\patchcmd\Gread@eps{\@inputcheck#1 }{\@inputcheck"#1"\relax}{}{}
\theoremstyle{plain}\newtheorem{theorem}{Theorem}[section]\newtheorem{Theorem}{Theorem}\newtheorem{proposition}[theorem]{Proposition}\newtheorem{lemma}[theorem]{Lemma}\newtheorem{corollary}[theorem]{Corollary}
\def\eq{\coloneqq}
\newtheorem*{claim*}{Claim}
\theoremstyle{definition}\newtheorem{definition}[theorem]{Definition}\newtheorem{example}[theorem]{Example}\newtheorem{notation}[theorem]{Notation}\newtheorem{remark}[theorem]{Remark}%\newtheorem{construction}[subsection]{Construction}
\def\C{\mathbb{C}}\def\N{\mathbb{N}}\def\Z{\mathbb{Z}}\def\R{\mathbb{R}}\def\K{\mathbb{K}}\def\T{\mathbb{T}}
\def\ZZ2{\mathbb{\Z/ 2\Z}}
\def\su{\subset}
\def\lb{\langle}\def\rb{\rangle}\def\disp{\displaystyle}\def\ot{\otimes}\def\t{\times}\def\sm{\setminus}
\def\c{\gamma}\def\v{\varphi}
\def\a{\alpha}\def\b{\beta}\def\d{\delta}\def\e{\epsilon}\def\s{\sigma}\def\De{\Delta}\def\La{\Lambda}\def\o{\omega}\def\la{\lambda}\def\la{\lambda}\def\n{\nabla}\def\p{\partial}\def\S{\Sigma}
\def\mf{\mathfrak}\def\mc{\mathcal}\def\ov{\overline}\def\wh{\widehat}\def\wt{\widetilde}
\def\gl{\mathfrak{gl}}
\def\sl2{\mathfrak{sl}_2}
\def\su2{\mathfrak{su}(2)}
\def\Aut{\text{Aut}}
\def\id{\text{id}}
\def\Hom{\text{Hom}}\def\rk{\text{rk}\,}
\def\sign{\text{sign}}\def\sgn{\text{sgn}}\def\aug{\text{aug}}
\def\inte{\text{int} \,}\def\Sym{\text{Sym}}
\def\deg{\text{deg}}\def\Spinc{\text{Spin}^c}
\def\TH3{\Theta_3^{H}}
\def\x{\textbf{x}}\def\y{\textbf{y}}
\def\aa{\boldsymbol\a}\def\bb{\boldsymbol\b}\def\cc{\boldsymbol\c}
\def\HD{(\S,\aa,\bb)}\def\HH{\mc{H}}
\def\HDzw{(\S,\aa,\bb,z,w)}
\def\Tab{\T_{\a}\cap\T_{\b}}\def\Ta{\T_{\a}}\def\Tb{\T_{\b}}
\def\ss{\mf{s}}
\def\L{\mc{L}}\def\cc{\boldsymbol\c}
\def\Uq{U_q(\mf{sl}_2)}\def\Usl2{U_q(\sl2)}\def\usl2{\wt{U}_q(\sl2)}\def\Uqgl11{U_q(\mathfrak{gl}(1|1))}\def\Uqsl11{U_q(\mathfrak{sl}(1|1))}\def\Uq{U_q(\mf{sl}_2)}
\def\II1{\text{II}_1}
\def\mod2{\ (mod \ 2)}
\def\L2{L^2}\def\l2{l^2(G)}\def\Cn2{C_n^{(2)}}\def\Hn2{H_n^{(2)}}\def\bn2{b_n^{(2)}}\def\pn2{\p_n^{(2)}}
\def\W1p{W^{1,p}}\def\Wd1p{W_{\d}^{1,p}}
\def\L{\mathcal{L}}
\def\ZZ{\mathcal{Z}}
\def\kk{\mathbb{K}}\def\Svect{\text{SVect}_{\kk}}\def\gl{\mathfrak{gl}}\def\gl11{\mathfrak{gl}(1|1)}
\def\Ka{\K[\a^{\pm 1}]}
\def\bolda{\boldsymbol{a}}\def\boldb{\boldsymbol{b}}
\def\aaa{\aa\cup\bolda}\def\bbb{\bb\cup\boldb}
\def\bstar{\b^*}
\def\aaa{\aa^e}\def\bbb{\bb^e}\def\aaap{\aa'^e}\def\bbbp{\bb'^e}
\def\waaa{w_{\aa^e}}\def\wbbb{w_{\bbb}}
\def\HDD{(\S,\aaa,\bbb)}\def\HDDp{(\S,\aaap,\bbbp)}
\def\dHH{\d_{\o}(\HH)}\def\dHHp{\d_{\o}(\HH')}
\def\Ua{U_{\a}}\def\Ub{U_{\b}}
\def\nequiv{\not\equiv}
\begin{document}

\title{Kuperberg invariants for balanced sutured 3-manifolds}
\author{Daniel L\'opez Neumann}
\address{Institut de Math\'ematiques de Jussieu - Paris Rive Gauche, Universit\'e de Paris, Paris, France}
\email{daniel.lopez@imj-prg.fr}

\maketitle

\begin{abstract}

We construct quantum invariants of balanced sutured 3-manifolds with a $\Spinc$ structure out of an involutive (possibly non-unimodular) Hopf superalgebra $H$. If $H$ is the Borel subalgebra of $\Uqgl11$, we show that our invariant is computed via Fox calculus and it is a normalization of Reidemeister torsion. The invariant is defined via a modification of a construction of G. Kuperberg, where we use the $\Spinc$ structure to take care of the non-unimodularity of $H$ or $H^*$.

%The invariant is a scalar of the form $Z_H^{\rho}(M,\c,\ss,\o)$ where $(M,\c)$ is a balanced sutured manifold, $\ss\in\Spinc(M,\c)$, $\o$ is an orientation of $H_*(M,R_-;\R)$ and $\rho: H_1(M)\to G$ is an abelian representation into a group that depends on $H$.
\end{abstract}

\tableofcontents

\section[Introduction]{Introduction}

%dim(H)\neq 0. Say just char zero.
% Write eq of coint rather than integral. 
% Don't say sutured complement of a link, but rather specify that we put two sutures.
% Say degree zero linear maps for def of Hopf superalgebra (possible confusion with antipode, since it is an algebra antihomomorphism).

Let $H=(H,m,\eta,\De,\e,S)$ be a finite dimensional Hopf algebra over a field $\kk$ of characteristic zero. When $H$ is involutive (i.e. $S\circ S=\id_H$), G. Kuperberg constructed in \cite{Kup1} a topological invariant $Z_H(Y)\in\kk$ for closed oriented 3-manifolds $Y$. This invariant is constructed directly from a Heegaard diagram of $Y$ using the structure morphisms of $H$ and the Hopf algebra integral and cointegral. Recall that a right cointegral is an element $c\in H$ such that
\begin{align}
\label{eqintro: right cointegral standard} 
cx=c\e(x)
\end{align}
for all $x\in H$ (here $\e:H\to \kk$ is the counit) and a right integral is an element $\mu\in H^*$ satisfying the dual property. Left cointegrals and integrals are defined in a similar way. The definition of the invariant $Z_H(Y)$ is quite simple as a consequence of the following fact \cite{LR:cosemisimplechar0}: if $H$ is an involutive finite dimensional Hopf algebra over a field $\kk$ of characteristic zero, then both $H$ and $H^*$ are semisimple and so they are {\em unimodular}, meaning that a right cointegral is also a left cointegral and similarly for integrals. However, there are not too many examples of involutive Hopf algebras apart from the group-algebra of a finite group $G$, in which case the invariant $Z_H(Y)$ becomes the number of homomorphisms from $\pi_1(Y)$ into $G$. In particular, the Hopf algebra $\Uq$ involved in the construction of the Witten-Reshetikhin-Turaev (WRT) 3-manifold invariants \cite{RT2} is non-involutive. More interesting involutive examples exist if we consider Hopf {\em superalgebras} instead, such as the quantum group $\Uqgl11$, but an involutive Hopf superalgebra needs not be unimodular, so the construction of \cite{Kup1} does not applies. An extension of the invariant $Z_H(Y)$ that is defined for any finite dimensional Hopf (super)algebra was given by Kuperberg himself in \cite{Kup2}, but it produces an invariant of 3-manifolds endowed with a framing, which is quite difficult to represent on a Heegaard diagram. Moreover, though it has been recently established that this extension coincides with the so called Hennings invariant of the Drinfeld double \cite{CC:ontwoinvariants}, the relation with WRT invariants is less understood (except for \cite{CKS:relation-WRT-Henn}) and it is not clear how any of these constructions is related to classical invariants such as Reidemeister torsion.
\medskip

In this paper, we propose a modification of Kuperberg's invariant in the case of a (possibly non-unimodular) involutive Hopf superalgebra $H$ and we generalize it to balanced sutured 3-manifolds $(M,\c)$. To take care of the non-unimodularity of $H$ or $H^*$ we use $\Spinc$ structures, which we represent in a very simple way using multipoints as in \cite{OS1, Juhasz:holomorphic}. When $H$ is the Borel subalgebra of $\Uqgl11$, we show that our invariant is computed via Fox calculus and is a normalization of the Reidemeister torsion of the pair $(M,R_-(\c))$, where $R_-(\c)$ is half of $\p M$.

\subsection[Main results]{Main results}

Let $H$ be an involutive Hopf superalgebra over a field $\kk$, not necessarily of finite dimension. We define a {\em right relative cointegral} as a tuple $(A,\pi_A,i_A,\iota)$ where $A$ is a  Hopf subalgebra of $H$, $i_A:A\to H$ is the inclusion, $\pi_A:H\to A$ is a (cocentral) Hopf morphism such that $\pi_Ai_A=\id_A$ and $\iota:A\to H$ is an $A$-colinear map satisfying an equation analogue to (\ref{eqintro: right cointegral standard}) (see Section \ref{section: relative integrals and cointegrals}). A relative right integral is a tuple $(B,i_B,\pi_B,\mu)$ defined in a dual way. The failure of $\iota$ (resp. $\mu$) being a two-sided cointegral is measured by a group-like $a^*\in G(A^*)$ (resp. $b\in G(B)$), we require these to satisfy the compatibility condition of Definition \ref{def: compatible integral cointegral}. Note that when $A$ is of infinite dimension, we define $G(A^*)$ as the group of algebra morphisms $A\to\kk$ (and similarly for $B$). Our first theorem is the following.

\begin{Theorem}
\label{thm: intro Z is an invariant}
Let $H$ be an involutive Hopf superalgebra over $\kk$ endowed with a compatible right relative cointegral and right relative integral. Then there is a topological invariant $$I_H^{\rho}(M,\c,\ss,\o)\in\kk$$ where $(M,\c)$ is a balanced sutured manifold, $\ss\in\Spinc(M,\c)$, $\o$ is an orientation of $H_*(M,R_-(\c);\R)$ and $\rho:H_1(M)\to G(A\ot B^*)$ is a group homomorphism.
\end{Theorem}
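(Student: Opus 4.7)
The plan is to construct $I_H^{\rho}(M,\c,\ss,\o)$ as a state sum on a balanced sutured Heegaard diagram, adapting Kuperberg's tensor-network construction from \cite{Kup1}. Three modifications are required relative to \cite{Kup1}: we work with the sutured Heegaard diagrams of Juh\'asz \cite{Juhasz:holomorphic} rather than closed ones, we encode the $\Spinc$ structure $\ss$ by a multipoint $\pp\sb\S\sm(\aa\cup\bb)$ as in \cite{OS1}, and we use the data of a relative cointegral $(A,\pi_A,i_A,\iota)$ and a relative integral $(B,i_B,\pi_B,\mu)$ in place of two-sided ones.

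First I would fix such a diagram $\HH=\HD$ together with a multipoint $\pp$ representing $\ss$ and an ordering of the $\a$- and $\b$-curves compatible with $\o$; the orientation $\o$ of $H_*(M,R_-(\c);\R)$ serves to trivialize the residual Koszul signs that arise when reordering tensor factors in the supercommutative category. At each $\a$-curve I would place an iterated coproduct of $\iota$, at each $\b$-curve an iterated product evaluated against $\mu$, and contract the resulting tensors at intersection points using the Hopf pairings induced by $\pi_A$ and $\pi_B$ together with $i_A,i_B$. The homomorphism $\rho:H_1(M)\to G(A\ot B^*)$ is encoded by inserting the corresponding group-like decorations along generators of $\pi_1$ visible as arcs of $\S$ cut along $\aa\cup\bb$, while the multipoint $\pp$ prescribes where the group-likes $a^*\in G(A^*)$ and $b\in G(B)$ measuring the non-unimodularity of $\iota,\mu$ are inserted. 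This produces a scalar $I_H^{\rho}(\HH,\pp,\o)\in\kk$.

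It remains to verify that this scalar depends only on $(M,\c,\ss,\o)$ and $\rho$. By Juh\'asz's theorem, any two admissible balanced sutured Heegaard diagrams for $(M,\c)$ are related by isotopies, handleslides, and (de)stabilizations, and any two multipoints representing $\ss$ are connected by elementary moves across curves. Isotopy invariance is immediate from naturality of contractions, while stabilization invariance reduces to the normalization $\langle\mu,\iota\rangle=1$ together with a sign absorbed by $\o$. The hard step is handleslide invariance. In Kuperberg's setting it follows from the two-sided cointegral/integral properties, but here sliding an $\a$-curve over another produces an extra factor of $a^*$ and sliding a $\b$-curve produces a factor of $b$, both measuring the failure of unimodularity. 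The main obstacle, and the reason the $\Spinc$ structure is essential, is to show that these modular factors are precisely cancelled by the corresponding shift in $\pp$: moving a point of $\pp$ across an $\a$- (respectively $\b$-) curve multiplies the state sum by $a^*$ (respectively $b$), and the combinatorics of $\Spinc$-equivalence of multipoints matches the combinatorics of handleslides exactly. The compatibility hypothesis of Definition \ref{def: compatible integral cointegral} ensures the simultaneous consistency of $a^*$ and $b$ when both handleslide types occur, and $\o$ absorbs the Koszul sign from reordering the curves in the super setting, completing the proof.
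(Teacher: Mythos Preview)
Your outline has the right shape (a Kuperberg-style state sum plus a $\Spinc$ correction absorbing non-unimodularity), but two of the central mechanisms are misidentified, and one essential ingredient is missing.

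\textbf{Extended diagrams and the role of $i_A,\pi_B$.} You place $i_A,i_B,\pi_A,\pi_B$ ``at intersection points'' as contraction data. In the paper they appear in a completely different place: the construction uses \emph{extended} Heegaard diagrams $\HH=(\S,\aa,\bolda,\bb,\boldb)$, where $\bolda,\boldb$ are cut systems of properly embedded arcs. Closed $\a$-curves carry iterated coproducts of $\iota$, closed $\b$-curves carry iterated products into $\mu$, but the \emph{arcs} of $\bolda$ (resp.\ $\boldb$) carry iterated coproducts composed with $i_A$ (resp.\ products composed with $\pi_B$). At crossings one simply inserts $S_H^{\e_x}$. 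The arcs are not optional: they are what allow Proposition~\ref{prop: hsliding property for cointegral arc-arc, arc-curve, curve-curve} to cover all three handleslide types (curve/curve, arc/curve, arc/arc), and they are needed to make the dual classes $\a^*,\b^*\in H_1(M)$ (through which $\rho$ is inserted) well defined when $R_-$ is connected. Without the extended diagram your description of the tensor network is incomplete.

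\textbf{The multipoint and the source of the correction.} Your $\pp\subset\S\sm(\aa\cup\bb)$ is not the object the paper uses; the relevant multipoint is $\x\in\Tab$, a $d$-tuple of \emph{intersection} points $x_i\in\a_i\cap\b_{\sigma(i)}$. Its role is not to be inserted into the tensor network, but to prescribe \emph{basepoints} on the closed curves via the rule of Definition~\ref{def: basepoints from multipoint}. More importantly, your diagnosis of where the $a^*,b$ factors arise is inverted. Handleslides are invariant \emph{on the nose}: the relative (co)integral identity of Proposition~\ref{prop: hsliding property for cointegral arc-arc, arc-curve, curve-curve} absorbs the slide with no residual group-like. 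The non-unimodularity factors appear instead when one \emph{changes the basepoint} on a closed curve (Proposition~\ref{prop: changing basepoints and Z}, via condition~(\ref{eq: cointegral cotrace property})) or reverses its orientation (Lemma~\ref{lemma: invariance of orientations}, via condition~(\ref{eq: cointegral inv of orientation})). Two multipoints $\x,\y$ give different basepoints, and the resulting discrepancy in $Z_H^\rho$ is exactly $\langle\rho(\e(\x,\y)),a^*\ot b^{-1}\rangle$; since $\e(\x,\y)=PD[s(\x)-s(\y)]$, the prefactor $\zeta_{\ss,\x}=\langle\rho(PD[\ss-s(\x)]),(a^*)^{-1}\ot b\rangle$ makes the product independent of $\x$. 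So the $\Spinc$ structure cancels basepoint ambiguity, not handleslide defects.
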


%This theorem is the combination of Theorem \ref{thm: Z is an invariant} (the case $R_-$ connected) and Proposition \ref{prop: Z for disconnected case}. 
%Note that the (abelian) sign-refined torsion of a three-manifold $M$ has the form $\tau^{\rho}(M,\ss,\o)$ where $\ss\in\Spinc(M)$, $\o$ is an orientation of $H_*(M;\R)$ and $\rho:H_1(M)\to\kk^{\t}$ is a group homomorphism, see \cite{Turaev: torsion}. %Note that $\Spinc$ structures and homology orientations are standard in the theory of sign-refined torsions (\cite{Turaev: torsion}). In the sutured case, the sutured torsion defined in \cite{FJR11} has the form $\tau^{\rho}(M,\c,\ss,\o)$ where $\ss,\o$ are exactly as above and $\rho:H_1(M)\to\kk^{\t}$ is any homomorphism into a field.
%Thus, the appearance of such $\ss,\o$ and $\rho$ in Theorem \ref{thm: intro Z is an invariant} is quite natural.
\medskip

We briefly explain the ingredients of this theorem. Sutured manifolds were introduced by Gabai in \cite{Gabai:foliations} in order to study foliations of 3-manifolds, and they provide a common generalization of closed 3-manifolds, link complements and Seifert surface complements. A sutured manifold is a pair $(M,\c)$ where $M$ is a compact oriented 3-manifold with non-empty boundary and $\c$ is a collection of pairwise disjoint annuli in $\p M$ that divides the boundary into two subsurfaces $R_-(\c)$ and $R_+(\c)$. The balanced condition amounts to $\chi(R_-(\c))=\chi(R_+(\c))$ and holds in most interesting examples. A $\Spinc$ structure on a sutured manifold $(M,\c)$ is an homology class of non-vanishing vector fields on $M$ with prescribed behavior over $\p M$. The set of $\Spinc$ structures over $(M,\c)$ forms an affine space over $H^2(M,\p M)$. In the closed case, $\Spinc$ structures are in bijection with Turaev's combinatorial Euler structures \cite{Turaev:Spinc, Turaev:BOOK2} and they are used, along with homology orientations, to remove the indeterminacies of Reidemeister torsion. In \cite{Juhasz:holomorphic}, Juh\'asz proves that sutured manifolds are described by Heegaard diagrams $\HH=\HD$ in which $\S$ has non-empty boundary (a {\em sutured} Heegaard diagram) and that $\Spinc$ structures on them can be easily understood in terms of $\HH$: if $\Tab$ denotes the set of multipoints of $\HH$ then there is a map $$s:\Tab\to\Spinc(M,\c)$$ with nice properties (this map was first defined by Ozsv\'ath and Szab\'o \cite{OS1} in the closed case). 
\medskip

The main steps in the construction of the invariant $I_H^{\rho}(M,\c,\ss,\o)$ (at least when $R_-(\c)$ is connected) are the following. First, we take an {\em extended Heegaard diagram} of $(M,\c)$: this is a tuple $\HH=(\S,\aa,\bolda,\bb,\boldb)$ consisting of a sutured Heegaard diagram $(\S,\aa,\bb)$ of $(M,\c)$ together with two collections $\bolda,\boldb$ of properly embedded pairwise disjoint arcs on $\S$ that give handlebody decompositions of $R_-,R_+$ respectively. If all the curves and arcs are oriented and each curve in $\aa\cup\bb$ has a basepoint, then the construction of Kuperberg \cite{Kup1} can be generalized to this setting and produces a scalar $Z_H^{\rho}(\HH)\in\kk$. More precisely, one assigns a tensor to each $\a\in\aa\cup\bolda$ and to each $\b\in\bb\cup\boldb$, and $Z_H^{\rho}(\HH)$ is obtained by contracting all these tensors using the combinatorics of the Heegaard diagram. The tensors corresponding to the closed $\a$-curves (resp. $\b$-curves) involve the cointegral $\iota$ (resp. $\mu$), while the tensors corresponding to the arcs in $\bolda$ (resp. $\boldb$) involve the Hopf morphism $i_A$ (resp. $\pi_B$), this is justified by Proposition \ref{prop: hsliding property for cointegral arc-arc, arc-curve, curve-curve}. However, the scalar $Z_H^{\rho}(\HH)$ is not a topological invariant if $\iota,\mu$ are not both two-sided (unless $\rho$ is trivial), due to some indeterminacies coming from the basepoints and orientations chosen on $\aa\cup\bb$. To correct this, we devise in Subsection \ref{subsection: basepoints} a special rule to put basepoints in $\aa\cup\bb$ once a multipoint $\x\in\Tab$ is given. If $Z^{\rho}_H(\HH,\x,\o)$ denotes the scalar above with the basepoints coming from $\x$ (and with a correction sign involving $\o$), then this depends on $\x$ in the same way as the $\Spinc$ structure $s(\x)$ does. Hence, if we fix $\ss\in\Spinc(M,\c)$ and set
$$\zeta_{\ss,\x}\eq\lb \rho(PD[\ss-s(\x)]), (a^*)^{-1}\ot b\rb$$
where $a^*\in G(A^*)$ and $b\in G(B)$ are the group-likes associated to $\iota$ and $\mu$, then
$$I_H^{\rho}(M,\c,\ss,\o)\eq\zeta_{\ss,\x} Z_H^{\rho}(\HH,\x,\o)$$
is independent of all the choices made in its construction, see Theorem \ref{thm: Z is an invariant}. The case when $R_-(\c)$ is disconnected is reduced to the connected case in Subsection \ref{subs: disconnected case}. 
\medskip

To state our second theorem, let $H_0$ be the Borel subalgebra of $\Uqgl11$ at generic $q$. Thus, $H_0$ is the superalgebra generated by two commuting elements $K,X$ with $|K|=0,|X|=1$ and $X^2=0$. The coproduct is given by
\begin{align}
\label{eqintro: coproduct of X}
\De(X)=K\ot X+X\ot 1
\end{align}
and we let $K$ be group-like. This (involutive) Hopf superalgebra does not quite satisfies the hypothesis needed in Theorem \ref{thm: intro Z is an invariant}, but each finite dimensional quotient $H_n\eq H_0/(K^n-1), n\geq 1$ does. This is enough to define an invariant $I_{H_0}(M,\c,\ss,\o)$ (where $(M,\c),\ss,\o$ are as above) which is a well-defined element of $\Z[H_1(M)]$ and which specializes to all the invariants $I^{\rho}_{H_n}$. % by the formula $$\rho(Z_{H_0}(M,\c,\ss,\o))=Z_{H_n}^{\rho}(M,\c,\ss,\o),$$ 
On the other hand, there is a relative Reidemeister torsion $\tau(M,R_-)$ which is an element of $\Z[H_1(M)]$ defined up to multiplication by an element of $\pm H_1(M)$ (see e.g. \cite{Turaev:BOOK2} or \cite{FJR11}).

\begin{Theorem}
\label{thm: intro Z recovers Reidemeister torsion}
Let $(M,\c)$ be a balanced sutured manifold, $\ss\in\Spinc(M,\c)$ and $\o$ an orientation of $H_*(M,R_-;\R)$. If $H_0$ is the Borel subalgebra of $\Uqgl11$ then $$I_{H_0}(M,\c,\ss,\o)\,\dot{=}\,\tau(M,R_-)$$ where $\dot{=}$ means equality up to multiplication by an element of $\pm H_1(M)$.
\end{Theorem}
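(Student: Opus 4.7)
My strategy is to compute both sides on the same extended Heegaard diagram $\HH=(\S,\aa,\bolda,\bb,\boldb)$ of $(M,\c)$: the left-hand side by unpacking the Kuperberg contraction defining $Z^{\rho}_{H_0}(\HH,\x,\o)$ with $\rho:H_1(M)\to G(A\ot B^*)$ taken to be universal, and the right-hand side via Fox calculus applied to the presentation of the pair $(M,R_-)$ induced by $\HH$. Once both are rewritten as a signed sum over multipoints $\x\in\Tab$ of products of elements of $H_1(M)$, the theorem reduces to a combinatorial identification, up to the ambiguity $\pm H_1(M)$ inherent in $\tau$.

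For the left-hand side I would work with the finite dimensional quotients $H_n=H_0/(K^n-1)$, taking $A=B=\kk[K^{\pm 1}]/(K^n-1)$ as Hopf subalgebras of $H_n$, with $\pi_A,\pi_B$ killing $X$ and with the relative right cointegral and integral taken, up to scalar, as $\iota(a)=aX$ and $\mu(a+bX)=\e(b)$. The tensor attached to each closed $\a$-curve of $\HH$ is obtained by propagating the output $X$ of $\iota$ through the coproducts $\De(X)=K\ot X+X\ot 1$ dictated at each intersection with $\bb\cup\boldb$. Because $X^2=0$, on each such curve exactly one intersection may carry the odd factor $X$ while every other intersection contributes a power of $K$ recording its position along the curve; a dual description holds for the $\b$-curves. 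Summing over $\x\in\Tab$ with the weights $\zeta_{\ss,\x}$ and pushing $\rho$ to the universal target $\Z[H_1(M)]$, the contraction collapses into the determinant of a matrix whose $(i,j)$-entry enumerates, with Koszul signs, the intersections of $\a_i$ with $\b_j$ weighted by their classes in $H_1(M)$.

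For the right-hand side, the extended diagram $\HH$ provides a handle decomposition of $M$ in which $R_-$ is a subcomplex: the arcs $\bolda$ together with a piece of $\S$ span $R_-$, the closed curves $\aa$ and the arcs $\boldb$ give $1$-handles relative to $R_-$, and the $\b$-curves give the $2$-handles. Fox's formula for relative torsion (see Turaev \cite{Turaev:BOOK2} or Friedl--Juh\'asz--Rasmussen \cite{FJR11}) identifies $\tau(M,R_-)$, up to $\pm H_1(M)$, with the determinant of the Fox Jacobian of the resulting presentation. The key observation is that the identity $\De(X)=K\ot X+X\ot 1$ is exactly the Leibniz rule governing Fox derivatives, so the matrix built in the previous paragraph coincides intersection by intersection with this Fox Jacobian, and the two determinants agree in $\Z[H_1(M)]/\pm H_1(M)$.

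The hardest part, I expect, will be reconciling signs and normalizations: tracking the super-Koszul signs appearing in Kuperberg's contraction, the effect of orientations of $\aa,\bb,\bolda,\boldb$, the basepoint rule of Subsection \ref{subsection: basepoints}, and matching them to the sign and $H_1$-ambiguity conventions built into $\tau(M,R_-)$. The $\Spinc$ correction $\zeta_{\ss,\x}$ is tailored so that $\zeta_{\ss,\x}Z^{\rho}_{H_0}(\HH,\x,\o)$ is independent of $\x$, by Theorem \ref{thm: intro Z is an invariant}; the analogous statement for the Fox Jacobian under change of multipoint holds up to translation in $H_1(M)$. Hence once the determinantal identification is established for one $\x$, it propagates to all others, finishing the proof up to the $\pm H_1(M)$ ambiguity.
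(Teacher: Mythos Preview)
Your strategy is the paper's: expand the Kuperberg tensor for $H_n$ using $\De(X)=K\ot X+X\ot 1$ and $X^2=0$ to write $Z^{\rho}_{H_n}(\HH)$ as a signed sum over multipoints, identify this sum with $\det(\p\ov{\a_i}/\p\b^*_j)$, and then invoke \cite{FJR11} to match that determinant with $\tau(M,R_-)$ up to $\pm H_1(M)$.

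There are, however, two concrete errors in your setup that would break the argument as written. First, the paper does \emph{not} take $A=\kk[K^{\pm 1}]/(K^n-1)$ but rather $A=\kk$, with $\iota$ the ordinary two-sided cointegral $\frac{1}{n}\sum_i K^iX$; your choice $\iota(a)=aX$ with $A=B$ fails the compatibility condition~(\ref{eq: cointegral inv of orientation}) of Definition~\ref{def: compatible integral cointegral}, since $\De_{a^*}(K^mX)=a^*(K^m)K^mX$ can never equal $K^{m-1}X$ for a scalar $a^*(K^m)$. With $A=\kk$ the $\a$-arcs contribute nothing and the computation reduces cleanly to the closed $\a$-curves. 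Second, your relative integral should be $\mu(a+bX)=b\in B$, not $\e(b)$; the whole point is that $\mu$ lands in $B$ so that the $\b$-tensors record the class $\psi(\b^*)$. Finally, the matrix whose determinant you compute is indexed only by the \emph{closed} curves $i,j=1,\dots,d$, not by all of $\bbb$; the arcs $\boldb$ enter only as extra letters in the words $\ov{\a_i}$, and one must argue (as the paper does via $\pi_B(X)=0$) that any term placing an $X$ over a $\b$-arc vanishes. With these corrections your outline becomes the paper's proof of Theorem~\ref{thm: intro Z via Fox calculus}, after which Theorem~\ref{thm: intro Z recovers Reidemeister torsion} follows by citing \cite[Prop.~4.1 and Lemma~3.20]{FJR11}.
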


This theorem follows from a Fox calculus expression of $I_{H_0}$, stated in Theorem \ref{thm: intro Z via Fox calculus} for the $Z_{H_n}^{\rho}$ case. The reason such an expression exists is the following. Recall that if $F$ is a free group on generators $x_1,\dots, x_d$ and $u,v\in F$ then the Fox derivatives $\p/\p x_i:\Z[F]\to \Z[F]$ satisfy
\begin{align}
\label{eqintro: Fox derivation property}
\frac{\p (uv)}{\p x_i}=u\frac{\p v}{\p x_i}+\frac{\p u}{\p x_i}.
\end{align}
Theorem \ref{thm: intro Z via Fox calculus} follows from the observation that formula (\ref{eqintro: Fox derivation property}) is very similar to formula (\ref{eqintro: coproduct of X}) for the coproduct of $X\in H_n$, which is the cointegral of $H_n$ modulo group-likes. One has to think that $X$ corresponds to derivating, $K$ corresponds to ``leaving the variable untouched" and $1$ corresponds to ``erasing the variable". Theorem \ref{thm: intro Z recovers Reidemeister torsion} follows from Theorem \ref{thm: intro Z via Fox calculus} since $\tau(M,R_-)$ is also computed via Fox calculus. 
\medskip

As a consequence of Theorem \ref{thm: intro Z recovers Reidemeister torsion} it follows that our extension of Kuperberg's invariant contains interesting topological information. For example, if $M$ denotes the complement of a link $L\subset S^3$ and $\c$ consists of two sutures on each boundary component of $M$, then $\tau(M,R_-(\c))$ recovers the multivariable Alexander polynomial $\De_L$ \cite{FJR11}, and so also does $I_{H_0}$ (see Corollary \ref{corollary: Z0 recovers multivariable Alexander polynomial}). The torsion $\tau(M,R_-(\c))$ has also been used to detect non-isotopic minimal genus Seifert surfaces of some knots \cite{Altman:Seifert}. However, if $M$ is the sutured manifold associated to a closed 3-manifold $Y$ (i.e. $M=Y\sm B$ where $B$ is a 3-ball with a single suture in $\p B$), then the invariant $I_{H_0}^{\rho}$ is essentially trivial (see Corollary \ref{corollary: Z of closed 3-mfld}). It is an interesting question whether there exist other Hopf superalgebras for which the invariant $I_H^{\rho}$ contains non-trivial topological information not detected by Reidemeister torsion, though we do not consider this in the present paper.

\subsection[Comparison to previous works]{Comparison to previous works}

Another generalization of Kuperberg's invariant was considered by Virelizier \cite{Virelizier:flat} using an involutive {\em Hopf group-coalgebra} $\{H_g\}_{g\in G}$. This gives a scalar invariant of the form $\tau^{\rho}(Y)$ where $Y$ is a closed 3-manifold and $\rho:\pi_1(Y)\to G$ is an arbitrary group homomorphism. This is similar to our invariant in the closed case, but no $\Spinc$ structures are involved since it is an unimodular (even semisimple) approach. 
\medskip

%Our invariant recovers that of \cite{Kup1} when $A=B=\kk$ and $H,H^*$ are unimodular and $M$ is a closed 3-manifold. Another generalization of Kuperberg's invariant was considered in \cite{Virelizier: Hopf-group} using {\em Hopf-group coalgebras}. This is still an unimodular situation, so no $\Spinc$ structures are involved.

%Other generalizations of (involutive) Kuperberg invariants appear in \cite{Virelizier: flat bundles} and \cite{KV: generalized}. The approach of \cite{Virelizier: flat bundles} uses an involutive {\em Hopf group-coalgebra} \cite{Virelizier: Hopf-group} $\{H_g\}_{g\in G}$ and gives an invariant $Z^{\rho}(M)$ of a closed 3-manifold together with a representation $\rho:\pi_1(M)\to G$. This is still an unimodular approach, so no $\Spinc$ structures are involved. In \cite{KV: generalized} a closed 3-manifold invariant is constructed out of any finite dimensional involutive Hopf superalgebra with {\em central} distinguished group-likes. They consider the same Hopf superalgebra $H_n$ as us, but they only recover $\pm |H_1(M;\Z)|$ which is our invariant at $\rho\equiv 1$.\medskip

Theorem \ref{thm: intro Z recovers Reidemeister torsion} has to be seen as a direct Hopf algebra approach to Reidemeister torsion, in the sense that we use no $R$-matrices or representation theory. It is well-known that torsion invariants of links and 3-manifolds can be obtained via the representation theory of some Hopf algebra. It was first shown by Reshetikhin \cite{Reshetikhin:supergroup} and Rozansky-Saleur \cite{RS:Alexander} that the Alexander polynomial could be obtained from the representation theory of $\Uqgl11$. A similar result was obtained by Jun Murakami \cite{Murakami:Alexander, Murakami:state} using the representation theory of $\Uq$ at $q=i$, see \cite{Viro:Alexander} for more references. More recently, Blanchet, Costantino, Geer and Patureau-Mirand showed in \cite{BCGP} that the (absolute) Reidemeister torsion of a closed 3-manifold can be obtained via the representation theory of an ``unrolled" version of $\Uq$ at $q=i$. Our approach has the advantage that it is defined for more general manifolds (e.g. complements of Seifert surfaces), nevertheless, we do not get invariants of links colored by a representation of the Hopf algebra as in the works mentioned above, neither we obtain the absolute Reidemeister torsion of a closed 3-manifold as in \cite{BCGP}. Note also that our approach makes clear the connection of the Borel of $\Uqgl11$ to Fox calculus (Theorem \ref{thm: intro Z via Fox calculus}), while the above papers are more related to the skein relation.
\medskip

%The usual approach goes through the representation theory of some Hopf algebra (e.g. \cite{CGP},\cite{Viro: quantum relatives}, see also \cite{Sartori: Alexander}). In \cite{Viro: quantum relatives},\cite{Sartori: Alexander}, for example, the Alexander polynomial is obtained from the representation theory of $\Uqgl11$. Note that $\Uqgl11$ (at a root of unity) is the Drinfeld double of $H_n$, so in view of the result of \cite{CC: on two invariants}, it should be possible to get the Alexander polynomial as a Kuperberg invariant of $H_n$. Our theorem makes this connection explicit and generalized to the sutured context. 

Finally, note that Friedl, Juh\'asz and Rasmussen have shown in \cite{FJR11} that another $\Spinc$ normalization of the Reidemeister torsion $\tau(M,R_-)$ equals the Euler characteristic of Juh\'asz sutured Floer homology $SFH(M,\c,\ss)$ \cite{Juhasz:holomorphic}, a powerful homological invariant defined for balanced sutured manifolds. It is an interesting question whether there is a deeper connection between Floer homology and the Borel subalgebra of $\Uqgl11$ or even whether Floer homology could be extended to categorify other Hopf algebra invariants.

\subsection[Structure of the paper]{Structure of the paper} Section \ref{section: Hopf superalgebras} contains the necessary material on Hopf algebra theory. This is mostly standard, except for Subsection \ref{section: relative integrals and cointegrals} where we introduce relative integrals and cointegrals. In Section \ref{section: Sutured manifolds and Heegaard diagrams} we define sutured manifolds, their Heegaard diagrams, $\Spinc$ structures and homology orientations. This is also standard, except for Subsection \ref{subs: Extended Heegaard diagrams} where we introduce extended Heegaard diagrams and extend the Reidemeister-Singer theorem to this context. In Section \ref{section: the invariant} we construct in detail the invariant $I_H^{\rho}(M,\c,\ss,\o)$. We begin in Subsection \ref{subs: tensors of HDs} by generalizing the construction of \cite{Kup1} to the sutured context using extended Heegaard diagrams, assuming $R_-(\c)$ connected. In Subsection \ref{subsection: basepoints} we explain how to put basepoints on the Heegaard diagram once a multipoint is chosen. The invariant $I_H^{\rho}(M,\c,\ss,\o)$ is defined in Subsection \ref{subs: Construction of Z_H}. In Subsection \ref{subs: disconnected case} we generalize $I_H^{\rho}$ to the case when $R_-(\c)$ is disconnected. Proof of invariance is devoted to Section \ref{section: proof of invariance}. Finally, we investigate the relation to Reidemeister torsion in Section \ref{section: The torsion for sutured manifolds} in which we prove Theorem \ref{thm: intro Z recovers Reidemeister torsion}.

\subsection[Acknowledgments]{Acknowledgments}

I would like to thank Christian Blanchet, my PhD supervisor, for his continuing support and for all the help provided during the course of my PhD. I also would like to thank Rinat Kashaev and Alexis Virelizier for helpful conversations. I am particularly grateful to Alexis Virelizier for all the time he spend to discuss about this topic and the many suggestions he gave. This project has received funding from the European Union's Horizon 2020 Research and Innovation Programme under the Marie Sklodowska-Curie Grant Agreement No. 665850.

\section[Algebraic preliminaries]{Algebraic preliminaries}
\label{section: Hopf superalgebras}

\def\SVect{\text{SVect}_{\kk}}

This section contains the necessary material on Hopf superalgebras. We begin by recalling some basic definitions and conventions, such as super-vector spaces, tensor networks, and Hopf superalgebras. In Subsection \ref{section: relative integrals and cointegrals} we introduce relative versions of the Hopf algebra integrals and cointegrals. Then in Subsection \ref{subs: The compatibility condition} we discuss a compatibility condition for such structures, which generalizes the properties of the distinguished group-likes for the standard Hopf algebra (co)integrals. An excellent reference for the theory of Hopf algebras is \cite{Radford:BOOK}.

\medskip

%To simplify the graphical notation, we will omit the source and target of a morphism whenever they are clear from the context.

\subsection[Super vector spaces]{Super vector spaces}

Let $\kk$ be a field. In what follows, all vector spaces are defined over $\kk$. A {\em super vector space} is a vector space $V$ endowed with a direct sum decomposition
\begin{align*}
V=V_0\oplus V_1.
\end{align*}
An element $v\in V_i$ is said to be {\em homogeneous} of degree $i$ and we denote its degree by $|v|= i\pmod 2$. If $V,W$ are super vector spaces, a linear map $f:V\to W$ has {\em degree $k \pmod 2 $} if $f(V_i)\subset W_{i+k}$ for $i=0,1$ and we denote $|f|= k\pmod 2$. The class of super vector spaces and linear maps of degree zero forms a category, which we denote by $\Svect$. %We work over the category $\Svect$ of supervector spaces over $\kk$, i.e. $\Z/2\Z$-graded $\kk$-vector spaces. If $V$ is such an object, we denote by $|v|\in \Z/2\Z$ the degree of an homogeneous element $v\in V$. One could take as morphisms of this category the degree preserving $\kk$-linear maps or just all $\kk$-linear maps. In the latter case, for any supervector spaces $V,W$, $\Hom(V,W)$ becomes a supervector space whose degree $i$ part consists of degree $i$ $\kk$-linear maps, $i\in\Z/2\Z$. Note that for any superobject $V$ one can shift degrees and get a new object $\ov{V}$ with the same underlying vector space. A degree one map $V\to W$ can be seen as a degree zero map $V\to \ov{W}$, so we consider $\Svect$ with morphisms the degree zero linear maps. 
This is a monoidal category if for each $i=0,1$ we let
\begin{align*}
(V\ot W)_i\eq \bigoplus_{j=0,1}V_j\ot W_{i-j}
\end{align*}
for super vector spaces $V,W$, and the unit object is the base field $\kk$ concentrated in degree zero. The tensor product of two morphisms $f,g$ of $\Svect$ is defined by
\begin{align*}
(f\ot g)(v\ot w)\eq f(v)\ot g(w).
\end{align*}
 %Note that if we want to consider degree one linear maps as morphisms in $\Svect$, then the tensor product $f\ot g$ has to be defined by\begin{align*}(f\ot g)(v\ot w)\eq (-1)^{|v||g|}f(v)\ot g(w).\end{align*}
The category $\Svect$ is a symmetric monoidal category with symmetry isomorphism given by 
\begin{align*}
\label{eq: symmetry isomorphism of supervect}
\tau_{V,W}:V\ot W&\to W \ot  V \\
 v\ot w &\mapsto (-1)^{|v||w|}w\ot v
\end{align*}
where $v\in V,w\in W$ are homogeneous elements. %The symmetry $c$ is natural since we took morphisms as degree zero linear maps. 
If $V$ is a super vector space, then the dual vector space $V^*$ is naturally a super vector space with $V^*_i\eq(V_i)^*$ for $i=0,1$. This way $\Svect$ is a rigid symmetric monoidal category. %Finally, note that for any supervector space $V$, there is a supervector space $\ov{V}$ obtained by shifting degrees, i.e. $\ov{V}_i\eq V_{i+1}$ for $i=0,1$. Thus any degree one linear map $f:V\to W$ can be seen as a degree zero map $f:V\to \ov{W}$, which is a morphism in $\Svect$.\medskip

%\begin{notation}\label{notation: representation of Sn via Svect}Let $V$ be a supervector space. For each $n\geq 1$ and $1\leq i\leq n-1$ we can define $c_i:V^{\ot n}\to V^{\ot n}$ by $c_i\eq \id_V^{\ot (i-1)}\ot c\ot \id_V^{\ot(n-i-1)}$ where $c$ is the above symmetry. These induce a representation $S_n\to GL_{\Svect}(V^{\ot n})$ (the latter denotes automorphisms in the category $\Svect$). We denote by $P_{\tau}$ the image of $\tau\in S_n$ under this map. We also denote by $P'_{\tau}$ the unsigned permutation (i.e. we use the symmetry $c$ of $\Vect$ instead). Note that if $v_1,\dots,v_n\in V$ have degree one, the\begin{align*}P_{\tau}(\ov{v})=\sign(\tau)v_{\tau(1)}\ot\dots\ot v_{\tau(n)}=\sign(\tau)P'_{\tau}(\ov{v})\end{align*}where $\ov{v}=v_1\ot\dots\ot v_n$.\end{notation}

\subsection{Graphical notation}

We will adopt the tensor network notation of \cite{Kup1, Kup2}. This notation is valid over any (symmetric) monoidal category, but we will work only over $\SVect$. In this notation, the inputs of a tensor are denoted as incoming arrows in counterclockwise direction, while the outputs are denoted as outcoming arrows in clockwise direction. For example, a linear map $T:V\ot W\to V\ot W$ is denoted by 

\begin{figure}[H]
\centering
\begin{pspicture}(0,-0.445)(2.65,0.445)
\rput[bl](1.15,-0.1){$T$}
\psline[linecolor=black, linewidth=0.018, arrowsize=0.05291667cm 2.0,arrowlength=0.8,arrowinset=0.2]{->}(0.55,0.305)(0.95,0.105)
\psline[linecolor=black, linewidth=0.018, arrowsize=0.05291667cm 2.0,arrowlength=0.8,arrowinset=0.2]{->}(0.55,-0.295)(0.95,-0.095)
\psline[linecolor=black, linewidth=0.018, arrowsize=0.05291667cm 2.0,arrowlength=0.8,arrowinset=0.2]{->}(1.55,0.105)(1.95,0.305)
\psline[linecolor=black, linewidth=0.018, arrowsize=0.05291667cm 2.0,arrowlength=0.8,arrowinset=0.2]{->}(1.55,-0.095)(1.95,-0.295)
\rput[bl](0.05,0.205){$V$}
\rput[bl](0.0,-0.445){$W$}
\rput[bl](2.15,0.205){$V$}
\rput[bl](2.1,-0.445){$W$}
\rput[bl](2.6,-0.1){.}
\end{pspicture}
\end{figure}

\noindent Usually, the domain and target of a linear map will be clear from the context so they will be dropped from the notation. Thus, we denote $T$ simply by
\begin{figure}[H]
\centering \begin{pspicture}(0,-0.37947297)(1.604469,0.37947297)
\rput[bl](0.604469,-0.029472962){$T$}
\psline[linecolor=black, linewidth=0.018, arrowsize=0.05291667cm 2.0,arrowlength=0.8,arrowinset=0.2]{->}(0.004468994,0.37052703)(0.40446898,0.17052704)
\psline[linecolor=black, linewidth=0.018, arrowsize=0.05291667cm 2.0,arrowlength=0.8,arrowinset=0.2]{->}(0.004468994,-0.22947297)(0.40446898,-0.029472962)
\psline[linecolor=black, linewidth=0.018, arrowsize=0.05291667cm 2.0,arrowlength=0.8,arrowinset=0.2]{->}(1.004469,0.17052704)(1.404469,0.37052703)
\psline[linecolor=black, linewidth=0.018, arrowsize=0.05291667cm 2.0,arrowlength=0.8,arrowinset=0.2]{->}(1.004469,-0.029472962)(1.404469,-0.22947297)
\rput[bl](1.554469,-0.029){.}
\end{pspicture}
\end{figure}

\noindent A vector $v\in V$, a functional $v^*\in V^*$ and the identity $\id_V$ are denoted respectively by:
\begin{figure}[H]
\centering \begin{pspicture}(0,-0.155)(3.97,0.155)
\psline[linecolor=black, linewidth=0.018, arrowsize=0.05291667cm 2.0,arrowlength=0.8,arrowinset=0.2]{->}(0.32,-0.005)(0.72,-0.005)
\psline[linecolor=black, linewidth=0.018, arrowsize=0.05291667cm 2.0,arrowlength=0.8,arrowinset=0.2]{->}(1.62,-0.005)(2.02,-0.005)
\rput[bl](0.0,-0.075){$v$}
\rput[bl](2.15,-0.085){$v^*$}
\psline[linecolor=black, linewidth=0.018, arrowsize=0.05291667cm 2.0,arrowlength=0.8,arrowinset=0.2]{->}(3.42,-0.005)(3.82,-0.005)
\rput[bl](0.77,-0.155){,}
\rput[bl](2.47,-0.155){,}
\rput[bl](3.92,-0.105){.}
\end{pspicture}

\end{figure}

\noindent Composition of tensors is denoted by joining the corresponding outcoming and incoming legs of the tensor networks while the tensor product of two tensors is denoted by stacking the corresponding tensor networks one over another. For example, if $T_1:V\ot V\to V\ot V$ and $T_2:V\ot V\to V$ are two tensors, then the composition $(T_2\ot \id_V)\circ (\id_V\ot T_1)$ and the tensor product $T_1\ot T_2$ are respectively denoted by

\begin{figure}[H]
\centering
\begin{pspicture}(0,-0.8089459)(5.7044683,0.8089459)
\rput[bl](0.6044684,-0.3){$T_1$}
\psline[linecolor=black, linewidth=0.018, arrowsize=0.05291667cm 2.0,arrowlength=0.8,arrowinset=0.2]{->}(0.0044683837,0.1)(0.4044684,-0.1)
\psline[linecolor=black, linewidth=0.018, arrowsize=0.05291667cm 2.0,arrowlength=0.8,arrowinset=0.2]{->}(0.0044683837,-0.5)(0.4044684,-0.3)
\psline[linecolor=black, linewidth=0.018, arrowsize=0.05291667cm 2.0,arrowlength=0.8,arrowinset=0.2]{->}(1.1044683,-0.1)(1.5044684,0.1)
\psline[linecolor=black, linewidth=0.018, arrowsize=0.05291667cm 2.0,arrowlength=0.8,arrowinset=0.2]{->}(1.1044683,-0.3)(1.5044684,-0.5)
\psline[linecolor=black, linewidth=0.018, arrowsize=0.05291667cm 2.0,arrowlength=0.8,arrowinset=0.2]{->}(1.1044683,0.5)(1.5044684,0.3)
\rput[bl](1.7044684,0.0){$T_2$}
\psline[linecolor=black, linewidth=0.018, arrowsize=0.05291667cm 2.0,arrowlength=0.8,arrowinset=0.2]{->}(2.2044685,0.2)(2.6044683,0.2)
\rput[bl](4.6044683,0.4){$T_1$}
\psline[linecolor=black, linewidth=0.018, arrowsize=0.05291667cm 2.0,arrowlength=0.8,arrowinset=0.2]{->}(4.0044684,0.8)(4.4044685,0.6)
\psline[linecolor=black, linewidth=0.018, arrowsize=0.05291667cm 2.0,arrowlength=0.8,arrowinset=0.2]{->}(4.0044684,0.2)(4.4044685,0.4)
\psline[linecolor=black, linewidth=0.018, arrowsize=0.05291667cm 2.0,arrowlength=0.8,arrowinset=0.2]{->}(5.1044683,0.6)(5.5044684,0.8)
\psline[linecolor=black, linewidth=0.018, arrowsize=0.05291667cm 2.0,arrowlength=0.8,arrowinset=0.2]{->}(5.1044683,0.4)(5.5044684,0.2)
\psline[linecolor=black, linewidth=0.018, arrowsize=0.05291667cm 2.0,arrowlength=0.8,arrowinset=0.2]{->}(4.0044684,-0.8)(4.4044685,-0.6)
\psline[linecolor=black, linewidth=0.018, arrowsize=0.05291667cm 2.0,arrowlength=0.8,arrowinset=0.2]{->}(4.0044684,-0.2)(4.4044685,-0.4)
\rput[bl](4.6044683,-0.7){$T_2$}
\psline[linecolor=black, linewidth=0.018, arrowsize=0.05291667cm 2.0,arrowlength=0.8,arrowinset=0.2]{->}(5.1044683,-0.5)(5.5044684,-0.5)
\rput[bl](5.6544685,-0.0){.}
\rput[bl](2.6944685,-0.0){,}
\end{pspicture}

\end{figure}

\noindent If $V,W$ are super vector spaces, the symmetry isomorphism $\tau_{V,W}$ is denoted by a crossing pair of arrows. For example, if $T:V\to V\ot V$ is a tensor, then $\tau_{V,V}\circ T$ is denoted by

\begin{figure}[H]
\centering
\begin{pspicture}(0,-0.28455383)(1.75,0.28455383)
\rput[bl](0.6,-0.10455383){$T$}
\psline[linecolor=black, linewidth=0.018, arrowsize=0.05291667cm 2.0,arrowlength=0.8,arrowinset=0.2]{->}(0.0,0.015446167)(0.4,0.015446167)
\psbezier[linecolor=black, linewidth=0.018, arrowsize=0.05291667cm 2.0,arrowlength=0.8,arrowinset=0.2]{->}(1.0,0.115446165)(1.3,0.115446165)(1.4,0.015446167)(1.5,-0.2845538330078125)
\rput[bl](1.7,-0.1){.}
\psbezier[linecolor=black, linewidth=0.018, arrowsize=0.05291667cm 2.0,arrowlength=0.8,arrowinset=0.2]{->}(1.0,-0.08455383)(1.3,-0.08455383)(1.4,0.015446167)(1.5,0.3154461669921875)
\end{pspicture}

\end{figure}

\subsection[Hopf superalgebras]{Hopf superalgebras} 
Recall that a superalgebra is a tuple $(A,m,\eta)$ consisting of a super-vector space together with tensors $m:A\ot A\to A$ and $\eta:\kk\to A$ satisfying the usual associativity and unitality conditions. Dually, a supercoalgebra is a tuple $(C,\De,\e)$ satisfying coassociativity and counitality conditions. In tensor network notation, the supercoalgebra axioms are:

\begin{figure}[H]
\centering
\begin{pspicture}(0,-0.5688811)(12.0,0.5688811)
\psline[linecolor=black, linewidth=0.018, arrowsize=0.05291667cm 2.0,arrowlength=0.8,arrowinset=0.2]{->}(8.9,-0.018881073)(9.3,-0.018881073)
\rput[bl](7.9,0.22111893){$\epsilon$}
\psline[linecolor=black, linewidth=0.018, arrowsize=0.05291667cm 2.0,arrowlength=0.8,arrowinset=0.2]{->}(0.0,0.28111893)(0.3,0.28111893)
\rput[bl](0.46,0.18111892){$\Delta$}
\psline[linecolor=black, linewidth=0.018, arrowsize=0.05291667cm 2.0,arrowlength=0.8,arrowinset=0.2]{->}(0.9,0.38111892)(1.2,0.58111894)
\psline[linecolor=black, linewidth=0.018, arrowsize=0.05291667cm 2.0,arrowlength=0.8,arrowinset=0.2]{->}(0.9,0.18111892)(1.2,-0.018881073)
\rput[bl](1.36,-0.31888106){$\Delta$}
\psline[linecolor=black, linewidth=0.018, arrowsize=0.05291667cm 2.0,arrowlength=0.8,arrowinset=0.2]{->}(1.8,-0.11888108)(2.1,0.08111893)
\psline[linecolor=black, linewidth=0.018, arrowsize=0.05291667cm 2.0,arrowlength=0.8,arrowinset=0.2]{->}(1.8,-0.31888106)(2.1,-0.5188811)
\psline[linecolor=black, linewidth=0.018, arrowsize=0.05291667cm 2.0,arrowlength=0.8,arrowinset=0.2]{->}(3.0,-0.21888107)(3.3,-0.21888107)
\rput[bl](3.46,-0.31888106){$\Delta$}
\psline[linecolor=black, linewidth=0.018, arrowsize=0.05291667cm 2.0,arrowlength=0.8,arrowinset=0.2]{->}(3.9,-0.11888108)(4.2,0.08111893)
\psline[linecolor=black, linewidth=0.018, arrowsize=0.05291667cm 2.0,arrowlength=0.8,arrowinset=0.2]{->}(3.9,-0.31888106)(4.2,-0.5188811)
\rput[bl](4.36,0.18111892){$\Delta$}
\psline[linecolor=black, linewidth=0.018, arrowsize=0.05291667cm 2.0,arrowlength=0.8,arrowinset=0.2]{->}(4.8,0.38111892)(5.1,0.58111894)
\psline[linecolor=black, linewidth=0.018, arrowsize=0.05291667cm 2.0,arrowlength=0.8,arrowinset=0.2]{->}(4.8,0.18111892)(5.1,-0.018881073)
\psline[linecolor=black, linewidth=0.018, arrowsize=0.05291667cm 2.0,arrowlength=0.8,arrowinset=0.2]{->}(6.5,-0.018881073)(6.8,-0.018881073)
\rput[bl](6.96,-0.11888108){$\Delta$}
\psline[linecolor=black, linewidth=0.018, arrowsize=0.05291667cm 2.0,arrowlength=0.8,arrowinset=0.2]{->}(7.4,0.08111893)(7.7,0.28111893)
\psline[linecolor=black, linewidth=0.018, arrowsize=0.05291667cm 2.0,arrowlength=0.8,arrowinset=0.2]{->}(7.4,-0.11888108)(7.7,-0.31888106)
\rput[bl](11.7,-0.37888107){$\epsilon$}
\psline[linecolor=black, linewidth=0.018, arrowsize=0.05291667cm 2.0,arrowlength=0.8,arrowinset=0.2]{->}(10.3,-0.018881073)(10.6,-0.018881073)
\rput[bl](10.76,-0.11888108){$\Delta$}
\psline[linecolor=black, linewidth=0.018, arrowsize=0.05291667cm 2.0,arrowlength=0.8,arrowinset=0.2]{->}(11.2,0.08111893)(11.5,0.28111893)
\psline[linecolor=black, linewidth=0.018, arrowsize=0.05291667cm 2.0,arrowlength=0.8,arrowinset=0.2]{->}(11.2,-0.11888108)(11.5,-0.31888106)
\rput[bl](8.35,-0.1){=}
\rput[bl](9.65,-0.1){=}
\rput[bl](2.45,-0.1){=}
\rput[bl](5.25,-0.1){,}
\rput[bl](11.95,-0.1){.}
\end{pspicture}

\end{figure}

\begin{definition}
A {\em Hopf superalgebra} over $\kk$ is a super-vector space $H$ (over $\kk$) endowed with degree zero linear maps
\begin{figure}[H]
\centering
\begin{pspicture}(0,-0.31162995)(10.105809,0.31162995)
\psline[linecolor=black, linewidth=0.018, arrowsize=0.05291667cm 2.0,arrowlength=0.8,arrowinset=0.2]{->}(4.3058095,0.0)(4.705809,0.0)
\rput[bl](4.8658094,-0.1){$\Delta$}
\psline[linecolor=black, linewidth=0.018, arrowsize=0.05291667cm 2.0,arrowlength=0.8,arrowinset=0.2]{->}(5.3058095,0.1)(5.705809,0.3)
\psline[linecolor=black, linewidth=0.018, arrowsize=0.05291667cm 2.0,arrowlength=0.8,arrowinset=0.2]{->}(5.3058095,-0.1)(5.705809,-0.3)
\psline[linecolor=black, linewidth=0.018, arrowsize=0.05291667cm 2.0,arrowlength=0.8,arrowinset=0.2]{->}(0.005809326,-0.3)(0.4058093,-0.1)
\psline[linecolor=black, linewidth=0.018, arrowsize=0.05291667cm 2.0,arrowlength=0.8,arrowinset=0.2]{->}(0.005809326,0.3)(0.4058093,0.1)
\rput[bl](0.5658093,-0.05){$m$}
\psline[linecolor=black, linewidth=0.018, arrowsize=0.05291667cm 2.0,arrowlength=0.8,arrowinset=0.2]{->}(1.0058093,0.0)(1.4058093,0.0)
\rput[bl](2.4458094,-0.1){$\eta$}
\psline[linecolor=black, linewidth=0.018, arrowsize=0.05291667cm 2.0,arrowlength=0.8,arrowinset=0.2]{->}(2.8058093,0.0)(3.2058094,0.0)
\psline[linecolor=black, linewidth=0.018, arrowsize=0.05291667cm 2.0,arrowlength=0.8,arrowinset=0.2]{->}(6.8058095,0.0)(7.205809,0.0)
\rput[bl](7.4058094,-0.06){$\epsilon$}
\rput[bl](9.145809,-0.1){$S$}
\psline[linecolor=black, linewidth=0.018, arrowsize=0.05291667cm 2.0,arrowlength=0.8,arrowinset=0.2]{->}(9.50581,0.0)(9.905809,0.0)
\psline[linecolor=black, linewidth=0.018, arrowsize=0.05291667cm 2.0,arrowlength=0.8,arrowinset=0.2]{->}(8.605809,0.0)(9.00581,0.0)
\rput[bl](1.5258093,-0.1){,}
\rput[bl](3.3258092,-0.1){,}
\rput[bl](5.725809,-0.1){,}
\rput[bl](7.625809,-0.1){,}
\rput[bl](10.055809,-0.1){,}
\end{pspicture}
\end{figure}
\noindent called multiplication, unit, comultiplication, counit and antipode respectively and subject to the following axioms. First, we require that $(H,m,\eta)$ is a superalgebra and that $(H,\De,\e)$ is a supercoalgebra. Second, $m,\De$ satisfy
\begin{figure}[H]
\centering
\begin{pspicture}(0,-0.42)(5.8058095,0.42)
\psline[linecolor=black, linewidth=0.018, arrowsize=0.05291667cm 2.0,arrowlength=0.8,arrowinset=0.2]{->}(0.005809326,-0.32)(0.4058093,-0.12)
\psline[linecolor=black, linewidth=0.018, arrowsize=0.05291667cm 2.0,arrowlength=0.8,arrowinset=0.2]{->}(0.005809326,0.28)(0.4058093,0.08)
\rput[bl](0.5558093,-0.1){$m$}
\psline[linecolor=black, linewidth=0.018, arrowsize=0.05291667cm 2.0,arrowlength=0.8,arrowinset=0.2]{->}(1.0058093,-0.02)(1.4058093,-0.02)
\rput[bl](1.5658094,-0.12){$\Delta$}
\psline[linecolor=black, linewidth=0.018, arrowsize=0.05291667cm 2.0,arrowlength=0.8,arrowinset=0.2]{->}(2.0058093,0.08)(2.4058094,0.28)
\psline[linecolor=black, linewidth=0.018, arrowsize=0.05291667cm 2.0,arrowlength=0.8,arrowinset=0.2]{->}(2.0058093,-0.12)(2.4058094,-0.32)
\rput[bl](2.6858094,-0.1){=}
\psline[linecolor=black, linewidth=0.018, arrowsize=0.05291667cm 2.0,arrowlength=0.8,arrowinset=0.2]{->}(3.2058094,0.28)(3.6058092,0.28)
\rput[bl](3.7658093,0.18){$\Delta$}
\psline[linecolor=black, linewidth=0.018, arrowsize=0.05291667cm 2.0,arrowlength=0.8,arrowinset=0.2]{->}(4.205809,0.28)(4.605809,0.28)
\rput[bl](4.7558093,0.2){$m$}
\psline[linecolor=black, linewidth=0.018, arrowsize=0.05291667cm 2.0,arrowlength=0.8,arrowinset=0.2]{->}(5.205809,0.28)(5.605809,0.28)
\psline[linecolor=black, linewidth=0.018, arrowsize=0.05291667cm 2.0,arrowlength=0.8,arrowinset=0.2]{->}(3.2058094,-0.32)(3.6058092,-0.32)
\rput[bl](3.7658093,-0.42){$\Delta$}
\psline[linecolor=black, linewidth=0.018, arrowsize=0.05291667cm 2.0,arrowlength=0.8,arrowinset=0.2]{->}(4.205809,-0.32)(4.605809,-0.32)
\rput[bl](4.7558093,-0.4){$m$}
\psline[linecolor=black, linewidth=0.018, arrowsize=0.05291667cm 2.0,arrowlength=0.8,arrowinset=0.2]{->}(5.205809,-0.32)(5.605809,-0.32)
\psline[linecolor=black, linewidth=0.018, arrowsize=0.05291667cm 2.0,arrowlength=0.8,arrowinset=0.2]{->}(4.205809,-0.22)(4.705809,0.18)
\psline[linecolor=black, linewidth=0.018, arrowsize=0.05291667cm 2.0,arrowlength=0.8,arrowinset=0.2]{->}(4.205809,0.18)(4.705809,-0.22)
\rput[bl](5.7558093,-0.1){.}
\end{pspicture}

\end{figure}

\noindent The crossing arrows in the above formula represent the symmetry $\tau_{H,H}$ of $\SVect$. Finally, we require the following axiom for the antipode:

\begin{figure}[H]
\centering
\begin{pspicture}(0,-0.43391204)(10.0,0.43391204)
\rput[bl](0.57,-0.10608795){$\Delta$}
\psline[linecolor=black, linewidth=0.018, arrowsize=0.05291667cm 2.0,arrowlength=0.8,arrowinset=0.2]{->}(1.0,0.09391205)(1.4,0.29391205)
\rput[bl](1.5,0.19391204){$S$}
\psbezier[linecolor=black, linewidth=0.018, arrowsize=0.05291667cm 2.0,arrowlength=0.8,arrowinset=0.2]{->}(1.0,-0.10608795)(1.3,-0.50608796)(1.9,-0.50608796)(2.2,-0.10608795166015625)
\psline[linecolor=black, linewidth=0.018, arrowsize=0.05291667cm 2.0,arrowlength=0.8,arrowinset=0.2]{->}(1.8,0.29391205)(2.2,0.09391205)
\rput[bl](2.34,-0.07608795){$m$}
\psline[linecolor=black, linewidth=0.018, arrowsize=0.05291667cm 2.0,arrowlength=0.8,arrowinset=0.2]{->}(0.0,-0.006087952)(0.4,-0.006087952)
\psline[linecolor=black, linewidth=0.018, arrowsize=0.05291667cm 2.0,arrowlength=0.8,arrowinset=0.2]{->}(2.8,-0.006087952)(3.2,-0.006087952)
\rput[bl](7.27,-0.10608795){$\Delta$}
\psline[linecolor=black, linewidth=0.018, arrowsize=0.05291667cm 2.0,arrowlength=0.8,arrowinset=0.2]{->}(8.5,-0.30608794)(8.9,-0.10608795)
\rput[bl](8.2,-0.40608796){$S$}
\psbezier[linecolor=black, linewidth=0.018, arrowsize=0.05291667cm 2.0,arrowlength=0.8,arrowinset=0.2]{->}(7.7,0.09391205)(8.0,0.49391204)(8.6,0.49391204)(8.9,0.09391204833984375)
\psline[linecolor=black, linewidth=0.018, arrowsize=0.05291667cm 2.0,arrowlength=0.8,arrowinset=0.2]{->}(7.7,-0.10608795)(8.1,-0.30608794)
\rput[bl](9.04,-0.07608795){$m$}
\psline[linecolor=black, linewidth=0.018, arrowsize=0.05291667cm 2.0,arrowlength=0.8,arrowinset=0.2]{->}(6.7,-0.006087952)(7.1,-0.006087952)
\psline[linecolor=black, linewidth=0.018, arrowsize=0.05291667cm 2.0,arrowlength=0.8,arrowinset=0.2]{->}(9.5,-0.006087952)(9.9,-0.006087952)
\rput[bl](3.48,-0.1){=}
\rput[bl](6.18,-0.1){=}
\rput[bl](5.14,-0.10608795){$\eta$}
\psline[linecolor=black, linewidth=0.018, arrowsize=0.05291667cm 2.0,arrowlength=0.8,arrowinset=0.2]{->}(5.5,-0.006087952)(5.9,-0.006087952)
\psline[linecolor=black, linewidth=0.018, arrowsize=0.05291667cm 2.0,arrowlength=0.8,arrowinset=0.2]{->}(4.0,-0.006087952)(4.4,-0.006087952)
\rput[bl](4.6,-0.066087954){$\epsilon$}
\rput[bl](9.95,-0.1){.}
\end{pspicture}

\end{figure}
\noindent A Hopf superalgebra is said to be {\em involutive} if $S^2=\id_H$.
\end{definition}

We will use the following abbreviation for iterated multiplication and comultiplication:
\begin{figure}[H]
\centering
\begin{pspicture}(0,-0.60919243)(11.959192,0.60919243)
\psline[linecolor=black, linewidth=0.018, arrowsize=0.05291667cm 2.0,arrowlength=0.8,arrowinset=0.2]{->}(8.209192,0.0)(8.609193,0.0)
\rput[bl](8.769193,-0.09999993){$\Delta$}
\psline[linecolor=black, linewidth=0.018, arrowsize=0.05291667cm 2.0,arrowlength=0.8,arrowinset=0.2]{->}(9.209192,0.0)(9.609193,0.0)
\psline[linecolor=black, linewidth=0.018, arrowsize=0.05291667cm 2.0,arrowlength=0.8,arrowinset=0.2]{->}(9.209192,-0.19999993)(9.609193,-0.39999992)
\psline[linecolor=black, linewidth=0.018, arrowsize=0.05291667cm 2.0,arrowlength=0.8,arrowinset=0.2]{->}(2.0091925,-0.39999992)(2.4091926,-0.19999993)
\psline[linecolor=black, linewidth=0.018, arrowsize=0.05291667cm 2.0,arrowlength=0.8,arrowinset=0.2]{->}(2.0091925,0.0)(2.4091926,0.0)
\rput[bl](2.5691924,-0.049999923){$m$}
\psline[linecolor=black, linewidth=0.018, arrowsize=0.05291667cm 2.0,arrowlength=0.8,arrowinset=0.2]{->}(3.0091925,0.0)(3.4091926,0.0)
\psline[linecolor=black, linewidth=0.018, arrowsize=0.05291667cm 2.0,arrowlength=0.8,arrowinset=0.2]{->}(6.2091923,0.0)(6.6091924,0.0)
\rput[bl](6.7691927,-0.09999993){$\Delta$}
\psline[linecolor=black, linewidth=0.018, arrowsize=0.05291667cm 2.0,arrowlength=0.8,arrowinset=0.2]{->}(7.2091923,0.20000008)(7.6091924,0.6000001)
\psline[linecolor=black, linewidth=0.018, arrowsize=0.05291667cm 2.0,arrowlength=0.8,arrowinset=0.2]{->}(7.2091923,-0.19999993)(7.6091924,-0.5999999)
\rput[bl](7.4691925,-0.23999992){$\vdots$}
\psline[linecolor=black, linewidth=0.018, arrowsize=0.05291667cm 2.0,arrowlength=0.8,arrowinset=0.2]{->}(0.009192505,-0.5999999)(0.4091925,-0.19999993)
\psline[linecolor=black, linewidth=0.018, arrowsize=0.05291667cm 2.0,arrowlength=0.8,arrowinset=0.2]{->}(0.009192505,0.6000001)(0.4091925,0.20000008)
\rput[bl](0.06919251,-0.23999992){$\vdots$}
\rput[bl](0.5691925,-0.049999923){$m$}
\psline[linecolor=black, linewidth=0.018, arrowsize=0.05291667cm 2.0,arrowlength=0.8,arrowinset=0.2]{->}(1.0091925,0.0)(1.4091926,0.0)
\psline[linecolor=black, linewidth=0.018, arrowsize=0.05291667cm 2.0,arrowlength=0.8,arrowinset=0.2]{->}(0.009192505,0.30000007)(0.4091925,0.100000076)
\psline[linecolor=black, linewidth=0.018, arrowsize=0.05291667cm 2.0,arrowlength=0.8,arrowinset=0.2]{->}(4.2091923,-0.39999992)(4.6091924,-0.19999993)
\psline[linecolor=black, linewidth=0.018, arrowsize=0.05291667cm 2.0,arrowlength=0.8,arrowinset=0.2]{->}(4.2091923,0.0)(4.6091924,0.0)
\rput[bl](4.7691927,-0.049999923){$m$}
\psline[linecolor=black, linewidth=0.018, arrowsize=0.05291667cm 2.0,arrowlength=0.8,arrowinset=0.2]{->}(5.2091923,0.0)(5.6091924,0.0)
\rput[bl](1.6091925,-0.039999925){=}
\rput[bl](3.6091926,-0.09999993){$\dots$}
\rput[bl](7.8091927,-0.039999925){=}
\psline[linecolor=black, linewidth=0.018, arrowsize=0.05291667cm 2.0,arrowlength=0.8,arrowinset=0.2]{->}(7.2091923,0.100000076)(7.6091924,0.30000007)
\psline[linecolor=black, linewidth=0.018, arrowsize=0.05291667cm 2.0,arrowlength=0.8,arrowinset=0.2]{->}(10.409192,0.0)(10.809193,0.0)
\rput[bl](10.9691925,-0.09999993){$\Delta$}
\psline[linecolor=black, linewidth=0.018, arrowsize=0.05291667cm 2.0,arrowlength=0.8,arrowinset=0.2]{->}(11.409192,0.0)(11.809193,0.0)
\psline[linecolor=black, linewidth=0.018, arrowsize=0.05291667cm 2.0,arrowlength=0.8,arrowinset=0.2]{->}(11.409192,-0.19999993)(11.809193,-0.39999992)
\rput[bl](9.809193,-0.09999993){$\dots$}
\rput[bl](5.7091923,-0.09999993){,}
\rput[bl](11.909192,-0.09999993){.}
\end{pspicture}
\end{figure}

\noindent We will also use the following shorthand for the opposite multiplication and comultiplication:
\begin{figure}[H]
\centering
\begin{pspicture}(0,-0.30805147)(10.254022,0.30805147)
\rput[bl](6.304022,-0.1){$\Delta^{\text{op}}$}
\rput[bl](0.5540216,-0.05){$m^{\text{op}}$}
\psline[linecolor=black, linewidth=0.018, arrowsize=0.05291667cm 2.0,arrowlength=0.8,arrowinset=0.2]{->}(1.3040216,0.0)(1.7040216,0.0)
\psline[linecolor=black, linewidth=0.018, arrowsize=0.05291667cm 2.0,arrowlength=0.8,arrowinset=0.2]{->}(0.0040216064,0.3)(0.40402162,0.1)
\psline[linecolor=black, linewidth=0.018, arrowsize=0.05291667cm 2.0,arrowlength=0.8,arrowinset=0.2]{->}(0.0040216064,-0.3)(0.40402162,-0.1)
\psline[linecolor=black, linewidth=0.018, arrowsize=0.05291667cm 2.0,arrowlength=0.8,arrowinset=0.2]{->}(7.0040216,-0.1)(7.4040217,-0.3)
\psline[linecolor=black, linewidth=0.018, arrowsize=0.05291667cm 2.0,arrowlength=0.8,arrowinset=0.2]{->}(7.0040216,0.1)(7.4040217,0.3)
\psline[linecolor=black, linewidth=0.018, arrowsize=0.05291667cm 2.0,arrowlength=0.8,arrowinset=0.2]{->}(5.7040215,0.0)(6.1040215,0.0)
\rput[bl](2.1540215,-0.1){=}
\rput[bl](3.4540217,-0.05){$m$}
\psline[linecolor=black, linewidth=0.018, arrowsize=0.05291667cm 2.0,arrowlength=0.8,arrowinset=0.2]{->}(3.9040215,0.0)(4.304022,0.0)
\psbezier[linecolor=black, linewidth=0.018, arrowsize=0.05291667cm 2.0,arrowlength=0.8,arrowinset=0.2]{->}(2.8040216,0.3)(3.0040216,0.1)(3.1040215,-0.2)(3.3040216,-0.1)
\rput[bl](9.104022,-0.1){$\Delta$}
\psline[linecolor=black, linewidth=0.018, arrowsize=0.05291667cm 2.0,arrowlength=0.8,arrowinset=0.2]{->}(8.504022,0.0)(8.904021,0.0)
\psbezier[linecolor=black, linewidth=0.018, arrowsize=0.05291667cm 2.0,arrowlength=0.8,arrowinset=0.2]{->}(9.504022,0.1)(9.804022,0.1)(9.904021,0.0)(10.004022,-0.3)
\rput[bl](4.3540215,-0.1){,}
\rput[bl](10.204021,-0.1){.}
\rput[bl](7.8540215,-0.1){=}
\psbezier[linecolor=black, linewidth=0.018, arrowsize=0.05291667cm 2.0,arrowlength=0.8,arrowinset=0.2]{->}(2.8040216,-0.3)(3.0040216,-0.1)(3.1040215,0.2)(3.3040216,0.1)
\psbezier[linecolor=black, linewidth=0.018, arrowsize=0.05291667cm 2.0,arrowlength=0.8,arrowinset=0.2]{->}(9.504022,-0.1)(9.804022,-0.1)(9.904021,0.0)(10.004022,0.3)
\end{pspicture}

\end{figure}

%Note that other authors (e.g. \cite{KV: generalized}) define $H^*$ as our $(H^*)^{op}$.

\def\LaX{\La_{\kk}}

\begin{example}
\label{example: exterior algebra} Let $\LaX$ be an exterior algebra on one generator, that is $\LaX\eq \kk[X]/(X^2)$. This is a superalgebra if we let $|X|=1$. It is an involutive Hopf superalgebra if we define
\begin{align*}
\De(X)&=1\ot X+ X\ot 1, & \e(X)&=0, & S(X)&=-X
\end{align*}
as one can readily check. %The Hopf algebra integral $\mu:\LaX\to \kk$ is given by $\mu(1)=0$ and $\mu(X)=1$ so it has degree one. 
\end{example} 

Our main example of an involutive Hopf superalgebra is the following.

\begin{definition}
\label{def: Hopf algebra Hn}
Let $n\in \N_{\geq 0}$. We denote by $H_n$ the superalgebra over $\kk$ generated by elements $X,K,K^{-1}$ subject to the relations
\begin{align*}
KK^{-1}=1 &=K^{-1}K, & XK&=KX, & X^2&=0, & K^n&=1
\end{align*}
and graded (mod 2) by letting $|K|=0$ and $|X|=1$. This is an involutive Hopf superalgebra with
\begin{align*}
\De(X)&=K\ot X+ X\ot 1, & \De(K^{\pm 1})&=K^{\pm 1}\ot K^{\pm 1} \\
\e(X)&=0, & \e(K^{\pm 1})&=1 \\
S(X)&=-K^{-1}X, & S(K^{\pm 1})&=K^{\mp 1}.
\end{align*}
\end{definition}

Equivalently, $H_n$ is the Borel subalgebra of $\Uqgl11$ at a root of unity $q$ of order $n$. We will not use the quantum group $\Uqgl11$ in this paper though, so we refer to \cite{Sartori:Alexander} for its definition.

\def\Ka{\K[\a^{\pm 1}]}

\begin{definition}
\label{def: dual Hopf H}
Let $(H,m,\eta,\De,\e,S)$ be a finite-dimensional Hopf superalgebra. Then its dual $H^*$ (consisting of all linear maps $H\to\kk$) is a Hopf superalgebra with grading $(H^*)_i\eq (H_i)^*$ and structure maps $\De^*,\e^*,m^*,\eta^*,S^*$. Thus, the multiplication and comultiplication are respectively given by

\begin{figure}[H]
\centering
\begin{pspicture}(0,-0.455)(10.0,0.455)
\psline[linecolor=black, linewidth=0.018, arrowsize=0.05291667cm 2.0,arrowlength=0.8,arrowinset=0.2]{->}(0.0,0.045)(0.4,0.045)
\rput[bl](0.65,-0.055){$f\cdot g$}
\rput[bl](1.7,-0.1){=}
\psline[linecolor=black, linewidth=0.018, arrowsize=0.05291667cm 2.0,arrowlength=0.8,arrowinset=0.2]{->}(3.3,-0.055)(3.7,-0.255)
\psline[linecolor=black, linewidth=0.018, arrowsize=0.05291667cm 2.0,arrowlength=0.8,arrowinset=0.2]{->}(3.3,0.145)(3.7,0.345)
\psline[linecolor=black, linewidth=0.018, arrowsize=0.05291667cm 2.0,arrowlength=0.8,arrowinset=0.2]{->}(2.3,0.045)(2.7,0.045)
\rput[bl](2.85,-0.055){$\Delta$}
\rput[bl](3.85,0.145){$f$}
\rput[bl](3.9,-0.405){$g$}
\psline[linecolor=black, linewidth=0.018, arrowsize=0.05291667cm 2.0,arrowlength=0.8,arrowinset=0.2]{->}(5.4,0.345)(5.8,0.145)
\psline[linecolor=black, linewidth=0.018, arrowsize=0.05291667cm 2.0,arrowlength=0.8,arrowinset=0.2]{->}(5.4,-0.255)(5.8,-0.055)
\rput[bl](6.0,-0.155){$\Delta_{H^*}(f)$}
\rput[bl](7.5,-0.1){=}
\psline[linecolor=black, linewidth=0.018, arrowsize=0.05291667cm 2.0,arrowlength=0.8,arrowinset=0.2]{->}(8.1,0.345)(8.5,0.145)
\psline[linecolor=black, linewidth=0.018, arrowsize=0.05291667cm 2.0,arrowlength=0.8,arrowinset=0.2]{->}(8.1,-0.255)(8.5,-0.055)
\rput[bl](8.65,-0.055){$m$}
\psline[linecolor=black, linewidth=0.018, arrowsize=0.05291667cm 2.0,arrowlength=0.8,arrowinset=0.2]{->}(9.1,0.045)(9.5,0.045)
\rput[bl](9.6,-0.155){$f$}
\rput[bl](4.2,-0.1){,}
\rput[bl](9.95,-0.1){.}
\end{pspicture}
\end{figure}
\end{definition}

\begin{definition}
\label{def: group-likes}
Let $H$ be a Hopf superalgebra over a field $\kk$. An element $g\in H$ is said to be {\em group-like} if it satisfies
\begin{figure}[H]
\centering
\begin{pspicture}(0,-0.305)(7.85,0.305)
\psline[linecolor=black, linewidth=0.018, arrowsize=0.05291667cm 2.0,arrowlength=0.8,arrowinset=0.2]{->}(0.35,-0.005)(0.75,-0.005)
\rput[bl](0.91,-0.105){$\Delta$}
\psline[linecolor=black, linewidth=0.018, arrowsize=0.05291667cm 2.0,arrowlength=0.8,arrowinset=0.2]{->}(1.35,0.095)(1.75,0.295)
\psline[linecolor=black, linewidth=0.018, arrowsize=0.05291667cm 2.0,arrowlength=0.8,arrowinset=0.2]{->}(1.35,-0.105)(1.75,-0.305)
\psline[linecolor=black, linewidth=0.018, arrowsize=0.05291667cm 2.0,arrowlength=0.8,arrowinset=0.2]{->}(5.55,-0.005)(5.95,-0.005)
\rput[bl](6.15,-0.065){$\epsilon$}
\rput[bl](3.57,-0.1){,}
\rput[bl](0.0,-0.105){$g$}
\rput[bl](5.2,-0.105){$g$}
\psline[linecolor=black, linewidth=0.018, arrowsize=0.05291667cm 2.0,arrowlength=0.8,arrowinset=0.2]{->}(3.05,0.195)(3.45,0.195)
\psline[linecolor=black, linewidth=0.018, arrowsize=0.05291667cm 2.0,arrowlength=0.8,arrowinset=0.2]{->}(3.05,-0.205)(3.45,-0.205)
\rput[bl](2.7,0.095){$g$}
\rput[bl](2.7,-0.305){$g$}
\rput[bl](2.05,-0.1){=}
\rput[bl](6.65,-0.1){=}
\rput[bl](7.15,-0.105){$\text{id}_{\mathbb{K}}$}
\rput[bl](7.8,-0.1){.}
\end{pspicture}
\end{figure}

We denote the set of group-likes of $H$ by $G(H)$. This is a group with the multiplication induced from $H$. Note that if $H$ is finite dimensional, then $G(H^*)$ consists of the algebra morphisms $H\to\kk$. If $H$ is infinite dimensional, we {\em define} $G(H^*)$ as $\Hom_{alg}(H,\kk)$.%A morphism $g:\kk\to H$ is said to be {\em group-like} if $\e_H\circ g=\id_{\kk}$ and $\De\circ g=g\ot g$. The set of group-likes of $H$ form a group denoted $G(H)$. Dually, we say a morphism $g:H\to \kk$ is group-like (in $H^*$) if $g\circ \eta_H=\id_{\kk}$ and $g\circ m_H=g\ot g$. These also form a group, denoted $G(H^*)$.
\end{definition}

\subsection[Relative integrals and cointegrals]{Relative integrals and cointegrals}\label{section: relative integrals and cointegrals}

We now define relative versions of the Hopf algebra integrals and cointegrals (see \cite[Chapter 10]{Radford:BOOK} for a detailed treatment of the usual non-relative case).  %and discuss the compatibility condition these need to satisfy in order to produce sutured 3-manifold invariants. We give an example of this structure on the Hopf superalgebra $H_n$ of Definition \ref{def: Hopf algebra Hn}. 
However, we do not explore the question of existence and uniqueness of these structures. In what follows we let $H$ be an involutive Hopf superalgebra over a field $\kk$. Since we consider various Hopf superalgebras, we denote the structure tensors of $H$ by $m_H,\eta_H,\De_H,\e_H,S_H$. We do not suppose $H$ is finite dimensional.

\begin{definition}
\label{def: relative integral}
A {\em relative right integral} in $H$ is a tuple $(B,i_B,\pi_B,\mu)$ where $B$ is a Hopf superalgebra, $i_B:B\to H$ and $\pi_B:H\to B$ are degree zero Hopf morphisms and $\mu:H\to B$ is a degree preserving linear map 
with the following properties:

\begin{enumerate}
\item \label{eq: centrality} $\pi_Bi_B=\id_B$ and $i_B(B)$ is a central subalgebra of $H$, that is, 
\begin{figure}[H]
\centering
\begin{pspicture}(0,-0.39374542)(6.016667,0.39374542)
\rput[bl](0.5,0.11374542){$i_B$}
\rput[bl](3.9,0.11374542){$i_B$}
\rput[bl](1.5,-0.18625458){$m_H$}
\rput[bl](2.85,-0.13625458){=}
\psline[linecolor=black, linewidth=0.018, arrowsize=0.05291667cm 2.0,arrowlength=0.8,arrowinset=0.2]{->}(0.0,0.21374542)(0.3,0.21374542)
\psline[linecolor=black, linewidth=0.018, arrowsize=0.05291667cm 2.0,arrowlength=0.8,arrowinset=0.2]{->}(1.0,0.21374542)(1.3,0.0137454225)
\psline[linecolor=black, linewidth=0.018, arrowsize=0.05291667cm 2.0,arrowlength=0.8,arrowinset=0.2]{->}(1.0,-0.38625458)(1.3,-0.18625458)
\psline[linecolor=black, linewidth=0.018, arrowsize=0.05291667cm 2.0,arrowlength=0.8,arrowinset=0.2]{->}(2.2,-0.086254574)(2.5,-0.086254574)
\rput[bl](4.87,-0.23625457){$m^{\text{op}}_H$}
\psline[linecolor=black, linewidth=0.018, arrowsize=0.05291667cm 2.0,arrowlength=0.8,arrowinset=0.2]{->}(5.6,-0.086254574)(5.9,-0.086254574)
\psline[linecolor=black, linewidth=0.018, arrowsize=0.05291667cm 2.0,arrowlength=0.8,arrowinset=0.2]{->}(3.4,0.21374542)(3.7,0.21374542)
\psline[linecolor=black, linewidth=0.018, arrowsize=0.05291667cm 2.0,arrowlength=0.8,arrowinset=0.2]{->}(4.4,0.21374542)(4.7,0.0137454225)
\psline[linecolor=black, linewidth=0.018, arrowsize=0.05291667cm 2.0,arrowlength=0.8,arrowinset=0.2]{->}(4.4,-0.38625458)(4.7,-0.18625458)
\rput[bl](5.9666667,-0.38625458){.}
\end{pspicture}

\end{figure}

\item \label{eq: relative integral B-linear}  $\mu$ is $B$-linear, that is
\begin{figure}[H]
\centering
\begin{pspicture}(0,-0.855)(5.9,0.855)
\psline[linecolor=black, linewidth=0.018, arrowsize=0.05291667cm 2.0,arrowlength=0.8,arrowinset=0.2]{->}(0.0,-0.745)(0.3,-0.745)
\rput[bl](0.54,-0.855){$m_H$}
\rput[bl](4.74,-0.355){$m_B$}
\psline[linecolor=black, linewidth=0.018, arrowsize=0.05291667cm 2.0,arrowlength=0.8,arrowinset=0.2]{<-}(5.0,0.055)(5.0,0.355)
\rput[bl](2.8,-0.085){=}
\psline[linecolor=black, linewidth=0.018, arrowsize=0.05291667cm 2.0,arrowlength=0.8,arrowinset=0.2]{->}(0.8,0.855)(0.8,0.555)
\rput[bl](0.61,0.055){$i_B$}
\psline[linecolor=black, linewidth=0.018, arrowsize=0.05291667cm 2.0,arrowlength=0.8,arrowinset=0.2]{->}(0.8,-0.145)(0.8,-0.445)
\psline[linecolor=black, linewidth=0.018, arrowsize=0.05291667cm 2.0,arrowlength=0.8,arrowinset=0.2]{->}(5.5,-0.245)(5.8,-0.245)
\psline[linecolor=black, linewidth=0.018, arrowsize=0.05291667cm 2.0,arrowlength=0.8,arrowinset=0.2]{->}(3.3,-0.245)(3.6,-0.245)
\rput[bl](3.81,-0.345){$\mu$}
\psline[linecolor=black, linewidth=0.018, arrowsize=0.05291667cm 2.0,arrowlength=0.8,arrowinset=0.2]{->}(4.2,-0.245)(4.5,-0.245)
\psline[linecolor=black, linewidth=0.018, arrowsize=0.05291667cm 2.0,arrowlength=0.8,arrowinset=0.2]{->}(1.3,-0.745)(1.6,-0.745)
\rput[bl](1.81,-0.845){$\mu$}
\psline[linecolor=black, linewidth=0.018, arrowsize=0.05291667cm 2.0,arrowlength=0.8,arrowinset=0.2]{->}(2.2,-0.745)(2.5,-0.745)
\rput[bl](5.85,-0.745){.}
\end{pspicture}

\end{figure}

\item \label{eq: relative integral relation} The map $\mu$ satisfies \begin{figure}[H]
\centering
\begin{pspicture}(0,-0.89)(5.9,0.89)
\psline[linecolor=black, linewidth=0.018, arrowsize=0.05291667cm 2.0,arrowlength=0.8,arrowinset=0.2]{->}(1.3,0.21)(1.6,0.21)
\rput[bl](0.54,0.1){$\Delta_H$}
\psline[linecolor=black, linewidth=0.018, arrowsize=0.05291667cm 2.0,arrowlength=0.8,arrowinset=0.2]{->}(0.0,0.21)(0.3,0.21)
\psline[linecolor=black, linewidth=0.018, arrowsize=0.05291667cm 2.0,arrowlength=0.8,arrowinset=0.2]{->}(0.7,-0.09)(0.7,-0.39)
\rput[bl](2.8,-0.03){=}
\rput[bl](1.81,0.11){$\mu$}
\psline[linecolor=black, linewidth=0.018, arrowsize=0.05291667cm 2.0,arrowlength=0.8,arrowinset=0.2]{->}(2.2,0.21)(2.5,0.21)
\rput[bl](4.74,0.6){$\Delta_B$}
\psline[linecolor=black, linewidth=0.018, arrowsize=0.05291667cm 2.0,arrowlength=0.8,arrowinset=0.2]{->}(5.0,0.41)(5.0,0.11)
\psline[linecolor=black, linewidth=0.018, arrowsize=0.05291667cm 2.0,arrowlength=0.8,arrowinset=0.2]{->}(5.5,0.71)(5.8,0.71)
\rput[bl](4.81,-0.39){$i_B$}
\psline[linecolor=black, linewidth=0.018, arrowsize=0.05291667cm 2.0,arrowlength=0.8,arrowinset=0.2]{->}(5.0,-0.59)(5.0,-0.89)
\psline[linecolor=black, linewidth=0.018, arrowsize=0.05291667cm 2.0,arrowlength=0.8,arrowinset=0.2]{->}(3.3,0.71)(3.6,0.71)
\rput[bl](3.81,0.61){$\mu$}
\psline[linecolor=black, linewidth=0.018, arrowsize=0.05291667cm 2.0,arrowlength=0.8,arrowinset=0.2]{->}(4.2,0.71)(4.5,0.71)
\rput[bl](5.85,-0.89){.}
\end{pspicture}
\end{figure}

\end{enumerate}

%\begin{equation}\label{eq: integral B-linearity}\mu\circ m_H\circ (i_B\ot \id_H)=(b^*\ot m_B)\circ (\De_B\ot \mu)\end{equation}for some central group-like $b^*\in G(B^*)$ and\begin{equation}\label{eq: integral handlesliding}(\mu\ot m_H)\circ (\De_H\ot\id_H)=(\id_B\ot m_H)\circ (\id_B\ot i_B\ot \id_H)\circ (\De_B\mu\ot \id_H).\end{equation}for all $h\in H$. \end{definition}
%\begin{equation}\label{eq: B linearity}\mu(i_B(b)h)=b\mu(h)\end{equation}and

\end{definition}

Note that for $B=\kk$ and $i_B=\eta_H, \pi_B=\e_H$ this definition reduces to the usual definition of Hopf algebra (right) integral. Since $\pi_Bi_B=\id_B$ we may as well consider $B$ as a central Hopf subalgebra of $H.$

\begin{definition}
\label{def: relative cointegral}
A {\em relative right cointegral} in $H$ is a tuple $(A,\pi_A,i_A,\iota)$ where $A$ is a Hopf superalgebra, $\pi_A:H\to A$ and $i_A:A\to H$ are degree zero Hopf morphisms and $\iota:A\to H$ is a degree preserving linear map with the following properties:
\begin{enumerate}
\item \label{eq: cocentrality} $\pi_Ai_A=\id_A$ and $\pi_A$ is cocentral, that is 
\begin{figure}[H]
\centering 
\begin{pspicture}(0,-0.34388107)(6.2166667,0.34388107)
\psline[linecolor=black, linewidth=0.018, arrowsize=0.05291667cm 2.0,arrowlength=0.8,arrowinset=0.2]{->}(0.0,-0.056118928)(0.3,-0.056118928)
\psline[linecolor=black, linewidth=0.018, arrowsize=0.05291667cm 2.0,arrowlength=0.8,arrowinset=0.2]{->}(1.2,-0.15611893)(1.5,-0.35611892)
\psline[linecolor=black, linewidth=0.018, arrowsize=0.05291667cm 2.0,arrowlength=0.8,arrowinset=0.2]{->}(1.2,0.043881074)(1.5,0.24388108)
\rput[bl](0.5,-0.15611893){$\Delta_H$}
\rput[bl](1.7,0.14388107){$\pi_A$}
\psline[linecolor=black, linewidth=0.018, arrowsize=0.05291667cm 2.0,arrowlength=0.8,arrowinset=0.2]{->}(2.3,0.24388108)(2.6,0.24388108)
\rput[bl](2.95,-0.106118925){=}
\psline[linecolor=black, linewidth=0.018, arrowsize=0.05291667cm 2.0,arrowlength=0.8,arrowinset=0.2]{->}(3.5,-0.056118928)(3.8,-0.056118928)
\psline[linecolor=black, linewidth=0.018, arrowsize=0.05291667cm 2.0,arrowlength=0.8,arrowinset=0.2]{->}(4.7,-0.15611893)(5.0,-0.35611892)
\psline[linecolor=black, linewidth=0.018, arrowsize=0.05291667cm 2.0,arrowlength=0.8,arrowinset=0.2]{->}(4.7,0.043881074)(5.0,0.24388108)
\rput[bl](4.0,-0.20611893){$\Delta^{\text{op}}_H$}
\rput[bl](5.2,0.14388107){$\pi_A$}
\psline[linecolor=black, linewidth=0.018, arrowsize=0.05291667cm 2.0,arrowlength=0.8,arrowinset=0.2]{->}(5.8,0.24388108)(6.1,0.24388108)
\rput[bl](6.1666665,-0.3227856){.}
\end{pspicture}
\end{figure}

\item \label{eq: A colinearity} $\iota$ is $A$-colinear, that is \begin{figure}[H]
\centering
\begin{pspicture}(0,-0.8372166)(5.4,0.8372166)
\psline[linecolor=black, linewidth=0.018, arrowsize=0.05291667cm 2.0,arrowlength=0.8,arrowinset=0.2]{->}(1.8,-0.72721666)(2.1,-0.72721666)
\rput[bl](1.14,-0.8372166){$\Delta_H$}
\rput[bl](3.54,-0.43721664){$\Delta_A$}
\psline[linecolor=black, linewidth=0.018, arrowsize=0.05291667cm 2.0,arrowlength=0.8,arrowinset=0.2]{->}(3.1,-0.32721666)(3.4,-0.32721666)
\psline[linecolor=black, linewidth=0.018, arrowsize=0.05291667cm 2.0,arrowlength=0.8,arrowinset=0.2]{->}(3.7,0.07278336)(3.7,0.37278336)
\rput[bl](2.5,-0.16721664){=}
\psline[linecolor=black, linewidth=0.018, arrowsize=0.05291667cm 2.0,arrowlength=0.8,arrowinset=0.2]{->}(0.0,-0.72721666)(0.3,-0.72721666)
\rput[bl](0.46,-0.79721665){$\iota$}
\psline[linecolor=black, linewidth=0.018, arrowsize=0.05291667cm 2.0,arrowlength=0.8,arrowinset=0.2]{->}(0.7,-0.72721666)(1.0,-0.72721666)
\psline[linecolor=black, linewidth=0.018, arrowsize=0.05291667cm 2.0,arrowlength=0.8,arrowinset=0.2]{->}(4.2,-0.32721666)(4.5,-0.32721666)
\rput[bl](4.66,-0.39721665){$\iota$}
\psline[linecolor=black, linewidth=0.018, arrowsize=0.05291667cm 2.0,arrowlength=0.8,arrowinset=0.2]{->}(4.9,-0.32721666)(5.2,-0.32721666)
\psline[linecolor=black, linewidth=0.018, arrowsize=0.05291667cm 2.0,arrowlength=0.8,arrowinset=0.2]{<-}(1.3,-0.027216645)(1.3,-0.32721666)
\psline[linecolor=black, linewidth=0.018, arrowsize=0.05291667cm 2.0,arrowlength=0.8,arrowinset=0.2]{<-}(1.3,0.87278336)(1.3,0.57278335)
\rput[bl](1.11,0.17278336){$\pi_A$}
\rput[bl](5.35,-0.67721665){.}
\end{pspicture}
\end{figure}

%The map $\iota$ is $A$-colinear, that is \begin{equation*}(\pi_A\ot \id_H)\circ \De_H\circ \iota=(\id_A\ot \iota)\circ \De_A.\end{equation*}

\item \label{eq: relative cointegral relation} The map $\iota$ satisfies \begin{figure}[H]
\centering
\begin{pspicture}(0,-0.82)(5.4,0.82)
\psline[linecolor=black, linewidth=0.018, arrowsize=0.05291667cm 2.0,arrowlength=0.8,arrowinset=0.2]{->}(1.8,0.33)(2.1,0.33)
\rput[bl](1.14,0.22){$m_H$}
\psline[linecolor=black, linewidth=0.018, arrowsize=0.05291667cm 2.0,arrowlength=0.8,arrowinset=0.2]{->}(0.0,0.33)(0.3,0.33)
\psline[linecolor=black, linewidth=0.018, arrowsize=0.05291667cm 2.0,arrowlength=0.8,arrowinset=0.2]{<-}(1.4,0.03)(1.4,-0.27)
\rput[bl](2.5,0.09){=}
\rput[bl](0.46,0.26){$\iota$}
\psline[linecolor=black, linewidth=0.018, arrowsize=0.05291667cm 2.0,arrowlength=0.8,arrowinset=0.2]{->}(0.7,0.33)(1.0,0.33)
\rput[bl](3.54,0.62){$m_A$}
\psline[linecolor=black, linewidth=0.018, arrowsize=0.05291667cm 2.0,arrowlength=0.8,arrowinset=0.2]{->}(3.1,0.73)(3.4,0.73)
\psline[linecolor=black, linewidth=0.018, arrowsize=0.05291667cm 2.0,arrowlength=0.8,arrowinset=0.2]{->}(4.2,0.73)(4.5,0.73)
\rput[bl](4.66,0.66){$\iota$}
\psline[linecolor=black, linewidth=0.018, arrowsize=0.05291667cm 2.0,arrowlength=0.8,arrowinset=0.2]{->}(4.9,0.73)(5.2,0.73)
\psline[linecolor=black, linewidth=0.018, arrowsize=0.05291667cm 2.0,arrowlength=0.8,arrowinset=0.2]{<-}(3.7,-0.47)(3.7,-0.77)
\psline[linecolor=black, linewidth=0.018, arrowsize=0.05291667cm 2.0,arrowlength=0.8,arrowinset=0.2]{<-}(3.7,0.43)(3.7,0.13)
\rput[bl](3.51,-0.27){$\pi_A$}
\rput[bl](5.35,-0.82){.}
\end{pspicture}

\end{figure}%One has\begin{equation*}m_H\circ (\iota\ot\id_H)=\iota\circ m_A\circ (\id_A\ot \pi_A).\end{equation*}
%that is \iota(a)h=\iota(a\pi_A(h)).

\end{enumerate}

% \begin{equation}\label{eq: cointegral A-comod}(\pi_A\ot \id_H)\circ \De_H\circ \iota=(m_A\ot\pi_A)\circ (a\ot \De_A)\end{equation}for some central $a\in G(A)$ and \begin{equation}\label{eq: cointegral handlesliding}(m_H\ot \id_H)\circ (\iota\ot\De_H)=(\iota m_A\ot \id_H(\id_A\ot \pi_A\ot \id_H)\circ (\id_A\ot \De_H).\end{equation}\end{definition}

\end{definition}

Since $\pi_Ai_A=\id_A$ we also think of $A$ as a Hopf subalgebra of $H$. The above definition also reduces to the usual one when $A=\kk$ and $\pi_A=\e_H, i_A=\eta_H$. This is precisely the dual notion of relative right integral when $H$ is finite dimensional: if $(B,i_B,\pi_B,\mu)$ is a relative right integral in $H$ then $(B^*,i_B^*,\pi_B^*,\mu^*)$ is a relative right cointegral in $H^*$.% (see Definition \ref{def: dual Hopf H} for our conventions for $H^*$). %Note that by evaluating at $a=1$ and $h=a\in A$ in condition (\ref{eq: relative cointegral relation}) we get $\iota(a)=\iota(1)a$.\medskip

%We will depict the morphisms of Definition \ref{def: relative cointegral} as follows:Here a red strand denotes $A$ and the black strand stands for $H$. \medskip

\begin{example}
\label{example: Hopf algebra Hn finite dim quotient}
Let $n\in \N_{\geq 1}$. Consider the Hopf superalgebra $H_n$ of Definition \ref{def: Hopf algebra Hn} and let $B$ be the subalgebra generated by $K,K^{-1}$. Then $H_n=\LaX\ot B$ as superalgebras. Define $i_B,\pi_B$ and $\mu$ by
\begin{align*}
 i_B&=\eta_{\LaX}\ot\id_B, & \pi_B&=\e_{\LaX}\ot\id_B, & \mu&=\mu_{\LaX}\ot \id_B.
\end{align*}
Here we denote by $\eta_{\LaX},\e_{\LaX}$ and $\mu_{\LaX}$ respectively the unit, counit and integral of the Hopf superalgebra $\LaX$ (see Example \ref{example: exterior algebra}), so $\mu_{\LaX}$ is given by $\mu_{\LaX}(1)=0$ and $\mu_{\LaX}(X)=1$. It is straightforward to check that these maps define a relative integral over $H_n$. %It is easy to see that both $i_B,\pi_B$ are Hopf algebra morphisms and that that $\mu$ is $B$-linear. To check (\ref{eq: relative integral relation}) note that both sides vanish when $h=K^i$. For $h=K^iX$ one has\begin{align*} \De(K^iX)&=K^{i+1}\ot K^iX+K^iX\ot K^i, \end{align*}hence\begin{equation*}\begin{split}\wtmu(K^{i+1})\ot K^iX+\wtmu(K^iX)\ot K^i&= \wtmu(K^iX)\ot K^i\\&= (K^i\mu_{\La_X}(X))\ot K^i\\&= \De_B(K^i \mu_{\La_X}(X))\\&= \De_B \wtmu(K^iX)\end{split}\end{equation*}so that $\mu$ is a right relative integral. 
As a relative cointegral we take the usual (two-sided) cointegral of $H_n$, that is, we let $A=\kk,\pi_A=\e_H, i_A=\eta_H$ and
\begin{align*}
\iota=\frac{1}{n}(1+K+\dots+K^{n-1})X
\end{align*}
where we identify $\iota$ with its image $\iota(1)$ since $A=\kk$. %Note that $\iota$ is just the usual cointegral of $H_n$ (which is finite dimensional since we supposed $n>0$). 

\end{example}

\begin{remark}
Though the above finite dimensional example will be the only one treated in the present paper, our definition of relative cointegral allows $H$ to be infinite dimensional. A general class of (possibly) infinite dimensional examples can be found via semidirect products, that is, $H=\kk[\Aut(J)]\ltimes J$ for some Hopf algebra $J$, with a relative right cointegral built over the subalgebra $A=\kk[\Aut(J)]$ and given by $\iota(\a)\eq c_r\a$ where $c_r\in J$ is a right cointegral and $\a\in \Aut(J)$. This example will be studied at length in a future paper. When $J=\LaX$, this is dual to the case of $H_n$ at $n=0$.
\end{remark}
\medskip

We now derive a simple but fundamental consequence of our definition. We state it for the relative cointegral, a dual statement holds for relative integrals. %For any Hopf superalgebra $(H,m_H,\eta,\De_H,\e, S_H)$ we denote $T_H\eq (m_H\ot\id_H)\circ(\id_H\ot\De_H)$.

\begin{proposition}
\label{prop: hsliding property for cointegral arc-arc, arc-curve, curve-curve}
Let $(A,\pi_A,i_A,\iota)$ be a relative right cointegral in $H$. If 
$(f_1,f_2)=(\iota,\iota), (\iota,i_A)$ or $(i_A,i_A)$ then one has
\begin{figure}[H]
\centering
\begin{pspicture}(0,-0.655)(6.0,0.655)
\psline[linecolor=black, linewidth=0.018, arrowsize=0.05291667cm 2.0,arrowlength=0.8,arrowinset=0.2]{->}(2.1,0.485)(2.4,0.485)
\rput[bl](1.34,0.375){$m_H$}
\psline[linecolor=black, linewidth=0.018, arrowsize=0.05291667cm 2.0,arrowlength=0.8,arrowinset=0.2]{->}(0.0,0.485)(0.3,0.485)
\psline[linecolor=black, linewidth=0.018, arrowsize=0.05291667cm 2.0,arrowlength=0.8,arrowinset=0.2]{<-}(1.6,0.185)(1.6,-0.115)
\rput[bl](0.46,0.345){$f_1$}
\psline[linecolor=black, linewidth=0.018, arrowsize=0.05291667cm 2.0,arrowlength=0.8,arrowinset=0.2]{->}(0.9,0.485)(1.2,0.485)
\psline[linecolor=black, linewidth=0.018, arrowsize=0.05291667cm 2.0,arrowlength=0.8,arrowinset=0.2]{->}(2.1,-0.515)(2.4,-0.515)
\rput[bl](1.43,-0.625){$\Delta_H$}
\psline[linecolor=black, linewidth=0.018, arrowsize=0.05291667cm 2.0,arrowlength=0.8,arrowinset=0.2]{->}(0.0,-0.515)(0.3,-0.515)
\rput[bl](0.46,-0.655){$f_2$}
\psline[linecolor=black, linewidth=0.018, arrowsize=0.05291667cm 2.0,arrowlength=0.8,arrowinset=0.2]{->}(0.9,-0.515)(1.2,-0.515)
\rput[bl](2.8,-0.1){=}
\rput[bl](3.84,0.375){$m_A$}
\psline[linecolor=black, linewidth=0.018, arrowsize=0.05291667cm 2.0,arrowlength=0.8,arrowinset=0.2]{<-}(4.1,0.185)(4.1,-0.115)
\psline[linecolor=black, linewidth=0.018, arrowsize=0.05291667cm 2.0,arrowlength=0.8,arrowinset=0.2]{->}(3.4,0.485)(3.7,0.485)
\rput[bl](3.93,-0.615){$\Delta_A$}
\psline[linecolor=black, linewidth=0.018, arrowsize=0.05291667cm 2.0,arrowlength=0.8,arrowinset=0.2]{->}(3.4,-0.515)(3.7,-0.515)
\psline[linecolor=black, linewidth=0.018, arrowsize=0.05291667cm 2.0,arrowlength=0.8,arrowinset=0.2]{->}(4.6,0.485)(4.9,0.485)
\rput[bl](5.06,0.345){$f_1$}
\psline[linecolor=black, linewidth=0.018, arrowsize=0.05291667cm 2.0,arrowlength=0.8,arrowinset=0.2]{->}(5.5,0.485)(5.8,0.485)
\psline[linecolor=black, linewidth=0.018, arrowsize=0.05291667cm 2.0,arrowlength=0.8,arrowinset=0.2]{->}(4.6,-0.515)(4.9,-0.515)
\rput[bl](5.06,-0.655){$f_2$}
\psline[linecolor=black, linewidth=0.018, arrowsize=0.05291667cm 2.0,arrowlength=0.8,arrowinset=0.2]{->}(5.5,-0.515)(5.8,-0.515)
\rput[bl](5.95,-0.1){.}
\end{pspicture}
\end{figure}

\end{proposition}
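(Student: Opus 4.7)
The plan is to verify the identity separately for each of the three listed combinations of $(f_1,f_2)$, exploiting in each case the appropriate axioms of the relative cointegral to move a multiplication between $H$ and $A$ across $\iota$ and $i_A$. The common idea is to push the $m_H$ on the left-hand side down through $\iota$ (or $i_A$) into the $m_A$ on the right-hand side.

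The case $(f_1,f_2)=(i_A,i_A)$ is essentially immediate from the fact that $i_A$ is a Hopf algebra morphism: it commutes with $\Delta$ and with $m$, so both sides collapse to $i_A(ab_{(1)})\otimes i_A(b_{(2)})$ for $a,b\in A$. The case $(f_1,f_2)=(\iota,i_A)$ reduces to a direct application of axiom~(\ref{eq: relative cointegral relation}) with $h=i_A(b_{(1)})$: that axiom rewrites $\iota(a)\,i_A(b_{(1)})$ as $\iota\bigl(a\cdot\pi_A(i_A(b_{(1)}))\bigr)$, which simplifies to $\iota(ab_{(1)})$ via $\pi_A i_A=\id_A$ from axiom~(\ref{eq: cocentrality}). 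Tensoring with $i_A(b_{(2)})$ then produces the right-hand side.

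The case $(f_1,f_2)=(\iota,\iota)$ is where both nontrivial axioms come into play, and it is the main step. I would first apply axiom~(\ref{eq: relative cointegral relation}) with $h$ equal to the first Sweedler factor of $\Delta_H(\iota(b))$, yielding
\[
\iota(a)\,\iota(b)_{(1)}\otimes \iota(b)_{(2)} \;=\; \iota\!\bigl(a\cdot\pi_A(\iota(b)_{(1)})\bigr)\otimes \iota(b)_{(2)}.
\]
Then I would invoke the $A$-colinearity of $\iota$ (axiom~(\ref{eq: A colinearity})), which asserts the identity $\pi_A(\iota(b)_{(1)})\otimes \iota(b)_{(2)} = b_{(1)}\otimes \iota(b_{(2)})$ in $A\otimes H$. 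Substituting this identity under $\iota(a\cdot-)$ on the first factor delivers $\iota(ab_{(1)})\otimes \iota(b_{(2)})$, which is the right-hand side.

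The only subtle point—what I would flag as the main obstacle—is the justification of this final substitution: one must check that, for fixed $a\in A$, the linear map $A\otimes H\to H\otimes H$ given by $c\otimes h\mapsto \iota(ac)\otimes h$ is well-defined, so that equalities of tensors in $A\otimes H$ transport correctly. Once this is observed, the proposition reduces to a clean combination of axioms~(\ref{eq: cocentrality}), (\ref{eq: A colinearity}) and~(\ref{eq: relative cointegral relation}), and serves as the Hopf-algebraic avatar of the handle-sliding move that motivates it.
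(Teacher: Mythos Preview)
Your proof is correct and follows essentially the same approach as the paper: the case $(i_A,i_A)$ uses that $i_A$ is an algebra and coalgebra morphism, the case $(\iota,i_A)$ uses axiom~(\ref{eq: relative cointegral relation}) together with $\pi_Ai_A=\id_A$ (the paper applies these in a slightly different order but the content is identical), and the case $(\iota,\iota)$ is exactly the combination of axioms~(\ref{eq: A colinearity}) and~(\ref{eq: relative cointegral relation}) that the paper invokes without spelling out. Your flagged ``subtle point'' is not actually an obstacle: the map $c\otimes h\mapsto \iota(ac)\otimes h$ is just the tensor product of linear maps $(\iota\circ m_A(a,-))\otimes\id_H$, so it is automatically well-defined.
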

\begin{proof}
Consider the case when $(f_1,f_2)=(\iota, i_A)$. We have
\begin{figure}[H]
\centering
\begin{pspicture}(0,-0.85)(12.326559,0.85)
\psline[linecolor=black, linewidth=0.017, arrowsize=0.05291667cm 2.0,arrowlength=1.2,arrowinset=0.2]{->}(4.7,0.76)(5.0,0.76)
\rput[bl](4.04,0.65){$m_A$}
\psline[linecolor=black, linewidth=0.017, arrowsize=0.05291667cm 2.0,arrowlength=1.2,arrowinset=0.2]{<-}(4.3,0.56)(4.3,0.26)
\psline[linecolor=black, linewidth=0.017, arrowsize=0.05291667cm 2.0,arrowlength=1.2,arrowinset=0.2]{->}(3.6,0.76)(3.9,0.76)
\rput[bl](5.16,0.69){$\iota$}
\psline[linecolor=black, linewidth=0.017, arrowsize=0.05291667cm 2.0,arrowlength=1.2,arrowinset=0.2]{->}(5.4,0.76)(5.7,0.76)
\rput[bl](4.11,-0.04){$\pi_A$}
\psline[linecolor=black, linewidth=0.017, arrowsize=0.05291667cm 2.0,arrowlength=1.2,arrowinset=0.2]{<-}(4.3,-0.14)(4.3,-0.44)
\psline[linecolor=black, linewidth=0.017, arrowsize=0.05291667cm 2.0,arrowlength=1.2,arrowinset=0.2]{->}(1.8,0.46)(2.1,0.46)
\rput[bl](1.14,0.35){$m_H$}
\psline[linecolor=black, linewidth=0.017, arrowsize=0.05291667cm 2.0,arrowlength=1.2,arrowinset=0.2]{->}(0.0,0.46)(0.3,0.46)
\psline[linecolor=black, linewidth=0.017, arrowsize=0.05291667cm 2.0,arrowlength=1.2,arrowinset=0.2]{<-}(1.4,0.26)(1.4,-0.04)
\rput[bl](0.46,0.39){$\iota$}
\psline[linecolor=black, linewidth=0.017, arrowsize=0.05291667cm 2.0,arrowlength=1.2,arrowinset=0.2]{->}(0.7,0.46)(1.0,0.46)
\psline[linecolor=black, linewidth=0.017, arrowsize=0.05291667cm 2.0,arrowlength=1.2,arrowinset=0.2]{->}(1.8,-0.34)(2.1,-0.34)
\rput[bl](1.24,-0.45){$\Delta_H$}
\psline[linecolor=black, linewidth=0.017, arrowsize=0.05291667cm 2.0,arrowlength=1.2,arrowinset=0.2]{->}(0.0,-0.34)(0.3,-0.34)
\rput[bl](0.36,-0.41){$i_A$}
\psline[linecolor=black, linewidth=0.017, arrowsize=0.05291667cm 2.0,arrowlength=1.2,arrowinset=0.2]{->}(0.775,-0.34)(1.1,-0.34)
\psline[linecolor=black, linewidth=0.017, arrowsize=0.05291667cm 2.0,arrowlength=1.2,arrowinset=0.2]{->}(4.8,-0.74)(5.1,-0.74)
\rput[bl](4.14,-0.85){$\Delta_H$}
\psline[linecolor=black, linewidth=0.017, arrowsize=0.05291667cm 2.0,arrowlength=1.2,arrowinset=0.2]{->}(2.9,-0.74)(3.2,-0.74)
\rput[bl](3.26,-0.81){$i_A$}
\psline[linecolor=black, linewidth=0.017, arrowsize=0.05291667cm 2.0,arrowlength=1.2,arrowinset=0.2]{->}(3.7,-0.74)(4.0,-0.74)
\rput[bl](2.4,-0.14){=}
\psline[linecolor=black, linewidth=0.017, arrowsize=0.05291667cm 2.0,arrowlength=1.2,arrowinset=0.2]{->}(8.6,0.76)(8.9,0.76)
\rput[bl](7.94,0.65){$m_A$}
\psline[linecolor=black, linewidth=0.017, arrowsize=0.05291667cm 2.0,arrowlength=1.2,arrowinset=0.2]{<-}(8.2,0.56)(8.2,0.26)
\psline[linecolor=black, linewidth=0.017, arrowsize=0.05291667cm 2.0,arrowlength=1.2,arrowinset=0.2]{->}(7.5,0.76)(7.8,0.76)
\rput[bl](9.06,0.69){$\iota$}
\psline[linecolor=black, linewidth=0.017, arrowsize=0.05291667cm 2.0,arrowlength=1.2,arrowinset=0.2]{->}(9.3,0.76)(9.6,0.76)
\rput[bl](8.01,-0.04){$\pi_A$}
\psline[linecolor=black, linewidth=0.017, arrowsize=0.05291667cm 2.0,arrowlength=1.2,arrowinset=0.2]{->}(7.5,0.06)(7.8,0.06)
\rput[bl](7.06,-0.01){$i_A$}
\psline[linecolor=black, linewidth=0.017, arrowsize=0.05291667cm 2.0,arrowlength=1.2,arrowinset=0.2]{<-}(7.1,-0.14)(7.1,-0.44)
\psline[linecolor=black, linewidth=0.017, arrowsize=0.05291667cm 2.0,arrowlength=1.2,arrowinset=0.2]{->}(7.6,-0.74)(7.9,-0.74)
\rput[bl](6.94,-0.85){$\Delta_A$}
\psline[linecolor=black, linewidth=0.017, arrowsize=0.05291667cm 2.0,arrowlength=1.2,arrowinset=0.2]{->}(6.5,-0.74)(6.8,-0.74)
\rput[bl](7.96,-0.81){$i_A$}
\psline[linecolor=black, linewidth=0.017, arrowsize=0.05291667cm 2.0,arrowlength=1.2,arrowinset=0.2]{->}(8.4,-0.74)(8.7,-0.74)
\psline[linecolor=black, linewidth=0.017, arrowsize=0.05291667cm 2.0,arrowlength=1.2,arrowinset=0.2]{->}(11.4,0.36)(11.7,0.36)
\rput[bl](10.74,0.25){$m_A$}
\psline[linecolor=black, linewidth=0.017, arrowsize=0.05291667cm 2.0,arrowlength=1.2,arrowinset=0.2]{<-}(11.0,0.16)(11.0,-0.14)
\psline[linecolor=black, linewidth=0.017, arrowsize=0.05291667cm 2.0,arrowlength=1.2,arrowinset=0.2]{->}(10.4,0.36)(10.7,0.36)
\rput[bl](11.86,0.29){$\iota$}
\psline[linecolor=black, linewidth=0.017, arrowsize=0.05291667cm 2.0,arrowlength=1.2,arrowinset=0.2]{->}(12.1,0.36)(12.4,0.36)
\psline[linecolor=black, linewidth=0.017, arrowsize=0.05291667cm 2.0,arrowlength=1.2,arrowinset=0.2]{->}(11.4,-0.44)(11.7,-0.44)
\rput[bl](10.84,-0.55){$\Delta_A$}
\psline[linecolor=black, linewidth=0.017, arrowsize=0.05291667cm 2.0,arrowlength=1.2,arrowinset=0.2]{->}(10.4,-0.44)(10.7,-0.44)
\rput[bl](11.76,-0.51){$i_A$}
\psline[linecolor=black, linewidth=0.017, arrowsize=0.05291667cm 2.0,arrowlength=1.2,arrowinset=0.2]{->}(12.1,-0.44)(12.4,-0.44)
\rput[bl](6.0,-0.14){=}
\rput[bl](9.9,-0.14){=}
\rput[bl](12.75,-0.14){.}
\end{pspicture}

\end{figure}
\noindent Here the first equality follows from condition (\ref{eq: relative cointegral relation}) for $\iota$,  the second follows from $i_A$ being a coalgebra morphism and the last follows from $\pi_Ai_A=\id_A$. The case $(f_1,f_2)=(i_A,i_A)$ is proved in a similar way, using that $i_A$ is both an algebra and a coalgebra morphism. The case $(f_1,f_2)=(\iota,\iota)$ follows by combining conditions (\ref{eq: A colinearity}) and (\ref{eq: relative cointegral relation}) in Definition \ref{def: relative cointegral}.

\end{proof}

\begin{remark}
\label{remark: three handlesliding moves algebraically}
The three equalities of the above proposition correspond to the three handlesliding moves of Proposition \ref{prop: extended RS thm}. More precisely, the cases $(\iota,\iota),(\iota,i_A),(i_A,i_A)$ of the above proposition will respectively imply invariance of $I^{\rho}_H(M,\c,\ss,\o)$ under curve-curve, arc-curve and arc-arc handlesliding. This justifies our use of the Hopf morphisms $\pi_B$ and $i_A$ in the definitions of relative integral and cointegral.
\end{remark}

\subsection{The compatibility condition}
\label{subs: The compatibility condition}  Let $H$ be an involutive Hopf superalgebra over $\kk$ together with a relative right integral $(B,i_B,\pi_B,\mu)$ and relative right cointegral $(A,\pi_A,i_A,\iota)$. 

\begin{notation}
\label{notation: multiplication by b and cop by astar}
For any $b\in G(B)$ we denote by $m_b:H\to H$ left multiplication by $b$, that is,
\begin{figure}[H]
 \centering 
\begin{pspicture}(0,-0.39374545)(5.1,0.39374545)
\psline[linecolor=black, linewidth=0.018, arrowsize=0.05291667cm 2.0,arrowlength=0.8,arrowinset=0.2]{->}(0.0,-0.08625454)(0.3,-0.08625454)
\rput[bl](0.5,-0.18625455){$m_b$}
\psline[linecolor=black, linewidth=0.018, arrowsize=0.05291667cm 2.0,arrowlength=0.8,arrowinset=0.2]{->}(1.1,-0.08625454)(1.4,-0.08625454)
\rput[bl](1.7,-0.13625453){=}
\psline[linecolor=black, linewidth=0.018, arrowsize=0.05291667cm 2.0,arrowlength=0.8,arrowinset=0.2]{->}(3.6,-0.38625455)(3.9,-0.18625455)
\psline[linecolor=black, linewidth=0.018, arrowsize=0.05291667cm 2.0,arrowlength=0.8,arrowinset=0.2]{->}(3.6,0.21374546)(3.9,0.013745461)
\rput[bl](4.05,-0.18625455){$m_H$}
\psline[linecolor=black, linewidth=0.018, arrowsize=0.05291667cm 2.0,arrowlength=0.8,arrowinset=0.2]{->}(4.7,-0.08625454)(5.0,-0.08625454)
\rput[bl](2.25,0.11374546){$b$}
\rput[bl](3.1,0.11374546){$i_B$}
\psline[linecolor=black, linewidth=0.018, arrowsize=0.05291667cm 2.0,arrowlength=0.8,arrowinset=0.2]{->}(2.6,0.21374546)(2.9,0.21374546)
\rput[bl](5.05,-0.38625455){.}
\end{pspicture}
 \end{figure}
 
Similarly, if $a^*\in G(A^*) (=\Hom_{alg}(A,\kk))$, we denote by $\De_{a^*}:H\to H$ the map defined by
\begin{figure}[H]
\centering
\begin{pspicture}(0,-0.363881)(5.3,0.363881)
\rput[bl](0.45,-0.176119){$\Delta_{a^*}$}
\psline[linecolor=black, linewidth=0.018, arrowsize=0.05291667cm 2.0,arrowlength=0.8,arrowinset=0.2]{->}(0.0,-0.076119006)(0.3,-0.076119006)
\psline[linecolor=black, linewidth=0.018, arrowsize=0.05291667cm 2.0,arrowlength=0.8,arrowinset=0.2]{->}(1.2,-0.076119006)(1.5,-0.076119006)
\rput[bl](1.8,-0.126119){=}
\psline[linecolor=black, linewidth=0.018, arrowsize=0.05291667cm 2.0,arrowlength=0.8,arrowinset=0.2]{->}(3.4,0.023880996)(3.7,0.22388099)
\psline[linecolor=black, linewidth=0.018, arrowsize=0.05291667cm 2.0,arrowlength=0.8,arrowinset=0.2]{->}(3.4,-0.176119)(3.7,-0.37611902)
\rput[bl](2.75,-0.226119){$\Delta_H$}
\psline[linecolor=black, linewidth=0.018, arrowsize=0.05291667cm 2.0,arrowlength=0.8,arrowinset=0.2]{->}(2.3,-0.076119006)(2.6,-0.076119006)
\rput[bl](3.85,0.123881){$\pi_A$}
\psline[linecolor=black, linewidth=0.018, arrowsize=0.05291667cm 2.0,arrowlength=0.8,arrowinset=0.2]{->}(4.4,0.22388099)(4.711111,0.22388099)
\rput[bl](4.927778,0.123881){$a^*$}
\rput[bl](5.25,-0.326119){.}
\end{pspicture}
\end{figure}

Note that by centrality of $B$ in $H$, it doesn't matter whether we use left or right multiplication to define $m_b$ (and similarly for $\De_{a^*}$ by cocentrality of $\pi_A$).
\end{notation}

\begin{definition}
\label{def: compatible integral cointegral}
We say $\mu$ and $\iota$ are {\em compatible} if there exist group-likes $b\in G(B)$ and $a^*\in G(A^*)$ satisfying the following properties:\\

\begin{enumerate}
\item\hspace{3cm}
\begin{pspicture}(0,-0.195)(7.7,0.195)
\psline[linecolor=black, linewidth=0.018, arrowsize=0.05291667cm 2.0,arrowlength=0.8,arrowinset=0.2]{->}(0.0,0.005)(0.3,0.005)
\rput[bl](0.5,-0.095){$m_b$}
\rput[bl](1.55,-0.095){$\mu$}
\rput[bl](4.75,-0.095){$S_H$}
\rput[bl](6.7,-0.095){$S_B$}
\psline[linecolor=black, linewidth=0.018, arrowsize=0.05291667cm 2.0,arrowlength=0.8,arrowinset=0.2]{->}(6.2,0.005)(6.5,0.005)
\psline[linecolor=black, linewidth=0.018, arrowsize=0.05291667cm 2.0,arrowlength=0.8,arrowinset=0.2]{->}(7.3,0.005)(7.6,0.005)
\rput[bl](5.85,-0.095){$\mu$}
\psline[linecolor=black, linewidth=0.018, arrowsize=0.05291667cm 2.0,arrowlength=0.8,arrowinset=0.2]{->}(5.4,0.005)(5.7,0.005)
\psline[linecolor=black, linewidth=0.018, arrowsize=0.05291667cm 2.0,arrowlength=0.8,arrowinset=0.2]{->}(4.3,0.005)(4.6,0.005)
\rput[bl](3.0,-0.195){$(-1)^{|\mu|}$}
\psline[linecolor=black, linewidth=0.018, arrowsize=0.05291667cm 2.0,arrowlength=0.8,arrowinset=0.2]{->}(1.9,0.005)(2.2,0.005)
\rput[bl](2.5,-0.045){=}
\rput[bl](7.65,-0.145){,}
\psline[linecolor=black, linewidth=0.018, arrowsize=0.05291667cm 2.0,arrowlength=0.8,arrowinset=0.2]{->}(1.1,0.005)(1.4,0.005)
 \label{eq: integral inv of orientation} 
\end{pspicture}

\vspace{1cm}

\item \hspace{3cm}
\begin{pspicture}(0,-0.195)(7.8,0.195)
\psline[linecolor=black, linewidth=0.018, arrowsize=0.05291667cm 2.0,arrowlength=0.8,arrowinset=0.2]{->}(0.0,0.005)(0.3,0.005)
\rput[bl](0.46,-0.095){$\iota$}
\rput[bl](1.25,-0.095){$\Delta_{a^*}$}
\rput[bl](6.75,-0.095){$S_H$}
\rput[bl](4.9,-0.095){$S_A$}
\psline[linecolor=black, linewidth=0.018, arrowsize=0.05291667cm 2.0,arrowlength=0.8,arrowinset=0.2]{->}(2.0,0.005)(2.3,0.005)
\psline[linecolor=black, linewidth=0.018, arrowsize=0.05291667cm 2.0,arrowlength=0.8,arrowinset=0.2]{->}(6.3,0.005)(6.6,0.005)
\psline[linecolor=black, linewidth=0.018, arrowsize=0.05291667cm 2.0,arrowlength=0.8,arrowinset=0.2]{->}(7.4,0.005)(7.7,0.005)
\psline[linecolor=black, linewidth=0.018, arrowsize=0.05291667cm 2.0,arrowlength=0.8,arrowinset=0.2]{->}(5.5,0.005)(5.8,0.005)
\psline[linecolor=black, linewidth=0.018, arrowsize=0.05291667cm 2.0,arrowlength=0.8,arrowinset=0.2]{->}(4.4,0.005)(4.7,0.005)
\rput[bl](6.0,-0.095){$\iota$}
\rput[bl](2.6,-0.045){=}
\rput[bl](3.1,-0.195){$(-1)^{|\iota|}$}
\rput[bl](7.75,-0.145){,}
\psline[linecolor=black, linewidth=0.018, arrowsize=0.05291667cm 2.0,arrowlength=0.8,arrowinset=0.2]{->}(0.8,0.005)(1.1,0.005)
\label{eq: cointegral inv of orientation}
\end{pspicture}

\vspace{1cm}

\item\hspace{3.5cm}\begin{pspicture}(0,-0.35374543)(6.7049885,0.35374543)
\rput[bl](0.4649884,-0.07374542){$m^{\text{op}}_H$}
\rput[bl](3.5549884,-0.35374543){$\Delta_{a^*}$}
\psline[linecolor=black, linewidth=0.018, arrowsize=0.05291667cm 2.0,arrowlength=0.8,arrowinset=0.2]{->}(3.1049883,-0.25374544)(3.4049883,-0.25374544)
\psline[linecolor=black, linewidth=0.018, arrowsize=0.05291667cm 2.0,arrowlength=0.8,arrowinset=0.2]{->}(1.2049884,0.04625458)(1.5049884,0.04625458)
\rput[bl](1.6549884,-0.053745423){$\mu$}
\psline[linecolor=black, linewidth=0.018, arrowsize=0.05291667cm 2.0,arrowlength=0.8,arrowinset=0.2]{->}(2.0049884,0.04625458)(2.3049884,0.04625458)
\psline[linecolor=black, linewidth=0.018, arrowsize=0.05291667cm 2.0,arrowlength=0.8,arrowinset=0.2]{->}(4.3049884,-0.25374544)(4.6049886,-0.053745423)
\psline[linecolor=black, linewidth=0.018, arrowsize=0.05291667cm 2.0,arrowlength=0.8,arrowinset=0.2]{->}(4.3049884,0.3462546)(4.6049886,0.14625458)
\rput[bl](4.754988,-0.053745423){$m_H$}
\psline[linecolor=black, linewidth=0.018, arrowsize=0.05291667cm 2.0,arrowlength=0.8,arrowinset=0.2]{->}(5.504988,0.04625458)(5.8049884,0.04625458)
\rput[bl](5.9549885,-0.053745423){$\mu$}
\psline[linecolor=black, linewidth=0.018, arrowsize=0.05291667cm 2.0,arrowlength=0.8,arrowinset=0.2]{->}(6.3049884,0.04625458)(6.6049886,0.04625458)
\rput[bl](2.6049883,-0.0037454225){=}
\rput[bl](6.6549883,-0.30374542){,}
\psline[linecolor=black, linewidth=0.018, arrowsize=0.05291667cm 2.0,arrowlength=0.8,arrowinset=0.2]{->}(0.0049884035,-0.25374544)(0.3049884,-0.053745423)
\psline[linecolor=black, linewidth=0.018, arrowsize=0.05291667cm 2.0,arrowlength=0.8,arrowinset=0.2]{->}(0.0049884035,0.3462546)(0.3049884,0.14625458)
 \label{eq: integral trace property}
\end{pspicture}

\vspace{1cm}
\item\hspace{3.5cm}  
\begin{pspicture}(0,-0.34388107)(6.5857143,0.34388107)
\psline[linecolor=black, linewidth=0.018, arrowsize=0.05291667cm 2.0,arrowlength=0.8,arrowinset=0.2]{->}(4.985714,0.15611893)(5.285714,0.35611892)
\psline[linecolor=black, linewidth=0.018, arrowsize=0.05291667cm 2.0,arrowlength=0.8,arrowinset=0.2]{->}(4.985714,-0.043881074)(5.285714,-0.24388108)
\rput[bl](4.3357143,-0.09388107){$\Delta_H$}
\rput[bl](5.485714,-0.34388107){$m_b$}
\psline[linecolor=black, linewidth=0.018, arrowsize=0.05291667cm 2.0,arrowlength=0.8,arrowinset=0.2]{->}(6.0857143,-0.24388108)(6.385714,-0.24388108)
\rput[bl](1.2357141,-0.09388107){$\Delta^{\text{op}}_H$}
\rput[bl](2.585714,0.006118927){=}
\psline[linecolor=black, linewidth=0.018, arrowsize=0.05291667cm 2.0,arrowlength=0.8,arrowinset=0.2]{->}(0.0,0.056118928)(0.28571412,0.056118928)
\rput[bl](0.44571412,-0.043881074){$\iota$}
\psline[linecolor=black, linewidth=0.018, arrowsize=0.05291667cm 2.0,arrowlength=0.8,arrowinset=0.2]{->}(0.7857141,0.056118928)(1.0857141,0.056118928)
\psline[linecolor=black, linewidth=0.018, arrowsize=0.05291667cm 2.0,arrowlength=0.8,arrowinset=0.2]{->}(3.085714,0.056118928)(3.385714,0.056118928)
\rput[bl](3.5457141,-0.043881074){$\iota$}
\psline[linecolor=black, linewidth=0.018, arrowsize=0.05291667cm 2.0,arrowlength=0.8,arrowinset=0.2]{->}(3.885714,0.056118928)(4.1857142,0.056118928)
\rput[bl](6.535714,-0.34388107){.}
\psline[linecolor=black, linewidth=0.018, arrowsize=0.05291667cm 2.0,arrowlength=0.8,arrowinset=0.2]{->}(1.9857141,0.15611893)(2.2857141,0.35611892)
\psline[linecolor=black, linewidth=0.018, arrowsize=0.05291667cm 2.0,arrowlength=0.8,arrowinset=0.2]{->}(1.9857141,-0.043881074)(2.2857141,-0.24388108)
\label{eq: cointegral cotrace property} 
\end{pspicture}

\vspace{1cm}

\noindent \hspace{-0.6cm} We will further require the following conditions:\\

\item \hspace{3.5cm}
\begin{pspicture}(0,-0.14)(5.6,0.14)
\rput[bl](0.5,-0.14){$i_A$}
\rput[bl](1.45,-0.14){$\pi_B$}
\psline[linecolor=black, linewidth=0.018, arrowsize=0.05291667cm 2.0,arrowlength=0.8,arrowinset=0.2]{->}(1.0,-0.04)(1.3,-0.04)
\psline[linecolor=black, linewidth=0.018, arrowsize=0.05291667cm 2.0,arrowlength=0.8,arrowinset=0.2]{->}(2.0,-0.04)(2.3,-0.04)
\psline[linecolor=black, linewidth=0.018, arrowsize=0.05291667cm 2.0,arrowlength=0.8,arrowinset=0.2]{->}(3.1,-0.04)(3.4,-0.04)
\rput[bl](3.6,-0.14){$\epsilon_A$}
\rput[bl](4.65,-0.14){$\eta_B$}
\psline[linecolor=black, linewidth=0.018, arrowsize=0.05291667cm 2.0,arrowlength=0.8,arrowinset=0.2]{->}(5.2,-0.04)(5.5,-0.04)
\rput[bl](2.6,-0.09){=}
\rput[bl](5.55,-0.14){,}
\psline[linecolor=black, linewidth=0.018, arrowsize=0.05291667cm 2.0,arrowlength=0.8,arrowinset=0.2]{->}(0.0,-0.04)(0.3,-0.04)
\label{eq: invariance under bdry isotopy of arc}
\end{pspicture}

\vspace{1cm}
\item\hspace{3.5cm}
\begin{pspicture}(0,-0.145)(4.45,0.145)
\rput[bl](1.8,-0.145){$\mu$}
\psline[linecolor=black, linewidth=0.018, arrowsize=0.05291667cm 2.0,arrowlength=0.8,arrowinset=0.2]{->}(2.15,-0.045)(2.45,-0.045)
\rput[bl](2.65,-0.145){$\epsilon_B$}
\rput[bl](0.0,-0.145){$\eta_A$}
\rput[bl](3.35,-0.095){=}
\psline[linecolor=black, linewidth=0.018, arrowsize=0.05291667cm 2.0,arrowlength=0.8,arrowinset=0.2]{->}(0.55,-0.045)(0.8357143,-0.045)
\rput[bl](1.01,-0.145){$\iota$}
\psline[linecolor=black, linewidth=0.018, arrowsize=0.05291667cm 2.0,arrowlength=0.8,arrowinset=0.2]{->}(1.35,-0.045)(1.65,-0.045)
\rput[bl](3.8,-0.145){$\text{id}_{\mathbb{K}}$}
\rput[bl](4.4,-0.145){.}
\label{eq: stabilization}
\end{pspicture}

\end{enumerate}
\end{definition}
\bigskip

We pause a bit to explain this definition. Recall that if $H$ is any finite-dimensional (ungraded) Hopf algebra and $\mu\in H^*, c\in H$ are respectively the right integral and right cointegral, then there is a group-like $b\in G(H)$ satisfying
\begin{align*}
\mu(bx)=\mu S_H(x) 
\end{align*}
for all $x\in H$ and
\begin{align*}
c_{(2)}\ot c_{(1)}=c_{(1)}\ot bS^{-2}(c_{(2)})
\end{align*}
where $\De_H(c)=\sum c_{(1)}\ot c_{(2)}$ in Sweedler's notation, see for example \cite[Thm. 10.5.4]{Radford:BOOK}. This is very similar to conditions (\ref{eq: integral inv of orientation}) and (\ref{eq: cointegral cotrace property}) above ($S^2$ does not appears there since we assumed involutivity). However, it will be essential in our construction that ``$b$ goes out of $\mu$", that is $\mu(bx)=b\mu(x)$, see Lemma \ref{lemma: multiply by b on lower handlebody does not affect result} and Proposition \ref{prop: changing basepoints and Z}. Thus, we really need $b$ to belong to the subalgebra $B$ of $H$ over which $\mu$ takes its values. Hence if $A=B=\kk$ and $\mu,\iota$ are the usual Hopf algebra integral and cointegral, then they will be compatible in our sense only if $b=1_H$ and $a^*=\e_H$, i.e., when both $H$ and $H^*$ are unimodular.

%It will also be essential in the proof of topological invariance of $Z_H$ that the group-like $b$ appearing in (\ref{eq: integral inv of orientation}) is the same that appears in (\ref{eq: cointegral cotrace property}).

%\begin{remark}If one wants to generalize the above axioms to non-involutive Hopf algebras, then the square of the antipode comes into play.\end{remark}

% NOTE: The integral $\mu$ is not a morphism in $\Svect$! So which is the underlying symmetric category for the Hopf algebra $\La_X$?

\begin{example}
Consider the relative integral and cointegral of $H_n$ defined in Example \ref{example: Hopf algebra Hn finite dim quotient}. These satisfy the compatibility condition of Definition \ref{def: compatible integral cointegral} where the group-likes $b\in G(B)$ and $a^*\in G(A^*)$ are given by $b=K$ and $a^*=\e_A$. % Indeed, one has \begin{equation*}\begin{split}-S_B\mu S_H(K^iX)=-S_B\mu(-K^{-i-1}X)=S_B(K^{-i-1})&=K^{i+1}\\&=\mu(K^{i+1}X)\end{split}\end{equation*}while both sides of (\ref{eq: integral inv of orientation}) vanish for $h=K^i$, so that (\ref{eq: integral inv of orientation}) holds. It is also easy to check $S(\iota)=-\iota$ so that (\ref{eq: cointegral inv of orientation}) holds. The trace property (\ref{eq: integral trace property}) holds since $a^*=\e$and $H_n$ is commutative while
For example, we have
\begin{equation*}
\label{eqexample: cotrace property}
\begin{split}
\iota_{(2)}\ot \iota_{(1)}&=\left(\sum_{i=0}^{n-1}K^i\ot K^i\right)(X\ot K+1\ot X)\\
&=\left(\sum_{i=0}^{n-1}K^i\ot K^i\right)(1\ot K)(X\ot 1+(K^{-1}\ot K^{-1})K\ot X)\\
&= (1\ot K)\iota_{(1)}\ot \iota_{(2)}
\end{split}
\end{equation*}
so condition (\ref{eq: cointegral cotrace property}) holds. 

%Finally, equation (\ref{eq: astar over b equals one}) is trivial.

%Note that since $b^*=\e_B$, $\mu$ is $B$-linear in the usual sense (where $H$ is a $B$-module via $i_B$) but $\pi_A$ is not an $A$-comodule map since $a\neq 1$. 

\end{example}

\section[Sutured manifolds and Heegaard diagrams]{Sutured manifolds and Heegaard diagrams}
\label{section: Sutured manifolds and Heegaard diagrams}

In this section we recall some notions of sutured manifold theory. We start by defining sutured manifolds and give some examples. In Subsection \ref{subs: Heegaard diagrams} we explain how these manifolds are represented using Heegaard diagrams, following \cite{Juhasz:holomorphic} and \cite{JTZ:naturality}. In Subsection \ref{subs: Extended Heegaard diagrams} we introduce extended Heegaard diagrams and we prove a version of the Reidemeister-Singer theorem for such diagrams. In Subsection \ref{subsection: dual curves} we define some canonical homology classes associated to a Heegaard diagram, under the additional assumption that $R_-(\c)$ is connected. In Subsection \ref{subs: Spinc structures} we define $\Spinc$ structures and recall the construction of the map $s$ of \cite{OS1, Juhasz:holomorphic}. Finally, we discuss homology orientations following \cite{FJR11} in Subsection \ref{subsection: homology orientation}. In what follows, all 3-manifolds will be assumed to be compact and oriented unless explicitly stated. We use the letter $Y$ to denote a closed 3-manifold, and $M$ for manifolds with non-empty boundary.

%Some questions arising: \begin{enumerate}\item The structure on the boundary of a sutured manifold is like a Heegaard splitting of the boundary, so the "Heegaard submanifold" is fixed and there is no stabilization move analogue. Our cut systems are Heegaard diagrams of these splittings (well, kind of...the cut system of $R_-$ is internal to it and does not quite relates to that of $R_+$, only near $\p\c=\p \S$. A real Heegaard diagram of a surface should be understood on a 1-dimensional object). Thus we are actually constructing invariants for surfaces. Formalize all this.\item Is it possible we could define invariants for bordered manifolds containing our sutured manifold invariant?\item A surgery presentation of $M$ also gives a presentation of $\pi_1(M)$ (or the complement of a link). We should be able to compute torsion via Fox calculus in this context. Do this means Hennings invariant can be enhanced with a character as we did with Kuperberg's invariants?\item Sutured manifolds are not quite convenient for knots with framings. But bordered Floer seems so. Indeed "bordered structure" at the bdry = a framing! Which is the bordered diagram associated to trefoil? Or knots in general...the doubly pointed diagrams do not work.\end{enumerate}

\subsection[Sutured manifolds]{Sutured manifolds} Sutured manifolds were introduced by Gabai in \cite{Gabai:foliations}. We use a slightly less general definition, as in \cite{Juhasz:holomorphic, JTZ:naturality}.

\begin{definition}
\label{def: sutured manifold}
A {\em sutured manifold} is a pair $(M,\c)$ where $M$ is a 3-manifold with boundary and $\c$ is a collection of pairwise disjoint annuli 
%$A(\c)$ and pairwise disjoint tori $T(\c)$ 
contained in $\p M$. Each annuli $A$ in $\c$ is supposed to be the tubular neighborhood of a single oriented simple closed curve called a {\em suture}, and the set of such sutures is denoted by $s(\c)$. We further suppose that each component of $R(\c)\eq\p M\sm \inte(\c)$ is oriented and we require that each (oriented) component of $\p R(\c)$ represents the same homology class in $H_1(\c)$ as some suture. We denote by $R_+(\c)$ (resp. $R_-(\c)$) the union of the components of $R(\c)$ whose orientation coincides (resp. is opposite) with the induced orientation of $\p M$. We say that $(M,\c)$ is {\em balanced} if $M$ has no closed components, $\chi(R_-(\c))=\chi(R_+(\c))$ and every component of $\p M$ has at least one suture.
\end{definition}

%We will restrict to a simpler class of sutured manifolds.

%\begin{definition}A {\em balanced sutured manifold} is a sutured manifold $(M,\c)$ where $M$ has no closed components, $\chi(R_-(\c))=\chi(R_+(\c))$ and every component of $\p M$ has at least one suture.\end{definition}

If $M$ has no closed components and every component of $\p M$ contains a suture (a {\em proper} sutured manifold as in \cite{JTZ:naturality}), then $s(\c)$ determines $R_{\pm}(\c)$. Thus, we only need to specify $s(\c)$ in order to define a balanced sutured manifold. 
%We now give some examples of balanced sutured manifolds.

%A sutured manifold can be thought of as a 3-manifold with boundary, endowed with a Heegaard decomposition of the boundary. Note that this is weaker than requiring a parametrization of the boundary, so there is not a well-defined gluing along the boundary operation between sutured manifolds. What is interesting about these manifolds, is that there is a good notion of decomposition along surfaces, see \cite{Juhasz: holomorphic}.\medskip

%NOTE: well, there are some gluing operations, see the paper by Grigsby-Wehrli.

\begin{example}
\label{example: sutured from closed}
Pointed closed 3-manifolds: consider a closed 3-manifold $Y$ together with a basepoint $p\in Y$. Then if $B$ is an open ball neighborhood of $p$ in $Y$, the complement $Y\sm B$ becomes a balanced sutured 3-manifold if we let $s(\c)$ be a single oriented simple closed curve on $\p B$. Both surfaces $R_-$ and $R_+$ are disks in this case. We denote this sutured manifold by $Y(1)$. Note that a pointed diffeomorphism between pointed closed 3-manifolds is equivalent to a diffeomorphism between the associated sutured 3-manifolds.
\end{example}

\begin{example} 
\label{example: sutured from link complement}
Link complements: let $L$ be a link in a closed 3-manifold $Y$ and let $N(L)$ be a closed tubular neighborhood of $L$. Then $Y\sm \inte (N(L))$ is a balanced sutured manifold if we put two oppositely oriented meridians on each component of $\p N(L)$. We denote this sutured 3-manifold by $Y(L)$. Here $R_+$ consist of one annulus component for each component of $L$ and similarly for $R_-$.
\end{example}

%\begin{example}\label{example: sutured from Seifert surface}Seifert surface complements: Let $S$ be a compact oriented surface-with-boundary with no closed components which is embedded in a closed 3-manifold $Y$ and let $N(S)\cong S\t [-1,1]$ be a closed tubular neighborhood of $S$ in $Y$. Then $Y\sm \inte (N(S))$ is a balanced sutured manifold if we let $s(\c)=\p S\t \{0\}, \c=\p S\t [-1,1], R_+=S\t \{1\}$ and $R_-=S\t \{-1\}$. %We denote this sutured manifold by $Y(S)$.\end{example}

%NOTE: Grigsby-Wehrli's examples are also nice.
%NOTE: Sutured link complement can be made bordered in a canonical way. But what about surface complements? How is this related to the fact that bordered Floer categorifies the Seifert matrix?\medskip

\subsection[Heegaard diagrams]{Heegaard diagrams}
\label{subs: Heegaard diagrams}

%\begin{enumerate}\item More references: Gabai defined sutured manifolds, Juhasz describes HDs for these.\item More detail on ``compressing $\S$ along $\aa$".\end{enumerate}

We now describe how to represent sutured manifolds via sutured Heegaard diagrams as in \cite{Juhasz:holomorphic}, \cite{JTZ:naturality}. %These generalize the pointed Heegaard diagrams of closed 3-manifolds of \cite{OS1} and the doubly-pointed Heegaard diagrams of knots of \cite{OS:knot}. 
We denote by $I$ the interval $[-1,1]$.

\begin{definition}
\label{def: Heegaard diagram}
Let $(M,\c)$ be a sutured manifold. An (embedded) {\em sutured Heegaard diagram} of $(M,\c)$ is a tuple $\HH=\HD$ consisting of the following data: \begin{enumerate}
\item An embedded oriented surface-with-boundary $\S\subset M$ such that 
$\p\S=s(\c)$ as oriented 1-manifolds,
\item A set $\aa=\{\a_1,\dots,\a_n\}$ of pairwise disjoint embedded circles in $\inte(\S)$ bounding disjoint disks to the negative side of $\S$,
\item A set $\bb=\{\b_1,\dots,\b_m\}$ of pairwise disjoint embedded circles in $\inte(\S)$ bounding disjoint disks to the positive side of $\S$.
\end{enumerate}
We further require that if $\S$ is compressed along $\aa$ (resp. $\bb$), inside $M$, we get a surface isotopic to $R_-$ (resp. $R_+$) relative to $\c$. Thus, $M$ can be written as $M=\Ua\cup\Ub$ with $\Ua\cap\Ub=\S$ where $\Ua$ (resp. $\Ub$) is homeomorphic to the sutured manifold obtained from $R_-\t I$ (resp. $R_+\t I$) by gluing 1-handles to $R_-\t \{1\}$ (resp. $R_+\t \{0\}$) with belt circles the $\a$ curves (resp. $\b$ curves). We say that $\Ua$ (resp. $\Ub$) is the lower (resp. upper) compression body corresponding to the Heegaard diagram. We say that $\HD$ is {\em balanced} if $|\aa|=|\bb|$ and every component of $\S\sm \aa$ contains a component of $\p \S$ (and similarly for $\S\sm \bb$).
\end{definition}

It can be shown that any sutured 3-manifold admits an embedded Heegaard diagram \cite[Prop. 2.14]{Juhasz:holomorphic}. Moreover, a sutured manifold is balanced if and only if any Heegaard diagram of it is balanced \cite[Prop. 2.9]{Juhasz:holomorphic}.

\begin{remark}\label{remark: Heegaard diagram gives handle decomposition}
Suppose that $\HD$ is an embedded Heegaard diagram of $(M,\c)$. Then $(M,\c)$ is diffeomorphic to the sutured manifold constructed in the following way: take $R_-\t I$ and attach $n$ 3-dimensional 1-handles to $R_-\t I$ along $R_-\t\{1\}$ so that the $\a_i$ become the belt circles of these 1-handles. The upper boundary of the manifold thus obtained can be identified with $\S$. Then, attach $m$ 3-dimensional 2-handles along the curves $\b_i\subset \S$. This defines a sutured 3-manifold with sutures $\p\S\t I$ and is clearly diffeomorphic to $(M,\c)$.
%In other words, a Heegaard diagram specifies a handlebody decomposition of $M_{\HH}$ relative to $R_-\t I$, where the handles are attached in increasing order according to their index. Thus, it corresponds to a {\em self-indexing Morse function} (see \cite{Milnor:h-cobordism}) on $M_{\HH}$.
\end{remark}

%If $\HH=\HD$ is an embedded diagram of $(M,\c)$, then of course it is also an abstract diagram. Conversely, an abstract diagram $\HH$ is an embedded diagram of $M_{\HH}$. Thus, from now on we assume all Heegaard diagrams are embedded diagrams (of some sutured manifold). The fact that $\HH$ is embedded in $M$ implies that there is a {\em canonical} homeomorphism (up to isotopy) $d:M\to M_{\HH}$. Usually, one says that an abstract diagram $\HH$ is a diagram for $(M,\c)$ if {\em there is} an homeomorphism $M\cong M_{\HH}$, but this is not enough to define the homology classes (in $H_1(M)$) of Subsection \ref{subsection: dual curves}.
%NOTE: moreover now Heegaard diagrams for knots have sense!\medskip

%The following theorem is proved in \cite[Prop. 2.14]{Juhasz:holomorphic}. \begin{theorem}Any (balanced) sutured 3-manifold admits a (balanced) sutured Heegaard diagram. \end{theorem}

%Note that from this theorem sutured manifolds become a very natural object: they are the 3-manifolds described by Heegaard diagrams where the Heegaard surface $\S$ has boundary.

For the following two examples, see \cite[Section 9]{Juhasz:holomorphic}.

\begin{example}
\label{example: sutured HD for closed manifold}
Let $\HD$ be a Heegaard diagram of a closed 3-manifold $Y$ in the usual sense (so $\S$ is a closed oriented surface). If $z\in \S\sm (\aa\cup\bb)$ is a basepoint then let $\S'=\S\sm D$ where $D$ is a small open disk centered at $z$. Then $(\S',\aa,\bb)$ is a Heegaard diagram of the sutured manifold $Y(1)$.
\end{example}

\begin{example}
\label{example: sutured HD for link complement}
Let $\HDzw$ be a doubly pointed Heegaard diagram of a knot $K$ in a closed 3-manifold $Y$, that is, $\HD$ is an embedded Heegaard diagram of $Y$, $K$ is disjoint from the disks bounded by the $\a$ and $\b$ curves and $K\cap \S$ consists of two points $z,w\in\S\sm(\aa\cup\bb)$ (see \cite{OS:knot}). An example of a doubly pointed Heegaard diagram for the left-handed trefoil is given in Figure \ref{figure: doubly pointed HD of trefoil}. Let $\S'=\S\sm (D_z\cup D_w)$ where $D_z,D_w$ are small disks centered at $z,w$ respectively. Then $(\S',\aa,\bb)$ is a Heegaard diagram for the sutured manifold $Y(K)$. %Note that if $K$ is a knot in $S^3$, represented by a planar projection $D$ with $c$ crossings, then there is a doubly pointed Heegaard diagram for $(S^3,K)$ associated to $D$ with $g(\S)=c+1$, cf. \cite[Example 3.3]{Manolescu:intro}. The diagram of Figure \ref{figure: doubly pointed HD of trefoil} does not comes from this procedure.
\end{example}

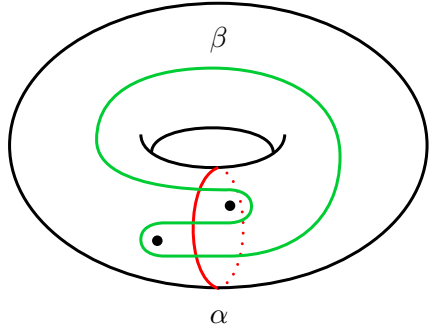
\begin{figure}[h]
\centering
\begin{pspicture}(0,-2.14)(5.590509,2.14)
\definecolor{colour0}{rgb}{0.0,0.8,0.2}
\psbezier[linecolor=red, linewidth=0.04, linestyle=dotted, dotsep=0.10583334cm](2.7952542,-1.6740508)(3.2366102,-1.5269322)(3.2366102,-0.2138983)(2.7952542,-0.0667796610169512)
\psellipse[linecolor=black, linewidth=0.04, dimen=outer](2.7952542,0.22745761)(2.7952542,1.9125423)
\psbezier[linecolor=black, linewidth=0.04](1.7654237,0.37457627)(1.7654237,-0.2138983)(3.677966,-0.2138983)(3.677966,0.3745762711864398)
\psbezier[linecolor=black, linewidth=0.04](1.9076384,0.13183051)(1.9076384,0.57318646)(3.5259435,0.57318646)(3.5259435,0.13183050847457536)
\psbezier[linecolor=red, linewidth=0.04](2.7952542,-0.06677966)(2.3538983,-0.2138983)(2.3465424,-1.5232543)(2.7878983,-1.670372881355929)
\psbezier[linecolor=colour0, linewidth=0.04](1.1769491,0.30101696)(1.1769491,-0.43457627)(2.9423728,-0.36101696)(2.9423728,-0.361016949152542)(2.9423728,-0.36101696)(3.2366102,-0.36101696)(3.2366102,-0.5816949)(3.2366102,-0.8023729)(2.9423728,-0.8023729)(2.9423728,-0.8023729)(2.9423728,-0.8023729)(2.059661,-0.8023729)(2.059661,-0.8023729)(2.059661,-0.8023729)(1.7654237,-0.8023729)(1.7654237,-1.0230509)(1.7654237,-1.2437288)(2.059661,-1.2437288)(2.059661,-1.2437288)(2.059661,-1.2437288)(2.1332202,-1.2437288)(2.9423728,-1.2437288)(3.7515254,-1.2437288)(4.4135594,-0.8023729)(4.4135594,0.080338985)(4.4135594,0.96305084)(3.457288,1.2572881)(2.721695,1.2572881)(1.9861016,1.2572881)(1.1769491,1.0366101)(1.1769491,0.30101696)
\rput[bl](2.6844068,-2.14){$\alpha$}
\rput[bl](2.6844068,1.46){$\beta$}
\psdots[linecolor=black, dotsize=0.14](2.9510734,-0.5733333)
\psdots[linecolor=black, dotsize=0.14](1.9844067,-1.04)
\end{pspicture}
\medskip

\caption{A doubly pointed Heegaard diagram of a left-handed trefoil. Removing two small disks centered at $z,w$ (represented by black dots) produces a sutured Heegaard diagram of the associated sutured manifold}
\label{figure: doubly pointed HD of trefoil}

\end{figure}

We will use an embedded version of the Reidemeister-Singer theorem. For this, we make a few definitions. If $\HH_1=(\S_1,\aa_1,\bb_1),\HH_2=(\S_2,\aa_2,\bb_2)$ are two Heegaard diagrams, a diffeomorphism $d:\HH_1\to \HH_2$ consists of an orientation-preserving diffeomorphism $d:\S_1\to\S_2$ such that $d(\aa_1)=\aa_2$ and $d(\bb_1)=\bb_2$.
\medskip

%NOTE: Compare the abstract/embedded moves below.

\begin{definition}[{\cite[Definition 2.34]{JTZ:naturality}}]
\label{def: diffeo isotopic to identity}
Let $\HH_1=(\S_1,\aa_1,\bb_1), \HH_2=(\S_2,\aa_2,\bb_2)$ be two Heegaard diagrams of $(M,\c)$ and denote by $j_i:\S_i\to M$ the inclusion map, for $i=1,2$. A diffeomorphism $d:\HH_1\to \HH_2$ is {\em isotopic to the identity in $M$} 
%(\cite[Definition 2.34]{JTZ: naturality})  such that $d(\aa_1)=\aa_2, d(\bb_1)=\bb_2$ and 
if $j_2\circ d:\S_1\to M$ is isotopic to $j_1:\S_1\to M$ relative to $s(\c)$.
\end{definition}

\begin{definition}
Let $\HH=\HD$ be a Heegaard diagram. Let $\d$ be an arc embedded in $\inte(\S)$ connecting a point of a curve $\a_j$ to a point of a curve $\a_i$ and such that $\inte (\d)\cap \aa=\emptyset$. There is a neighborhood of $\a_j\cup\d\cup\a_i$ which is a pair of pants embedded in $\S$ and whose boundary consists of the curves $\a_j,\a_i$ and a curve $\a'_j$. We say that $\a'_j$ is obtained by {\em handlesliding} the curve $\a_j$ over $\a_i$ along the arc $\d$. Similarly, we can handleslide a $\b$ curve over another along an arc $\d\subset \inte(\S)$ such that $\inte(\d)\cap\bb=\emptyset$.
\end{definition}

\begin{definition}
Let $\HH=\HD$ be a Heegaard diagram of a sutured manifold $(M,\c)$.  Let $D\subset \inte(\S)\sm (\aa\cup\bb)$ be a disk. Let $\S'$ be a connected sum $\S'=\S\# T$ along $D$, where $T$ is a torus. Let $\a',\b'$ be two curves in $T$ intersecting transversely in one point and let $\aa'=\aa\cup \{\a'\}$, $\bb'=\bb\cup\{\b'\}$. Then $\HH'=(\S',\aa',\bb')$ is a Heegaard diagram of $(M,\c)$ which we say is obtained by a {\em stabilization} of the diagram $\HH$. We also say that $\HH$ is obtained by destabilization of $\HH'$.
\end{definition}

\begin{theorem}[{\cite[Prop. 2.36]{JTZ:naturality}}]
\label{thm: embedded RS thm of JTZ}
Any two embedded Heegaard diagrams of a sutured 3-manifold $(M,\c)$ are related by a finite sequence of the following moves:
\begin{enumerate}
\item Isotopy of $\aa$ (or $\bb$) in $\inte(\S)$.
\item Diffeomorphisms isotopic to the identity in $M$.
\item Handlesliding an $\a$ curve (resp. $\b$ curve) over another $\a$ curve (resp. $\b$ curve).
\item Stabilization.
\end{enumerate}
\end{theorem}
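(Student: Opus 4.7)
My plan is to mimic the classical proof of the Reidemeister–Singer theorem, but working with Morse functions adapted to the sutured boundary structure.

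First, I would set up a correspondence between embedded sutured Heegaard diagrams of $(M,\c)$ and \emph{admissible} Morse functions $f:M\to I=[-1,1]$ satisfying: $f^{-1}(-1)=R_-(\c)$, $f^{-1}(1)=R_+(\c)$, the restriction $f|_{\c}$ is the second-coordinate projection under an identification $\c\cong s(\c)\times I$, and all interior critical points have index $1$ or $2$ with indices $1$ and $2$ separated by the level $1/2$. From such an $f$ together with a gradient-like vector field, one extracts the Heegaard surface $\S=f^{-1}(1/2)$ (which satisfies $\p\S=s(\c)$), and takes $\aa$ (resp.\ $\bb$) to be the ascending spheres of the index-$1$ critical points (resp.\ descending spheres of the index-$2$ critical points) intersected with $\S$. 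Conversely, the handle decomposition of Remark~\ref{remark: Heegaard diagram gives handle decomposition} gives such an $f$ from $\HH$; one must check that two diagrams yielding isotopic admissible Morse data differ by moves (1)–(2).

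Next, given two admissible Morse functions $f_0,f_1$ representing $\HH_1,\HH_2$, I would invoke Cerf theory relative to $\p M$: the space of admissible functions (with the prescribed boundary behaviour) is connected, and any two can be joined by a generic $1$-parameter family $(f_t)_{t\in[0,1]}$ whose non-Morse moments are of only two types: (a) birth/death of a canceling pair of critical points of indices $1$ and $2$, and (b) handle-slides, occurring when a single gradient trajectory connects two critical points of the same index. Crucially, because $\p M$ is preserved throughout, these singularities occur in $\inte M$.

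Finally, I would translate these Cerf events back to diagram moves. A handle-slide between two index-$1$ (resp.\ index-$2$) critical points corresponds exactly to sliding one $\a$-curve over another (resp.\ one $\b$-curve over another), which is move (3); a birth/death corresponds to stabilization/destabilization (4); and the generic intervals of $t$ on which $f_t$ is Morse correspond to isotopies of $\aa\cup\bb$ in $\inte(\S)$ together with ambient isotopies in $M$, i.e.\ moves (1) and (2). The main obstacle is the boundary-relative version of Cerf theory: one must verify that transversality within the class of admissible Morse functions (those matching $R_\pm$ and $\c$ rigidly) still yields only the standard codimension-one singularities, and that all births and handle-slides can be arranged to occur strictly in the interior and away from the collar $\c\cong s(\c)\times I$. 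Once this boundary-relative Cerf theorem is in hand, the rest of the argument is a direct translation between Morse-theoretic and diagrammatic language.
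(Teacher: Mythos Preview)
The paper does not prove this theorem at all: it is stated with a citation to \cite[Prop.~2.36]{JTZ:naturality} and used as a black box. So there is no ``paper's own proof'' to compare against.

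That said, your outline is essentially the approach taken in the cited reference. The Cerf-theoretic strategy you describe---setting up a correspondence between embedded sutured Heegaard diagrams and self-indexing Morse functions adapted to the sutured boundary, connecting two such functions by a generic one-parameter family, and reading off the moves from the codimension-one singularities---is exactly how \cite{JTZ:naturality} proves this result. You have correctly identified the main technical point: one needs a boundary-relative version of Cerf theory ensuring that the generic singularities in a path of admissible functions are only births/deaths and handle-slides, all occurring in the interior. This is not entirely routine and is where most of the work in \cite{JTZ:naturality} goes; your proposal flags this but does not carry it out, so as written it is an outline rather than a proof. One small point: you should also argue that births/deaths of index $0$/$1$ and index $2$/$3$ pairs can be avoided or absorbed into the other moves, since an admissible Morse function a priori could acquire such critical points along the path before they cancel again.
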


Note that this theorem is stronger than the usual Reidemeister-Singer theorem \cite[Prop. 2.15]{Juhasz:holomorphic} which states that a {\em diffeomorphism class} of sutured manifolds is specified by a Heegaard diagram up to isotopy, handlesliding, stabilization and diagram diffeomorphism. 

%NOTE: I still need to say something with respect to the $\Spinc$ structure $\x$, $\x\in\Tab$ when doing Heegaard moves. The point is, when we prove invariance of Heegaard moves, I assume there is a multipoint before and after the move. We have to say that the $\Spinc$ structure $s_{\HH}(\x)=s_{\HH'}(\x)$.

\subsection[Extended Heegaard diagrams]{Extended Heegaard diagrams}
\label{subs: Extended Heegaard diagrams}

%\begin{enumerate}\item I still need a proof of RS thm for extended Heegaard diagrams. Note that a cut system of $R_-$ determines a diffeomorphism (up to isotopy... and more?) with a standard surface. Maybe we can write a diffeomorphism in terms of Dehn twists and realize each such in terms of our moves.\end{enumerate}

In what follows, we let $R$ be a compact orientable surface with boundary. We suppose $R$ has no closed components. 

\begin{definition}
A {\em cut system} of $R$ is a collection $\bolda=\{a_1,\dots, a_l\}$ of pairwise disjoint arcs properly embedded in $R$ such that for each component $R'$ of $R$, the complement $R'\sm R'\cap N(\bolda)$ is homeomorphic to a closed disk, where $N(\bolda)$ is an open tubular neighborhood of $\bolda$.
\end{definition}

\begin{remark}
\label{remark: cut systems in terms of homology}
If $\bolda=\{a_1,\dots,a_l\}$ is a collection of pairwise disjoint properly embedded arcs in $R$, then $\bolda$ is a cut system if and only if $[\bolda]=\{[a_1],\dots,[a_l]\}$ is a basis of $H_1(R,\p R)$. We don't prove this here as we don't need it, but see Lemma \ref{lemma: compression disks give homology basis} for a similar statement.

 %To see this, it suffices to suppose $R$ is connected. By excision, we have \begin{align*}H_*(\p R\cup\bolda, \p R)\cong H_*(\bolda,\p\bolda)\end{align*}and also\begin{align*}H_*(R,\p R\cup\bolda)\cong H_*(R',\p R')\end{align*}where $R'=R\sm N(\bolda)$. The exact sequence of the triple $(R,\p R\cup\bolda, \p R)$ becomes\begin{align*} 0\to \Z\to H_2(R',\p R')\to  H_1(\bolda,\p\bolda)\to H_1(R,\p R)\to H_1(R',\p R')\to 0.\end{align*}Thus, $[\bolda]$ is a basis of $H_1(R,\p R)$ if and only if $H_2(R',\p R')=\Z$ and $H_1(R',\p R')=0$, i.e., when $R'$ is a disk. % NOTE: If $\bolda$ was l.i. and maximal, then $H_1(R',\p R')$ would be finite. But R' is orientable, so this H_1 has to be zero!
\end{remark}

A cut system on $R$ always exists: take a handlebody decomposition of $R$ with a single 0-handle on each component of $R$. Then the cocores of the 1-handles define a cut system. Of course, a cut system is not unique: we can isotope the arcs (we allow their endpoints to be isotoped along $\p R$) and handleslide them. 

\begin{definition}
Let $\bolda=\{a_1,\dots, a_l\}$ be a cut system of $R$ and suppose that an arc $a_j$ has an endpoint on the same component $C$ of $\p R$ as another arc $a_i$. Suppose there is an arc $\d$ in $C$ connecting these two endpoints such that no other endpoint of an arc of $\bolda$ lies on $\d$. Then there is a neighborhood of $a_j\cup\d\cup a_i$ which is a disk $D$ embedded in $R$ and whose boundary $[\p D]\in H_1(R,\p R),$ consists of the arcs $a_j,a_i$ and a new arc $a'_j$. %Take an arc $\c$ in $\S\sm(\aa\cup \bolda)$ joining a point in the interior of $a_j$ to a point in the interior of $a_i$, and perform a connected sum $a_j\# a_i$ along that arc. This gives two new arcs: one whose endpoints lie on a same component of $\p \S$ and another arc $a'_j$. We discard the first arc, and say that $a'_j$ is obtained by handlesliding $a_j$ over $a_i$ (see Figure \ref{figure: handlesliding arc over arc}). It is clear that $(\bolda\sm \{a_j\}) \cup \{a'_j\}$ is a new cut system of $R_-$. 
We say that $a'_j$ is obtained by {\em handlesliding} $a_j$ over $a_i$. It is clear that $(\bolda\sm \{a_j\}) \cup \{a'_j\}$ also defines a cut system of $R$. See Figure \ref{figure: handlesliding arc over arc}.
\end{definition}

\begin{figure}[h]
\centering
\begin{pspicture}(0,-2.2)(6.65,2.2)
\psellipse[linecolor=black, linewidth=0.04, dimen=outer](3.2,0.2)(3.2,2.0)
\psline[linecolor=red, linewidth=0.04](2.0,-0.2)(2.0,-1.6133333)
\psline[linecolor=red, linewidth=0.04](4.4,-0.2)(4.4,-1.6133333)
\psbezier[linecolor=red, linewidth=0.04](2.2,-0.11333334)(2.2,-0.61333334)(2.2,-0.9)(2.2,-1.3)(2.2,-1.7)(4.2,-1.7)(4.2,-1.3)(4.2,-0.9)(4.2,-0.38)(4.2,-0.11333334)
\pscircle[linecolor=black, linewidth=0.04, dimen=outer](2.0,0.2){0.4}
\pscircle[linecolor=black, linewidth=0.04, dimen=outer](4.4,0.2){0.4}
\rput[bl](1.8666667,-2.2){$a_j$}
\rput[bl](4.3,-2.1666667){$a_i$}
\rput[bl](3.0666666,-1.3){$a'_j$}
%\rput[bl](6.6,-2.1){.}
\end{pspicture}

\caption{An arc $a_j$ is handleslided over an arc $a_i$}
\label{figure: handlesliding arc over arc}
\end{figure}
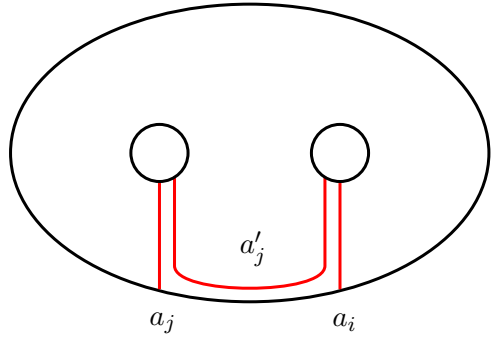

We now prove that these moves suffice to relate any two cut systems over $R$. In the following lemma, we assume all arcs are properly embedded (see \cite[Prop. 2.4]{OS1} for a similar statement).

\begin{lemma}
\label{lemma: making disjoint cut systems with hsliding}
Let $\bolda=\{a_1,\dots,a_l\}$ be a cut system of $R$. 
\begin{enumerate}
\item If $a'$ is an arc in $R$ disjoint from $\bolda$ and $[a']\neq 0$ in $H_1(R, \p R)$, then there is an $i$ such that $a'$ is isotopic to an arc obtained by handlesliding $a_i$ over some of the $a_j$ with $j\neq i$.
\item If $\bolda'$ is another cut system of $R$, then one can make $\bolda\cap\bolda'=\emptyset$ after sufficiently many handleslides and isotopies inside $\bolda$.
\end{enumerate}

\end{lemma}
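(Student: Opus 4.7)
The plan is to treat the two parts separately; part (1) is the substantive step, while part (2) follows from a standard bigon/outermost-arc reduction on the cut complement of $\bolda$. For (1), the first step is to cut $R$ along $\bolda$: since $\bolda$ is a cut system, this yields a disjoint union of disks, and because $a'$ is disjoint from $\bolda$, it sits entirely in one such disk $D$ as a properly embedded arc with both endpoints on the $\p R$-portion of $\p D$, separating $D$ into two sub-disks $D_1, D_2$. Each $a_i\in\bolda$ contributes exactly two copies to $\p D$, each lying in $\p D_1$ or $\p D_2$. Expanding $[a']=\sum_i c_i[a_i]$ in $H_1(R,\p R;\Z/2)$, a direct 2-chain computation using $D_1$ (whose relative boundary, modulo $\p R$, is $a'$ plus the $\bolda$-copies on $\p D_1$) yields $c_i=1$ precisely when exactly one of the two copies of $a_i$ lies on $\p D_1$. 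Since $[a']\neq 0$, the set $S=\{i:c_i=1\}$ is non-empty; pick any $i_0\in S$, so that $(\bolda\sm\{a_{i_0}\})\cup\{a'\}$ is again a cut system.

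The geometric crux of part (1) is realizing this basis change by handleslides on the single arc $a_{i_0}$. I would trace the path along $\p D_1$ from one endpoint of $a'$ to the other; this path alternates between $\p R$-arcs and $\bolda$-copies, and I would successively handleslide $a_{i_0}$ over each $\bolda$-copy other than its own copy, using the adjacent $\p R$-arc as the handleslide arc (each such $\p R$-arc contains no other $\bolda$-endpoint in its interior by construction, so each slide is admissible). In $H_1(R,\p R;\Z/2)$ each slide over $a_j$ adds $[a_j]$ to the class of the sliding arc; pairs of slides over the two copies of an $a_j$ with $c_j=0$ cancel, so the final class is $[a_{i_0}]+\sum_{j\in S\sm\{i_0\}}[a_j]=[a']$. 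The resulting arc is a push-off into $D_1$ of a path along $\p D_1$, and its endpoints lie on the same $\p R$-arcs of $\p D_1$ as those of $a'$; after an isotopy sliding these endpoints along $\p R$ to match, both arcs lie in the disk $D_1$ with identical endpoints, hence are isotopic.

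For (2), I would induct on $|\bolda\cap\bolda'|$. If this count is positive, consider the arcs of $\bolda'$ restricted to the disks of $R\sm N(\bolda)$ and pick an outermost sub-arc $\d$ of $\bolda'$ in some disk $D$: together with a sub-arc $\eta\subset\p D$, it bounds an innermost sub-disk $B\subset D$ whose interior meets no other $\bolda'$-arc. If the two endpoints of $\d$ lie on the same $\bolda$-copy of $\p D$, then $B$ is a bigon and an isotopy of the corresponding $\bolda$-arc across $B$ eliminates two intersection points; otherwise the $\p R$-sub-arcs of $\eta$ guide a sequence of handleslides of $\bolda$-arcs (plus endpoint isotopies along $\p R$) that strictly decreases $|\bolda\cap\bolda'|$, completing the induction.

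The principal obstacle I expect is the geometric implementation in part (1): translating the homological criterion $c_{i_0}\neq 0$ into an explicit admissible sequence of handleslides, and confirming that the cumulative effect is an arc isotopic (not merely homologous) to $a'$, requires careful bookkeeping of how the endpoints of the sliding arc migrate along $\p R$ and of how the cancelling slides over the two copies of each $a_j$ with $c_j=0$ interleave with the substantive slides indexed by $j\in S\sm\{i_0\}$.
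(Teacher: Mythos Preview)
Your proposal is correct and follows essentially the same strategy as the paper: both arguments reduce to a single disk (you by cutting along $\bolda$, the paper by modelling $R$ as a disk with $l$ one-handles attached at pairs $(p_i,q_i)$ and isotoping each $a_i$ to a small arc near $p_i$), both identify the required $a_{i_0}$ via the separation criterion (your condition ``exactly one copy of $a_i$ on $\partial D_1$'' is precisely the paper's ``$p_i\in X$, $q_i\in Y$''), and both prove (2) by induction on the total intersection count.

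The one place where the paper is cleaner is exactly the ``principal obstacle'' you flagged. In the paper's model, handlesliding $a_i$ over the other arcs is literally \emph{moving an endpoint of $a_i$ along $\partial D\setminus\{q_i\}$}; the sequence of slides needed to turn $a_i$ into $a'$ is then just ``slide the endpoints so that $a_i$ encloses exactly the set $X$'', with no cancelling pairs to track. Your description ``trace $\partial D_1$ from one endpoint of $a'$ to the other and slide $a_{i_0}$ over each $\bolda$-copy encountered'' is slightly imprecise as stated, since $a_{i_0}$ initially sits at its own copy $c_k$ in the middle of that path rather than at an endpoint of $a'$; the fix is to slide outward from $c_k$ in both directions along $\partial D_1$ (equivalently, adopt the paper's endpoint-dragging picture). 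With that adjustment your argument goes through, and the $\mathbb Z/2$ bookkeeping you set up is harmless here because an embedded arc disjoint from $\bolda$ has each $c_i\in\{-1,0,1\}$ over $\mathbb Z$ anyway.
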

\begin{proof}
It suffices to suppose $R$ is connected. For both proofs we think of $R$ as obtained from a disk $D$ by attaching $l$ one-handles, where each handle is attached along two points $p_i,q_i\in \p D$ and has cocore $a_i$ for $i=1,\dots,l$. We isotope each $a_i$ to a small arc contained in $D$ around $p_i$. Then, handlesliding $a_i$ over the other arcs of $\bolda$ corresponds to moving the endpoints of $a_i$ through $\p D\sm \{q_i\}$. So let $a'$ be an arc as in $(1)$. Since $a'$ is disjoint from $\bolda$, we can suppose it is contained in $D$, so it separates the points of $F=\cup_{i=1}^l\{p_i,q_i\}$ into two disjoint sets, call them $X,Y$. Now since $[a']\neq 0$ in $H_1(R,\p R)$, there must exist an $i$ such that $p_i\in X$ and $q_i\in Y$. Then one obtains a curve isotopic to $a'$ by handlesliding $a_i$ along all arcs $a_j$ for which $p_j\in X$ or $q_j\in X$ (if both $p_j,q_j\in X$ then we need to handleslide $a_i$ over $a_j$ twice). Now let $\bolda'$ be another cut system. We suppose that all the endpoints of the arcs of $\bolda'$ lie over $\p D\sm F$. We prove $(2)$ by induction on $N=\sum_{i,j}|a_i\cap a'_j|$. If $N=0$, then we are done. If $N>0$, then there is an arc, say $a'_1$, that intersects an arc of $\bolda$, say $a_1$. Let $x_0\in\p D\sm F$ be one of the endpoints of $a'_1$ and let $x_1\in a'_1\cap a_1$ so that $a'_1|_{[x_0,x_1]}\cap\bolda=a'_1|_{[x_0,x_1]}\cap a_1=\{x_1\}$ (here $a'_1|_{[x_0,x_1]}$ denotes the subarc of $a'_1$ with endpoints $x_0,x_1$). As in the proof of $(1)$, we can handleslide $a_1$ across $\bolda\sm\{a_1\}$ to place its endpoint right next to $x_0$ in such a way that the intersection $a'_1|_{[x_0,x_1]}\cap a_1$ is removed. This decreases $N$ at least by one, and we are done by the induction hypothesis.
\end{proof}

\begin{proposition}
\label{prop: RS thm for cut systems}
Any two cut systems of a compact orientable surface-with-boundary $R$ (without closed components) are related by isotopy and handlesliding.
\end{proposition}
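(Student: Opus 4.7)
The proof proceeds by induction on $l = |\bolda| = |\bolda'|$, which equals the rank of $H_1(R, \p R)$ (see Remark \ref{remark: cut systems in terms of homology}). The base case $l = 0$ forces $R$ to be a disjoint union of disks and both cut systems are empty, so the statement is trivial.

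For the inductive step, I first apply part (2) of Lemma \ref{lemma: making disjoint cut systems with hsliding} to reduce, via handleslides and isotopies of $\bolda$, to the case $\bolda \cap \bolda' = \emptyset$. Choose any $a' \in \bolda'$; since $\bolda'$ is a cut system we have $[a'] \neq 0$ in $H_1(R, \p R)$, and part (1) of the same lemma then produces some $a_i \in \bolda$ together with a sequence of handleslides of $a_i$ over other arcs of $\bolda$ whose outcome is isotopic to $a'$. Performing these moves, I may assume $a' \in \bolda$.

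Next, I cut $R$ along $a'$ to obtain a (possibly disconnected) surface $R'$, whose boundary is $(\p R \sm \p a') \cup a'^+ \cup a'^-$, where $a'^\pm$ are the two copies of $a'$. The arcs $\bolda \sm \{a'\}$ and $\bolda' \sm \{a'\}$ descend to cut systems of $R'$ of size $l-1$, and by the inductive hypothesis they are related by a sequence of handleslides and isotopies in $R'$. To finish, I lift each such move to a move in $R$: isotopies lift directly since they are supported in the complement of $a'$, and a handleslide in $R'$ whose arc $\d \subset \p R'$ lies inside $\p R \cap \p R'$ is literally a handleslide in $R$.

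The main obstacle is a handleslide in $R'$ whose arc $\d$ traverses one of the two copies $a'^\pm$ in $\p R'$; in $R$ this corresponds to a path along $\p R$ that ``jumps across'' the interior arc $a'$. Such a move can be realized in $R$ as a composition of ordinary handleslides: the portion of $\d$ running along $\p R$ up to $\p a'$ gives a handleslide over $a'$ itself, which produces an intermediate arc running parallel to $a'$, and the remaining portion of $\d$ then implements the intended handleslide using this new arc. Concatenating all these lifted moves with the preliminary handleslides yields the required sequence of handleslides and isotopies in $R$ relating $\bolda$ to $\bolda'$.
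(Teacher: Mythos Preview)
Your proof is correct and follows essentially the same inductive strategy as the paper: use Lemma~\ref{lemma: making disjoint cut systems with hsliding}(2) to make the two cut systems disjoint, use part (1) to arrange a common arc, cut along it, and induct. You are more explicit than the paper about lifting handleslides from the cut surface $R'$ back to $R$ (the paper simply writes ``the proof then follows by induction''), and your decomposition of a handleslide whose arc $\delta$ runs through $a'^{\pm}$ into successive handleslides over $a'$ in $R$ is the right way to fill in that step.
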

\begin{proof}
It suffices to suppose $R$ connected. Let $\bolda,\bolda'$ be two cut systems over $R$. By Lemma \ref{lemma: making disjoint cut systems with hsliding} $(2)$ we may suppose $\bolda\cap\bolda'=\emptyset$ and by $(1)$ of the same lemma we may suppose $a_1=a'_1$. Then we can cut $R$ along $a_1$ to get two cut systems on a surface $R'$ with $\rk H_1(R')=\rk H_1(R)-1$. The proof then follows by induction.
\end{proof}

\begin{definition}
Let $\HH=\HD$ be a balanced sutured Heegaard diagram. A {\em cut system} of $(\S,\aa)$ is a collection $\bolda=\{a_1,\dots,a_l\}$ of properly embedded arcs in $\S$ where:
\begin{enumerate}
\item The sets $\aa$ and $\bolda$ are disjoint in $\S$.
\item The collection $\bolda$ is a cut system in $\S[\aa]$, the surface obtained from $\S$ by doing surgery along $\aa$.
\end{enumerate}
\end{definition}

Cut systems of $(\S,\bb)$ are defined in a similar way. If $\bolda,\bolda'$ are two cut systems of $(\S,\aa)$, then they are related by isotopy and handlesliding in $\S[\aa]$ by Proposition \ref{prop: RS thm for cut systems}. Note that isotopies may pass through the traces of the surgery, this corresponds to handlesliding an arc in $\bolda$ over a curve of $\aa$. 

\begin{corollary}
\label{corollary: moves for cut systems of Heegaards diagram}
Let $\HD$ be a balanced sutured Heegaard diagram. Any two cut systems of $(\S,\aa)$ are related by isotopy, arc-arc handlesliding and arc-curve handlesliding.
\end{corollary}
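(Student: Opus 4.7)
The plan is to reduce this to Proposition \ref{prop: RS thm for cut systems} applied in the surgered surface $\S[\aa]$, and then translate the moves there back into moves on $\S$.

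First, I would observe that a cut system $\bolda$ of $(\S,\aa)$ is, by definition, a cut system of the closed-up surface-with-boundary $R=\S[\aa]$ obtained by surgering $\S$ along the curves of $\aa$. Note that $R$ has no closed components: each component of $\S[\aa]$ contains a boundary component coming from $\p\S$, because $\HH$ is balanced and every component of $\S\sm\aa$ meets $\p\S$. Hence Proposition \ref{prop: RS thm for cut systems} applies, and any two cut systems $\bolda,\bolda'$ of $(\S,\aa)$ are connected by a finite sequence of isotopies and handleslides in $R$.

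Next, I would show that each such move lifts to an allowed move on $\S$. An arc-arc handleslide performed in $R$ occurs in a pair of pants disjoint from the capping disks of the surgery (after a small perturbation, since the disks form a finite collection), so it is equally an arc-arc handleslide in $\S$, possibly composed with an isotopy. An isotopy in $R$ is the more delicate case: decomposing it as a finite concatenation of small isotopies, generic position lets me assume that at each step either the support is disjoint from the capping disks (giving an honest isotopy in $\S\sm\aa$) or the arc sweeps across exactly one capping disk. In the latter case, pushing the arc across a capping disk $D_{\a_i}$ in $R$ corresponds, in $\S$, to isotoping the arc up to $\a_i$ and then carrying its sub-arc around the other side of a pair of pants bounded by $\a_i$ together with two parallel copies of the crossing segment — that is exactly an arc-curve handleslide of the relevant arc over $\a_i$ followed by an isotopy in $\S$.

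The main obstacle, then, is this last identification: verifying rigorously that an elementary isotopy in $\S[\aa]$ that crosses a single capping disk is realized, upstairs in $\S$, by one arc-curve handleslide together with isotopies in $\inte(\S)\sm\aa$. Once this ``passing-through-a-cap equals handleslide over an $\a$-curve'' dictionary is established, concatenating the moves produced by Proposition \ref{prop: RS thm for cut systems} yields a sequence of isotopies in $\S$, arc-arc handleslides and arc-curve handleslides taking $\bolda$ to $\bolda'$, proving the corollary. \qed
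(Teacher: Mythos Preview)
Your proposal is correct and follows the same route the paper takes: apply Proposition \ref{prop: RS thm for cut systems} in $\S[\aa]$ and observe that an isotopy there which crosses a surgery disk corresponds, back in $\S$, to handlesliding an arc over the associated $\a$-curve. The paper states this more tersely (the sentence just before the corollary is essentially its entire proof), so your additional care about checking that $\S[\aa]$ has no closed components and about decomposing isotopies into elementary pieces is welcome.
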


\begin{definition}
\label{def: extended HD}
An {\em extended} sutured Heegaard diagram is a tuple $\HH=(\S,\aa,\bolda,\bb,\boldb)$ where $\HH=\HD$ is a Heegaard diagram and $\bolda$ (resp. $\boldb$) is a cut system of $(\S,\aa)$ (resp. $(\S,\bb)$). If $d=|\aa|$ and $l=|\bolda|$, we will usually note $\aaa=\{\a_1,\dots,\a_{d+l}\}$ where $\aa=\{\a_1,\dots,\a_d\}$ and $\bolda=\{\a_{d+1},\dots,\a_{d+l}\}$. We use similar notation for $\bbb=\bb\cup\boldb$. Thus, we note a general extended Heegaard diagram as $\HH=\HDD$.
\end{definition}

\begin{example}
\label{example: extended diagram for left trefoil}
The Heegaard diagram of the left trefoil given in Example \ref{example: sutured HD for link complement} can be extended by letting $\bolda=\{a\},\boldb=\{b\}$ as in Figure \ref{figure: cut systems for left trefoil}.
\end{example}
\begin{figure}[t]
\centering
\begin{pspicture}(0,-2.14)(12.334407,2.14)
\definecolor{colour0}{rgb}{0.0,0.8,0.2}
\psbezier[linecolor=red, linewidth=0.04](9.268475,-0.06677966)(8.827119,-0.2138983)(8.819762,-1.509921)(9.261119,-1.6570395480225977)
\psbezier[linecolor=red, linewidth=0.04, linestyle=dotted, dotsep=0.10583334cm](9.268475,-1.6607175)(9.70983,-1.5135989)(9.70983,-0.2138983)(9.268475,-0.0667796610169512)
\psbezier[linecolor=black, linewidth=0.04](8.380858,0.13183051)(8.380858,0.57318646)(9.999164,0.57318646)(9.999164,0.13183050847457536)
\psbezier[linecolor=colour0, linewidth=0.04](7.6501694,0.30101696)(7.6501694,-0.43457627)(9.415593,-0.36101696)(9.415593,-0.361016949152542)(9.415593,-0.36101696)(9.70983,-0.36101696)(9.70983,-0.5816949)(9.70983,-0.8023729)(9.415593,-0.8023729)(9.415593,-0.8023729)(9.415593,-0.8023729)(8.532882,-0.8023729)(8.532882,-0.8023729)(8.532882,-0.8023729)(8.238644,-0.8023729)(8.238644,-1.0230509)(8.238644,-1.2437288)(8.532882,-1.2437288)(8.532882,-1.2437288)(8.532882,-1.2437288)(8.606441,-1.2437288)(9.415593,-1.2437288)(10.224746,-1.2437288)(10.88678,-0.8023729)(10.88678,0.080338985)(10.88678,0.96305084)(9.930509,1.2572881)(9.194915,1.2572881)(8.459322,1.2572881)(7.6501694,1.0366101)(7.6501694,0.30101696)
%\rput[bl](12.284407,-1.9057627){.}
\pscircle[linecolor=black, linewidth=0.04, dimen=outer](8.606441,-1.0230509){0.14711864}
\psbezier[linecolor=blue, linewidth=0.04](8.753796,-1.0208095)(9.34227,-1.0208095)(9.949366,-1.082188)(9.977411,-0.713238885788187)(10.005457,-0.34428972)(9.7377405,-0.086666666)(9.950124,0.09014689)
\psbezier[linecolor=red, linewidth=0.04, linestyle=dotted, dotsep=0.10583334cm](2.7952542,-1.6740508)(3.2366102,-1.5269322)(3.2366102,-0.2138983)(2.7952542,-0.0667796610169512)
\psellipse[linecolor=black, linewidth=0.04, dimen=outer](2.7952542,0.22745761)(2.7952542,1.9125423)
\psbezier[linecolor=black, linewidth=0.04](1.7654237,0.37457627)(1.7654237,-0.2138983)(3.677966,-0.2138983)(3.677966,0.3745762711864398)
\psbezier[linecolor=black, linewidth=0.04](1.9076384,0.13183051)(1.9076384,0.57318646)(3.5259435,0.57318646)(3.5259435,0.13183050847457536)
\pscircle[linecolor=black, linewidth=0.04, dimen=outer](2.1332202,-1.0230509){0.14711864}
\pscircle[linecolor=black, linewidth=0.04, dimen=outer](2.8688135,-0.5816949){0.14711864}
\psbezier[linecolor=red, linewidth=0.04](1.9861016,-1.0230509)(1.3976271,-1.0230509)(0.5149152,-0.43457627)(0.5149152,0.44813559322033714)(0.5149152,1.3308475)(1.6918644,1.7722034)(2.721695,1.7722034)(3.7515254,1.7722034)(5.075593,1.3308475)(5.075593,0.44813558)(5.075593,-0.43457627)(3.7515254,-0.5816949)(3.015932,-0.5816949)
\psbezier[linecolor=blue, linewidth=0.04, linestyle=dotted, dotsep=0.10583334cm](10.984406,-1.24)(10.684406,-1.14)(10.571864,-1.0288136)(10.484406,-0.74)(10.396949,-0.45118645)(10.365198,0.033446327)(10.004067,0.11466666)
\psbezier[linecolor=black, linewidth=0.04](8.238644,0.37457627)(8.238644,-0.2138983)(10.151186,-0.2138983)(10.151186,0.3745762711864398)
\psbezier[linecolor=red, linewidth=0.04](2.7952542,-0.06677966)(2.3538983,-0.2138983)(2.3465424,-1.5232543)(2.7878983,-1.670372881355929)
\psbezier[linecolor=colour0, linewidth=0.04](1.1769491,0.30101696)(1.1769491,-0.43457627)(2.9423728,-0.36101696)(2.9423728,-0.361016949152542)(2.9423728,-0.36101696)(3.2366102,-0.36101696)(3.2366102,-0.5816949)(3.2366102,-0.8023729)(2.9423728,-0.8023729)(2.9423728,-0.8023729)(2.9423728,-0.8023729)(2.059661,-0.8023729)(2.059661,-0.8023729)(2.059661,-0.8023729)(1.7654237,-0.8023729)(1.7654237,-1.0230509)(1.7654237,-1.2437288)(2.059661,-1.2437288)(2.059661,-1.2437288)(2.059661,-1.2437288)(2.1332202,-1.2437288)(2.9423728,-1.2437288)(3.7515254,-1.2437288)(4.4135594,-0.8023729)(4.4135594,0.080338985)(4.4135594,0.96305084)(3.457288,1.2572881)(2.721695,1.2572881)(1.9861016,1.2572881)(1.1769491,1.0366101)(1.1769491,0.30101696)
\rput[bl](2.6844068,-2.14){$\alpha$}
\rput[bl](9.184406,-2.04){$\alpha$}
\rput[bl](2.6844068,0.76){$\beta$}
\rput[bl](9.184406,0.76){$\beta$}
\rput[bl](4.984407,-0.34){$a$}
\rput[bl](7.6844068,-0.94){$b$}
\psbezier[linecolor=blue, linewidth=0.04](9.209407,-0.58666664)(8.92274,-0.58666664)(8.51774,-0.5933333)(8.264407,-0.56)(8.011073,-0.52666664)(7.1844068,-0.44)(7.1844068,0.26)(7.1844068,0.96)(7.8044066,1.68)(9.184406,1.66)(10.564406,1.64)(11.284407,0.96)(11.384407,0.26)(11.484406,-0.44)(11.404407,-1.045)(11.004407,-1.245)
\psellipse[linecolor=black, linewidth=0.04, dimen=outer](9.268475,0.22745761)(2.7952542,1.9125423)
\pscircle[linecolor=black, linewidth=0.04, dimen=outer](9.342033,-0.5816949){0.14711864}
\end{pspicture}
\medskip

\caption{Cut systems for the diagram of the left trefoil complement of Example \ref{example: sutured HD for link complement}. Taken together they define an extended Heegaard diagram. On the right we depict the arc $b$ in blue to avoid confusion}
\label{figure: cut systems for left trefoil}
\end{figure}
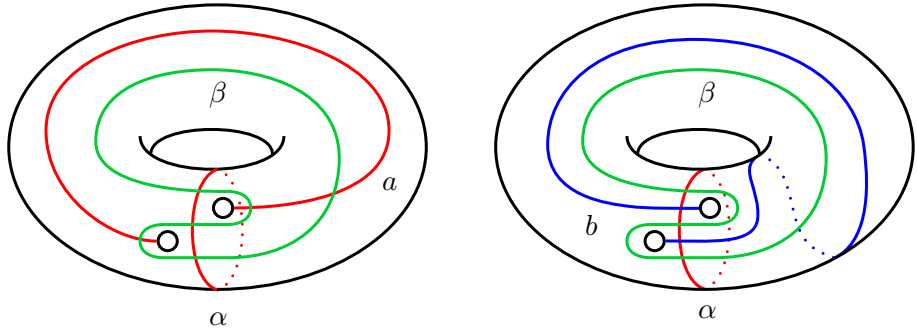

\begin{definition}
\label{def: extended Heegaard move}
Let $\HH=\HDD$ be an extended Heegaard diagram of $(M,\c)$. An {\em extended Heegaard move} consist of the Heegaard moves of $\HD$ and the moves between cut systems of $(\S,\aa)$ and $(\S,\bb)$ (i.e. arc-arc and arc-curve handlesliding) in which all isotopies and handleslidings of $\aaa$ are performed in the complement of $\aaa$ in $\S$ (and similarly for $\bbb$).
\end{definition}

\begin{proposition}
\label{prop: extended RS thm}
Any two extended Heegaard diagrams of a sutured 3-manifold $(M,\c)$ are related by extended Heegaard moves.
\end{proposition}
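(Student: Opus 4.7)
The plan is to reduce the statement to Theorem \ref{thm: embedded RS thm of JTZ} (for the underlying Heegaard diagram) together with Corollary \ref{corollary: moves for cut systems of Heegaards diagram} (for the cut systems). Given two extended diagrams $\HH_1,\HH_2$ of $(M,\c)$, I would first invoke Theorem \ref{thm: embedded RS thm of JTZ} to connect the underlying sutured Heegaard diagrams by a finite sequence of isotopies, handleslides of curves, stabilisations and diagram diffeomorphisms isotopic to the identity in $M$. The goal is then to show that each of these moves can be lifted to an extended Heegaard move, at the cost of a controlled modification of the ambient cut systems.

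The four cases to check are: (i) an isotopy of a curve in $\aa\cup\bb$, (ii) a handleslide of a curve in $\aa\cup\bb$, (iii) a stabilisation, and (iv) a diagram diffeomorphism isotopic to the identity. For (i) and (ii), the isotopy or handleslide is supported in some compact region $R\subset\S\sm(\aa\sm\{\a\})$, and the idea is first to move the arcs of $\bolda$ out of $R$ by extended Heegaard moves and only then to perform the curve move, which now automatically respects the ``complement of $\aaa$'' condition of Definition \ref{def: extended Heegaard move}. The clearing of $\bolda$ from $R$ is obtained by an intersection-count induction modelled on the proof of Lemma \ref{lemma: making disjoint cut systems with hsliding}(2): each intersection of an arc of $\bolda$ with $R$ can be removed by handlesliding the offending arc over its neighbours. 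A symmetric argument is applied to $\boldb$.

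For (iii), the key observation is that stabilisation leaves the surgered surface $\S[\aa]$ unchanged up to canonical diffeomorphism, because surgering the new torus summand along $\a'$ cancels the connect sum. Thus, after isotoping $\bolda$ and $\boldb$ off the small disk $D\subset\S\sm(\aa\cup\bb)$ where stabilisation takes place, the same cut systems remain valid on the stabilised diagram. Case (iv) is handled by transporting the cut systems through the diffeomorphism. At the end of the whole sequence the underlying Heegaard diagrams coincide, and the remaining discrepancy between the two pairs of cut systems on $(\S,\aa,\bb)$ is eliminated by Corollary \ref{corollary: moves for cut systems of Heegaards diagram}, which is precisely a statement about cut systems on a fixed Heegaard diagram.

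The main obstacle I anticipate is the ``clearing'' step of cases (i)/(ii): one must verify that the handleslides used to remove intersections of $\bolda$ with $R$ take place in $\S\sm(\aa\cup\bolda)$, rather than merely in $\S\sm\aa$, and that the induction terminates without reintroducing intersections. This requires arranging the order of handleslides with some care, but is a routine adaptation of the argument in Lemma \ref{lemma: making disjoint cut systems with hsliding}, so I expect no serious conceptual difficulty.
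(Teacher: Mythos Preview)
Your proposal is correct and follows the same architecture as the paper's proof: reduce to Theorem~\ref{thm: embedded RS thm of JTZ} for the underlying Heegaard diagram, and to Corollary~\ref{corollary: moves for cut systems of Heegaards diagram} for the cut systems once the underlying diagram is fixed. The paper simply declares cases (i), (iii), (iv) ``obvious'' and treats only curve--curve handlesliding, so your more explicit case analysis is fine.

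The one place where the paper is sharper is the clearing step in case (ii). Rather than invoking an intersection-count induction in the style of Lemma~\ref{lemma: making disjoint cut systems with hsliding}(2), the paper observes that any arc $a\in\bolda$ meeting the handleslide path~$\d$ can be pushed off $\d$ by a single \emph{arc--curve} handleslide of $a$ over~$\a_i$, using the tail of $\d$ from the intersection point to~$\a_i$ as the guiding path. Processing the intersection points of $\bolda\cap\d$ in order of proximity to~$\a_i$ clears $\d$ directly. This is cleaner than your proposed induction, and it also clarifies a small imprecision in your write-up: Lemma~\ref{lemma: making disjoint cut systems with hsliding}(2) concerns two cut systems on a surface and uses only arc--arc handleslides, whereas here one is clearing an interior arc~$\d$, and the natural move is the arc--curve handleslide over~$\a_i$ (equivalently, isotopy across a scar disk in~$\S[\aa]$). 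Your phrase ``handlesliding the offending arc over its neighbours'' should be read to include this arc--curve move; if $|\bolda|=1$ there are no arc neighbours available, but the arc--curve slide over $\a_i$ still works.
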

\begin{proof}
Let $\HH=\HDD$ and $\HH'=\HDDp$ be two extended Heegaard diagrams of $(M,\c)$. By Proposition \ref{prop: RS thm for cut systems} it suffices to prove that whenever the diagrams are related by one of the moves of Theorem \ref{thm: embedded RS thm of JTZ}, then the latter move can be performed in the complement of the cut systems. This is obvious for all moves except when handlesliding closed curves. So suppose $\HH'$ is obtained from $\HH$ by handlesliding a curve $\a_j$ over a curve $\a_i$ along an arc $\d$. Suppose the arc $\d$ intersects some arcs in $\bolda$. Then we can successively handleslide these arcs along $\a_i$ to get a new cut system $\bolda''$ for $\HH$ which is disjoint of $\d$. Hence, $\bolda''$ is also a cut system for $\HH'$ and so is related to $\bolda'$ by the moves of Corollary \ref{corollary: moves for cut systems of Heegaards diagram}. Thus we can pass from $\HH$ to $\HH'$ using extended Heegaard moves, as desired.\end{proof}

%NOTE: a cut system seems to give a "bordered structure" to $R_-$. Invariance of cut systems moves would mean independence of parametrization. So it seems our invariant actually is an evaluation of a bordered invariant.

%NOTE: Which is the stronger version of RS thm for bordered manifolds? (analogue to the version of JTZ). Is there such a version in the literature, maybe we can use such a thing and then prove our invariant is independent of the parametrization.

\subsection[Dual curves]{Dual curves} 
\label{subsection: dual curves}

%NOTE: This subsection should be placed just after the definition of embedded HD.\medskip

%NOTE: For the moment I assume $R_-$ is connected. Maybe we can band-connect-sum but I shall prove then independence of the band chosen. Another possibility is to remember that in the link case there is no arc-arc sliding, so no need to apply projection and $\psi$.\medskip

Let $(M,\c)$ be a balanced sutured 3-manifold. We will now require $R=R_-(\c)$ to be {\em connected}. 

\begin{definition}
\label{def: oriented HD}
An extended Heegaard diagram $\HH=\HDD$ of $(M,\c)$ is said to be {\em oriented} if each curve or arc in $\aaa\cup\bbb$ is oriented.
\end{definition}

Let $\HH=\HDD$ be an extended oriented diagram of $(M,\c)$. Let $\aa=\{\a_1,\dots,\a_d\}$ and $\bolda=\{\a_{d+1},\dots,\a_{d+l}\}$. 
We will denote by $\Ua$ and $\Ub$ the (oriented) compression bodies corresponding to $\HD$. Thus, for each $i=1,\dots,d$ there is a disk $D_i$ embedded in $\Ua$ so that $\p D_i=\a_i$ and the given orientation of $\a_i$ determines an orientation of $D_i$. Now think of $\Ua$ as built from $R_-\t I$ by attaching one-handles to $R_-\t\{1\}$ (with belt circles the $\a$ curves), see Remark \ref{remark: Heegaard diagram gives handle decomposition}. For each $i=1,\dots,l$ let $D_{d+i}=\a_{d+i}\t I\subset R_-\t I$ and give $D_{d+i}$ the {\em opposite} of the product orientation (this choice of orientation is explained in Remark \ref{remark: handlesliding alpha curves} below). This way, the oriented disks $D_1,\dots,D_{d+l}$ define homology classes in $H_2(\Ua,\p \Ua)$.

\begin{lemma}
\label{lemma: compression disks give homology basis}
Let $\Ua$ be the lower compression body of $(M,\c)$ corresponding to $\HD$. Then, the homology classes $[D_1],\dots,[D_{d+l}]$ defined above constitute a basis of $H_2(\Ua,\p\Ua)$.
\end{lemma}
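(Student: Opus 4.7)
The strategy is to show that $\Ua$ is a topological handlebody of genus $d+l$ for which $\{D_1, \dots, D_{d+l}\}$ is a complete system of compressing disks, then invoke the standard fact that the cocores of any complete set of $1$-handles form a basis of $H_2(H, \p H)$ for a handlebody $H$.

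The first observation is that, by Remark \ref{remark: Heegaard diagram gives handle decomposition}, $\Ua$ is built from $R_- \t I$ by attaching $d$ three-dimensional $1$-handles along $R_- \t\{1\}$, with belt circles $\a_1, \dots, \a_d$; thus $D_1, \dots, D_d$ are precisely the cocores of these $1$-handles. Since $R_-$ is connected with non-empty boundary, $\pi_1(R_-)$ is free of rank $l$ and $R_- \t I$ is itself a handlebody of genus $l$. Moreover, the cut system $\bolda = \{\a_{d+1}, \dots, \a_{d+l}\}$ of $\S[\aa] \cong R_-$ yields, via the product with $I$, a complete set of compressing disks $D_{d+i} = \a_{d+i} \t I$ for the sub-handlebody $R_- \t I$. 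Putting these together, $\Ua$ admits a handle decomposition with a single $0$-handle and $d+l$ $1$-handles whose cocores are exactly $D_1, \dots, D_{d+l}$. Equivalently, cutting $\Ua$ along all the $D_k$ produces a single $3$-ball: cutting along $D_1, \dots, D_d$ leaves $R_- \t I$, and cutting $R_- \t I$ along the remaining $D_{d+i}$ reduces $R_-$ to a disk (since $\bolda$ cuts $R_-$ into a disk) and hence $R_- \t I$ to $D^2 \t I = D^3$.

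With this handle decomposition in hand, the conclusion follows from the standard fact that for a handlebody $H$ of genus $g$ built from a $0$-handle by attaching $g$ $1$-handles, the cocores of the $1$-handles form a basis of $H_2(H, \p H) \cong \Z^g$. This can be seen directly from the relative cellular chain complex (in which the $D_i$ are the only $2$-cells of $(H,\p H)$), or by induction on $g$ via Mayer--Vietoris. The main technical point is therefore merely unpacking the handle decomposition of $\Ua$ from Remark \ref{remark: Heegaard diagram gives handle decomposition} and identifying the two-step compressing system $D_1, \dots, D_{d+l}$ with the cocores of a genuine handle decomposition of the topological handlebody $\Ua$; no orientation issue arises since the statement concerns only the basis property of the $[D_k]$.
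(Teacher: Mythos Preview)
Your proof is correct. Both your argument and the paper's hinge on the same geometric fact: since $R_-$ is connected, cutting $\Ua$ along all of the $D_k$ yields a single $3$-ball. You package this by recognising $\Ua$ as a genus-$(d+l)$ handlebody with $\{D_k\}$ a complete meridian system and then quoting the standard computation of $H_2(H,\p H)$ for a handlebody, whereas the paper carries out that computation in place via the long exact sequence of the triple $(\Ua, V\cup\p\Ua, \p\Ua)$ with $V=\bigcup_k D_k$, using excision to identify $H_*(\Ua, V\cup\p\Ua)\cong H_*(B,\p B)$ for $B=\Ua\sm N(V)\cong D^3$. Your route is a bit more conceptual and makes clear why the hypothesis that $R_-$ be connected enters; the paper's route is more self-contained, since the ``standard fact'' you invoke is precisely what that exact-sequence argument proves.
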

\begin{proof}
Let $V=D_1\cup\dots\cup D_{d+l}$. By excision, we have $$H_*(\Ua,V\cup\p\Ua)\cong H_*(B,\p B)$$ where $B=\Ua\sm N(V)$ and $N(V)$ is an open tubular neighborhood of $V$. Similarly, by excising $\p U_{\a}\sm N(V)\cap \p U_{\a}$ we get $$H_*(V\cup\p\Ua, \p\Ua)\cong H_*(V,\p V).$$ Thus, the long exact sequence of the triple $(\Ua,V\cup\p\Ua, \p\Ua)$ becomes 
\begin{align*}
0\to H_3(\Ua,\p\Ua)\to H_3(B,\p B)\to H_2(V,\p V)\to H_2(\Ua,\p\Ua)\to H_2(B,\p B).
\end{align*}
Now, since $R_-$ is connected, $B$ is homeomorphic to a 3-ball. This implies that the fourth arrow above is an isomorphism, which is equivalent to the statement of the lemma.
\end{proof}

Now consider the Poincar\'e duality pairing
\begin{align*}
H_2(\Ua,\p\Ua)\t H_1(\Ua)&\to \Z \\
(x,y)\hspace{1cm}&\mapsto x\cdot y.
\end{align*}
Since this pairing is non-degenerate, there is a basis $\c_1,\dots,\c_{d+l}$ of $H_1(\Ua)$ characterized by
\begin{align*}
[D_i]\cdot \c_j=\d_{ij}
\end{align*}
for all $i,j=1,\dots,d+l$.

\begin{definition}
\label{def: dual homology classes}
Let $\HDD$ be an oriented extended Heegaard diagram of a sutured 3-manifold $(M,\c)$ with connected $R_-$. Let $j:\Ua\to M$ be the inclusion. We will denote $\a^*_i=j(\c_i)\in H_1(M)$ for each $i=1,\dots,d+l$ where $\c_1,\dots,\c_{d+l}$ is the Poincar\'e dual basis of $[D_1],\dots,[D_{d+l}]$ as above.
\end{definition}

For each $i=1,\dots,d$, the class $\a^*_i$ can be represented by an oriented simple closed curve that goes through the one-handle corresponding to $\a_i$ once and disjoint from the disks $D_j$ for $j=1,\dots, d+l, j\neq i$. It can even be represented as an oriented simple closed curve in $\S$ satisfying $\a_i\cdot \a^*_i=+1$ (where the intersection is taken with respect to the orientation of $\S$) and $\a^*_i\cap (\aaa\sm\{\a_i\})=\emptyset$. Similarly, for each $i=1,\dots,l$, the class $\a^*_{d+i}$ can be represented as an oriented simple closed curve in $\S$ intersecting $\a_{d+i}$ positively in one point and disjoint from all curves or arcs in $\aaa$. The orientation of $D_{d+i}$ chosen above guarantees that $\a_{d+i}\cdot \a^*_{d+i}=+1$ over $\S$ for each $i=1,\dots,l$.
\medskip

Of course, one can also take the Poincar\'e dual basis of the disks corresponding to $\bbb$ and then take their image in $H_1(M)$. This gives homology classes $\b^*_1,\dots,\b^*_{d+l}$ in $H_1(M)$ where $\bbb=\{\b_1,\dots,\b_{d+l}\}$. If these classes are represented in the surface $\S$, then they are oriented so that $\b^*_i\cdot\b^*_j=\d_{ij}$ for $i,j=1,\dots, d+l$. 

\begin{remark}
\label{remark: handlesliding alpha curves} 
Suppose a closed curve $\a'_j$ is obtained by handlesliding a curve $\a_j$ over a curve $\a_i$. Let $\HH'=(\S,\aaap,\bbb)$ be the Heegaard diagram thus obtained. We denote by $\a'_i$ the curve $\a_i$ seen as a curve in $\HH'$. Suppose the curves $\a_i,\a_j,\a'_j$ are oriented so that
\begin{align*}
\p P=\a_i\cup\a_j\cup (-\a'_j)
\end{align*}
as oriented 1-manifolds, where $P$ is the handlesliding region. Let $D_i,D_j,D'_j$ be the disks corresponding to $\a_i,\a_j,\a'_j$. Then one has
\begin{align*}
\p B= -D_i\cup -D_j\cup D'_j
\end{align*}
as oriented surfaces, where $B$ is the three-ball where the handlesliding occurs. This implies $[D'_j]=[D_j]+[D_i]$ in $H_2(\Ua,\p\Ua)$ and so
\begin{align*}
(\a'_i)^*=\a^*_i(\a^*_j)^{-1}
\end{align*}
while $(\a'_j)^*=\a^*_j$, where we use multiplicative notation for $H_1(M)$. By our orientation conventions for the disks $D_{d+i}$ for $i=1,\dots,l$, a similar assertion holds if we handleslide an arc over a curve or an arc over an arc. Of course, similar statements hold for $\bbb$.
\end{remark}

We introduce a last piece of notation which we will use repeatedly.

\begin{definition}
\label{def: ov l for l arc}
\label{notation: waaa and wbbb for immersed curve in Sigma}
\def\waaa{w_{\aa,\bolda}}\def\wbbb{w_{\bb,\boldb}} Let $l$ be an oriented immersed circle or arc in $\S$, transversal to $\aaa$. If $l$ is a circle, we assume it has a basepoint. We construct an element $\ov{l}$ in $H_1(M)$ as follows:
\begin{enumerate}
\item At each intersection point $x\in \a\cap l$ where $\a\in\aaa$ write $(\a^*)^{m(x)}$ where $m(x)\in\{\pm 1\}$ denotes the sign of intersection at $x$.
\item Multiply all the elements encountered from left to right, following the orientation of $l$ and starting from the basepoint of $l$ (if $l$ is an arc we take its starting point as basepoint).
\end{enumerate}
If $l$ is transversal to $\bbb$ then there is an element in $H_1(M)$ defined in a similar way (but the signs are taken at $x\in l\cap\b$ and not $\b\cap l$). We also denote this by $\ov{l}$, whenever there is no confusion.
\end{definition}

\begin{remark}
\label{remark: ovl equals homology class of l if cycle}
If $l$ is a 1-cycle in $\S$ transversal to $\aaa$, then its homology class $[l]$ in $H_1(M)$ coincides with $\ov{l}$. Note that if $l$ is an arc in $\S$ which is not properly embedded (e.g. a subarc of an $\a$ curve), then $\ov{l}$ is by no means an isotopy invariant of $l$.
\end{remark}

\subsection[$\Spinc$ structures]{$\Spinc$ structures}
\label{subs: Spinc structures}

%\begin{enumerate}\item NOTE: why do we have such a height function? I mean, we could Dehn twist along $s(\c)\subset\p M$.\end{enumerate}

%There are many equivalent definitions of $\Spinc$ structures on closed 3-manifolds. We will take the standard definition used in Heegaard Floer theory, thus, we follow \cite[Sect. 2.6]{OS1} closely.

Let $(M,\c)$ be a connected sutured manifold. Fix a nowhere vanishing vector field $v_0$ on $\p M$ with the following properties:
\begin{enumerate}
\item It points into $M$ along $\inte R_-$,
\item It points out of $M$ along $\inte R_+$,
\item It is given by the gradient of the height function $\c=s(\c)\t [-1,1]\to [-1,1]$ on $\c$. 
\end{enumerate}

\begin{definition}
\label{def: spinc structure as homology class of vector fields}
Let $v$ and $w$ be two non-vanishing vector fields on $M$ such that $v|_{\p M}=v_0=w|_{\p M}$. We say that $v$ and $w$ are {\em homologous} if they are homotopic rel $\p M$ in the complement of an open 3-ball embedded in $\inte(M)$ where the homotopy is through non-vanishing vector fields. A {\em $\Spinc$ structure} is an homology class of such non-vanishing vector fields on $M$. We denote the set of $\Spinc$ structures on $M$ by $\Spinc(M,\c)$.
\end{definition}

The space of boundary vector fields $v_0$ with the above properties is contractible. This implies that there is a canonical identification between the set of $\Spinc$ structures coming from different boundary vector fields. Thus, we make no further reference to $v_0$.
\medskip

\begin{proposition}[{\cite[Prop. 3.6]{Juhasz:polytope}}]
\label{prop: spinc is affine space over Htwo}
Let $M$ be a connected sutured manifold. Then $\Spinc(M,\c)$ is non-empty if and only if $M$ is balanced. In such a case, the group $H^2(M,\p M)$ acts freely and transitively over $\Spinc(M,\c)$.
\end{proposition}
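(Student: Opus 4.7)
The plan is to apply standard obstruction theory to sections of the unit tangent bundle $\pi:S(TM)\to M$, whose fiber is $S^2$. By definition, $\Spinc(M,\c)$ is the set of equivalence classes of non-vanishing sections of $\pi$ extending the prescribed boundary section $v_0$, where two sections are identified if they are homotopic rel $\p M$ on the complement of some embedded open $3$-ball in $\inte(M)$. Since $\pi_1(S^2)=0$ and, for $M$ connected, Poincar\'e--Lefschetz duality gives $H^3(M,\p M;\Z)\cong\Z$, the only obstruction to the existence of such a section is the relative Euler class $e(TM,v_0)\in H^3(M,\p M;\Z)$.

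For the non-emptiness $\Leftrightarrow$ balanced equivalence, I would identify $e(TM,v_0)$ as an integer via the Poincar\'e--Hopf index theorem for manifolds with boundary: for any extension $\tilde v$ of $v_0$ with isolated zeros, one has
\[
e(TM,v_0)\;=\;\sum_{p:\,\tilde v(p)=0}\ind_p(\tilde v)\;=\;\chi(M)-\chi(\p_-M),
\]
where $\p_-M$ denotes the part of $\p M$ on which $v_0$ points into $M$. After a harmless inward perturbation of $v_0$ along $\c$ (where it is merely tangential), $\p_-M$ deformation-retracts onto $R_-(\c)$, and since $\chi(\c)=0$ one gets $\chi(\p_-M)=\chi(R_-(\c))$. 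Combining this with the identity $2\chi(M)=\chi(\p M)=\chi(R_-(\c))+\chi(R_+(\c))$, valid for any compact oriented $3$-manifold with boundary, one obtains
\[
e(TM,v_0)\;=\;\tfrac12\bigl(\chi(R_+(\c))-\chi(R_-(\c))\bigr),
\]
which vanishes iff $\chi(R_+(\c))=\chi(R_-(\c))$. The remaining hypotheses in the definition of ``balanced'' (no closed components of $M$, every component of $\p M$ meeting $\c$) are necessary a priori for the prescribed $v_0$ to exist.

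For the free and transitive $H^2(M,\p M)$-action, assume $\Spinc(M,\c)\neq\emptyset$ and pick two nowhere-vanishing extensions $v,w$ of $v_0$. Obstruction theory for a section-homotopy $v\simeq w$ rel $\p M$ yields a primary ``difference'' class $d(v,w)\in H^2(M,\p M;\pi_2(S^2))=H^2(M,\p M;\Z)$; if this vanishes then $v$ and $w$ agree up to homotopy on a $2$-skeleton of $M$ rel $\p M$, and the remaining secondary obstruction in $H^3(M,\p M;\pi_3(S^2))\cong\Z$ can be altered arbitrarily by modifying the homotopy inside a single embedded $3$-ball in $\inte(M)$. Thus $v$ is equivalent to $w$ in the sense of Definition~\ref{def: spinc structure as homology class of vector fields} iff $d(v,w)=0$, giving a well-defined free action of $H^2(M,\p M)$ on $\Spinc(M,\c)$. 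Transitivity follows by the standard construction: for any $\alpha\in H^2(M,\p M)$, a perturbation of $v$ inside a tubular neighborhood of a relative $2$-cycle Poincar\'e dual to $\alpha$, inserting the correct twist using the $\Z$-freedom in $\pi_2(S^2)$, realizes $\alpha=d(v,w)$.

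The main technical hurdle is the Poincar\'e--Hopf computation above, with the mixed inward/outward boundary condition; the cleanest route is to take $\tilde v=-\nabla f$ for a Morse function $f$ on $M$ adapted to the decomposition $\p M=R_-(\c)\cup\c\cup R_+(\c)$ and to carry out the Morse-theoretic index bookkeeping, so that the index sum on $M$ reduces to Morse data on $R_{\pm}(\c)$.
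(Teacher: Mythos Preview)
The paper does not supply its own proof of this proposition; it is simply quoted from Juh\'asz. Your obstruction-theoretic argument is the standard one and is essentially what one finds in the literature: the relative Euler class in $H^3(M,\partial M;\Z)\cong\Z$ obstructs extending $v_0$, Poincar\'e--Hopf (equivalently, Morse theory relative to $R_-$) identifies it with $\chi(M,R_-)=\tfrac12\bigl(\chi(R_+)-\chi(R_-)\bigr)$, and the difference cocycle in $H^2(M,\partial M;\pi_2(S^2))$ gives the free transitive action once one observes that the secondary obstruction in $H^3(M,\partial M;\pi_3(S^2))$ can be absorbed into a single ball. All of this is correct.

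One small point deserves care. Your dismissal of the remaining clauses in the definition of ``balanced'' is not quite accurate. For a connected sutured manifold (which by the paper's definition already has nonempty boundary) the clause ``no closed components'' is automatic, but the clause ``every component of $\partial M$ meets $\gamma$'' is \emph{not} needed for $v_0$ to exist: a closed boundary component lying entirely in $R_-$ simply carries an inward-pointing $v_0$. So what your argument literally proves is $\Spinc(M,\gamma)\neq\emptyset\iff\chi(R_+)=\chi(R_-)$, which is formally weaker than the biconditional with ``balanced'' as stated here. This discrepancy is a matter of convention---in Juh\'asz's setup one typically restricts to proper sutured manifolds from the outset---and the mathematical content of your proof is exactly what is intended.
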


%Let $v,w$ be two non-vanishing vector fields on a closed 3-manifold $M$. Provide $M$ with a CW structure. Then, since $\pi_1(S^2)=0$ there is no obstruction to homotope $v$ to $w$ over the 1-skeleton on $M$ among non-vanishing vector fields. To homotope along the 2-skeleton, there is an obstruction lying in $H^2(M;\Z)$. But then, since $H^3(M\sm B^3)=0$, there is no obstruction to homotope along the 3-skeleton of $M\sm B^3$. This implies that the set $\Spinc(M)$ is an {\em affine} space over $H^2(M;\Z)$: there is a free transitive action of $H^2(M;\Z)$ onto $\Spinc(M)$, but there is no canonical bijection between these two sets. 

We denote the action of $H^2(M,\p M)$ over $\Spinc(M,\c)$ by 
$(h,\ss)\mapsto \ss+h$. If $\ss_1,\ss_2$ denote two $\Spinc$ structures on $M$, we denote by $\ss_1-\ss_2$ the element $h\in H^2(M,\p M)$ such that $\ss_1=\ss_2+h$.

\medskip

We now study how $\Spinc$ structures of $(M,\c)$ can be understood directly from a Heegaard diagram. For this, we need the following definition.

\begin{definition}
Let $\HH=\HD$ be a balanced Heegaard diagram, where 
$\aa=\{\a_1,\dots,\a_d\}$ and $\bb=\{\b_1,\dots,\b_d\}$. A {\em multipoint} in $\HH$ is an unordered set $\x=\{x_1,\dots,x_d\}$ where $x_i\in\a_i\cap \b_{\s(i)}$ for each $i=1,\dots,d$ and $\s$ is some permutation in $S_d$. The set of multipoints of $\HH$ is denoted by $\Tab$.
\end{definition}

We use the notation $\Tab$ for the set of multipoints by the following reason. If we let $\Sym^d(\S)\eq\S^d/S_d$ where the symmetric group $S_d$ acts on $\S^d=\S\t\dots\t\S$ by permuting the factors, then the Heegaard diagram induces two tori 
$\Ta\eq \a_1\t\dots\t\a_d$ and $\Tb\eq \b_1\t\dots\t \b_d$ contained in $\Sym^d(\S)$. A multipoint $\x=\{x_1,\dots,x_d\}$ corresponds to an intersection point $\x\in\Tab$.

\medskip

Given a balanced Heegaard diagram $\HH=\HD$ of $(M,\c)$ with $d=|\aa|=|\bb|$, one can construct a map
\begin{align*}
s:\Tab\to \Spinc(M,\c)
\end{align*}
(see \cite[Section 2.6]{OS1} for the closed case and \cite[Section 4]{Juhasz:holomorphic} for the sutured extension). To do this, we first fix a Riemannian metric on $M$. Now, take a Morse function $f:M\to [-1,4]$ satisfying the following conditions:
\begin{enumerate}
\item $f(R_-)=-1, f(R_+)=4$ and $f|_{\c}$ is the height function 
$\c=s(\c)\t[-1,4]\to [-1,4]$. Here we choose a diffeomorphism $\c=s(\c)\t [-1,4]$ such that $s(\c)$ corresponds to $s(\c)\t \{3/2\}$.
\item For $i=1,2$, $f$ has $d$ index $i$ critical points and has value $i$ on these points. These lie in $\inte(M)$ and there are no other critical points.
\item One has $\S=f^{-1}(3/2)$, the $\a$ curves coincide with the intersection of the unstable manifolds of the index one critical points with $\S$ and the $\b$ curves coincide with the intersection of the stable manifolds of the index two critical points with $\S$.
\end{enumerate}
Such a Morse function always exists (see e.g. \cite[Prop. 6.17]{JTZ:naturality}). By the first condition, $\n f|_{\p M}$ satisfies the properties of the vector field $v_0$ in Definition \ref{def: spinc structure as homology class of vector fields}. Note that the only singularities of $\n f$ are the index one and index two critical points of $f$, denote them by $P_1,\dots, P_d$ and $Q_1,\dots, Q_d$ respectively. Then the last condition above means
\begin{align*}
W^u(P_i)\cap \S=\a_i  \hspace{0.4cm} \text{ and }  \hspace{0.4cm} W^s(Q_i)\cap\S=\b_i
\end{align*}
for each $i$. Here $W^u$ and $W^s$ denote respectively the unstable and stable submanifolds of $\n f$ at the corresponding critical point. Thus, an intersection point $x\in\a_i\cap\b_j$ corresponds to a trajectory of $\n f$ starting at $P_i$ and ending at $Q_j$. In particular, a multipoint $\x\in\Tab$ corresponds to a $d$-tuple $\c_{\x}$ of trajectories of $\n f$ connecting all the index one critical points to all the index two critical points. 

\begin{definition}
\label{def: sx of OSz}
Let $\HD$ be a balanced Heegaard diagram of $(M,\c)$ and $\x\in\Tab$. Let $f$ be a Morse function adapted to the Heegaard diagram as above. We define a $\Spinc$ structure $s(\x)$ as follows: let $N$ be a tubular neighborhood of $\c_{\x}$ in $M$ homeomorphic to a disjoint union of $d$ 3-balls. Then $\n f$ is a non-vanishing vector field over $M\sm N$. Since the critical points of $f$ have complementary indices on each component of $N$, one can extend $\n f|_{M\sm N}$ to a non-vanishing vector field over all of $M$. We let $s(\x)$ be the homology class of this vector field.
\end{definition}

\begin{definition}
\label{def: exy}
Let $\x,\y\in\Tab$ be two multipoints say $x_i\in\a_i\cap \b_i$ and $y_i\in\a_i\cap \b_{\s(i)}$ for each $i$. Let $c_i$ be an arc joining $x_i$ to $y_i$ along $\a_i$ and $d_i$ be an arc joining $y_{\s^{-1}(i)}$ to $x_i$ along $\b_i$. Then 
\begin{align*}
\disp\sum_{i=1}^d c_i+\sum_{i=1}^d d_i
\end{align*}
is a cycle in $\S$. We denote by $\e(\x,\y)$ the element of $H_1(M)$ induced by the cycle above.
\end{definition}

Note that for each $i$ there are two choices for an arc $c_i$ joining $x_i$ and $y_i$ along $\a_i$ (similarly for $d_i$) but the class $\e(\x,\y)\in H_1(M)$ is independent of which arc is chosen.
\medskip

\begin{lemma}[{\cite[Lemma 4.7]{Juhasz:holomorphic}}]
\label{lemma: spinc and exy}
For any $\x,\y\in\Tab$ we have
$$
PD[s(\x)-s(\y)]=\e(\x,\y)
$$
where $PD:H^2(M, \p M)\to H_1(M)$ is Poincar\'e duality.
\end{lemma}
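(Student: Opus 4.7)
The approach is to represent $s(\x)$ and $s(\y)$ by explicit non-vanishing vector fields built from the same Morse function $f$, and compare them directly using obstruction theory. By the construction in Definition \ref{def: sx of OSz}, we may choose representatives $v_\x, v_\y$ of $s(\x), s(\y)$ which both agree with $\n f$ outside disjoint tubular ball neighborhoods $N_\x$ of $\c_\x$ and $N_\y$ of $\c_\y$. The class $s(\x)-s(\y) \in H^2(M,\p M)$ is then the obstruction to homotoping $v_\y$ to $v_\x$ rel $\p M$ through non-vanishing vector fields, and it can be realized as the zero set of a generic interpolation between the two fields.

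The core of the argument is a local computation on each ball component of $N_\x$ or $N_\y$. In a standard Morse-theoretic model of a ball $B$ containing a single gradient flow line $\c$ from an index-one critical point $P$ to an index-two critical point $Q$, any two non-vanishing extensions of $\n f|_{\p B}$ across $B$ differ by a class in $H^2(B,\p B) \cong \Z$ whose Poincar\'e dual is the flow line $\c$ itself (up to sign). Choosing the extensions defining $v_\x$ and $v_\y$ from a common template, the global obstruction class is Poincar\'e dual to the formal 1-chain $\c_\x - \c_\y$, closed up into a 1-cycle in $M$ by adding arcs in $R_-$ joining the index-one critical points and arcs in $R_+$ joining the index-two critical points, with the pairings dictated by the permutations defining $\x$ and $\y$.

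Next I would push this 1-cycle into the Heegaard surface $\S$ along the gradient flow of $f$. Each trajectory of $\c_\x$ descends to its transverse intersection with $\S$, which is precisely the point $x_i \in \a_i \cap \b_{\sigma(i)}$, and similarly for $\c_\y$. The closing arcs in $R_-$ pairing index-one critical points flow up to $\S$ along $\n f$ to become arcs on the $\a$ curves joining the relevant points of $\x \cup \y$; likewise the closing arcs in $R_+$ flow down to arcs along the $\b$ curves. A combinatorial check matching the pairings of critical points with the permutations shows that the resulting 1-cycle on $\S$ is exactly $\sum_i c_i + \sum_i d_i$ from Definition \ref{def: exy}, so its homology class in $M$ equals $\e(\x,\y)$.

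The main obstacle is the local obstruction calculation in the second step: one must verify rigorously that comparing the two extensions across $B$ yields a Poincar\'e dual 1-cycle homologous to $\c$ rel $\p B$, and then assemble the local pieces into a global cycle whose closing arcs match the combinatorics of $\x$ and $\y$ precisely. This boundary bookkeeping is what forces the correct pairing of critical points by the permutations defining the multipoints rather than an arbitrary one, and it is where the sign conventions implicit in the definition of $\e(\x,\y)$ must be carefully tracked.
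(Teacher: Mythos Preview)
The paper does not actually prove this lemma; it is stated with a citation to \cite[Lemma 4.7]{Juhasz:holomorphic} and no argument is given. So there is no ``paper's own proof'' to compare against.

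That said, your sketch is essentially the standard argument, going back to Ozsv\'ath--Szab\'o in the closed case and adapted by Juh\'asz to the sutured setting. The key steps you identify---representing both $s(\x)$ and $s(\y)$ via the same Morse gradient, interpreting the difference class as an obstruction supported near $\c_\x \cup \c_\y$, and projecting the resulting $1$-cycle onto $\Sigma$ along the flow---are exactly what those references do. Your outline is correct in spirit; the only caveat is that the obstruction-theoretic language you use (zero set of a generic interpolation) is a slight repackaging of the more hands-on argument in the original sources, where one directly observes that the vector fields agree outside a neighborhood of the flow lines and reads off the difference class from the homotopy on that neighborhood. The bookkeeping you flag in your last paragraph is genuine but routine once the local model is set up carefully.
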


\medskip

%\subsection[$Spinc$ structures and Heegaard moves]{$Spinc$ structures and Heegaard moves}
\def\Tabprime{\mathbb{T}_{\a'}\cap\mathbb{T}_{\b'}}
Now lets study what happens with the map $s$ when doing Heegaard moves. Let $\HH=\HD,\HH'=(\S',\aa',\bb')$ be two balanced Heegaard diagrams of $(M,\c)$. Then we have two maps
\begin{align*}
s:\Tab\to\Spinc(M,\c) \hspace{0.4cm} \text{ and } \hspace{0.4cm}  s':\Tabprime\to \Spinc(M,\c).
\end{align*}
Suppose $\HH'$ is obtained from $\HH$ by one of the moves of Theorem \ref{thm: embedded RS thm of JTZ}. In the case of isotopy, we suppose that $\HH'$ is obtained by an isotopy of an $\a$ or $\b$ curve that adds just two new intersection points. For all such moves there is an obvious map
\begin{align*}
j:\Tab\to \Tabprime.
\end{align*}
In the case of isotopies (that increase the number of intersection points) or handlesliding, it is clear that $\Tab\subset\Tabprime$ so we let $j$ be the inclusion. For diffeomorphisms isotopic to the identity in $M$ we just let $j$ be the bijection between multipoints induced by the diffeomorphism. For stabilization, if $\a_{d+1},\b_{d+1}$ are the stabilized curves intersecting in a point $x_{d+1}\in \S'$ then we let $j$ be the bijection $\x\mapsto \x\cup \{x_{d+1}\}$.

\begin{proposition}
\label{prop: Spinc map is preserved under Heegaard moves}
If $\HH'$ is obtained from $\HH$ by a Heegaard move and if the map $j$ is defined as above, then 
\begin{align*}
s'(j(\x))=s(\x)
\end{align*}
for all $\x\in\Tab$.
%\label{eq: Spinc map is preserved under Heegaard moves}
\end{proposition}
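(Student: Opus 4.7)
The plan is to verify $s'(j(\x))=s(\x)$ separately for each of the four Heegaard moves listed in Theorem~\ref{thm: embedded RS thm of JTZ}. The overall strategy, in every case, is to exhibit a single Morse function or a generic 1-parameter family of Morse functions that is simultaneously compatible with both $\HH$ and $\HH'$, and to show that the gradient trajectory data $\c_{\x}$ and $\c_{j(\x)}$ produce homologous non-vanishing vector fields on $M$. Wherever both multipoints can be realised inside a common diagram, Lemma~\ref{lemma: spinc and exy} provides a shortcut: it reduces the equality of $\Spinc$ structures to the vanishing of the homology class $\e(\x,j(\x))$.

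For an isotopy introducing two new intersection points, I would fix a generic 1-parameter family $f_t$ of functions adapted to the moving curve, producing a birth of a pair of tangencies between the relevant $\a$ and $\b$ curves on $\S$. Since the trajectory through each $x_i \in \x$ varies continuously with $t$ and stays disjoint from any newly created trajectories, $\c_{\x}$ and $\c_{j(\x)}$ lie in an ambient-isotopic family of tubular neighborhoods, so the two candidate vector fields agree up to homotopy rel $\p M$. For a diffeomorphism $d:\HH\to\HH'$ isotopic to the identity in $M$ rel $s(\c)$, extend the isotopy to an ambient isotopy $F_s:M\to M$ and pull back a Morse function $f'$ adapted to $\HH'$ by $F_1$. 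Then $F_1^*f'$ is adapted to $\HH$ and its gradient flow is isotopic through non-vanishing vector fields rel $\p M$ to $\n f'$, so $s(\x)=s'(j(\x))$ follows at once.

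For handlesliding, say of $\a_j$ over $\a_i$, Cerf theory produces a path of pseudo-gradient flows realising the slide in which the unstable manifold of the index-one critical point $P_j$ sweeps across the ascending disk for $P_i$. The trajectories through $x_k$ for $k\neq j$ are unaffected, while the trajectory through $x_j$ is deformed across the handleslide region to land on the new position. Choosing the 3-ball neighborhood $N$ used in Definition~\ref{def: sx of OSz} to contain the entire handleslide region, the extensions of $\n f$ on $N$ before and after the slide are homotopic through non-vanishing vector fields, because $N$ is a disjoint union of 3-balls. For stabilization at a small disk $D\subset\inte(\S)\sm(\aa\cup\bb)$, enlarge $f$ by a canceling pair of index-one and index-two critical points located near $D$ and connected by a unique short gradient trajectory through the new intersection point $x_{d+1}$. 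Outside a small 3-ball around this short trajectory the gradient flow is unchanged, and the canonical extension on the 3-ball agrees with any reference extension; thus $s(\x)=s'(\x\cup\{x_{d+1}\})$.

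The main obstacle is the handleslide case: one must verify that the chosen extensions of $\n f$ over the $N$-components are compatible before and after the slide, independently of which curve a given intersection point sits on. I would resolve this either by enlarging $N$ as above, which makes the comparison purely local and contractible, or by first performing a preliminary isotopy/stabilization so that $\x$ and $j(\x)$ sit inside a common extended diagram, and then applying Lemma~\ref{lemma: spinc and exy}. In that second approach the problem reduces to showing $\e(\x,j(\x))=0$ in $H_1(M)$, which is immediate because the handleslide is supported inside a 3-ball and the cycle appearing in Definition~\ref{def: exy} is supported in the corresponding null-homologous region of $\S$.
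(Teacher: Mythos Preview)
Your main line of argument---using generic one-parameter families of Morse functions and Cerf theory for the handleslide---is correct, but it is a heavier apparatus than the paper invokes. The paper's proof is static rather than parametric: for the handleslide of $\a_1$ over $\a_2$, it observes that the three compressing disks $D_1,D_2,D_1'\subset U_{\a}$, together with the pair of pants on $\S$ bounded by $\a_1,\a_2,\a_1'$, cut off a single 3-ball $B\subset U_{\a}$. One then chooses two Morse functions $f,f'$ adapted to $\HH,\HH'$ that agree identically on $M\sm B$; the trajectories $\c_{\x}$ and $\c_{j(\x)}$ then coincide outside $B$, so the representing vector fields agree on the complement of $d+1$ disjoint 3-balls (the $d$ trajectory neighborhoods plus $B$), and the $\Spinc$ structures coincide by definition. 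This avoids any analysis of the non-generic moment in the Cerf path where an index-1/index-1 flowline appears. Your approach buys generality (it would adapt to other bifurcations), while the paper's buys simplicity and makes the $3$-ball explicit.

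Your alternative route via Lemma~\ref{lemma: spinc and exy} has a genuine gap. The quantity $\e(\x,j(\x))$ is only defined for two multipoints in the \emph{same} diagram, and the lemma only compares values of a \emph{single} map $s$. In the handleslide case $j$ is the inclusion $\Tab\hookrightarrow\Tabprime$, so $\x$ and $j(\x)$ are literally the same point set; applying the lemma inside $\HH'$ gives only the tautology $s'(j(\x))=s'(j(\x))$. To compare $s(\x)$ with $s'(j(\x))$ you must compare two different Morse functions, which the lemma does not do. Your suggestion to place both in a ``common extended diagram'' does not help: any such common diagram is either $\HH$ or $\HH'$, and the lemma then computes a difference within that one diagram, not across the move. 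So this second approach cannot close the argument; stick with the Cerf-theoretic (or the paper's static 3-ball) argument.
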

\begin{proof}
This is obvious for isotopies and diffeomorphisms isotopic to the identity in $M$. Suppose $\HH'$ is obtained from $\HH$ by stabilization. The union of the newly attached one-handle/two-handle pair is a 3-ball and it is clear that in the complement of this ball, the vector fields representing $s(\x)$ and $s'(j(\x))$ coincide so $s(\x)=s'(j(\x))$ by definition. Now suppose $\HH'$ is obtained from $\HH$ by handlesliding a curve $\a_1$ over $\a_2$ and let $\a'_1=\a_1\#\a_2$. Let $U_{\a}$ be the lower handlebody and let $D_1,D_2,D'_1\subset U_{\a}$ be the compressing disks corresponding to $\a_1,\a_2,\a'_1$ respectively. The complement of these three disks in $U_{\a}$ has two components: one contains the index zero critical point and the other is homeomorphic to a 3-ball $B$. Note that the boundary of $B$ is the union of $D_1,D_2,D'_1$ and the pair of pants bounded by $\a_1,\a_2,\a'_1$. One can pick Morse functions $f,f'$ adapted to $\HH,\HH'$ respectively such that in the complement of $B$ in $M$, the vector fields $-\n f$ and $-\n f'$ coincide. Moreover, the trajectories of $f$ associated to $\x$ coincide with the trajectories of $f'$ associated to $j(\x)$, so the vector fields representing $s(\x)$ and $s'(j(\x))$ coincide in the complement of $d+1$ 3-balls, which implies that $s(\x)=s'(j(\x))$. 
\end{proof}

\subsection[Homology orientations]{Homology orientations}
\label{subsection: homology orientation}

Let $\HD$ be a balanced Heegaard diagram of $(M,\c)$ and $d=|\aa|=|\bb|$. Denote $R_-=R_-(\c)$. Let $A\subset H_1(\S;\R)$ (resp. $B$) be the subspace spanned by $\aa$ (resp. $\bb$). These subspaces have dimension $d$ by \cite[Lemma 2.10]{Juhasz:holomorphic}. There is a bijection $o$ between orientations of the vector space $H_*(M,R_-;\R)$ and orientations of the vector space $\La^d(A)\ot \La^d(B)$ \cite[Sect. 2.4]{FJR11}. This is seen as follows. The Heegaard diagram specifies a handle decomposition of $(M,\c)$ relative to $R_-\t I$ with no handles of index zero or three. There are $d$ handles of index one and two, so the handlebody complex $C_*=C_*(M,R_-\t I;\R)$ is just $C_1\oplus C_2$ where both $C_1,C_2$ have dimension $d$. Now let $\o$ be an orientation of $H_*(M,R_-;\R)$ and let $h^1_1,\dots,h^1_m,h^2_1,\dots,h^2_m$ be an ordered basis of $H_*(M,R_-;\R)$ compatible with $\o$, where $h^i_j\in H_i(M,R_-;\R)$. Let $c^1_1,\dots,c^1_m,c^2_1,\dots,c^2_m\in C_*$ be chains representing this basis, where $c^i_j\in C_i$. Then, for any $b_1,\dots,b_{d-m}\in C_2$ such that $c^2_1,\dots,c^2_m,b_1,\dots,b_{d-m}$ is a basis of $C_2$ the collection\begin{align*}
c^1_1,\dots,c^1_m,\p b_1,\dots,\p b_{d-m},c^2_1,\dots,c^2_m,b_1,\dots,b_{d-m}
\end{align*}
is a basis of $C_*$ whose orientation $\o'$ depends only on $\o$. Now, an orientation of $C_*$ is specified by an ordering and orientation of the handles of index one and two. This is the same as an ordering and orientation of the curves in $\aa\cup\bb$ or equivalently, an orientation of $\La^d(A)\ot \La^d(B)$. This way, the orientation $\o$ induces an orientation of $\La^d(A)\ot \La^d(B)$ via $\o'$. We denote this orientation of $\La^d(A)\ot \La^d(B)$ by $o(\o)$.

\begin{remark}
\label{remark: canonical homology orientation when H is zero}
When $H_*(M,R_-;\R)=0$, the orientation $\o'$ of $C_*(M,R_-;\R)$ constructed above is {\em canonical}. In this case, an ordering and orientation of the curves of $\aa\cup\bb$ corresponds to the canonical orientation if and only if
\begin{align*}
\det(\a_i\cdot\b_j)>0.
\end{align*}

\end{remark}

\section{The invariant}
\label{section: the invariant}

In this section we define the sutured 3-manifold invariant $I_H^{\rho}(M,\c,\ss,\o)$. We begin by extending Kuperberg's construction \cite{Kup1} to extended Heegaard diagrams, at least provided $R_-(\c)$ is connected. This leads to a scalar $Z_H^{\rho}(\HH,\o)$ which is a topological invariant only when both the relative integral and cointegral are two-sided. In Subsection \ref{subsection: basepoints} we devise a way to put basepoints on a Heegaard diagram, once a multipoint is chosen. The actual invariant is then defined in Subsection \ref{subs: Construction of Z_H}, using the previous rule to put basepoints and the relation between multipoints and $\Spinc$ structures. The case when $R_-(\c)$ is disconnected can be reduced to the connected case, this is explained in Subsection \ref{subs: disconnected case}. The most general definition of the invariant $I_H$ is given in Definition \ref{def: Z for disconnected R}. 
\medskip

During the whole section we let $H=(H,m_H,\eta_H,\De_H,\e_H,S_H)$ be an involutive Hopf superalgebra over a field $\kk$ endowed with a relative right integral $(B,i_B,\pi_B,\mu)$ and a compatible relative right cointegral $(A,\pi_A,i_A,\iota)$. We let $b\in G(B)$ and $a^*\in G(A^*)$ be the associated group-likes (see Definition \ref{def: compatible integral cointegral}).
\medskip

We also let $(M,\c)$ be a connected, oriented, balanced sutured 3-manifold with {\em connected $R_-(\c)$}, let $\o$ be an orientation of $H_*(M,R_-;\R)$ and $\rho:H_1(M)\to G(A\ot B^*)$ be a group homomorphism. All homology groups will be taken with integral coefficients unless explicitly stated.
%In this section we use our notions of relative integrals and cointegrals to construct an invariant $Z(M,\ss,\v,\psi)$ where $M$ is a closed 3-manifold, $\ss\in\Spinc(M)$ and $\v:\pi_1(M)\to G(A), \psi:\pi_1(M)\to G(B^*)$ are group homomorphisms. 

\subsection[Tensors associated to Heegaard diagrams]{Tensors associated to Heegaard diagrams}
\label{subs: tensors of HDs} Let $\HH=\HDD$ be an extended Heegaard diagram of $(M,\c)$. Recall that $\aaa=\aa\cup\bolda$ where $\aa$ consists of closed curves and $\bolda$ consists of properly embedded arcs in $\S$ (and similarly for $\bbb=\bb\cup\boldb$). We let $d=|\aa|=|\bb|$ and $l=|\bolda|=|\boldb|$ and note $\aa=\{\a_1,\dots,\a_d\}, \bolda=\{\a_{d+1},\dots,\a_{d+l}\}$. Similarly, we note $\bb=\{\b_1,\dots,\b_d\}, \boldb=\{\b_{d+1},\dots,\b_{d+l}\}$. We will endow our diagrams with some additional structure.

\begin{definition}
We say that $\HH$ is {\em ordered} if each of the sets $\aa,\bolda,\bb,\boldb$ is linearly ordered. We extend the linear orders to $\aaa$ (resp. $\bbb$) by declaring each curve of $\aa$  (resp. $\bb$) to come before each arc of $\bolda$ (resp. $\boldb$). %We will always assume the curves are numbered in increasing order, that is, $$\a_1<\dots<\a_d<\a_{d+1}<\dots<\a_{d+l}$$ and similarly for $\bbb$. 
We say that $\HH$ is {\em based} if for each $i=1,\dots, d$ there is a basepoint $p_i\in\a_i$ disjoint from $\bbb$ and a basepoint $q_i\in\b_i$ disjoint from $\aaa$. Whenever the arcs of $\HH$ are oriented, we will take the beginning point of each arc as its basepoint.
\end{definition}

\begin{definition}
Let $\HH$ be an oriented extended Heegaard diagram, that is, each curve or arc is oriented. We call an intersection point $x\in \aaa\cap\bbb$ a {\em crossing} of $\HH$ and we say it is {\em positive} if the tangent vectors of $\aaa$ and $\bbb$ at $x$ define an oriented basis of $T_x\S$ and {\em negative} otherwise. At each crossing $x$ we define a number $\e_x\in\{0,1\}$ by $\e_x=0$ if the crossing is positive and $\e_x=1$ if it is negative. In other words, $(-1)^{\e_x}=m_x$ where $m_x$ is the intersection sign at $x$.
\end{definition}

Let $\HH=\HDD$ be an ordered, oriented, based extended Heegaard diagram of a sutured manifold $(M,\c)$ as above.  We define a scalar 
\begin{align*}
Z^{\rho}_H(\HH)\in\kk
\end{align*}
as follows. First, write $\rho=(\v,\psi)$ under the isomorphism $G(A\ot B^*)\cong G(A)\ot G(B^*)$. Since we suppose $R_-(\c)$ is connected and $\HH$ is oriented, there is an element $\a^*\in H_1(M)$ associated to each $\a\in\aaa$ (Definition \ref{def: dual homology classes}), and thus also an element $\v(\a^*)\in G(A)$. We associate to each $\a\in\aaa$ the tensor

%We will always suppose that $\aaa$ and $\bbb$ are transversal as submanifolds of $\S$.

\begin{figure}[H]
\centering

\begin{pspicture}(0,-0.5786306)(9.77,0.5786306)
\psline[linecolor=black, linewidth=0.018, arrowsize=0.05291667cm 2.0,arrowlength=0.8,arrowinset=0.2]{->}(2.02,0.0213694)(2.42,0.0213694)
\rput[bl](2.59,-0.1186306){$\Delta_H$}
\psline[linecolor=black, linewidth=0.018, arrowsize=0.05291667cm 2.0,arrowlength=0.8,arrowinset=0.2]{->}(3.22,0.2213694)(3.62,0.6213694)
\psline[linecolor=black, linewidth=0.018, arrowsize=0.05291667cm 2.0,arrowlength=0.8,arrowinset=0.2]{->}(3.22,-0.1786306)(3.62,-0.5786306)
\rput[bl](3.48,-0.2186306){$\vdots$}
\rput[bl](0.0,-0.1586306){$\varphi(\alpha^*)$}
\psline[linecolor=black, linewidth=0.018, arrowsize=0.05291667cm 2.0,arrowlength=0.8,arrowinset=0.2]{->}(3.22,0.1213694)(3.62,0.3213694)
\psline[linecolor=black, linewidth=0.018, arrowsize=0.05291667cm 2.0,arrowlength=0.8,arrowinset=0.2]{->}(1.02,0.0213694)(1.42,0.0213694)
\rput[bl](1.66,-0.0386306){$\iota$}
\psline[linecolor=black, linewidth=0.018, arrowsize=0.05291667cm 2.0,arrowlength=0.8,arrowinset=0.2]{->}(7.82,0.0213694)(8.22,0.0213694)
\rput[bl](8.4,-0.1186306){$\Delta_H$}
\psline[linecolor=black, linewidth=0.018, arrowsize=0.05291667cm 2.0,arrowlength=0.8,arrowinset=0.2]{->}(9.02,0.2213694)(9.42,0.6213694)
\psline[linecolor=black, linewidth=0.018, arrowsize=0.05291667cm 2.0,arrowlength=0.8,arrowinset=0.2]{->}(9.02,-0.1786306)(9.42,-0.5786306)
\rput[bl](9.28,-0.2186306){$\vdots$}
\rput[bl](5.8,-0.1586306){$\varphi(\alpha^*)$}
\psline[linecolor=black, linewidth=0.018, arrowsize=0.05291667cm 2.0,arrowlength=0.8,arrowinset=0.2]{->}(9.02,0.1213694)(9.42,0.3213694)
\psline[linecolor=black, linewidth=0.018, arrowsize=0.05291667cm 2.0,arrowlength=0.8,arrowinset=0.2]{->}(6.82,0.0213694)(7.22,0.0213694)
\rput[bl](7.36,-0.1286306){$i_A$}
\rput[bl](4.62,-0.0786306){\text{or}}
%\rput[bl](9.72,-0.5786306){.}
\end{pspicture}
\end{figure}

\noindent depending on whether $\a$ is a closed curve (left) or $\a$ is an arc (right). The iterated coproduct has as many legs as crossings through $\a$, and from top to bottom the legs correspond to the crossings through $\a$ when starting from the basepoint and following its orientation. Note that changing the basepoint of $\a$ changes the cyclic order of the iterated coproduct, and this produces a different tensor when $\a$ is closed and $b\neq 1$ (see condition (\ref{eq: cointegral cotrace property}) of Definition \ref{def: compatible integral cointegral}). Similarly, for each $\b\in\bbb$ there is an element $\b^*
\in H_1(M)$ and hence $\psi(\b^*)\in G(B^*)$. To each $\b$ we assign the following tensor:

\begin{figure}[H]
\centering

\begin{pspicture}(0,-0.6091925)(9.759192,0.6091925)
\psline[linecolor=black, linewidth=0.018, arrowsize=0.05291667cm 2.0,arrowlength=0.8,arrowinset=0.2]{->}(0.009192505,-0.6)(0.4091925,-0.2)
\psline[linecolor=black, linewidth=0.018, arrowsize=0.05291667cm 2.0,arrowlength=0.8,arrowinset=0.2]{->}(0.009192505,0.6)(0.4091925,0.2)
\rput[bl](0.06919251,-0.24){$\vdots$}
\rput[bl](0.5191925,-0.07){$m_H$}
\psline[linecolor=black, linewidth=0.018, arrowsize=0.05291667cm 2.0,arrowlength=0.8,arrowinset=0.2]{->}(1.2091925,0.0)(1.6091925,0.0)
\psline[linecolor=black, linewidth=0.018, arrowsize=0.05291667cm 2.0,arrowlength=0.8,arrowinset=0.2]{->}(0.009192505,0.3)(0.4091925,0.1)
\rput[bl](1.7991925,-0.13){$\mu$}
\rput[bl](2.7591925,-0.17){$\psi(\beta^*)$}
\psline[linecolor=black, linewidth=0.018, arrowsize=0.05291667cm 2.0,arrowlength=0.8,arrowinset=0.2]{->}(2.2091925,0.0)(2.6091926,0.0)
\psline[linecolor=black, linewidth=0.018, arrowsize=0.05291667cm 2.0,arrowlength=0.8,arrowinset=0.2]{->}(5.8091927,-0.6)(6.2091923,-0.2)
\psline[linecolor=black, linewidth=0.018, arrowsize=0.05291667cm 2.0,arrowlength=0.8,arrowinset=0.2]{->}(5.8091927,0.6)(6.2091923,0.2)
\rput[bl](5.8691926,-0.24){$\vdots$}
\rput[bl](6.3191924,-0.07){$m_H$}
\psline[linecolor=black, linewidth=0.018, arrowsize=0.05291667cm 2.0,arrowlength=0.8,arrowinset=0.2]{->}(7.0091925,0.0)(7.4091926,0.0)
\psline[linecolor=black, linewidth=0.018, arrowsize=0.05291667cm 2.0,arrowlength=0.8,arrowinset=0.2]{->}(5.8091927,0.3)(6.2091923,0.1)
\rput[bl](7.4891925,-0.11){$\pi_B$}
\rput[bl](8.559193,-0.17){$\psi(\beta^*)$}
\psline[linecolor=black, linewidth=0.018, arrowsize=0.05291667cm 2.0,arrowlength=0.8,arrowinset=0.2]{->}(8.009192,0.0)(8.409192,0.0)
\rput[bl](4.6091924,-0.1){\text{or}}
%\rput[bl](9.709192,-0.5){.}
\end{pspicture}
\end{figure}

\noindent depending on whether $\b$ is closed (left) or $\b$ is an arc (right). As before, the legs from top to bottom of the product correspond to the crossings of $\b$ starting from its basepoint and following its orientation.
\medskip

Now let $x$ be a crossing of $\HH$. Then $x$ corresponds to a unique outcoming leg (resp. incoming leg) of a tensor associated to some $\a\in\aaa$ (resp. $\b\in\bbb$). If $\e_x\in\{0,1\}$ is defined as above, then we associate to $x$ the tensor
\begin{figure}[H]
\centering
\begin{pspicture}(0,-0.13)(1.6,0.13)
\psline[linecolor=black, linewidth=0.018, arrowsize=0.05291667cm 2.0,arrowlength=0.8,arrowinset=0.2]{->}(0.0,0.02)(0.4,0.02)
\rput[bl](0.49,-0.11){$S_H^{\epsilon_x}$}
\psline[linecolor=black, linewidth=0.018, arrowsize=0.05291667cm 2.0,arrowlength=0.8,arrowinset=0.2]{->}(1.1,0.02)(1.5,0.02)
\rput[bl](1.55,-0.13){.}
\end{pspicture}
\end{figure}

\noindent We now take the tensor product of the tensors corresponding to the $\a$'s for all $\a\in\aaa$. We assume the tensor product is ordered according to the order of $\aaa$. Similarly, we take the tensor product of the tensors corresponding to the $\b$'s for each $\b\in\bbb$, following the order of $\bbb$. These two tensor products are then contracted using the tensors associated to the crossings as above. The result of this contraction is a scalar in the base field of $H$.

\begin{definition}
\label{def: ZHrho and ZHrho with orientation}
Let $\HH$ be an ordered, oriented, based, extended Heegaard diagram. We denote by $Z_H^{\rho}(\HH)$ the scalar obtained by contracting all the tensors above. Given an orientation $\o$ of $H_*(M,R_-(\c);\R)$ we also define $$Z_H^{\rho}(\HH,\o)\eq \dHH Z_H^{\rho}(\HH)$$
where $\dHH$ is a sign defined by $$\dHH\eq (\pm 1)^{\deg(\mu)}=(\pm 1)^{\deg(\iota)}$$ where the $\pm 1$ sign is determined by $o(\HH)=\pm\o$ and $o$ is the map of Subsection \ref{subsection: homology orientation}. 
\end{definition}

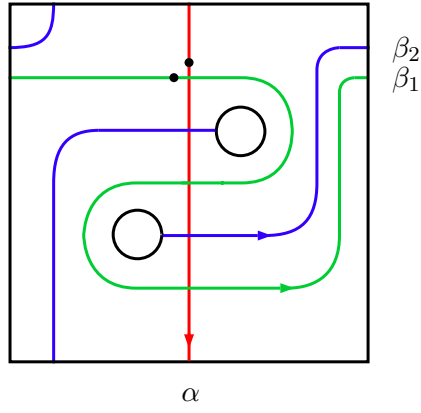
\begin{figure}[h]
\centering
\begin{pspicture}(0,-2.65)(5.41,2.65)
\definecolor{colour0}{rgb}{0.0,0.8,0.2}
\definecolor{colour1}{rgb}{0.2,0.0,1.0}
\pscircle[linecolor=black, linewidth=0.04, dimen=outer](3.0857143,0.9357143){0.34285715}
\pscircle[linecolor=black, linewidth=0.04, dimen=outer](1.7142857,-0.43571427){0.34285715}
\psline[linecolor=red, linewidth=0.04, arrowsize=0.05291667cm 2.0,arrowlength=1.4,arrowinset=0.0]{->}(2.4,2.65)(2.4,-1.95)
\psdots[linecolor=black, dotsize=0.12](2.4,1.85)
\psbezier[linecolor=colour0, linewidth=0.04](3.7,-1.15)(4.1,-1.15)(4.4,-0.95)(4.4,-0.45)(4.4,0.05)(4.4,1.25)(4.4,1.45)(4.4,1.65)(4.6,1.65)(4.6,1.65)
\rput[bl](2.3,-2.65){$\alpha$}
\rput[bl](5.1,1.85){$\beta_2$}
\rput[bl](5.1,1.45){$\beta_1$}
\psbezier[linecolor=colour1, linewidth=0.04](3.4,-0.45)(3.9,-0.45)(4.1,-0.23)(4.1,0.25)(4.1,0.73)(4.1,1.45)(4.1,1.75)(4.1,2.05)(4.4,2.05)(4.4,2.05)
\psline[linecolor=colour1, linewidth=0.04, arrowsize=0.05291667cm 2.0,arrowlength=1.4,arrowinset=0.0]{->}(2.05,-0.45)(3.5,-0.45)
\psbezier[linecolor=colour0, linewidth=0.04](0.0,1.65)(0.8,1.65)(2.4,1.65)(3.1,1.65)(3.8,1.65)(3.7714286,0.9357143)(3.7714286,0.9357143)(3.7714286,0.9357143)(3.8,0.25)(3.1,0.25)(2.4,0.25)(3.4,0.25)(2.3,0.25)
\psdots[linecolor=black, dotsize=0.12](2.2,1.65)
\psbezier[linecolor=colour0, linewidth=0.04, arrowsize=0.05291667cm 2.0,arrowlength=1.4,arrowinset=0.0]{<-}(3.8,-1.15)(3.2,-1.15)(2.4,-1.15)(1.7,-1.15)(1.0,-1.15)(1.0,-0.45)(1.0,-0.45)(1.0,-0.45)(1.0,0.25)(1.7,0.25)(2.4,0.25)(2.0,0.25)(2.4,0.25)
\psbezier[linecolor=colour1, linewidth=0.04](1.2,0.95)(0.8,0.95)(0.6,0.75)(0.6,0.25)(0.6,-0.25)(0.6,-0.15)(0.6,-0.85)(0.6,-1.55)(0.6,-1.55)(0.6,-2.15)
\psline[linecolor=colour1, linewidth=0.04](2.75,0.95)(1.2,0.95)
\psbezier[linecolor=colour1, linewidth=0.04](0.6,2.65)(0.6,2.2)(0.5,2.05)(0.0,2.05)
\psline[linecolor=colour1, linewidth=0.04](4.4,2.05)(4.8,2.05)
\psline[linecolor=colour0, linewidth=0.04](4.6,1.65)(4.766667,1.65)
\psframe[linecolor=black, linewidth=0.04, dimen=outer](4.8,2.65)(0.0,-2.15)
\psline[linecolor=red, linewidth=0.04](2.4,-1.75)(2.4,-2.1166666)
\end{pspicture}
\caption{A zoom in of the extended Heegaard diagram of the left trefoil, with orientations and basepoints. The arc $\a_2$ is not drawn as it plays no role in $Z_{H_n}^{\rho}$ since $A=\C$}
\label{figure: Z invariant of trefoil}
\end{figure}

\begin{example}
\label{example: computation of Z of left trefoil}
Let $H=H_n$ ($n\geq 1$) be the Hopf algebra over $\kk=\C$ of Example \ref{example: Hopf algebra Hn finite dim quotient} with the relative integral and cointegral given there, so $A=\C$ and $B\cong \C[\Z/n\Z]$. We compute $Z_{H}^{\rho}(\HH)$ where $\HH$ is the extended Heegaard diagram of the left trefoil complement on the right of Figure \ref{figure: cut systems for left trefoil}, with orientations and basepoints as in Figure \ref{figure: Z invariant of trefoil} below. We let $\rho:H_1(M)\to G(B^*)$ be given by $\rho(\beta^*_2)(K)=q$ where $q=e^{\frac{2\pi i}{n}}$ (note that $\b^*_2$ is a meridian of the knot). Note that with the notation above, $\v=1$ and $\rho=\psi$ since $G(A)=\{1\}$. Moreover, since $i_A$ is the unit of $H$, the tensors corresponding to the $\a$-arcs do not contribute to $Z_H$, hence we do not consider them. The tensor corresponding to $\HH=(\S,\{\a\},\{\b_1,\b_2\})$ is thus given as follows:

%Since $H_*(M,R_-;\R)=0$, we orient $\a,\b$ so that $\a\cdot\b=+1$ (see Remark \ref{remark: canonical homology orientation when H is zero}).  Here $H_1(M)$ is canonically isomorphic to $\Z$, once a positive meridian of the knot is chosen (which is equivalent to an orientation of $K$). If the arc $b$ is oriented as in Figure \ref{figure: Z invariant of trefoil}, then we take $b^*$ to be the canonical generator of $H_1(M)$. Let $\rho:H_1(M)\to \Z_n\cong G(B^*)$ be mod $n$ reduction, so that $\rho(b^*)(K)=q$ where $q=e^{\frac{2\pi i}{n}}$. Let's compute $Z^{\rho}(\HH,\x)$ where the orientations and basepoints of $\HH$ are given in Figure \ref{figure: Z invariant of trefoil}.

\begin{figure}[H]
\centering
\begin{pspicture}(0,-1.3780358)(8.17,1.3780358)
\rput[bl](0.96,-0.23303574){$\Delta_H$}
\rput[bl](5.06,0.56696427){$m_H$}
\rput[bl](5.06,-1.0330358){$m_H$}
\rput[bl](2.76,-0.23303574){$S_H$}
\psline[linecolor=black, linewidth=0.018, arrowsize=0.05291667cm 2.0,arrowlength=0.8,arrowinset=0.2]{->}(1.66,-0.13303573)(2.66,-0.13303573)
\psbezier[linecolor=black, linewidth=0.018, arrowsize=0.05291667cm 2.0,arrowlength=0.8,arrowinset=0.2]{->}(3.36,-0.13303573)(4.3242855,-0.13303573)(4.11,0.6669643)(4.86,0.6669642639160156)
\psbezier[linecolor=black, linewidth=0.018, arrowsize=0.05291667cm 2.0,arrowlength=0.8,arrowinset=0.2]{<-}(4.86,0.8669643)(4.06,1.5669643)(2.36,1.3669642)(1.66,0.26696426391601563)
\psbezier[linecolor=black, linewidth=0.018, arrowsize=0.05291667cm 2.0,arrowlength=0.8,arrowinset=0.2]{->}(1.66,0.06696426)(2.06,0.36696425)(2.76,0.6669643)(3.26,0.46696426391601564)(3.76,0.26696426)(3.76,-0.033035737)(3.96,-0.8330357)(4.16,-1.6330358)(4.46,-1.3330357)(4.86,-1.1330358)
\psbezier[linecolor=black, linewidth=0.018, arrowsize=0.05291667cm 2.0,arrowlength=0.8,arrowinset=0.2]{->}(1.66,-0.33303574)(2.36,-0.8330357)(4.36,-0.23303574)(4.86,-0.7330357360839844)
\psbezier[linecolor=black, linewidth=0.018, arrowsize=0.05291667cm 2.0,arrowlength=0.8,arrowinset=0.2]{->}(1.66,-0.53303576)(2.16,-1.1330358)(3.56,-1.8330357)(4.86,0.46696426391601564)
\psline[linecolor=black, linewidth=0.018, arrowsize=0.05291667cm 2.0,arrowlength=0.8,arrowinset=0.2]{->}(5.76,0.6669643)(6.16,0.6669643)
\psline[linecolor=black, linewidth=0.018, arrowsize=0.05291667cm 2.0,arrowlength=0.8,arrowinset=0.2]{->}(5.76,-0.93303573)(6.16,-0.93303573)
\psline[linecolor=black, linewidth=0.018, arrowsize=0.05291667cm 2.0,arrowlength=0.8,arrowinset=0.2]{->}(0.36,-0.13303573)(0.76,-0.13303573)
\rput[bl](6.36,0.5469643){$\mu$}
\rput[bl](6.26,-1.0330358){$\pi_B$}
\rput[bl](0.0,-0.20303574){$\iota$}
\rput[bl](7.32,0.49696428){$\psi(\beta_1^*)$}
\rput[bl](7.3,-1.0930357){$\psi(\beta_2^*)$}
\rput[bl](8.6,-0.13){.}
\psline[linecolor=black, linewidth=0.018, arrowsize=0.05291667cm 2.0,arrowlength=0.8,arrowinset=0.2]{->}(6.76,0.6669643)(7.16,0.6669643)
\psline[linecolor=black, linewidth=0.018, arrowsize=0.05291667cm 2.0,arrowlength=0.8,arrowinset=0.2]{->}(6.76,-0.93303573)(7.16,-0.93303573)
\end{pspicture}

\end{figure}

Recall that $\iota=\frac{1}{n}(\sum_{i=0}^{n-1} K^iX)$. To compute the above tensor, we rely on Lemma \ref{lemma: multiply by b on lower handlebody does not affect result} below: each $K^i$ accompanying $X$ is group-like and central in $H$, hence it has the effect of multiplying the whole tensor by $\lb \psi(\ov{\a}),K\rb^i$. Since $\ov{\a}=1$ in $H_1(M)$, it follows that each $K^i$ does nothing, hence it suffices to compute the above tensor with $X$ in place of the cointegral $\iota$. Now, we have
\begin{align*}
\De_H^{(5)}(X)=X\ot 1\ot\dots \ot 1+K\ot X\ot 1\ot\dots \ot 1+\dots + K\ot\dots \ot K\ot X.
\end{align*}
where there are five terms in the sum. Since $\mu$ vanishes on $B\subset H$, the only terms of this coproduct that contribute to $Z$ are those for which $X$ lies over $\b$ and there are three of them (the first, third and fifth in the above expansion of $\De_H^{(5)}(X)$). Hence, one computes
\begin{align*}
Z_H^{\rho}(\HH)&=\psi(\b_1^*)\ot\psi(\b_2^*)(\mu(X)\ot 1+\mu(KS(X))\ot K+\mu(KK^{-1}X)\ot K^2)\\
&=1-\psi(\b_2^*)(K)+\psi(\b_2^*)(K^2)\\
&= 1-q+q^2.
\end{align*}
Note that varying $n$ this defines a polynomial in a variable $q$, namely, $1-q+q^2$ which is the Alexander polynomial of the knot up to a unit. See Corollary \ref{corollary: Z0 recovers multivariable Alexander polynomial} below.
\end{example}

\begin{remark}
The above example anticipates the proof of Theorem \ref{thm: intro Z recovers Reidemeister torsion} (or rather, the Fox calculus formula of Theorem \ref{thm: intro Z via Fox calculus}). Indeed, with the orientations and basepoint as above, one has $$ \ov{\a}=\b_1^*\b_2^*(\b_1^*)^{-1}\b_2^*\b_1^*.$$
%Since $\ov{\a}=1$ in $H_1(M)$ this gives $\b^*b^*=1$ in $H_1M$. 
The Fox derivative (see Subsection \ref{subs: Fox calculus}) is
\begin{align*}
\frac{\p\ov{\a}}{\p \b_1^*}&=1-\b_1^*\b_2^*(\b_1^*)^{-1}+\b_1^*\b_2^*(\b_1^*)^{-1}\b_2^*=1-\b_2^*+(\b_2^*)^{2} \\
\end{align*}
when considered as an element of $\Z[H_1(M)]$. Evaluating this on the character $\rho:H_1(M)\to \C^{\t}$ defined by $\rho(\b_2^*)=q$ gives $Z_{H_n}^{\rho}(\HH)$ as computed above therefore satisfying the conclusion of Theorem \ref{thm: intro Z via Fox calculus}.
\end{remark}

\subsection[Basepoints]{Basepoints}
\label{subsection: basepoints}

Whenever $b\neq 1_B$ or $a^*\neq\e_A$, the scalar $Z_H^{\rho}(\HH)$ defined above depends on the basepoints on $\HH$ (see Proposition \ref{prop: changing basepoints and Z} for a precise statement), thus, it is not necessarily a topological invariant of $(M,\c)$. To correct this, we explain now how to put basepoints on the curves in $\aa\cup\bb$ in a coherent way.

\begin{definition}
\label{def: basepoints from multipoint}
Let $\HH$ be an oriented extended Heegaard diagram. Let $\x\in\Tab$ be a multipoint, say, $\x=\{ x_1,\dots,x_d \}$ with $x_i\in\a_i\cap\b_i$ for each $i$ (where $d=|\aa|=|\bb|$). We define basepoints $p_i=p_i(\x)$ on each $\a_i$ and $q_i=q_i(\x)$ on each $\b_i$ as follows: if the crossing $x_i$ is positive (resp. negative), then we put $p_i$ and $q_i$ right before $x_i$ (resp. after) when following the orientation of $\a_i$ and $\b_i$, see Figure \ref{fig: basepoint convention}.
\end{definition}

%Our invariant will only depend on $\x$ up to the $\Spinc$ structure $s_z(\x)$ it defines (note that this is why we took our Heegaard diagram to be based). 

\begin{figure}[h]
\centering
\begin{pspicture}(0,-1.4334792)(6.94,1.52)
\definecolor{colour0}{rgb}{0.0,0.8,0.2}
\psline[linecolor=colour0, linewidth=0.04, arrowsize=0.05291667cm 2.0,arrowlength=1.4,arrowinset=0.0]{<-}(1.2,1.5265208)(1.2,-0.8734791)
\psline[linecolor=red, linewidth=0.04, arrowsize=0.05291667cm 2.0,arrowlength=1.4,arrowinset=0.0]{->}(0.0,0.32652083)(2.4,0.32652083)
\psdots[linecolor=black, fillstyle=solid, dotstyle=o, dotsize=0.2, fillcolor=white](1.2,0.32652083)
\psdots[linecolor=black, dotsize=0.2](0.8,0.32652083)
\rput[bl](1.08,-1.4334792){$\beta_i$}
\psdots[linecolor=black, dotsize=0.2](1.2,-0.07347915)
\rput[bl](1.6,-0.29347914){$q_i$}
\rput[bl](0.46,0.64652085){$p_i$}
\rput[bl](2.64,0.22652084){$\alpha_i$}
\psline[linecolor=colour0, linewidth=0.04, arrowsize=0.05291667cm 2.0,arrowlength=1.4,arrowinset=0.0]{<-}(5.2,1.5265208)(5.2,-0.8734791)
\psline[linecolor=red, linewidth=0.04, arrowsize=0.05291667cm 2.0,arrowlength=1.4,arrowinset=0.0]{<-}(4.0,0.32652083)(6.4,0.32652083)
\psdots[linecolor=black, dotstyle=o, dotsize=0.2, fillcolor=white](5.2,0.32652083)
\psdots[linecolor=black, dotsize=0.2](4.8,0.32652083)
\rput[bl](5.06,-1.4334792){$\beta_i$}
\rput[bl](5.64,0.6265209){$q_i$}
\rput[bl](4.54,-0.23347916){$p_i$}
\rput[bl](6.64,0.22652084){$\alpha_i$}
\psdots[linecolor=black, dotsize=0.2](5.2,0.72652084)
\end{pspicture}

\caption{Basepoints on $\aa\cup\bb$ coming from $\x\in\Tab$. The point $x_i\in\a_i\cap\b_i$ is represented by a white dot, and the basepoints $p_i,q_i$ by black dots. We have depicted both types of crossings, a positive one on the left and a negative one on the right. The surface is oriented as $\S=\p U_{\a}$ (i.e. the normal vector points to the reader)}
\label{fig: basepoint convention}
\label{figure: reversing orientation of alpha changes basepoints}
\end{figure}
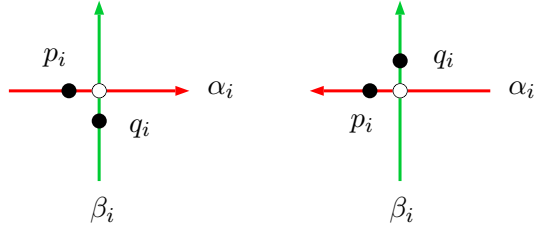

\begin{remark}
\label{remark: basepoint convention under or reversal}
%Let $\HH$ be an oriented diagram with the basepoints on $\aa\cup\bb$ coming from $\x$ as described above. If the crossing $x_i$ is positive, then $x_i$ is the first crossing of $\a_i$ (resp. $\b_i$) in the order of $\Iaaa$ (resp. $\Ibbb$). Now, if the crossing $x_i$ is negative, $x_i$ becomes the {\em last} crossing of $\a_i$ (resp. $\b_i$) in $\Iaaa$ (resp. $\Ibbb$), see Figure \ref{fig: basepoint convention}. Thus, reversing the orientation of a curve also changes a basepoint. 
This rule for the basepoints is set to mimic the condition $$\mu(bx)=\pm S_B\circ\mu\circ S_H(x)$$
of Definition \ref{def: compatible integral cointegral}. Indeed, moving the basepoints affects the Kuperberg tensors by multiplication by $b$ while reversing the orientation of a curve involves the antipode. This will be essential for the proof of Lemma \ref{lemma: invariance of orientations}.
\end{remark}

%The reason to put one basepoint on the left side and the other on the right side (instead of both on the same side) is the following: if $\HD$ is a Heegaard diagram of $(M,\c)$, then $(-\S,\bb,\aa)$ is a Heegaard diagram of $(M,-\c)$ (here $(M,-\c)$ represents the sutured manifold obtained from $(M,\c)$ by reversing the orientation of $s(\c)$). Then if $\x\in\Tab$ is a multipoint, the associated basepoints on both diagrams are the same since the left side of $\b$ on $\S$ becomes the right side of $\b$ in $-\S$.

%The reason to put basepoints this way is that we can use the dual homology classes of Definition \ref{def: dual homology classes} to restate Lemma \ref{lemma: spinc and exy}. \medskip

Now let $\x,\y\in\Tab$ be two multipoints, say, $x_i\in \a_i\cap\b_i$ and $y_i\in\a_{i}\cap\b_{\s(i)}$ for $i=1,\dots,d$. There are two sets of basepoints $\{p_1(\x),\dots,p_d(\x)\}$ and $\{p_1(\y),\dots, p_d(\y)\}$ on the $\a$ curves, where $p_i(\x),p_i(\y)\in \a_i$ for each $i$. Consider the oriented arc $c'_i$ from $p_i(\x)$ to $p_i(\y)$ along $\a_i$ in the direction of its orientation. As in Definition \ref{notation: waaa and wbbb for immersed curve in Sigma} we get an element $\ov{c'_i}\in H_1(M)$ for each $i=1,\dots, d$, that is, $\ov{c'_i}$ is the product of the dual homology classes of the curves and arcs in $\bbb$ encountered as one follows $c'_i$, with appropriate signs. Similarly, there are basepoints $q_i(\x),q_i(\y)\in\b_i$ for each $i=1,\dots,d$, let $d'_i$ be the oriented arc from $q_i(\x)$ to $q_i(\y)$ along $\b_i$. There is an element $\ov{d'_i}\in H_1(M)$ which is a product of dual homology classes of $\aaa$.

\begin{lemma}
\label{lemma: exy change of basepoints}
For any $\x,\y\in\Tab$ one has
\begin{align*}
\e(\x,\y)=\prod_{i=1}^d\ov{c'_i}=\prod_{i=1}^d\ov{d'_i}^{-1}
\end{align*}
in $H_1(M)$, where $\e(\x,\y)$ is the homology class of Definition \ref{def: exy}.
\end{lemma}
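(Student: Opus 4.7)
The plan is to construct an explicit 1-cycle $\tilde{C}$ in $\S$ which represents $\e(\x,\y)$ and whose weight $\ov{\tilde{C}}$ can be read off in two different ways, yielding the two asserted products. For each $j=1,\dots,d$ define a path $\tilde{d}_j$ from $p_{\s^{-1}(j)}(\y)$ to $p_j(\x)$ as the concatenation of three pieces: (i) a short arc inside a small disk around the crossing $y_{\s^{-1}(j)}$ connecting $p_{\s^{-1}(j)}(\y)\in\a_{\s^{-1}(j)}$ to $q_j(\y)\in\b_j$; (ii) the subarc of $\b_j$ from $q_j(\y)$ to $q_j(\x)$ traversed against the orientation of $\b_j$; (iii) a short arc inside a small disk around $x_j$ connecting $q_j(\x)\in\b_j$ to $p_j(\x)\in\a_j$. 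The key point, relying essentially on Definition \ref{def: basepoints from multipoint}, is that at each crossing the two basepoints attached to it lie in a common local quadrant cut out by $\a\cup\b$, so the corner arcs in (i) and (iii) can be drawn inside their respective small disks disjointly from $\aaa\cup\bbb$. Set $\tilde{C}\eq\sum_i c'_i+\sum_j\tilde{d}_j$; boundary-matching at the points $p_i(\x), p_i(\y)$ shows this is a 1-cycle in $\S$.

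I next claim $[\tilde{C}]=\e(\x,\y)$ in $H_1(M)$. Let $C=\sum_i c_i+\sum_j d_j$ be the cycle of Definition \ref{def: exy}, choosing each $d_j$ to run from $y_{\s^{-1}(j)}$ to $x_j$ along $\b_j$ against its orientation (any choice yields the same class). At each crossing $y_i$ the cycles $C$ and $\tilde{C}$ differ by a small triangle with sides an $\a_i$-arc between $p_i(\y)$ and $y_i$, a $\b_{\s(i)}$-arc between $y_i$ and $q_{\s(i)}(\y)$, and the corner arc of (i) between these last two points. This triangle bounds a disk inside the small disk around $y_i$, and similarly for the triangle at each $x_i$. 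Hence $\tilde{C}-C$ is null-homologous in $\S$, so $[\tilde{C}]=[C]=\e(\x,\y)$ already in $H_1(\S)$.

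It remains to evaluate $\ov{\tilde{C}}$ in two ways, applying Remark \ref{remark: ovl equals homology class of l if cycle}. For the first equality I perturb each $\b_j$-portion of $\tilde{d}_j$ slightly to one side of $\b_j$ so that $\tilde{C}$ becomes transverse to $\bbb$; by disjointness of $\bbb$ this perturbed portion then crosses no $\b$-curve, and the corner arcs are by construction disjoint from $\bbb$. Thus every $\bbb$-crossing of $\tilde{C}$ is contributed by some $c'_i$, so $\ov{\tilde{C}}=\prod_i\ov{c'_i}$, which equals $\e(\x,\y)$ by the previous step. For the second equality I perturb each $c'_i$ slightly off $\a_i$ to make $\tilde{C}$ transverse to $\aaa$; the $c'_i$'s and corner arcs then contribute no $\aaa$-crossings, while each $\b_j$-portion of $\tilde{d}_j$, being $d'_j$ traversed in reverse, contributes $\ov{d'_j}^{-1}$ (reversing the orientation of a path flips every intersection sign, hence inverts its $\ov{\,\cdot\,}$ in the abelian group $H_1(M)$). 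Thus $\ov{\tilde{C}}=\prod_j\ov{d'_j}^{-1}=\e(\x,\y)$.

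The main obstacle is the geometric verification that the corner arcs in (i) and (iii) really can be drawn inside small disks avoiding $\aaa\cup\bbb$ entirely. This is exactly what the placement rule of Definition \ref{def: basepoints from multipoint} --- ``before the crossing if positive, after if negative'' --- is tailored for: it forces the pair $(p,q)$ at any crossing to sit in the same local quadrant, hence to be joinable by a short arc disjoint from $\a\cup\b$ and, by choosing the disk small enough, from every other curve of $\aaa\cup\bbb$.
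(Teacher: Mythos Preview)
Your proof is correct and follows essentially the same strategy as the paper's: replace the defining cycle of $\e(\x,\y)$ by a homologous cycle built from the $c'_i$'s together with arcs running along (a push-off of) the $\beta$-curves, then invoke Remark \ref{remark: ovl equals homology class of l if cycle}. The paper packages the connecting arcs as single push-offs $e_i$ of $d_i$ to one side of $\b_i$, whereas you give a more explicit three-piece construction with corner arcs and spell out why the basepoint convention of Definition \ref{def: basepoints from multipoint} forces $p$ and $q$ into a common quadrant --- a point the paper leaves implicit in its claim that the original cycle can be ``pushed off to the left side of $\b_i$''. You also extract both equalities from a single cycle $\tilde C$ by perturbing in two different directions, while the paper proves the first equality and dispatches the second with ``similarly, noting that $d'_i$ and $d_i$ have opposite orientations''. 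The underlying argument is the same; your version is slightly more self-contained.
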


\begin{proof} 
For each $i$ let $c_i$ be the oriented subarc of $\a_i$ starting at $x_i$ and ending at $y_i$ and let $d_i$ be the oriented subarc of $\b_i$ from $y_{\s^{-1}(i)}$ to $x_i$, so that $\cup_{i=1}^d(c_i\cup d_i)$ represents $\e(\x,\y)$ (see Definition \ref{def: exy}). Let $e_i$ be an arc parallel to $d_i$ on the left side of $\b_i$ from $p_{\s^{-1}(i)}(\y)$ to $p_i(\x)$. It is clear that $\cup_{i=1}^d(c_i\cup d_i)$ is isotopic to $\cup_{i=1}^d(c'_i\cup e_i)$ (as oriented 1-submanifolds of $\S$): each $c_i\cup d_i\cup c_{\s^{-1}(i)}$ can be pushed off to the left side of $\b_i$ to match $c'_i\cup e_i\cup c'_{\s^{-1}(i)}$. Thus $\e(\x,\y)$ is represented by the 1-submanifold $\bigcup_{i=1}^d(c_i'\cup e_i)$, which is transversal to $\bbb$.  By Remark \ref{remark: ovl equals homology class of l if cycle}
\begin{align*}
\e(\x,\y)&=\ov{\cup_{i=1}^d(c_i'\cup e_i)}\\
&= \ov{\cup_{i=1}^d c_i'}\\
&= \prod_{i=1}^d\ov{c'_i}
\end{align*}
where we used $\ov{e_i}=1$ since the $e_i$'s are disjoint from $\bbb$. The expression using the dual homology classes in $\aaa$ is proved similarly noting that $d'_i$ and $d_i$ have opposite orientations.

\end{proof}

\subsection[Construction of $I_H$]{Construction of $I_H$}
\label{subs: Construction of Z_H}

Let $(M,\c)$, $\o$ and $\rho$ be as before and let $\HH$ be an ordered, oriented, extended Heegaard diagram of $(M,\c)$. Pick a multipoint $\x$ of $\HH$. Whenever $\HH$ is based according to $\x$, we denote the tensor of Definition \ref{def: ZHrho and ZHrho with orientation} by $Z_H^{\rho}(\HH,\x)$ (resp. $Z_H^{\rho}(\HH,\x,\o)$ if the orientation $\o$ is taken into account). 
\medskip

Now let $\ss\in\Spinc(M,\c)$. Recall that $\Spinc(M,\c)$ is an affine space over $H^2(M,\p M)$ (Proposition \ref{prop: spinc is affine space over Htwo}) so there is an homology class $\ss-s(\x)\in H^2(M,\p M)$ where $s$ is the map of Definition \ref{def: sx of OSz}. Let $h_{\ss,\x}$ be the class in $H_1(M)$ defined by $$h_{\ss,\x}\eq PD[\ss-s(\x)]$$ where $PD:H^2(M,\p M)\to H_1(M)$ is Poincar\'e duality. 

\begin{definition}
\label{def: hx and scalar zeta ss x}
We let
\begin{align*}
\zeta_{\ss,\x}\eq \lb \rho(h_{\ss,\x}),(a^*)^{-1}\ot b\rb=\lb (a^*)^{-1},\v(h_{\ss,\x})\rb\lb \psi(h_{\ss,\x}),b\rb\in\kk^{\t}
\end{align*}
where $\rho=(\v,\psi)$ under the isomorphism $G(A\ot B^*)\cong G(A)\ot G(B^*)$. Here $\lb \, , \, \rb$ denotes the corresponding evaluation pairings.
\end{definition}

\medskip

\begin{theorem}
\label{thm: Z is an invariant}
Let $(M,\c)$ be a balanced sutured manifold with connected $R_-(\c)$ endowed with $\ss\in\Spinc(M,\c)$, an orientation $\o$ of $H_*(M,R_-(\c);\R)$ and a group homomorphism $\rho:H_1(M)\to G(A\ot B^*)$. Then the scalar
\begin{align*}
\zeta_{\ss,\x} Z_H^{\rho}(\HH,\x,\o)
\end{align*}
is independent of all the choices made in its definition and defines a topological invariant of the tuple $(M,\c,\ss,\o,\rho)$.
\end{theorem}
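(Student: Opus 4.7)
The plan is to check invariance step by step through the combinatorial choices made on a fixed extended Heegaard diagram (ordering, orientations, multipoint used to put basepoints), and then through each of the extended Heegaard moves listed in Proposition \ref{prop: extended RS thm}. For each step, the two ingredients to control are $Z_H^{\rho}(\HH,\x,\o)$ and the scalar $\zeta_{\ss,\x}$; the latter is topologically determined once $s(\x)\in\Spinc(M,\c)$ is known, so any move that preserves the bijective correspondence $\x\mapsto s(\x)$ (via Proposition \ref{prop: Spinc map is preserved under Heegaard moves}) reduces everything to a computation on the Kuperberg tensor.

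Invariance under the ordering of $\aa,\bolda,\bb,\boldb$ follows because transposing two adjacent factors permutes the corresponding $\iota$- and $\mu$-tensors, producing a Koszul sign $(-1)^{|\iota|}$ or $(-1)^{|\mu|}$; the same transposition switches two basis vectors of $C_1$ or $C_2$, so $o(\HH)$ flips and the sign is absorbed by $\dHH$. For orientations of the individual curves, the basepoint convention of Definition \ref{def: basepoints from multipoint} was tailored for this: reversing the orientation of a closed $\a_i$ jumps $p_i(\x)$ to the opposite side of $x_i$ and cyclically reverses the iterated coproduct of $\iota$, so the comparison reduces to identity (\ref{eq: cointegral inv of orientation}) whose $S$ and $(-1)^{|\iota|}$ match the required sign change (orientations of $\b$-curves use (\ref{eq: integral inv of orientation}), and orientations of arcs use that $i_A,\pi_B$ are Hopf maps).

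The key analytic step is independence from the multipoint $\x$. Given $\y\in\Tab$, I would move the basepoints prescribed by $\x$ along each $\a_i$ and $\b_i$ until they match those prescribed by $\y$. Each time a basepoint crosses an intersection of $\a_i$ with some $\b\in\bbb$, the cyclic rotation of $\De_H^{(k-1)}(\iota)$ produces, by condition (\ref{eq: cointegral cotrace property}), a factor of $b$ inserted on the corresponding $\b$-leg; dually, a basepoint rotation on a $\b$-curve inserts a factor of $a^*$ on an $\a$-leg by (\ref{eq: integral trace property}). Collecting these insertions and using that $b\in B$ and $a^*\in A^*$ are central/cocentral, together with Lemma \ref{lemma: exy change of basepoints}, yields
\begin{align*}
Z_H^{\rho}(\HH,\x,\o)=\bigl\langle\rho(\e(\x,\y)),\,(a^*)^{-1}\ot b\bigr\rangle\, Z_H^{\rho}(\HH,\y,\o).
\end{align*}
By Lemma \ref{lemma: spinc and exy} the correction factor equals $\zeta_{\ss,\y}/\zeta_{\ss,\x}$, so $\zeta_{\ss,\x}Z_H^{\rho}(\HH,\x,\o)$ is multipoint-independent.

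It remains to verify invariance under the five types of extended Heegaard moves. Diffeomorphisms isotopic to the identity are immediate from the topological nature of the ingredients. Isotopies in $\inte(\S)$ either preserve crossings or create a canceling pair of opposite signs, eliminated by the antipode axiom $m_H(S_H\ot\id)\De_H=\eta_H\e_H$. Stabilization is handled by condition (\ref{eq: stabilization}): the new one-point intersection contracts to $\mu\iota=\id_{\kk}$, and the extended multipoint $\x\cup\{x_{d+1}\}$ defines the same $\Spinc$ class by Proposition \ref{prop: Spinc map is preserved under Heegaard moves}. The three handlesliding moves (curve–curve, arc–curve, arc–arc) correspond exactly to the three cases of Proposition \ref{prop: hsliding property for cointegral arc-arc, arc-curve, curve-curve} as flagged in Remark \ref{remark: three handlesliding moves algebraically}; the main subtlety here, and the step I expect to be the principal technical obstacle, is accounting for the shift $(\a_i')^*=\a_i^*(\a_j^*)^{-1}$ from Remark \ref{remark: handlesliding alpha curves}: one must check that the factor $\v((\a_j^*)^{-1})$ absorbed from the Hopf-morphism output leg on the slid curve cancels with the corresponding $\v(\a_j^*)$ on the reference curve after applying the handleslide identity, and that the resulting change of basepoints is consistent with the persistence of $s(\x)$ under the move. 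Once these bookkeeping issues are settled, all Heegaard moves preserve both $Z_H^{\rho}(\HH,\x,\o)$ and $\zeta_{\ss,\x}$, which completes the proof.
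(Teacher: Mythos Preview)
Your outline follows the paper's own strategy essentially verbatim: first dispose of the combinatorial choices (ordering, orientations, multipoint) via Lemmas \ref{lemma: invariance of ordering}, \ref{lemma: invariance of orientations}, \ref{lemma: invariance of multipoint}, and then run through the extended Heegaard moves using Proposition \ref{prop: Spinc map is preserved under Heegaard moves} to ensure $\zeta_{\ss,\x}$ is unchanged and checking $Z_H^{\rho}$ directly. The multipoint argument, the orientation argument, stabilization, diffeomorphisms, and the three handleslide cases are all handled as in the paper, and your identification of the dual-class bookkeeping $(\a_i')^*=\a_i^*(\a_j^*)^{-1}$ as the delicate point in handlesliding is accurate.

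There is, however, one genuine omission. Your isotopy step only treats isotopies in $\inte(\S)$, where a canceling pair of crossings is produced and killed by the antipode axiom. But the arcs in $\bolda$ and $\boldb$ are \emph{properly} embedded, and an isotopy of an arc may slide one of its endpoints along $\p\S$ past the endpoint of an arc of the opposite color. This creates or destroys a \emph{single} crossing, not a canceling pair, so the antipode axiom does not apply. The paper handles this case explicitly: the new crossing contributes a factor $\psi(\b^*)\circ\pi_B\circ i_A\circ\v(\a^*)$, which is trivial precisely by condition (\ref{eq: invariance under bdry isotopy of arc}) of Definition \ref{def: compatible integral cointegral}, namely $\pi_B\circ i_A=\eta_B\circ\e_A$. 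You never invoke this condition, so as written your argument does not cover all extended Heegaard moves. Once you add this boundary-isotopy case, the proof is complete and matches the paper's.
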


We will prove this theorem in Section \ref{section: proof of invariance}. The case when $R_-$ is disconnected is treated in Subsection \ref{subs: disconnected case} below. Theorem \ref{thm: intro Z is an invariant} is the combination of Theorem \ref{thm: Z is an invariant} together with Proposition \ref{prop: Z for disconnected case}.

\begin{definition}
Using the above notation and assuming $R_-$ connected, we define
\begin{align*}
I_H^{\rho}(M,\c,\ss,\o)\eq\zeta_{\ss,\x} Z_H^{\rho}(\HH,\x,\o)
\end{align*}
where $\HH$ is any ordered, oriented, extended Heegaard diagram of $(M,\c)$ (based according to $\x$). This is well-defined by the above theorem.
\end{definition}

%\begin{example}If $K$ is the left trefoil, the presentation of $\pi_1(S^3\sm K)$ associated to the Heegaard diagram of Example \ref{example: sutured HD for link complement} (with orientations and basepoints as in Example \ref{example: computation of Z of left trefoil}) is\begin{align*}\lb \b^*, b^* \ | \ \b^*b^*(\b^*)^{-1}b^*\b^*(b^*)^{-1}\rb.\end{align*}Note that this gives $\b^*b^*=1$ in $H_1M$. The Fox derivative is\begin{align*}\frac{\p\ov{\a}}{\p \b^*}&=1-\b^*b^*(\b^*)^{-1}+\b^*b^*(\b^*)^{-1}b^* \\\end{align*}so in $H_1(M)$ this becomes\begin{align*}\frac{\p\ov{\a}}{\p \b^*}&=1-b^*+(b^*)^{2}.\end{align*}Evaluating this on the character $\rho:H_1(M)\to \C^{\t}$ defined by $\rho(b^*)=e^{\frac{2\pi i}{n}}$ gives $Z(\HH,\x_0)$ as computed in Example \ref{example: computation of Z of left trefoil} therefore satisfying the conclusion of Theorem \ref{thm: intro Z via Fox calculus}\end{example}

\subsection[The disconnected case]{The disconnected case}
\label{subs: disconnected case} 

Assuming Theorem \ref{thm: Z is an invariant}, we show how to extend the definition of $I_H$ to arbitrary balanced sutured manifolds (i.e. with possibly disconnected $R_-$). 
\medskip

Let $(M,\c)$ be a balanced sutured manifold. Then we can construct a balanced sutured manifold $(M',\c')$ containing $M$ and with connected $R'=R_-(\c')$ as follows (see \cite[Section 3.6]{FJR11}). First, attach a 2-dimensional 1-handle $h$ along $s(\c)$. This handle can be thickened to a 3-dimensional 1-handle $h\t I$ attached to $\c=s(\c)\t I$. This produces a new balanced sutured manifold, and after sufficiently many handle attachments, we get $(M',\c')$ with connected $R'$. For each $\ss\in\Spinc(M,\c)$ we let $i(\ss)\in \Spinc(M',\c')$ be defined by $i(\ss)|_M=\ss$ and let $i(\ss)$ be the gradient of the height function $h\t I\to I$ on each 2-dimensional 1-handle $h$ attached to $R$. Note that the inclusion map induces an isomorphism
\begin{align*}
H_*(M,R;\R)\cong H_*(M',R';\R)
\end{align*}
so given an orientation $\o$ of $H_*(M,R;\R)$, we denote by $\o'$ the corresponding orientation of $H_*(M',R';\R)$. Now, given a representation 
$\rho:H_1(M)\to G(A\ot B^*)$ we let $\rho':H_1(M')\to G(A\ot B^*)$ be an arbitrary extension, i.e., we choose $\rho'$ such that $\rho'\circ i_*=\rho$ where $i_*:H_1(M)\to H_1(M')$ is the homomorphism induced by inclusion.

\begin{lemma}
If $R=R_-(\c)$ is connected, then
\begin{align*}
I_H^{\rho}(M,\c,\ss,\o)=I_H^{\rho'}(M',\c',i(\ss),\o')
\end{align*}
where $(M',\c')$ is any balanced sutured manifold constructed from $(M,\c)$ as above.
\end{lemma}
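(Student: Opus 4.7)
The plan is to reduce the lemma to the case of attaching a single 3-dimensional 1-handle, iterating the argument thereafter. Given an extended Heegaard diagram $\HH = (\S,\aa,\bolda,\bb,\boldb)$ of $(M,\c)$, I would choose the attaching arcs $e_1, e_2 \subset s(\c)$ of the handle $h$ disjoint from the endpoints of $\bolda\cup\boldb$ and set $\S' \eq \S \cup (h\times\{1/2\})$, extending $\S$ by the middle slice of $h\times I$ glued along $e_1, e_2$. The resulting extended Heegaard diagram is $\HH' \eq (\S',\aa,\bolda\cup\{a_*\},\bb,\boldb\cup\{b_*\})$, where $a_*, b_*$ are two disjoint parallel arcs in the strip $h\times\{1/2\}$, each connecting its two new boundary arcs. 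Since $\S'[\aa] \cong R_-(\c')$ is obtained from $\S[\aa]\cong R_-(\c)$ by attaching a 2-dimensional 1-handle whose cocore is represented by $a_*$, the arc $a_*$ is precisely what is needed to promote $\bolda$ to a cut system of $(\S',\aa)$; likewise $b_*$ for $\boldb$. Thus $\HH'$ is a valid extended Heegaard diagram of $(M',\c')$.

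Pick any multipoint $\x\in\Tab$, which doubles as a multipoint of $\HH'$ since $\aa$ and $\bb$ are unchanged, and order $\HH'$ by placing $a_*, b_*$ last. The tensor $Z_H^{\rho'}(\HH',\x)$ differs from $Z_H^{\rho}(\HH,\x)$ only by the two additional tensors associated with $a_*$ and $b_*$. By construction, both arcs are disjoint from every other curve or arc of the diagram, so these tensors have no crossing-legs and evaluate to scalars: the tensor for $a_*$ is $\e_H(i_A(\v'(a_*^*)))=\e_A(\v'(a_*^*))=1$, since $i_A$ is a coalgebra morphism and $\v'(a_*^*)\in G(A)$ is group-like, while the tensor for $b_*$ is $\psi'(b_*^*)(\pi_B(\eta_H(1)))=\psi'(b_*^*)(1_B)=1$, since $\pi_B$ is an algebra morphism and $\psi'(b_*^*)\in G(B^*)$ is an algebra morphism. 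Hence $Z_H^{\rho'}(\HH',\x)=Z_H^{\rho}(\HH,\x)$. The orientation sign is also unchanged: the sets $\aa,\bb$ of closed curves are the same in both diagrams, so the handlebody chain complex $C_*(M,R_-(\c)\t I;\R)$ equals $C_*(M',R_-(\c')\t I;\R)$, and since $\o'$ corresponds to $\o$ under the natural isomorphism $H_*(M,R_-(\c);\R)\cong H_*(M',R_-(\c');\R)$, one has $\d_{\o'}(\HH')=\d_{\o}(\HH)$.

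It remains to verify $\zeta_{i(\ss),\x}=\zeta_{\ss,\x}$, which reduces to checking $s'(\x)=i(s(\x))$ in $\Spinc(M',\c')$. The approach is to take a Morse function $f$ on $M$ adapted to $\HH$ and extend it to $f'$ on $M'$ by the rescaled height function on $h\times I$; no new critical points appear, $\S'=(f')^{-1}(3/2)$, and $f'$ is adapted to $\HH'$. The trajectories $\c_{\x}$ lie in $M\subset M'$, and $\n f'$ coincides with $\n f$ on $M$ and with the gradient of the height function on $h\times I$; the vector field representing $s'(\x)$ therefore agrees with the extension of the representative of $s(\x)$ used to define $i(s(\x))$, giving $s'(\x)=i(s(\x))$. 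Consequently $h_{i(\ss),\x}=i_*(h_{\ss,\x})\in H_1(M')$, and since $\rho'\circ i_*=\rho$, evaluation against $(a^*)^{-1}\ot b$ yields $\zeta_{i(\ss),\x}=\zeta_{\ss,\x}$. Combining the three equalities gives $I_H^{\rho'}(M',\c',i(\ss),\o')=I_H^{\rho}(M,\c,\ss,\o)$. The main obstacle is the $\Spinc$ identification, which requires a simultaneous adaptation of Morse functions compatible with the handle attachment; once that is arranged, all remaining steps are essentially computational.
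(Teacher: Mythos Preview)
Your proof is correct and follows essentially the same route as the paper: reduce to a single 1-handle, extend the Heegaard surface by the middle slice of the handle, add a cocore arc to each cut system, observe that the closed curves (hence multipoints and handlebody chain complex) are unchanged, and conclude that the Kuperberg tensor and $\zeta$ factor match. You give slightly more detail than the paper in two places---you compute explicitly that the crossing-free arc tensors are $\e_H\circ i_A(\v'(a_*^*))=1$ and $\psi'(b_*^*)\circ\pi_B\circ\eta_H=1$, and you sketch the Morse-theoretic argument for $s'(\x)=i(s(\x))$ directly---whereas the paper simply notes that the new arcs meet nothing and cites \cite[Section 3.6]{FJR11} for the $\Spinc$ compatibility.
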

\begin{proof}
It suffices to suppose $(M',\c')$ is obtained by adding a single 3-dimensional 1-handle to $R_-\t I$. Then, an extended Heegaard diagram of $M'$ is obtained from an extended diagram $\HH=\HDD$ of $M$ by attaching a 2-dimensional 1-handle $h$ to $\S$ along $\p\S$ and letting $\bolda'=\bolda\cup\{a\}, \boldb'=\boldb\cup\{b\}$, where both $a$ and $b$ are arcs parallel to the cocore of the 1-handle attached (as usual we note $\aaa=\aa\cup\bolda$ and $\bbb=\bb\cup\boldb$). If $\S'$ denotes the surface $\S$ with $h$ attached, then $(\S',\aa,\bb,\bolda',\boldb')$ is an extended diagram of $(M',\c')$. Note that with such Heegaard diagrams, one has 
an identification $\Tab=\Tabprime$ and we note $\x'$ the multipoint in $\Tabprime$ corresponding to $\x\in\Tab$. Since the cocore of $h$ does not intersects any curve or arc of $\HH$ it follows from the definition that
\begin{align*}
Z^{\rho}(\HH,\x,\o)=Z^{\rho'}(\HH',\x',\o').
\end{align*}
Moreover, since the map $i:\Spinc(M,\c)\to \Spinc(M',\c')$ defined above is an affine map and satisfies $i(s(\x))=s'(\x')$ (see \cite[Section 3.6]{FJR11}), one has 
\begin{align*}
PD[h_{i(\ss),\x'}]&=i(\ss)-s'(\x')\\
&=i(\ss)-i(s(\x)\\
&= i(\ss-s(\x)))\\
&=i_*(PD[h_{\ss,\x}])
\end{align*}
which implies $\zeta_{\ss,\x}=\zeta_{i(\ss),\x'}$ since $\rho'\circ i_*=\rho$. Thus
\begin{align*}
I_H^{\rho}(M,\c,\ss,\o)=I_H^{\rho'}(M',\c',i(\ss),\o')
\end{align*}
as desired.
\end{proof}

\begin{proposition}
\label{prop: Z for disconnected case}
Let $(M,\c)$ be a balanced sutured manifold with possibly disconnected $R=R_-(\c)$. Let $(M',\c')$ be any sutured manifold with connected $R'=R_-(\c')$ constructed as above. The scalar $I_H^{\rho'}(M',\c',i(\ss),\o')$ is independent of how the one-handles are attached to $R$, provided $R'$ is connected.
\end{proposition}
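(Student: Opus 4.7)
The plan is to embed both configurations into a common refinement and reduce to the preceding lemma. Given two choices $(M_1',\c_1')$ and $(M_2',\c_2')$ of handle attachments to $(M,\c)$ each producing connected $R_-$, I would first isotope all the attaching disks so that the handles of both configurations are attached along pairwise disjoint regions of $s(\c)\t I$. Attaching all of them simultaneously then yields a third balanced sutured manifold $(M_3',\c_3')$, with $R_-(\c_3')$ still connected since starting from the connected $R_-(\c_1')$ and adding further 2-dimensional 1-handles to its boundary cannot disconnect it. The inclusions $M_1'\hookrightarrow M_3'\hookleftarrow M_2'$ exhibit $(M_3',\c_3')$ as being obtained from each of $(M_i',\c_i')$ by attaching further 1-handles.

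Next, since $R_-(\c_1')$ is connected, the preceding lemma applies---a priori for a single handle, but iteratively for finitely many because connectivity of $R_-$ is preserved at each intermediate step---and gives
\begin{align*}
I_H^{\rho_1'}(M_1',\c_1',i(\ss),\o_1')=I_H^{\rho_3}(M_3',\c_3',i(\ss),\o_3')
\end{align*}
for any extension $\rho_3:H_1(M_3')\to G(A\ot B^*)$ of $\rho_1'$, and the analogous equality holds starting from $(M_2',\c_2')$ for any extension of $\rho_2'$. Putting the two together with a well-chosen common extension $\rho_3$ yields $I_H^{\rho_1'}(M_1',\c_1',i(\ss),\o_1')=I_H^{\rho_2'}(M_2',\c_2',i(\ss),\o_2')$, which is the content of the proposition.

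The main point that requires attention is the existence of a \emph{single} character $\rho_3$ on $H_1(M_3')$ that restricts simultaneously to $\rho_1'$ on $H_1(M_1')$ and to $\rho_2'$ on $H_1(M_2')$. I would argue this as follows: $H_1(M_3')$ splits canonically as $H_1(M)$ together with one free $\Z$ summand for each attached 1-handle (the summand being generated by the obvious dual loop to the cocore), and a similar decomposition holds for $H_1(M_1')$ and $H_1(M_2')$ with their respective subsets of handles. Since both $\rho_1'$ and $\rho_2'$ restrict to $\rho$ on $H_1(M)$, one may define $\rho_3$ by using $\rho$ on $H_1(M)$, $\rho_1'$ on the $M_1'$-handle generators, and $\rho_2'$ on the $M_2'$-handle generators. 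The orientations and $\Spinc$ structures cause no extra issue because the relevant isomorphisms $H_*(M,R;\R)\cong H_*(M_i',R_i';\R)$ are natural, and the extension $\ss\mapsto i(\ss)$ by height functions on the handles is manifestly compatible with iterated attachment. I therefore do not anticipate a genuine obstacle beyond this bookkeeping around simultaneously compatible extensions.
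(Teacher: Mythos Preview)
Your proposal is correct and follows exactly the same strategy as the paper: build a common refinement $(M_3',\c_3')$ containing both $(M_1',\c_1')$ and $(M_2',\c_2')$, then apply the preceding lemma twice. The paper's proof is a terse three lines that simply asserts $I(M')=I(M_0)=I(M'')$; your version is more careful, in particular about the existence of a single extension $\rho_3$ restricting to both $\rho_1'$ and $\rho_2'$, a point the paper does not explicitly address but which your splitting $H_1(M_3')\cong H_1(M)\oplus\Z^{k_1}\oplus\Z^{k_2}$ handles correctly.
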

\begin{proof}
Suppose $(M',\c')$ and $(M'',\cc'')$ are obtained from $(M,\c)$ by attaching 1-handles to $R$ as above and that $R',R''$ are connected. 
One can keep attaching 1-handles to find a sutured manifold $(M_0,\c_0)$ containing both $M'$ and $M''$. By the lemma above, it follows that 
$I(M')=I(M_0)=I(\HH')$ as desired.
\end{proof}

\begin{definition}
\label{def: Z for disconnected R}
Let $(M,\c)$ be a connected balanced sutured manifold with possibly disconnected $R=R_-(\c)$. We define
\begin{align*}
I_H^{\rho}(M,\c,\ss,\o)\eq I_H^{\rho'}(M',\c',i(\ss),\o')
\end{align*}
where $(M',\c')$ is any balanced sutured manifold obtained by adding 1-handles to $M$ as above, $\rho'$ is an arbitrary extension of $\rho$ to $H_1(M')$ and $\o'$ is the orientation of $H_*(M',R';\R)$ corresponding to $\o$ under the isomorphism induced by inclusion. If $M$ is disconnected with connected components $M_1,\dots,M_m$ then we let
\begin{align*}
I_H^{\rho}(M,\c,\ss,\o)\eq\prod_{i=1}^m I_H^{\rho_i}(M_i,\c_i,\ss_i,\o_i)
\end{align*}
where $\c_i=\c\cap M_i, \rho_i=\rho|_{H_1(M_i)}, \ss_i=\ss |_{M_i},\o_i=\o|_{H_*(M_i,R_-(\c_i);\R)}$ for each $i=1,\dots,m$.
\end{definition}

\section{Proof of invariance}
\label{section: proof of invariance}

In this section we prove Theorem \ref{thm: Z is an invariant}. The main difference between our proof and that of Kuperberg \cite{Kup1} is that the tensors we associate to the closed curves of a Heegaard diagram are not necessarily cyclic. Thus, $Z_H^{\rho}(\HH)$ depends on the basepoints of $\HH$, but we find a nice basepoint-changing formula as in Proposition \ref{prop: changing basepoints and Z} below. We will use the same notation as in the previous section. Thus, $H$ is an involutive Hopf superalgebra endowed with a compatible relative integral and cointegral with distinguished group-likes $b\in G(B)$ and $a^*\in G(A^*)$. We also let $\HH=\HDD$ be an ordered, oriented, based extended Heegaard diagram of a balanced sutured 3-manifold $(M,\c)$ (with connected $R_-(\c)$) and let $\rho=(\v,\psi):H_1(M)\to G(A\ot B^*)$ be a group homomorphism.

 %Recall that we note $m_b:H\to H, h\mapsto i_B(b)x$ and $\De_{a^*}:H\to H, h\mapsto a^*(\pi_A(h_{(1)}))h_{(2)}$.

\begin{lemma}
\label{lemma: multiply by b on lower handlebody does not affect result}
For any $\b\in\bbb$, composing an incoming leg of a $\b$-tensor with $m_{b^{\pm 1}}$ has the effect of multiplying that tensor by $\lb \psi((\b^*)^{\pm 1}),b\rb$. Graphically, this is
\begin{figure}[H]
\centering
\begin{pspicture}(0,-0.50498873)(11.68,0.50498873)
\rput[bl](0.5,-0.11){$m_{b^{\pm 1}}$}
\rput[bl](2.1,-0.09){$m_H$}
\psline[linecolor=black, linewidth=0.018, arrowsize=0.05291667cm 2.0,arrowlength=0.8,arrowinset=0.2]{->}(2.8,0.0)(3.1,0.0)
\psline[linecolor=black, linewidth=0.018, arrowsize=0.05291667cm 2.0,arrowlength=0.8,arrowinset=0.2]{->}(3.6,0.0)(3.9,0.0)
\rput[bl](3.24,-0.11){$\mu$}
\rput[bl](4.02,-0.18){$\psi(\beta^*)$}
\psline[linecolor=black, linewidth=0.018, arrowsize=0.05291667cm 2.0,arrowlength=0.8,arrowinset=0.2]{->}(0.0,0.0)(0.3,0.0)
\rput[bl](9.52,-0.18){$\cdot\langle\psi((\beta^*)^{\pm 1}),b\rangle$}
\rput[bl](5.23,-0.04){=}
\rput[bl](6.5,-0.09){$m_H$}
\psline[linecolor=black, linewidth=0.018, arrowsize=0.05291667cm 2.0,arrowlength=0.8,arrowinset=0.2]{->}(7.2,0.0)(7.5,0.0)
\psline[linecolor=black, linewidth=0.018, arrowsize=0.05291667cm 2.0,arrowlength=0.8,arrowinset=0.2]{->}(8.0,0.0)(8.3,0.0)
\rput[bl](7.64,-0.11){$\mu$}
\rput[bl](8.42,-0.18){$\psi(\beta^*)$}
\psline[linecolor=black, linewidth=0.018, arrowsize=0.05291667cm 2.0,arrowlength=0.8,arrowinset=0.2]{->}(1.4,0.0)(2.0,0.0)
\psline[linecolor=black, linewidth=0.018, arrowsize=0.05291667cm 2.0,arrowlength=0.8,arrowinset=0.2]{->}(1.8,-0.5)(2.0,-0.2)
\psline[linecolor=black, linewidth=0.018, arrowsize=0.05291667cm 2.0,arrowlength=0.8,arrowinset=0.2]{->}(1.8,0.5)(2.0,0.2)
\psarc[linecolor=black, linewidth=0.026, linestyle=dotted, dotsep=0.10583334cm, dimen=outer](2.15,-0.03){0.6}{135.0}{170.0}
\psarc[linecolor=black, linewidth=0.026, linestyle=dotted, dotsep=0.10583334cm, dimen=outer](2.1766667,-0.06){0.6}{-170.0}{-135.0}
\psline[linecolor=black, linewidth=0.018, arrowsize=0.05291667cm 2.0,arrowlength=0.8,arrowinset=0.2]{->}(5.8,0.0)(6.4,0.0)
\psline[linecolor=black, linewidth=0.018, arrowsize=0.05291667cm 2.0,arrowlength=0.8,arrowinset=0.2]{->}(6.2,-0.5)(6.4,-0.2)
\psline[linecolor=black, linewidth=0.018, arrowsize=0.05291667cm 2.0,arrowlength=0.8,arrowinset=0.2]{->}(6.2,0.5)(6.4,0.2)
\psarc[linecolor=black, linewidth=0.026, linestyle=dotted, dotsep=0.10583334cm, dimen=outer](6.55,-0.03){0.6}{135.0}{170.0}
\psarc[linecolor=black, linewidth=0.026, linestyle=dotted, dotsep=0.10583334cm, dimen=outer](6.576667,-0.06){0.6}{-170.0}{-135.0}
\end{pspicture}
\end{figure}
\noindent as well as with $\pi_B$ in place of $\mu$. Similarly, for any $\a\in\aaa$, composing an outcoming leg of an $\a$-tensor with $\De_{(a^*)^{\pm 1}}$ has the effect of multiplying that tensor by $\lb a^*,\v((\a^*)^{\pm 1})\rb$.
\end{lemma}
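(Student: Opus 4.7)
My plan is to prove the four assertions (the $\beta$-version for closed curves with $\mu$, the $\beta$-version for arcs with $\pi_B$, the $\alpha$-version for closed curves with $\iota$, and the $\alpha$-version for arcs with $i_A$) by the same strategy of ``sliding the decoration through the tensor to its root, using the defining properties of relative (co)integrals, and then using group-likeness to evaluate it as a scalar.'' I will present the $\beta$-closed-curve case with $m_b$ in detail, since the other three are entirely parallel.

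First I would use the centrality condition (1) of Definition \ref{def: relative integral}, namely $m_H \circ (i_B \otimes \id_H) = m_H^{\op} \circ (i_B \otimes \id_H)$, to slide $m_{b^{\pm 1}}$ past every internal node of the iterated multiplication $m_H^{(k)}$ that forms the bottom of the $\beta$-tensor. Iterating this, the $m_{b^{\pm 1}}$ inserted on any incoming leg can be brought to act on the single output of $m_H^{(k)}$, producing $m_{b^{\pm 1}}\circ m_H^{(k)}$.

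Second I would apply condition (2) of Definition \ref{def: relative integral} ($B$-linearity of $\mu$), which in the form $\mu\circ m_H\circ(i_B\otimes \id_H)=m_B\circ (\id_B\otimes\mu)$ lets me pull $b^{\pm 1}$ past $\mu$ and into $B$. Finally, since $\psi(\beta^*)\in G(B^*)=\Hom_{\text{alg}}(B,\kk)$ is an algebra morphism, $\psi(\beta^*)(b^{\pm 1}\cdot y)=\langle\psi(\beta^*),b^{\pm 1}\rangle\,\psi(\beta^*)(y)$, which factors out the scalar $\langle\psi(\beta^*),b^{\pm 1}\rangle=\langle\psi((\beta^*)^{\pm 1}),b\rangle$, as claimed. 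The $\pi_B$-variant is identical except that one replaces the use of $B$-linearity of $\mu$ by the fact that $\pi_B$ is a Hopf (hence algebra) morphism with $\pi_B\circ i_B=\id_B$, so $\pi_B\circ m_{b^{\pm 1}}=m_{b^{\pm 1}}^B\circ \pi_B$.

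For the $\alpha$-tensors, I would dualize the argument. First use the cocentrality condition (1) of Definition \ref{def: relative cointegral}, namely $(\pi_A\otimes\id_H)\Delta_H=(\pi_A\otimes\id_H)\Delta_H^{\op}$, to move the ``$a^*\circ \pi_A$'' component of $\Delta_{(a^*)^{\pm 1}}$ through every coproduct node, so that up to coassociativity it acts directly on the output of $\iota$ (resp.\ $i_A$). Then, for the $\iota$-case, I apply the $A$-colinearity condition (2) of Definition \ref{def: relative cointegral}, $(\pi_A\otimes\id_H)\Delta_H\iota=(\id_A\otimes\iota)\Delta_A$, to pull the $a^*$ out onto an $A$-factor of $\Delta_A(\v(\alpha^*))$; for the $i_A$-case, the same step is done using that $i_A$ is a coalgebra morphism and $\pi_A i_A=\id_A$. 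In both cases, the fact that $\v(\alpha^*)\in G(A)$ gives $\Delta_A(\v(\alpha^*))=\v(\alpha^*)\otimes \v(\alpha^*)$, so the $a^*$-factor evaluates to the scalar $\langle (a^*)^{\pm 1},\v(\alpha^*)\rangle=\langle a^*,\v((\alpha^*)^{\pm 1})\rangle$ multiplied by the original tensor. The only ``technical'' step is checking that the sliding in the first and second paragraphs is really legitimate at every internal node of the arbitrary iterated (co)multiplication tree, but this is an immediate induction once the single-node identity is in place; no obstacle beyond careful bookkeeping is expected.
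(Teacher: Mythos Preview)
Your proposal is correct and follows essentially the same approach as the paper's proof, which is a one-liner citing exactly the three ingredients you use: centrality of $b^{\pm 1}\in i_B(B)\subset H$, $B$-linearity of $\mu$ (resp.\ that $\pi_B$ is an algebra morphism with $\pi_B i_B=\id_B$), and group-likeness of $\psi(\beta^*)$, with the $\alpha$-case obtained by the dual argument. You have simply unpacked these facts into the explicit ``slide through the tree, then out the root'' computation that the paper leaves implicit.
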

\begin{proof}
We think of $B$ as a central subalgebra of $H$ via $i_B$. Then, this is an immediate consequence of the fact that $b^{\pm 1}\in H$ is central, $\mu$ (or $\pi_B$) is $B$-linear and $\psi(\b^*)$ is group-like. The second assertion follows similarly.
\end{proof}

\begin{proposition}
\label{prop: changing basepoints and Z}
Let $\HH$ be an ordered, oriented, based, extended Heegaard diagram. Let $\a\in\aa$ be a closed curve and $p\in\a$ be its basepoint. Then changing the basepoint $p$ to another basepoint $p'\in\a$ has the effect of multiplying $Z_H^{\rho}(\HH)$ by $\lb \psi(\ov{c}),b\rb$ where $c$ is the arc from $p'$ to $p$ along $\a$ (following its orientation). Similarly, if $\b\in\bb$ is closed and $q\in\b$ is its basepoint, moving $q$ to another basepoint $q'$ multiplies $Z_H^{\rho}(\HH)$ by $\lb a^*,\v(\ov{c})\rb$ where $c$ is the arc from $q'$ to $q$ along $\b$.
\end{proposition}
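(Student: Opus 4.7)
My plan is to reduce to the case where $p$ and $p'$ are separated by a single crossing $x_1$, and then to prove that moving past $x_1$ multiplies $Z_H^{\rho}(\HH)$ by the claimed factor. The general statement will then follow by iterating single-crossing moves, since $\psi \colon H_1(M) \to G(B^*)$ is a group homomorphism and $b \in G(B)$ is group-like, so the scalars $\lb \psi(\bar c_i), b\rb$ for consecutive one-step moves multiply to $\lb \psi(\bar c), b\rb$ where $c$ is the concatenation.

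For the one-crossing $\a$-case, let $x_1, x_2, \dots, x_n$ be the crossings along $\a$ starting from the old basepoint $p$, with $p'$ placed just past $x_1$ in the direction of the orientation of $\a$. The only tensor that changes is the $\a$-tensor, and setting $y = \iota(\v(\a^*))$, the new tensor feeds $y_{(i)}$ into the crossing $x_{i+1 \bmod n}$ instead of $x_i$. Re-indexing so each crossing $x_i$ receives its new content, I must compare the tensor $y_{(1)} \otimes \cdots \otimes y_{(n)}$ with the cyclic shift $y_{(n)} \otimes y_{(1)} \otimes \cdots \otimes y_{(n-1)}$. The crucial identity
\[
y_{(n)} \otimes y_{(1)} \otimes y_{(2)} \otimes \cdots \otimes y_{(n-1)} \;=\; y_{(1)} \otimes by_{(2)} \otimes by_{(3)} \otimes \cdots \otimes by_{(n)}
\]
is obtained by applying $\id \otimes \Delta_H^{(n-2)}$ to the cotrace property \eqref{eq: cointegral cotrace property} $y_{(2)} \otimes y_{(1)} = y_{(1)} \otimes by_{(2)}$, together with coassociativity and the fact that $b$ is group-like (so $\Delta^{(n-2)}(by_{[2]}) = (b^{\otimes n-1}) \Delta^{(n-2)}(y_{[2]})$). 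Applying this in the contraction and invoking Lemma \ref{lemma: multiply by b on lower handlebody does not affect result} at each of the crossings $x_2, \dots, x_n$ produces a factor $\lb \psi((\b_{j(i)}^*)^{m(x_i)}), b\rb$ per crossing, where the exponent $m(x_i) = (-1)^{\e_{x_i}}$ is forced by commuting $b$ through the antipode $S_H^{\e_{x_i}}$ (using $b$ central in $H$ and $S_H(b) = b^{-1}$). Taking the product and using that $\psi$ is a homomorphism gives $\lb \psi(\bar c), b\rb$, since $\bar c = \prod_{i=2}^n (\b_{j(i)}^*)^{m(x_i)}$ by construction.

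The closed-$\b$ case is dual. The relevant identity is the trace property \eqref{eq: integral trace property}, which reads $\mu(vu) = \mu(u\, \Delta_{a^*}(v))$; applied with $u = x_1$ and $v = x_2 \cdots x_n$ it gives $\mu(x_2 \cdots x_n x_1) = \mu(x_1\, \Delta_{a^*}(x_2)\cdots \Delta_{a^*}(x_n))$, where I use that $\Delta_{a^*}$ is an algebra morphism on $H$ (this in turn follows because $a^* \in G(A^*) = \Hom_{alg}(A,\kk)$ and $\pi_A$ is cocentral, so $a^* \circ \pi_A$ is an algebra morphism $H \to \kk$). Commuting $\Delta_{a^*}$ through $S_H^{\e_{x_i}}$ turns it into $\Delta_{(a^*)^{m(x_i)}}$ acting on the $\a$-tensor output at $x_i$; Lemma \ref{lemma: multiply by b on lower handlebody does not affect result} then produces the scalar $\lb a^*, \v((\a_{j(i)}^*)^{m(x_i)})\rb$ per crossing, and multiplying gives $\lb a^*, \v(\bar c)\rb$ as claimed.

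The main obstacle is the derivation of the iterated cotrace identity (and its trace dual), together with the sign bookkeeping coming from negative crossings; once the group-likeness of $b$ and $a^*$ is used to distribute these through coproducts, products, and antipodes, the rest is a direct contraction argument using Lemma \ref{lemma: multiply by b on lower handlebody does not affect result}.
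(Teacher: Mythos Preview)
Your argument is correct and follows essentially the same route as the paper: both use the cotrace property~\eqref{eq: cointegral cotrace property} together with group-likeness of $b$ to identify the cyclically shifted $\alpha$-tensor with the original one having $m_b$ inserted on the crossings along $c$, then push $m_b$ through $S_H^{\e_x}$ and invoke Lemma~\ref{lemma: multiply by b on lower handlebody does not affect result}. The paper carries this out directly for an arbitrary shift (past $k$ crossings, with $l$ remaining), while you specialize to $k=1$ and iterate; note that the $k=1$ case is not actually simpler, since it already requires the full $n$-fold identity you derive.

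One small point: in your iteration step the arcs $c_1,c_2,\dots$ are the \emph{long} arcs (each omitting a single crossing), so their literal concatenation wraps around $\alpha$ several times rather than giving the final arc $c$. The equality $\prod_j \overline{c_j}=\overline{c}$ in $H_1(M)$ that you need holds because $\overline{\alpha}=1$ (as $\alpha$ bounds a disk in $M$); you should make this explicit. With that clarification, your proof and the paper's are equivalent.
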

\begin{proof}

We prove only the first assertion, the second is analogous. First we claim that, after moving the basepoint to $p'$, the tensor corresponding to $\a$ is obtained from the original one by applying $m_b$ to all the crossings along $c$. If $k$ is the number of crossings along $\a$ from $p$ to $p'$ and $l$ is the number of crossings along $c$, then this is shown as follows (we denote $\De=\De_H$):

\begin{figure}[H]
\centering
\begin{pspicture}(0,-2.435)(11.95,2.435)
\rput[bl](5.65,1.765){=}
\rput[bl](9.81,2.115){$\Delta$}
\psline[linecolor=black, linewidth=0.018, arrowsize=0.05291667cm 2.0,arrowlength=0.8,arrowinset=0.2]{->}(10.25,2.315)(10.55,2.415)
\psline[linecolor=black, linewidth=0.018, arrowsize=0.05291667cm 2.0,arrowlength=0.8,arrowinset=0.2]{->}(10.25,2.115)(10.55,2.015)
\rput[bl](9.81,1.315){$\Delta$}
\psline[linecolor=black, linewidth=0.018, arrowsize=0.05291667cm 2.0,arrowlength=0.8,arrowinset=0.2]{->}(10.25,1.515)(10.55,1.615)
\psline[linecolor=black, linewidth=0.018, arrowsize=0.05291667cm 2.0,arrowlength=0.8,arrowinset=0.2]{->}(10.25,1.315)(10.55,1.215)
\rput[bl](10.65,2.015){$\big\}$}
\rput[bl](10.65,1.215){$\big\}$}
\rput[bl](10.95,2.115){$k$}
\rput[bl](10.95,1.315){$l$}
\rput[bl](5.65,0.065){=}
\rput[bl](10.41,0.415){$\Delta$}
\psline[linecolor=black, linewidth=0.018, arrowsize=0.05291667cm 2.0,arrowlength=0.8,arrowinset=0.2]{->}(10.85,0.615)(11.15,0.715)
\psline[linecolor=black, linewidth=0.018, arrowsize=0.05291667cm 2.0,arrowlength=0.8,arrowinset=0.2]{->}(10.85,0.415)(11.15,0.315)
\rput[bl](11.25,0.315){$\big\}$}
\rput[bl](11.55,0.415){$k$}
\psline[linecolor=black, linewidth=0.018, arrowsize=0.05291667cm 2.0,arrowlength=0.8,arrowinset=0.2]{->}(9.05,0.215)(10.25,0.415)
\psline[linecolor=black, linewidth=0.018, arrowsize=0.05291667cm 2.0,arrowlength=0.8,arrowinset=0.2]{->}(9.05,0.015)(9.35,-0.085)
\psline[linecolor=black, linewidth=0.018, arrowsize=0.05291667cm 2.0,arrowlength=0.8,arrowinset=0.2]{->}(1.95,1.815)(2.25,1.815)
\rput[bl](2.41,1.715){$\Delta$}
\rput[bl](3.31,2.115){$\Delta$}
\rput[bl](3.31,1.315){$\Delta$}
\rput[bl](5.15,2.125){$k$}
\rput[bl](5.15,1.365){$l$}
\rput[bl](1.6,1.765){$\iota$}
\rput[bl](0.0,1.665){$\varphi(\alpha^*)$}
\psline[linecolor=black, linewidth=0.018, arrowsize=0.05291667cm 2.0,arrowlength=0.8,arrowinset=0.2]{->}(1.05,1.815)(1.35,1.815)
\psline[linecolor=black, linewidth=0.018, arrowsize=0.05291667cm 2.0,arrowlength=0.8,arrowinset=0.2]{->}(2.85,1.915)(3.15,2.115)
\psline[linecolor=black, linewidth=0.018, arrowsize=0.05291667cm 2.0,arrowlength=0.8,arrowinset=0.2]{->}(2.85,1.715)(3.15,1.515)
\rput[bl](4.85,2.025){$\}$}
\psline[linecolor=black, linewidth=0.018, arrowsize=0.05291667cm 2.0,arrowlength=0.8,arrowinset=0.2]{->}(3.75,2.115)(4.75,1.315)
\psline[linecolor=black, linewidth=0.018, arrowsize=0.05291667cm 2.0,arrowlength=0.8,arrowinset=0.2]{->}(3.75,2.315)(4.75,1.515)
\rput[bl](4.85,1.265){$\}$}
\rput[bl](9.45,-0.335){$m_b$}
\psline[linecolor=black, linewidth=0.018, arrowsize=0.05291667cm 2.0,arrowlength=0.8,arrowinset=0.2]{->}(10.05,-0.285)(10.35,-0.385)
\rput[bl](10.51,-0.485){$\Delta$}
\psline[linecolor=black, linewidth=0.018, arrowsize=0.05291667cm 2.0,arrowlength=0.8,arrowinset=0.2]{->}(10.95,-0.285)(11.25,-0.185)
\psline[linecolor=black, linewidth=0.018, arrowsize=0.05291667cm 2.0,arrowlength=0.8,arrowinset=0.2]{->}(10.95,-0.485)(11.25,-0.585)
\rput[bl](11.35,-0.585){$\big\}$}
\rput[bl](11.65,-0.485){$l$}
\rput[bl](5.65,-1.735){=}
\rput[bl](9.51,-1.385){$\Delta$}
\psline[linecolor=black, linewidth=0.018, arrowsize=0.05291667cm 2.0,arrowlength=0.8,arrowinset=0.2]{->}(9.95,-1.185)(10.25,-1.085)
\psline[linecolor=black, linewidth=0.018, arrowsize=0.05291667cm 2.0,arrowlength=0.8,arrowinset=0.2]{->}(9.95,-1.385)(10.25,-1.485)
\rput[bl](9.51,-2.185){$\Delta$}
\psline[linecolor=black, linewidth=0.018, arrowsize=0.05291667cm 2.0,arrowlength=0.8,arrowinset=0.2]{->}(9.95,-1.985)(10.25,-1.885)
\psline[linecolor=black, linewidth=0.018, arrowsize=0.05291667cm 2.0,arrowlength=0.8,arrowinset=0.2]{->}(9.95,-2.185)(10.25,-2.285)
\rput[bl](10.35,-1.485){$\big\}$}
\rput[bl](10.65,-1.385){$k$}
\psline[linecolor=black, linewidth=0.018, arrowsize=0.05291667cm 2.0,arrowlength=0.8,arrowinset=0.2]{->}(9.05,-1.585)(9.35,-1.385)
\psline[linecolor=black, linewidth=0.018, arrowsize=0.05291667cm 2.0,arrowlength=0.8,arrowinset=0.2]{->}(9.05,-1.785)(9.35,-1.985)
\rput[bl](10.45,-2.035){$m_b$}
\rput[bl](10.45,-2.435){$m_b$}
\psline[linecolor=black, linewidth=0.018, arrowsize=0.05291667cm 2.0,arrowlength=0.8,arrowinset=0.2]{->}(11.05,-1.885)(11.35,-1.885)
\psline[linecolor=black, linewidth=0.018, arrowsize=0.05291667cm 2.0,arrowlength=0.8,arrowinset=0.2]{->}(11.05,-2.285)(11.35,-2.285)
\rput[bl](11.45,-2.285){$\big\}$}
\rput[bl](11.75,-2.185){$l$}
\psline[linecolor=black, linewidth=0.018, arrowsize=0.05291667cm 2.0,arrowlength=0.8,arrowinset=0.2]{->}(8.15,1.815)(8.45,1.815)
\rput[bl](7.8,1.765){$\iota$}
\rput[bl](6.2,1.665){$\varphi(\alpha^*)$}
\psline[linecolor=black, linewidth=0.018, arrowsize=0.05291667cm 2.0,arrowlength=0.8,arrowinset=0.2]{->}(7.25,1.815)(7.55,1.815)
\psline[linecolor=black, linewidth=0.018, arrowsize=0.05291667cm 2.0,arrowlength=0.8,arrowinset=0.2]{->}(8.15,0.115)(8.45,0.115)
\rput[bl](8.61,0.015){$\Delta$}
\rput[bl](7.8,0.065){$\iota$}
\rput[bl](6.2,-0.035){$\varphi(\alpha^*)$}
\psline[linecolor=black, linewidth=0.018, arrowsize=0.05291667cm 2.0,arrowlength=0.8,arrowinset=0.2]{->}(7.25,0.115)(7.55,0.115)
\psline[linecolor=black, linewidth=0.018, arrowsize=0.05291667cm 2.0,arrowlength=0.8,arrowinset=0.2]{->}(8.15,-1.685)(8.45,-1.685)
\rput[bl](8.61,-1.785){$\Delta$}
\rput[bl](7.8,-1.735){$\iota$}
\rput[bl](6.2,-1.835){$\varphi(\alpha^*)$}
\psline[linecolor=black, linewidth=0.018, arrowsize=0.05291667cm 2.0,arrowlength=0.8,arrowinset=0.2]{->}(7.25,-1.685)(7.55,-1.685)
\rput[bl](11.9,-2.435){.}
\psline[linecolor=black, linewidth=0.018, arrowsize=0.05291667cm 2.0,arrowlength=0.8,arrowinset=0.2]{->}(3.75,1.515)(4.75,2.315)
\psline[linecolor=black, linewidth=0.018, arrowsize=0.05291667cm 2.0,arrowlength=0.8,arrowinset=0.2]{->}(3.75,1.315)(4.75,2.115)
\rput[bl](8.61,1.715){$\Delta^{\text{op}}$}
\psline[linecolor=black, linewidth=0.018, arrowsize=0.05291667cm 2.0,arrowlength=0.8,arrowinset=0.2]{->}(9.35,1.915)(9.65,2.115)
\psline[linecolor=black, linewidth=0.018, arrowsize=0.05291667cm 2.0,arrowlength=0.8,arrowinset=0.2]{->}(9.35,1.715)(9.65,1.515)
\end{pspicture}
\end{figure}
\medskip

\noindent Here in the second equality we used property (\ref{eq: cointegral cotrace property}) of Definition \ref{def: compatible integral cointegral} and we used that $b$ is group-like in the third equality. The leftmost tensor is the tensor corresponding to $\a$ after moving the basepoint, proving our claim. Now, let $x$ be a crossing through $c$ and denote by $\b_x$ the $\b$ curve or arc passing through $x$. We have

\begin{figure}[H]
\centering
\begin{pspicture}(0,-0.155)(7.6,0.155)
\psline[linecolor=black, linewidth=0.018, arrowsize=0.05291667cm 2.0,arrowlength=0.8,arrowinset=0.2]{->}(0.0,0.045)(0.4,0.045)
\rput[bl](0.65,-0.095){$m_b$}
\rput[bl](1.9,-0.085){$S^{\epsilon_x}$}
\rput[bl](6.15,-0.095){$m_{b^{\pm 1}}$}
\psline[linecolor=black, linewidth=0.018, arrowsize=0.05291667cm 2.0,arrowlength=0.8,arrowinset=0.2]{->}(1.3,0.045)(1.7,0.045)
\psline[linecolor=black, linewidth=0.018, arrowsize=0.05291667cm 2.0,arrowlength=0.8,arrowinset=0.2]{->}(2.6,0.045)(3.0,0.045)
\rput[bl](4.8,-0.085){$S^{\epsilon_x}$}
\psline[linecolor=black, linewidth=0.018, arrowsize=0.05291667cm 2.0,arrowlength=0.8,arrowinset=0.2]{->}(4.2,0.045)(4.6,0.045)
\psline[linecolor=black, linewidth=0.018, arrowsize=0.05291667cm 2.0,arrowlength=0.8,arrowinset=0.2]{->}(5.5,0.045)(5.9,0.045)
\psline[linecolor=black, linewidth=0.018, arrowsize=0.05291667cm 2.0,arrowlength=0.8,arrowinset=0.2]{->}(7.1,0.045)(7.5,0.045)
\rput[bl](3.5,-0.005){=}
%\rput[bl](7.55,-0.155){.}
\end{pspicture}
\end{figure}
\noindent where the $\pm 1$ on the right hand side is the sign of the crossing $x$. Thus, moving the basepoint $p\in\a$ to $p'$ has the effect of composing by $m_{b^{\pm 1}}$ every incoming leg of a $\b$-tensor that corresponds to a crossing of $c$. By Lemma \ref{lemma: multiply by b on lower handlebody does not affect result}, this has the effect of multiplying the scalar $Z_H^{\rho}(\HH)$ by
\begin{align*}
\prod_{x\in c}\lb \psi((\b_x^*)^{\pm 1}),b\rb
\end{align*}
where $\pm 1$ is the sign at $x$. This is exactly $\lb \psi(\ov{c}),b\rb$ as was to be shown.

\end{proof}

Now suppose $\HH$ is ordered, oriented and based according to a multipoint $\x\in\Tab$. As before, we denote by $Z_H^{\rho}(\HH,\x)$ the corresponding tensor (or $Z_H^{\rho}(\HH,\x,\o)$ if we take $\o$ into account). Given $\ss\in\Spinc(M,\c)$, recall that we defined $\zeta_{\ss,\x}\in\kk^{\t}$ by $$\zeta_{\ss,\x}\eq \lb (a^*)^{-1},\v(h_{\ss,\x})\rb\lb \psi(h_{\ss,\x}),b\rb$$ where $h_{\ss,\x}\eq PD[\ss-s(\x)]\in H_1(M)$.

\medskip

\begin{lemma}
\label{lemma: invariance of multipoint}
%Let $\HH$ be a diagram of $(M,\c)$ as above and let $\ss\in\Spinc(M,\c)$. 
Let $\ss\in\Spinc(M,\c)$ be fixed. Then the scalar $$\zeta_{\ss,\x}Z_H^{\rho}(\HH,\x)$$ is independent of the multipoint $\x\in\Tab$.
\end{lemma}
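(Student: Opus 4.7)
The plan is to compute how $Z_H^{\rho}(\HH,\x)$ and $\zeta_{\ss,\x}$ each transform under changing the multipoint $\x\to\y$, and show that the two dependencies cancel exactly. This requires comparing the basepoint-change formula (Proposition \ref{prop: changing basepoints and Z}) with the $\Spinc$-difference formula (Lemma \ref{lemma: spinc and exy}), linked through the identity of Lemma \ref{lemma: exy change of basepoints}.

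First I would fix $\x,\y\in\Tab$ and, for each $i$, denote by $c''_i\subset\a_i$ the oriented arc from $p_i(\y)$ to $p_i(\x)$ (in the direction of $\a_i$), and by $d''_i\subset\b_i$ the oriented arc from $q_i(\y)$ to $q_i(\x)$. Changing the basepoints on $\HH$ from those determined by $\x$ to those determined by $\y$ is performed one curve at a time; applying Proposition \ref{prop: changing basepoints and Z} to each closed $\a_i$ and then to each closed $\b_i$ yields
\begin{align*}
Z_H^{\rho}(\HH,\y)=Z_H^{\rho}(\HH,\x)\cdot\prod_{i=1}^d\lb\psi(\ov{c''_i}),b\rb\cdot\prod_{i=1}^d\lb a^*,\v(\ov{d''_i})\rb.
\end{align*}
By Lemma \ref{lemma: exy change of basepoints} (with $\x,\y$ swapped), $\e(\y,\x)=\prod_i\ov{c''_i}=\prod_i\ov{d''_i}^{-1}$ in $H_1(M)$, so the above simplifies to
\begin{align*}
Z_H^{\rho}(\HH,\y)=Z_H^{\rho}(\HH,\x)\,\lb\psi(\e(\y,\x)),b\rb\,\lb(a^*)^{-1},\v(\e(\y,\x))\rb.
\end{align*}

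Next I would analyse the ratio of the scalars $\zeta_{\ss,\x}$ and $\zeta_{\ss,\y}$. Since $\v$ and $\psi$ are group homomorphisms and $a^*,b$ are group-like, the definition gives
\begin{align*}
\frac{\zeta_{\ss,\x}}{\zeta_{\ss,\y}}=\lb(a^*)^{-1},\v(h_{\ss,\x}-h_{\ss,\y})\rb\,\lb\psi(h_{\ss,\x}-h_{\ss,\y}),b\rb.
\end{align*}
By definition $h_{\ss,\x}-h_{\ss,\y}=PD[\ss-s(\x)]-PD[\ss-s(\y)]=PD[s(\y)-s(\x)]$, and Lemma \ref{lemma: spinc and exy} identifies this with $\e(\y,\x)\in H_1(M)$. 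Substituting,
\begin{align*}
\frac{\zeta_{\ss,\x}}{\zeta_{\ss,\y}}=\lb(a^*)^{-1},\v(\e(\y,\x))\rb\,\lb\psi(\e(\y,\x)),b\rb.
\end{align*}

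Multiplying the two displays shows $\zeta_{\ss,\x}Z_H^{\rho}(\HH,\x)=\zeta_{\ss,\y}Z_H^{\rho}(\HH,\y)$, as required. The only delicate point — and the place where one must be careful — is the matching of orientations: the basepoint convention of Definition \ref{def: basepoints from multipoint} was tailored precisely so that the arcs $c''_i,d''_i$ entering Proposition \ref{prop: changing basepoints and Z} are the ones whose $\ov{\,\cdot\,}$-products give $\e(\y,\x)$ on the nose (rather than a correction by a boundary class or a sign). Once the direction conventions in Lemmas \ref{lemma: spinc and exy} and \ref{lemma: exy change of basepoints} are lined up with those of Proposition \ref{prop: changing basepoints and Z}, the cancellation is automatic and there is no residual contribution from $\bolda,\boldb$ since the basepoints on these arcs are fixed (they coincide with the initial endpoints).
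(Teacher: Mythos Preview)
Your proof is correct and follows essentially the same route as the paper's: both arguments reduce the question to Proposition~\ref{prop: changing basepoints and Z} applied curve by curve, invoke Lemma~\ref{lemma: exy change of basepoints} to recognise the product of basepoint-change factors as $\e(\y,\x)$ (resp.\ $\e(\x,\y)$), and then match this against the difference $h_{\ss,\x}-h_{\ss,\y}$ via Lemma~\ref{lemma: spinc and exy}. The paper introduces an intermediate diagram $\ov{\HH}$ (with $\a$-basepoints from $\y$ and $\b$-basepoints from $\x$) to separate the two families of basepoint moves, whereas you perform them in one pass; since the $\a$-basepoint factor involves only $\bbb$ and the $\b$-basepoint factor only $\aaa$, the order is immaterial and the two presentations are equivalent.
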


\begin{proof}
Let $\x,\y\in\Tab$, say $x_i\in\a_i\cap\b_i$ and $y_i\in\a_i\cap\b_{\s(i)}$ for each $i=1,\dots,d$, where $\s\in S_d$. We thus have basepoints $p_i(\x),p_i(\y)\in \a_i$ and $q_i(\x),q_i(\y)\in\b_i$ for each $i=1,\dots,d$. We let $c'_i$ be the arc from $p_i(\x)$ to $p_i(\y)$ along $\a_i$ and let $d'_i$ be the arc from $q_i(\x)$ to $q_i(\y)$ along $\b_i$. Let $\ov{\HH}$ be the based diagram with basepoints the $p_i(\y)$'s on the $\a$ curves and the $q_i(\x)$'s on the $\b$ curves (and let $\ov{\HH}$ have the same ordering and orientations of $\HH$). Using Proposition \ref{prop: changing basepoints and Z} over each alpha curve we get
\begin{align*}
Z_H^{\rho}(\HH,\x)=\left\langle \prod_{i=1}^d\psi(\ov{c}'_i),b\right\rangle Z_H^{\rho}(\ov{\HH})=\left\langle\psi(\e(\x,\y)),b\right\rangle Z_H^{\rho}(\ov{\HH})
\end{align*}
where we use Lemma \ref{lemma: exy change of basepoints} in the second equality. Similarly, we have
\begin{align*}
Z_H^{\rho}(\ov{\HH})=\left\langle a^*,\prod_{i=1}^d\v(\ov{d}'_i)\right\rangle Z_H^{\rho}(\HH,\y)=\left\langle a^*, \v(\e(\y,\x))\right\rangle Z_H^{\rho}(\HH,\y).
\end{align*}
Combining these, we get 
\begin{align*}
Z_H^{\rho}(\HH,\y)&=\left\langle a^*, \v(\e(\x,\y))\right\rangle\left\langle \psi(\e(\x,\y)),b^{-1}\right\rangle Z_H^{\rho}(\HH,\x)\\
&=\left\langle\rho(\e(\x,\y)),a^*\ot b^{-1} \right\rangle Z_H^{\rho}(\HH,\x).
\end{align*}
Noting that $h_{\ss,\x},h_{\ss,\y}\in H_1(M)$ are related by $h_{\ss,\y}=h_{\ss,\x}+\e(\x,\y)$, we find
\begin{align*}
\zeta_{\ss,\y}=\zeta_{\ss,\x}\lb \rho(\e(\x,\y)), (a^*)^{-1}\ot b\rb
\end{align*}
and we deduce
\begin{align*}
\zeta_{\ss,\y}Z_H^{\rho}(\HH,\y)=\zeta_{\ss,\x}Z_H^{\rho}(\HH,\x)
\end{align*}
as was to be shown.
\end{proof}

\begin{lemma}
\label{lemma: invariance of orientations}
For any $\x\in\Tab$, the scalar $Z_H^{\rho}(\HH,\x,\o)$ is independent of the orientations of $\aaa,\bbb$ chosen.
\end{lemma}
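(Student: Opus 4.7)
It suffices to show that reversing the orientation of a single curve or arc in $\aaa \cup \bbb$ leaves $Z_H^\rho(\HH,\x,\o)$ unchanged; iterating handles any orientation change. We treat four cases (closed $\a$-curve, closed $\b$-curve, $\a$-arc in $\bolda$, $\b$-arc in $\boldb$); the two closed cases are formally dual, and the arc cases are simpler.

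First I would handle the reversal of a closed curve $\a_i$ in detail. Five changes occur simultaneously: (a) the element $\v(\a_i^*)=g$ fed into the $\a_i$-tensor is replaced by $\v((\a_i^*)^{-1})=S_A(g)$; (b) the iterated coproduct $\Delta_H^{(k-1)}$ attached to $\a_i$ becomes $\Delta_H^{\mathrm{op},(k-1)}$; (c) the crossing signs $\e_x$ flip at every $x\in\a_i\cap\bbb$; (d) by Definition \ref{def: basepoints from multipoint} the basepoint $p_i$ on $\a_i$ stays at the same geometric position while $q_i$ on $\b_i$ jumps across $x_i$ to the opposite side; (e) the sign $\dHH$ acquires a factor $(-1)^{|\iota|}$ because reversing one curve flips $o(\HH)$. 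The algebraic core is the identity $\Delta_H^{\mathrm{op},(k-1)}=S_H^{\otimes k}\circ\Delta_H^{(k-1)}\circ S_H$ together with compatibility condition (2) of Definition \ref{def: compatible integral cointegral}, which via cocentrality of $\pi_A$ and $A$-colinearity of $\iota$ yields, for group-like $g$,
\[
\iota(S_A(g)) \,=\, (-1)^{|\iota|}\langle a^*,g\rangle\, S_H(\iota(g)).
\]
Substituting this in the reversed $\a_i$-tensor, the extra $S_H$ on each output leg combines with the flipped $S_H^{1-\e_x}$ at each crossing to restore $S_H^{\e_x}$ (since $S_H^2=\id$), so the reversed $\a_i$-tensor equals $(-1)^{|\iota|}\langle a^*,g\rangle$ times the original. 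By Proposition \ref{prop: changing basepoints and Z}, the jump of $q_i$ multiplies $Z_H^\rho(\HH)$ by $\langle \psi(\ov{c_q}),b\rangle$, where $c_q$ is the arc on $\b_i$ from the new to the old basepoint following the orientation of $\b_i$. Since $\b_i$ bounds a disk in $\Ub$, one has $\ov{\b_i}=1\in H_1(M)$; isolating the $x_i$-contribution and noting that reversing $\a_i$ simultaneously flips $\a_i^*$ and $m(x_i)$ in a cancelling way, one obtains $\ov{c_q}=(\a_i^*)^{-1}$ in old orientation, hence $\v(\ov{c_q})=g^{-1}$. The four factors then multiply to $(-1)^{|\iota|}\langle a^*,g\rangle\cdot\langle a^*,g^{-1}\rangle\cdot(-1)^{|\iota|}=1$.

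The closed $\b_j$ case is strictly dual: one uses compatibility condition (1), the relation $m_H^{\mathrm{op},(k-1)}=S_H\circ m_H^{(k-1)}\circ S_H^{\otimes k}$, Lemma \ref{lemma: multiply by b on lower handlebody does not affect result} to absorb the freshly generated factor of $b$, and the vanishing $\ov{\a_j}=1$ (since $\a_j$ bounds in $\Ua$) to produce the cancellation of the basepoint factor arising when $p_j$ jumps across $x_j$. For an arc $\a_k\in\bolda$ (resp.\ $\b_k\in\boldb$), no multipoint basepoints are affected since multipoints lie only on closed curves, and $\dHH$ is insensitive to arc orientations. Using that $i_A$ (resp.\ $\pi_B$) is a Hopf morphism, so $i_A(S_A(g))=S_H(i_A(g))$, the same $S_H$-cancellation argument goes through without any compensating $\langle a^*,g\rangle$ (resp.\ $\langle\psi(\b^*),b\rangle$) factor, and the relevant arc-tensor is unchanged.

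The main obstacle is the basepoint bookkeeping in the two closed cases: verifying $\ov{c_q}=(\a_i^*)^{-1}$ (resp.\ $\ov{c_p}=(\b_j^*)^{-1}$) requires separate analysis of positive and negative crossings, tracking carefully which orientation (old or new) is used for each dual class and each local sign. Both subcases produce the same cancelling factor once one recognises that the $x_i$-contribution to $\ov{\b_i}$ is invariant under the reversal, but this computation is the only nontrivial verification that is not immediate from the compatibility conditions.
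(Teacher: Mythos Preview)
Your approach is essentially the same as the paper's: both reduce to the identity coming from compatibility condition~(2), then cancel the resulting group-like factor against the basepoint shift on the partner curve via Proposition~\ref{prop: changing basepoints and Z}, with the remaining sign absorbed by $\dHH$. One slip: when you invoke Proposition~\ref{prop: changing basepoints and Z} for the jump of $q_i$ you quote the $\a$-basepoint formula $\langle \psi(\ov{c_q}),b\rangle$, whereas for a $\b$-basepoint the correct factor is $\langle a^*,\v(\ov{c_q})\rangle$; you silently switch to the right expression in your final product, so this is only a typo. Your detour through $\ov{\b_i}=1$ to pin down the sign of $\ov{c_q}$ is valid and in fact more explicit than the paper, which simply asserts that the traversal contributes $\langle a^*,\v(\a_1^*)^{-1}\rangle$ without the case analysis you flag as the main obstacle.
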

\begin{proof}
We will only prove this for $\aaa$. Suppose first we reverse the orientation of an arc $\a$. Then $\a^*$ gets inversed, the crossings through $\a$ reverse order and change sign. Since $H$ is involutive, the tensor corresponding to a crossing $x$ of $\a$ becomes $S_H\circ S_H^{\e_x}$, where $\e_x$ is the intersection sign before the orientation reversal. The following computation then shows that $Z^{\rho}_H$ is independent of the orientation of $\a$  (we denote $\De=\De_H$ for simplicity):

\begin{figure}[H]
\centering
\begin{pspicture}(0,-0.745)(11.119107,0.745)
\rput[bl](5.45,0.305){=}
\psline[linecolor=black, linewidth=0.018, arrowsize=0.05291667cm 2.0,arrowlength=0.8,arrowinset=0.2]{->}(9.95,0.355)(10.25,0.355)
\rput[bl](6.0,0.205){$\varphi(\alpha^*)$}
\psline[linecolor=black, linewidth=0.018, arrowsize=0.05291667cm 2.0,arrowlength=0.8,arrowinset=0.2]{->}(7.05,0.355)(7.35,0.355)
\rput[bl](5.45,-0.595){=}
\rput[bl](6.0,-0.695){$\varphi(\alpha^*)$}
\psline[linecolor=black, linewidth=0.018, arrowsize=0.05291667cm 2.0,arrowlength=0.8,arrowinset=0.2]{->}(2.65,0.355)(2.95,0.355)
\rput[bl](2.17,0.255){$i_A$}
\rput[bl](0.0,0.205){$\varphi((\alpha^*)^{-1})$}
\psline[linecolor=black, linewidth=0.018, arrowsize=0.05291667cm 2.0,arrowlength=0.8,arrowinset=0.2]{->}(1.75,0.355)(2.05,0.355)
\rput[bl](4.25,0.455){$S_H$}
\rput[bl](4.25,0.055){$S_H$}
\rput[bl](7.45,0.255){$S_A$}
\psline[linecolor=black, linewidth=0.018, arrowsize=0.05291667cm 2.0,arrowlength=0.8,arrowinset=0.2]{->}(8.05,0.355)(8.35,0.355)
\rput[bl](9.35,0.255){$S_H$}
\rput[bl](10.41,0.255){$\Delta$}
\psline[linecolor=black, linewidth=0.018, arrowsize=0.05291667cm 2.0,arrowlength=0.8,arrowinset=0.2]{->}(10.85,0.455)(11.15,0.555)
\psline[linecolor=black, linewidth=0.018, arrowsize=0.05291667cm 2.0,arrowlength=0.8,arrowinset=0.2]{->}(10.85,0.255)(11.15,0.155)
\rput[bl](8.41,-0.645){$\Delta$}
\psline[linecolor=black, linewidth=0.018, arrowsize=0.05291667cm 2.0,arrowlength=0.8,arrowinset=0.2]{->}(8.85,-0.445)(9.15,-0.345)
\psline[linecolor=black, linewidth=0.018, arrowsize=0.05291667cm 2.0,arrowlength=0.8,arrowinset=0.2]{->}(8.85,-0.645)(9.15,-0.745)
\psline[linecolor=black, linewidth=0.018, arrowsize=0.05291667cm 2.0,arrowlength=0.8,arrowinset=0.2]{->}(4.85,0.555)(5.15,0.555)
\psline[linecolor=black, linewidth=0.018, arrowsize=0.05291667cm 2.0,arrowlength=0.8,arrowinset=0.2]{->}(4.85,0.155)(5.15,0.155)
\psline[linecolor=black, linewidth=0.018, arrowsize=0.05291667cm 2.0,arrowlength=0.8,arrowinset=0.2]{->}(8.95,0.355)(9.25,0.355)
\rput[bl](8.47,0.255){$i_A$}
\psline[linecolor=black, linewidth=0.018, arrowsize=0.05291667cm 2.0,arrowlength=0.8,arrowinset=0.2]{->}(7.05,-0.545)(7.35,-0.545)
\psline[linecolor=black, linewidth=0.018, arrowsize=0.05291667cm 2.0,arrowlength=0.8,arrowinset=0.2]{->}(7.95,-0.545)(8.25,-0.545)
\rput[bl](7.47,-0.645){$i_A$}
\rput[bl](9.3,-0.745){.}
\rput[bl](3.11,0.255){$\Delta^{\text{op}}$}
\psline[linecolor=black, linewidth=0.018, arrowsize=0.05291667cm 2.0,arrowlength=0.8,arrowinset=0.2]{->}(3.85,0.455)(4.15,0.555)
\psline[linecolor=black, linewidth=0.018, arrowsize=0.05291667cm 2.0,arrowlength=0.8,arrowinset=0.2]{->}(3.85,0.255)(4.15,0.155)
\end{pspicture}

\end{figure}

\noindent Here the first equality uses that $S_H$ is a coalgebra anti-automorphism and the second uses that $i_A$ preserves antipodes. Now, suppose we reverse the orientation of a closed curve, say $\a_1$. Let $\x=\{x_1,\dots,x_d\}$ with $x_i\in\a_i\cap\b_i$ for $i=1,\dots, d$. As before $\a_1^*$ gets inversed and the crossings through $\a_1$ get reversed and change sign, but also the basepoint of $\b_1$ is crossed through $\a_1$. Let's see first how does the tensor corresponding to $\a_1$ changes:

\begin{figure}[H]
\centering
\begin{pspicture}(0,-1.195)(11.75,1.195)
\rput[bl](5.25,0.755){=}
\rput[bl](8.3,0.755){$\iota$}
\rput[bl](5.7,0.655){$\varphi(\alpha_1^*)$}
\psline[linecolor=black, linewidth=0.018, arrowsize=0.05291667cm 2.0,arrowlength=0.8,arrowinset=0.2]{->}(6.75,0.805)(7.05,0.805)
\rput[bl](5.25,-0.145){=}
\psline[linecolor=black, linewidth=0.018, arrowsize=0.05291667cm 2.0,arrowlength=0.8,arrowinset=0.2]{->}(9.95,-0.095)(10.25,-0.095)
\rput[bl](9.6,-0.145){$\iota$}
\rput[bl](6.8,-0.245){$\varphi(\alpha_1^*)$}
\psline[linecolor=black, linewidth=0.018, arrowsize=0.05291667cm 2.0,arrowlength=0.8,arrowinset=0.2]{->}(7.85,-0.095)(8.15,-0.095)
\rput[bl](5.25,-1.045){=}
\psline[linecolor=black, linewidth=0.018, arrowsize=0.05291667cm 2.0,arrowlength=0.8,arrowinset=0.2]{->}(10.45,-0.995)(10.75,-0.995)
\rput[bl](10.91,-1.095){$\Delta$}
\rput[bl](10.1,-1.045){$\iota$}
\rput[bl](8.5,-1.145){$\varphi(\alpha_1^*)$}
\psline[linecolor=black, linewidth=0.018, arrowsize=0.05291667cm 2.0,arrowlength=0.8,arrowinset=0.2]{->}(9.55,-0.995)(9.85,-0.995)
\psline[linecolor=black, linewidth=0.018, arrowsize=0.05291667cm 2.0,arrowlength=0.8,arrowinset=0.2]{->}(11.35,-0.895)(11.65,-0.795)
\psline[linecolor=black, linewidth=0.018, arrowsize=0.05291667cm 2.0,arrowlength=0.8,arrowinset=0.2]{->}(11.35,-1.095)(11.65,-1.195)
\rput[bl](7.15,0.705){$S_A$}
\psline[linecolor=black, linewidth=0.018, arrowsize=0.05291667cm 2.0,arrowlength=0.8,arrowinset=0.2]{->}(7.75,0.805)(8.05,0.805)
\psline[linecolor=black, linewidth=0.018, arrowsize=0.05291667cm 2.0,arrowlength=0.8,arrowinset=0.2]{->}(8.65,0.805)(8.95,0.805)
\rput[bl](8.35,-0.245){$\Delta_{a^*}$}
\psline[linecolor=black, linewidth=0.018, arrowsize=0.05291667cm 2.0,arrowlength=0.8,arrowinset=0.2]{->}(9.05,-0.095)(9.35,-0.095)
\rput[bl](10.41,-0.195){$\Delta$}
\psline[linecolor=black, linewidth=0.018, arrowsize=0.05291667cm 2.0,arrowlength=0.8,arrowinset=0.2]{->}(10.85,0.005)(11.15,0.105)
\psline[linecolor=black, linewidth=0.018, arrowsize=0.05291667cm 2.0,arrowlength=0.8,arrowinset=0.2]{->}(10.85,-0.195)(11.15,-0.295)
\rput[bl](5.7,-1.165){$(-1)^{|\iota|} \langle a^*,\varphi(\alpha_1^*)\rangle\cdot$}
\psline[linecolor=black, linewidth=0.018, arrowsize=0.05291667cm 2.0,arrowlength=0.8,arrowinset=0.2]{->}(4.75,1.005)(5.05,1.005)
\psline[linecolor=black, linewidth=0.018, arrowsize=0.05291667cm 2.0,arrowlength=0.8,arrowinset=0.2]{->}(4.75,0.605)(5.05,0.605)
\rput[bl](11.7,-1.195){.}
\rput[bl](5.65,-0.195){$(-1)^{|\iota|} \cdot$}
\rput[bl](0.0,0.655){$\varphi((\alpha_1^*)^{-1})$}
\rput[bl](4.15,0.905){$S_H$}
\rput[bl](4.15,0.505){$S_H$}
\psline[linecolor=black, linewidth=0.018, arrowsize=0.05291667cm 2.0,arrowlength=0.8,arrowinset=0.2]{->}(9.65,0.805)(9.95,0.805)
\rput[bl](9.05,0.705){$S_H$}
\rput[bl](10.11,0.705){$\Delta$}
\psline[linecolor=black, linewidth=0.018, arrowsize=0.05291667cm 2.0,arrowlength=0.8,arrowinset=0.2]{->}(10.55,0.905)(10.85,1.005)
\psline[linecolor=black, linewidth=0.018, arrowsize=0.05291667cm 2.0,arrowlength=0.8,arrowinset=0.2]{->}(10.55,0.705)(10.85,0.605)
\rput[bl](2.2,0.755){$\iota$}
\psline[linecolor=black, linewidth=0.018, arrowsize=0.05291667cm 2.0,arrowlength=0.8,arrowinset=0.2]{->}(1.65,0.805)(1.95,0.805)
\psline[linecolor=black, linewidth=0.018, arrowsize=0.05291667cm 2.0,arrowlength=0.8,arrowinset=0.2]{->}(2.55,0.805)(2.85,0.805)
\rput[bl](3.01,0.705){$\Delta^{\text{op}}$}
\psline[linecolor=black, linewidth=0.018, arrowsize=0.05291667cm 2.0,arrowlength=0.8,arrowinset=0.2]{->}(3.75,0.905)(4.05,1.005)
\psline[linecolor=black, linewidth=0.018, arrowsize=0.05291667cm 2.0,arrowlength=0.8,arrowinset=0.2]{->}(3.75,0.705)(4.05,0.605)
\end{pspicture}
\end{figure}
\noindent The leftmost tensor is that corresponding to $\a_1$ after the orientation reversal. Here the first equality follows as in the previous case, the second uses condition (\ref{eq: cointegral inv of orientation}) of Definition \ref{def: compatible integral cointegral} and the third is by definition of $\De_{a^*}$. The last tensor is that corresponding to $\a_1$ before reversing the orientation (multiplied by a factor). However, since the basepoint of $\b_1$ traverses $\a_1$, the whole tensor $Z_H^{\rho}(\HH,\x)$ is further multiplied by $\lb a^*,\v(\a_1^*)^{-1}\rb$ by Proposition \ref{prop: changing basepoints and Z}. This cancels the factor appearing in the above computation, and shows that $Z_H^{\rho}(\HH,\x)$ is only multiplied by the sign $(-1)^{|\iota|}$. But $\dHH$ is multiplied by the same sign when the orientation of a closed curve is reversed, hence $Z_H^{\rho}(\HH,\x,\o)$ is unchanged.

\end{proof}

Finally, we have the following easy lemma.

\begin{lemma}
\label{lemma: invariance of ordering}
The scalar $Z_H^{\rho}(\HH,\x,\o)$ is independent of the ordering of $\HH$ chosen.
\end{lemma}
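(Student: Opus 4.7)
The plan is to observe that the ordering of $\HH$ enters the definition of $Z_H^\rho(\HH,\x,\o)$ only through two ingredients: the ordered tensor products $\bigotimes_{\a\in\aaa}T_\a$ and $\bigotimes_{\b\in\bbb}T_\b$ that are contracted to produce $Z_H^\rho(\HH,\x)$, and the homology-orientation sign $\dHH$ of Definition \ref{def: ZHrho and ZHrho with orientation}. Since adjacent transpositions within each of $\aa$, $\bolda$, $\bb$, $\boldb$ separately generate all admissible reorderings, it is enough to check that each such swap leaves $\dHH\cdot Z_H^\rho(\HH,\x)$ unchanged.

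First I would record the total degrees of the building-block tensors. The tensor $T_\a$ associated to a closed $\a\in\aa$ is built from $\v(\a^*)\in G(A)\subseteq A_0$, the map $\iota$ (of degree $|\iota|$), and an iterated coproduct (of degree $0$), so $|T_\a|=|\iota|$; the tensor associated to an arc in $\bolda$ uses $i_A$ in place of $\iota$ and is therefore even. Analogously $|T_\b|=|\mu|$ for closed $\b\in\bb$ and $|T_b|=0$ for $b\in\boldb$. Recall that the definition $\dHH=(\pm 1)^{\deg(\mu)}=(\pm 1)^{\deg(\iota)}$ presupposes $|\iota|\equiv|\mu|\pmod 2$.

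The key super-linear-algebra input is the Koszul sign rule: swapping two homogeneous tensors $T_1,T_2$ inside an ordered tensor product, while keeping the contraction combinatorics dictated by $\HH$ unchanged, multiplies the resulting scalar by $(-1)^{|T_1||T_2|}$. Thus swapping two closed $\a$-curves multiplies $Z_H^\rho(\HH,\x)$ by $(-1)^{|\iota|^2}=(-1)^{|\iota|}$, while swapping two arcs in $\bolda$ produces no sign. The same holds on the $\b$-side with $|\mu|$ in place of $|\iota|$. The convention that closed curves precede arcs in $\aaa$ and $\bbb$ rules out any swap between a closed curve and an arc.

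Finally I would compare with the change in $\dHH$. The homology orientation $o(\HH)$ of $\Lambda^d A\otimes\Lambda^d B$ depends only on the orderings and orientations of $\aa$ and $\bb$, not on the arcs. Swapping two curves of $\aa$ flips the orientation of $\Lambda^d A$, hence $o(\HH)$, hence multiplies $\dHH$ by $(-1)^{\deg(\mu)}=(-1)^{|\iota|}$; likewise for $\bb$, and swaps within $\bolda$ or $\boldb$ leave $\dHH$ untouched. In every case the Koszul sign produced in $Z_H^\rho(\HH,\x)$ cancels exactly against the sign change in $\dHH$, and invariance of $Z_H^\rho(\HH,\x,\o)$ under the ordering follows.

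The hard part will be cleanly justifying the Koszul sign $(-1)^{|T_1||T_2|}$ for a swap inside the full tensor network: one must track the paper's convention for tensor products of not-necessarily-even morphisms in $\SVect$ and check that the symmetries applied on the $\a$-side and on the $\b$-side of the pairing, together with the symmetries moving the outgoing and incoming legs past one another, combine into a single global Koszul sign depending only on the total degrees of $T_1$ and $T_2$. Once this bookkeeping is set up, the cancellation with $\dHH$ is immediate from the degree computation above.
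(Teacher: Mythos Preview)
Your proposal is correct and follows essentially the same approach as the paper's proof, which is a one-sentence sketch: swapping two closed curves changes $Z_H^\rho(\HH)$ by $(-1)^{|\iota|}$ (resp.\ $(-1)^{|\mu|}$) via the Koszul rule, and $\dHH$ changes by the same sign, so the product is invariant. You have simply filled in the details the paper omits, including the easy observation that arc swaps produce no sign on either side and that closed-curve/arc swaps are ruled out by the ordering convention.
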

\begin{proof}
Indeed, whenever the relative integral has degree one, the tensor $Z_H^{\rho}(\HH)$ changes sign when two closed curves in $\aaa$ (or $\bbb$) are interchanged (thus changing the ordering of $\HH$). The factor $\dHH$ changes sign in the same way, and therefore the product $Z_H^{\rho}(\HH,\o)\eq\dHH Z_H^{\rho}(\HH)$ is unchanged.
\end{proof}

In the following proof, we will denote the tensors of $H$ simply by $m,\De,S$, only the tensors of $A$ or $B$ will have subscripts, such as $i_A,m_A,\De_A$, etc.

\begin{proof}[Proof of theorem \ref{thm: Z is an invariant}] We have to show that $\zeta_{\ss,\x}Z_H^{\rho}(\HH,\x,\o)$ only depends on the underlying $(M,\c,\ss,\o,\rho)$ and that $$I_H^{\rho}(M,\c,\ss,\o)=I_H^{\rho'}(M',\c',\ss',\o')$$ whenever $d:(M,\c)\to (M',\c')$ is a sutured manifold diffeomorphism and $\rho'\circ d_*=\rho, \ss'=d_*(\ss), \o'=d_*(\o)$. The latter follows from the former since $d(\HH)=(d(\S),d(\aaa),d(\bbb))$ is an extended diagram of $(M',\c')$ for any extended diagram $\HH=\HDD$ of $(M,\c)$ and it is obvious that $$\zeta_{\ss,\x}Z_H^{\rho}(\HH,\x,\o)=\zeta_{\ss',d(\x)}Z_H^{\rho'}(d(\HH),d(\x),\o').$$
By Lemmas \ref{lemma: invariance of multipoint}, \ref{lemma: invariance of orientations}, \ref{lemma: invariance of ordering}, in order to show that $\zeta_{\ss,\x}Z_H^{\rho}(\HH,\x,\o)$ depends only on $(M,\c,\ss,\o,\rho)$, it suffices to show invariance under the extended Heegaard moves of Proposition \ref{prop: extended RS thm}. Note that since all the properties of the integral are dual to that of the cointegral, it suffices to prove isotopy and handlesliding invariance for the attaching curves of just one handlebody, say, the lower one. Each time we change a curve or arc $\a\in\aaa$ by one of such moves, we will denote by $\aa'$ or $\bolda'$ the new set of ordered oriented curves or arcs thus obtained and by $\HH'=(\S',\aaap,\bbbp)$ the new Heegaard diagram. %We also let $N'$ be the number of crossings of $\HH'$ and let $\tau'$ be the permutation of $S_{N'}$ associated to $\HH'$. 
We suppose $\HH$ is based according to $\x\in\Tab$, with $\x=\{x_1,\dots, x_d\}$ and $x_i\in\a_i\cap\b_i$ for $i=1,\dots, d$, while $\HH'$ is based according to $j(\x)\in\Tabprime$ as in Proposition \ref{prop: Spinc map is preserved under Heegaard moves} (so in the case of isotopy, we suppose $\HH'$ is obtained from $\HH$ by adding two new intersection points). Note that since $s'(j(\x))=s(\x)$, we get $h'_{\ss,j(\x)}=h_{\ss,\x}$ in $H_1(M)$ and so $\zeta'_{\ss,j(\x)}=\zeta_{\ss,\x}$ in $\kk^{\t}$. Hence, it suffices to show that
\begin{align*}
Z_H^{\rho}(\HH',j(\x),\o)=Z_H^{\rho}(\HH,\x,\o)
\end{align*}
for each extended Heegaard move.

\begin{enumerate}

\item Isotopy: suppose first that the isotopy of a curve or arc takes place inside $\inte(\S)$. Then the isotopy adds two new consecutive crossings, and after an orientation reversal if necessary, we can suppose the first is negative and the second is positive. The tensor corresponding to the isotopy region is the following:
\begin{figure}[H]
\centering
\begin{pspicture}(0,-1.0067096)(7.9644337,1.0067096)
\definecolor{colour0}{rgb}{0.0,0.8,0.2}
\psbezier[linecolor=red, linewidth=0.03, arrowsize=0.05291667cm 2.0,arrowlength=1.4,arrowinset=0.0]{->}(0.2,0.9932905)(1.0,0.5932905)(1.0,-0.20670952)(0.2,-0.6067095184326172)
\psbezier[linecolor=colour0, linewidth=0.03, arrowsize=0.05291667cm 2.0,arrowlength=1.4,arrowinset=0.0]{->}(1.0,0.9932905)(0.2,0.5932905)(0.2,-0.20670952)(1.0,-0.6067095184326172)
\rput[bl](0.86,-0.39670953){$+$}
\rput[bl](0.85,0.6132905){$-$}
\rput[bl](5.37,0.09329048){$\Delta$}
\psline[linecolor=black, linewidth=0.018, arrowsize=0.05291667cm 2.0,arrowlength=0.8,arrowinset=0.2]{->}(5.8,0.2932905)(6.2,0.49329048)
\rput[bl](6.3,0.3932905){$S$}
\psbezier[linecolor=black, linewidth=0.018, arrowsize=0.05291667cm 2.0,arrowlength=0.8,arrowinset=0.2]{->}(5.8,0.09329048)(6.1,-0.30670953)(6.7,-0.30670953)(7.0,0.09329048156738282)
\psline[linecolor=black, linewidth=0.018, arrowsize=0.05291667cm 2.0,arrowlength=0.8,arrowinset=0.2]{->}(6.6,0.49329048)(7.0,0.2932905)
\rput[bl](7.14,0.12329048){$m$}
\psline[linecolor=black, linewidth=0.018, arrowsize=0.05291667cm 2.0,arrowlength=0.8,arrowinset=0.2]{->}(4.8,0.19329049)(5.2,0.19329049)
\psline[linecolor=black, linewidth=0.018, arrowsize=0.05291667cm 2.0,arrowlength=0.8,arrowinset=0.2]{->}(7.6,0.19329049)(8.0,0.19329049)
\rput[bl](0.0,-0.9067095){$\alpha$}
\rput[bl](1.1,-1.0067096){$\beta$}
\psbezier[linecolor=black, linewidth=0.014](2.5,0.19329049)(2.7,0.19329049)(2.7076924,0.2932905)(2.8,0.2932904815673828)(2.8923078,0.2932905)(3.0076923,0.09329048)(3.1,0.09329048)(3.1923077,0.09329048)(3.3307693,0.2932905)(3.4230769,0.2932905)(3.5153847,0.2932905)(3.5153847,0.19329049)(3.7,0.19329049)
\psline[linecolor=black, linewidth=0.014, arrowsize=0.05291667cm 2.0,arrowlength=1.4,arrowinset=0.0]{->}(3.7,0.19329049)(3.8,0.19329049)
\end{pspicture}
\end{figure}
\noindent That this equals the tensor before the isotopy follows immediately from the antipode axiom. Now suppose we isotope an arc $\a\in\bolda$ past an arc $\b\in\boldb$ along $\p\S$, thus adding a unique new crossing. Such a crossing adds the following tensor to the picture:
\begin{figure}[H]
\centering
\begin{pspicture}(0,-0.855)(9.9,0.855)
\definecolor{colour0}{rgb}{0.0,0.8,0.2}
\psline[linecolor=colour0, linewidth=0.03](1.55,0.445)(1.55,-0.755)
\psbezier[linecolor=red, linewidth=0.03](1.15,0.445)(1.15,0.045)(1.95,-0.355)(1.95,-0.755)
\psline[linecolor=black, linewidth=0.03](0.75,-0.755)(2.35,-0.755)
\psline[linecolor=black, linewidth=0.018, arrowsize=0.05291667cm 2.0,arrowlength=0.8,arrowinset=0.2]{->}(6.05,-0.055)(6.45,-0.055)
\rput[bl](6.65,-0.155){$i_A$}
\psline[linecolor=black, linewidth=0.018, arrowsize=0.05291667cm 2.0,arrowlength=0.8,arrowinset=0.2]{->}(7.25,-0.055)(7.65,-0.055)
\psline[linecolor=black, linewidth=0.018, arrowsize=0.05291667cm 2.0,arrowlength=0.8,arrowinset=0.2]{->}(8.45,-0.055)(8.85,-0.055)
\rput[bl](7.85,-0.155){$\pi_B$}
\rput[bl](9.05,-0.215){$\psi(\beta^*)$}
\rput[bl](4.95,-0.215){$\varphi(\alpha^*)$}
\rput[bl](1.05,0.645){$\alpha$}
\rput[bl](1.45,0.545){$\beta$}
\rput[bl](0.0,-0.855){$\partial\Sigma$}
\psbezier[linecolor=black, linewidth=0.014](3.05,-0.055)(3.25,-0.055)(3.2576923,0.045)(3.35,0.045)(3.4423077,0.045)(3.5576923,-0.155)(3.65,-0.155)(3.7423077,-0.155)(3.8807693,0.045)(3.9730768,0.045)(4.0653844,0.045)(4.0653844,-0.055)(4.25,-0.055)
\psline[linecolor=black, linewidth=0.014, arrowsize=0.05291667cm 2.0,arrowlength=1.4,arrowinset=0.0]{->}(4.25,-0.055)(4.35,-0.055)
\end{pspicture}
\end{figure}
\noindent That the scalar on the right hand side is trivial follows from condition (\ref{eq: invariance under bdry isotopy of arc}) of Definition \ref{def: compatible integral cointegral}. This proves what we want.
\medskip

\item Diffeomorphisms isotopic to the identity: let $d:\HH_1\to \HH_2$ be a diffeomorphism as in Definition \ref{def: diffeo isotopic to identity}. By the isotopy condition, the dual homology classes $\a^*$ and $\b^*$ of both diagrams are the same in $H_1(M)$. Since both diagrams are diffeomorphic, this immediately implies that $Z(\HH_1)=Z(\HH_2)$.
\medskip

\item Handlesliding: suppose we handleslide $\a_j$ over $\a_i$ over an arc $\d$, where $\a_j,\a_i\in \aaa$ and $(\a_j,\a_i)\notin \aa\t\bolda$. We denote by $\a'_j$ the curve obtained after handlesliding. Since we already proved invariance of ordering and orientation (and isotopy), we make the following assumptions:

\begin{enumerate}
\item $\a_i<\a_j$ in the order of $\aaa$.
\item $\a_i,\a_j,\a'_j$ are oriented so that $\p P=\a_i\cup \a_j\cup -\a'_j$ as oriented 1-manifolds, where $P$ is the handlesliding region (see Remark \ref{remark: handlesliding alpha curves}).
\item If $\a_i\in\aa$ we orient $\b_i$ so that the crossing $x_i$ is positive. If $\a_j\in\aa$ we orient $\b_j$ so that $x_j$ is negative. 
\item When both $\a_i,\a_j\in\aa$ we suppose the handlesliding arc $\d$ connects $p_i(\x)$ to $p_j(\x)$.
\end{enumerate}

Thus, the handlesliding region looks as follows:

\begin{figure}[H]
\centering
\begin{pspicture}(0,-1.4200084)(8.309883,1.4200084)
\definecolor{colour0}{rgb}{0.0,0.8,0.2}
\psbezier[linecolor=red, linewidth=0.03](2.4198835,0.39999154)(2.4198835,0.99999154)(1.8198835,1.3999915)(1.2198834,1.3999915313720703)
\psbezier[linecolor=red, linewidth=0.03, arrowsize=0.05291667cm 2.0,arrowlength=1.4,arrowinset=0.0]{->}(6.2198834,0.99999154)(6.6198835,0.99999154)(7.0198836,0.6999915)(7.0198836,0.0)(7.0198836,-0.70000845)(6.6198835,-1.0000085)(6.2198834,-1.0000085)
\psbezier[linecolor=red, linewidth=0.03, linestyle=dotted, dotsep=0.10583334cm](6.2198834,-1.0000085)(5.8198833,-1.0000085)(5.4198833,-0.70000845)(5.4198833,0.0)(5.4198833,0.6999915)(5.8198833,0.99999154)(6.2198834,0.99999154)
\psline[linecolor=colour0, linewidth=0.03, arrowsize=0.05291667cm 2.0,arrowlength=1.4,arrowinset=0.0]{->}(6.5198836,0.39999154)(7.7198834,0.39999154)
\psline[linecolor=colour0, linewidth=0.03](6.5198836,-0.40000847)(7.7198834,-0.40000847)
\rput[bl](7.1798835,-0.13000847){\textcolor{colour0}{$\vdots$}}
\psdots[linecolor=black, fillstyle=solid, dotstyle=o, dotsize=0.14, fillcolor=white](6.9498835,0.39999154)
\psdots[linecolor=black, dotsize=0.14](6.8532166,0.6499915)
\psdots[linecolor=black, dotsize=0.14](6.689883,0.39999154)
\psbezier[linecolor=red, linewidth=0.03, linestyle=dotted, dotsep=0.10583334cm](5.0198836,0.39999154)(5.0198836,0.99999154)(5.6198835,1.3999915)(6.2198834,1.3999915313720703)
\psbezier[linecolor=red, linewidth=0.03, linestyle=dotted, dotsep=0.10583334cm](5.0198836,-0.40000847)(5.0198836,-1.0000085)(5.6198835,-1.4000084)(6.2198834,-1.4000084686279297)
\psbezier[linecolor=red, linewidth=0.03, arrowsize=0.05291667cm 2.0,arrowlength=1.4,arrowinset=0.0]{->}(6.2198834,1.3999915)(6.8198833,1.3999915)(7.4198833,1.0999916)(7.4198833,0.0)(7.4198833,-1.1000085)(6.8198833,-1.4000084)(6.2198834,-1.4000084)
\psline[linecolor=colour0, linewidth=0.03](1.7198834,1.0999916)(2.8198833,1.0999916)
\psline[linecolor=colour0, linewidth=0.03, arrowsize=0.05291667cm 2.0,arrowlength=1.4,arrowinset=0.0]{<-}(1.7198834,0.59999156)(2.8198833,0.59999156)
\rput[bl](2.0298834,0.67999154){\textcolor{colour0}{$\vdots$}}
\psdots[linecolor=black, dotsize=0.14](2.6498835,0.59999156)
\psdots[linecolor=black, fillstyle=solid, dotstyle=o, dotsize=0.14, fillcolor=white](2.3998835,0.5899915)
\rput[bl](1.1298834,-0.16000848){$\alpha'_j$}
\rput[bl](6.0998836,-0.07000847){$\alpha_i$}
\psline[linecolor=colour0, linewidth=0.03](3.5198834,0.39999154)(3.5198834,-0.40000847)
\psline[linecolor=colour0, linewidth=0.03](4.1198835,0.39999154)(4.1198835,-0.40000847)
\rput[bl](3.6398835,-0.010008468){\textcolor{colour0}{$\dots$}}
\psbezier[linecolor=red, linewidth=0.03](2.4198835,-0.40000847)(2.4198835,-1.0000085)(1.8198835,-1.4000084)(1.2198834,-1.4000084686279297)
\psbezier[linecolor=red, linewidth=0.03, linestyle=dotted, dotsep=0.10583334cm](1.2198834,1.3999915)(0.6198834,1.3999915)(0.019883422,0.99999154)(0.019883422,0.0)(0.019883422,-1.0000085)(0.6198834,-1.4000084)(1.2198834,-1.4000084)
\psbezier[linecolor=red, linewidth=0.03](2.4198835,0.39999154)(2.4198835,0.19999152)(2.4198835,0.19999152)(3.0198834,0.19999153137207032)(3.6198835,0.19999152)(3.5198834,0.19999152)(3.8198833,0.19999152)
\psbezier[linecolor=red, linewidth=0.03](5.0198836,0.39999154)(5.0198836,0.19999152)(5.0198836,0.19999152)(4.4198833,0.19999153137207032)(3.8198833,0.19999152)(4.0198836,0.19999152)(3.8198833,0.19999152)
\psbezier[linecolor=red, linewidth=0.03](2.4198835,-0.40000847)(2.4198835,-0.20000847)(2.4198835,-0.20000847)(3.0198834,-0.20000846862792968)(3.6198835,-0.20000847)(3.6198835,-0.20000847)(3.8198833,-0.20000847)
\psbezier[linecolor=red, linewidth=0.03](5.0198836,-0.40000847)(5.0198836,-0.20000847)(5.0198836,-0.20000847)(4.4198833,-0.20000846862792968)(3.8198833,-0.20000847)(3.8198833,-0.20000847)(3.6198835,-0.20000847)
\psdots[linecolor=black, dotsize=0.14](2.4265501,0.36999154)
\rput[bl](2.9298835,0.5299915){$\Big\}I'_j $}
\rput[bl](7.8298836,-0.42000848){$\bigg\}I'_i$}
\end{pspicture}
\label{fig: Proof of Thm, handlesliding alphas}
\end{figure}

\noindent Note that by (b), one has $o(\HH)=o(\HH')$ so also $\dHH=\dHHp$. Let $I_{\d}\eq \d\cap\bbb$. For the moment, we further suppose that $I_{\d}=\emptyset$. Thus, we can write
\begin{align*}
\a'_j\cap\bbb=I'_i\cup I'_j
\end{align*}
where $I'_n$ is the set of crossings through $\a'_j$ that sit next to the crossings of $\a_n$ for $n=i,j$. By the conventions above (including (c) in the case $\a_j,\a_i\in\aa$) we see that when following $\a'_j$ starting from its basepoint, the crossings in $I'_i$ appear first and then come those of $I'_j$, as in the above figure. Thus, $I'_i,I'_j$ inherit the order coming from $\a_i,\a_j$ and $x<y$ for all $x\in I'_i, y\in I'_j$. We can suppose the same in the case that $\a_j'$ is an arc, since $i\v((\a'_j)^*)$ is group-like in $H$. Denoting $g_i=\v(\a^*_i)$ and $g_j=\v(\a^*_j)$, it follows that the tensor corresponding to $\a_i\cup\a_j'$ has the following form:

\begin{figure}[H]
\centering
\begin{pspicture}(0,-0.8861223)(9.901425,0.8861223)
\rput[bl](5.25,-0.06387772){=}
\rput[bl](2.61,0.48612228){$\Delta$}
\rput[bl](0.5,0.4361223){$g_ig_j^{-1}$}
\psline[linecolor=black, linewidth=0.018, arrowsize=0.05291667cm 2.0,arrowlength=0.8,arrowinset=0.2]{->}(1.35,0.5861223)(1.65,0.5861223)
\rput[bl](1.71,-0.41387773){$\Delta$}
\rput[bl](0.0,-0.4638777){$g_j$}
\psline[linecolor=black, linewidth=0.018, arrowsize=0.05291667cm 2.0,arrowlength=0.8,arrowinset=0.2]{->}(0.45,-0.3138777)(0.75,-0.3138777)
\psline[linecolor=black, linewidth=0.018, arrowsize=0.05291667cm 2.0,arrowlength=0.8,arrowinset=0.2]{->}(2.15,-0.21387772)(2.45,-0.11387771)
\psline[linecolor=black, linewidth=0.018, arrowsize=0.05291667cm 2.0,arrowlength=0.8,arrowinset=0.2]{->}(2.15,-0.41387773)(2.45,-0.5138777)
\rput[bl](2.61,-0.11387771){$\Delta$}
\rput[bl](2.61,-0.71387774){$\Delta$}
\psline[linecolor=black, linewidth=0.018, arrowsize=0.05291667cm 2.0,arrowlength=0.8,arrowinset=0.2]{->}(3.05,-0.5138777)(3.35,-0.3138777)
\psline[linecolor=black, linewidth=0.018, arrowsize=0.05291667cm 2.0,arrowlength=0.8,arrowinset=0.2]{->}(3.05,-0.71387774)(3.35,-0.9138777)
\psline[linecolor=black, linewidth=0.018, arrowsize=0.05291667cm 2.0,arrowlength=0.8,arrowinset=0.2]{->}(3.05,-0.6138777)(3.35,-0.6138777)
\psline[linecolor=black, linewidth=0.018, arrowsize=0.05291667cm 2.0,arrowlength=0.8,arrowinset=0.2]{->}(3.05,-0.013877716)(3.65,-0.013877716)
\psline[linecolor=black, linewidth=0.018, arrowsize=0.05291667cm 2.0,arrowlength=0.8,arrowinset=0.2]{->}(3.05,0.5861223)(3.65,0.5861223)
\psline[linecolor=black, linewidth=0.018, arrowsize=0.05291667cm 2.0,arrowlength=0.8,arrowinset=0.2]{->}(3.05,0.08612228)(3.65,0.48612228)
\psline[linecolor=black, linewidth=0.018, arrowsize=0.05291667cm 2.0,arrowlength=0.8,arrowinset=0.2]{->}(3.05,0.48612228)(3.65,0.08612228)
\rput[bl](3.8,0.48612228){$m$}
\rput[bl](3.8,-0.11387771){$m$}
\psline[linecolor=black, linewidth=0.018, arrowsize=0.05291667cm 2.0,arrowlength=0.8,arrowinset=0.2]{->}(4.25,0.5861223)(4.55,0.5861223)
\psline[linecolor=black, linewidth=0.018, arrowsize=0.05291667cm 2.0,arrowlength=0.8,arrowinset=0.2]{->}(4.25,-0.013877716)(4.55,-0.013877716)
\rput[bl](6.2,0.13612229){$g_ig_j^{-1}$}
\psline[linecolor=black, linewidth=0.018, arrowsize=0.05291667cm 2.0,arrowlength=0.8,arrowinset=0.2]{->}(7.05,0.2861223)(7.35,0.2861223)
\rput[bl](6.6,-0.6638777){$g_j$}
\psline[linecolor=black, linewidth=0.018, arrowsize=0.05291667cm 2.0,arrowlength=0.8,arrowinset=0.2]{->}(7.05,-0.5138777)(7.35,-0.5138777)
\rput[bl](8.3,0.18612228){$m$}
\psline[linecolor=black, linewidth=0.018, arrowsize=0.05291667cm 2.0,arrowlength=0.8,arrowinset=0.2]{->}(8.75,0.2861223)(9.05,0.2861223)
\rput[bl](8.31,-0.6138777){$\Delta$}
\psline[linecolor=black, linewidth=0.018, arrowsize=0.05291667cm 2.0,arrowlength=0.8,arrowinset=0.2]{->}(8.45,-0.21387772)(8.45,0.08612228)
\psline[linecolor=black, linewidth=0.018, arrowsize=0.05291667cm 2.0,arrowlength=0.8,arrowinset=0.2]{->}(8.75,-0.5138777)(9.05,-0.5138777)
\rput[bl](9.21,-0.6138777){$\Delta$}
\psline[linecolor=black, linewidth=0.018, arrowsize=0.05291667cm 2.0,arrowlength=0.8,arrowinset=0.2]{->}(9.65,-0.41387773)(9.95,-0.21387772)
\psline[linecolor=black, linewidth=0.018, arrowsize=0.05291667cm 2.0,arrowlength=0.8,arrowinset=0.2]{->}(9.65,-0.6138777)(9.95,-0.8138777)
\psline[linecolor=black, linewidth=0.018, arrowsize=0.05291667cm 2.0,arrowlength=0.8,arrowinset=0.2]{->}(9.65,-0.5138777)(9.95,-0.5138777)
\rput[bl](9.21,0.18612228){$\Delta$}
\psline[linecolor=black, linewidth=0.018, arrowsize=0.05291667cm 2.0,arrowlength=0.8,arrowinset=0.2]{->}(9.65,0.3861223)(9.95,0.48612228)
\psline[linecolor=black, linewidth=0.018, arrowsize=0.05291667cm 2.0,arrowlength=0.8,arrowinset=0.2]{->}(9.65,0.18612228)(9.95,0.08612228)
\psline[linecolor=black, linewidth=0.018, arrowsize=0.05291667cm 2.0,arrowlength=0.8,arrowinset=0.2]{->}(7.85,-0.5138777)(8.15,-0.5138777)
\rput[bl](7.45,-0.6738777){$f_j$}
\psline[linecolor=black, linewidth=0.018, arrowsize=0.05291667cm 2.0,arrowlength=0.8,arrowinset=0.2]{->}(7.85,0.2861223)(8.15,0.2861223)
\rput[bl](7.46,0.15612228){$f_i$}
\psline[linecolor=black, linewidth=0.018, arrowsize=0.05291667cm 2.0,arrowlength=0.8,arrowinset=0.2]{->}(1.25,-0.3138777)(1.55,-0.3138777)
\rput[bl](0.85,-0.47387773){$f_j$}
\psline[linecolor=black, linewidth=0.018, arrowsize=0.05291667cm 2.0,arrowlength=0.8,arrowinset=0.2]{->}(2.15,0.5861223)(2.45,0.5861223)
\rput[bl](1.76,0.45612228){$f_i$}
\rput[bl](10.2,-0.9){.}
\end{pspicture}
\end{figure}

\noindent Here we denote $(f_i,f_j)=(\iota,\iota), (\iota, i_A)$ or $(i_A,i_A)$ depending on whether $(\a_i,\a_j)\in \aa\t\aa, \aa\t\bolda,\bolda\t\bolda$. On the right hand side we used that $\De$ is an algebra morphism. That this tensor equals the tensor before handlesliding is shown in the following computation:
\begin{figure}[H]
\centering

\begin{pspicture}(-0.5,-0.63)(12.5,0.63)
\rput[bl](0.0,0.18){$g_ig_j^{-1}$}
\psline[linecolor=black, linewidth=0.018, arrowsize=0.05291667cm 2.0,arrowlength=0.8,arrowinset=0.2]{->}(0.85,0.33)(1.15,0.33)
\psline[linecolor=black, linewidth=0.018, arrowsize=0.05291667cm 2.0,arrowlength=0.8,arrowinset=0.2]{->}(1.65,-0.47)(1.95,-0.47)
\rput[bl](1.25,-0.63){$f_j$}
\rput[bl](0.4,-0.63){$g_j$}
\psline[linecolor=black, linewidth=0.018, arrowsize=0.05291667cm 2.0,arrowlength=0.8,arrowinset=0.2]{->}(0.85,-0.47)(1.15,-0.47)
\psline[linecolor=black, linewidth=0.018, arrowsize=0.05291667cm 2.0,arrowlength=0.8,arrowinset=0.2]{->}(1.65,0.33)(1.95,0.33)
\rput[bl](1.26,0.2){$f_i$}
\rput[bl](2.1,0.23){$m$}
\psline[linecolor=black, linewidth=0.018, arrowsize=0.05291667cm 2.0,arrowlength=0.8,arrowinset=0.2]{->}(2.55,0.33)(2.85,0.33)
\rput[bl](2.11,-0.57){$\Delta$}
\psline[linecolor=black, linewidth=0.018, arrowsize=0.05291667cm 2.0,arrowlength=0.8,arrowinset=0.2]{->}(2.25,-0.17)(2.25,0.13)
\psline[linecolor=black, linewidth=0.018, arrowsize=0.05291667cm 2.0,arrowlength=0.8,arrowinset=0.2]{->}(2.55,-0.47)(2.85,-0.47)
\psline[linecolor=black, linewidth=0.018, arrowsize=0.05291667cm 2.0,arrowlength=0.8,arrowinset=0.2]{->}(4.45,0.33)(4.75,0.33)
\psline[linecolor=black, linewidth=0.018, arrowsize=0.05291667cm 2.0,arrowlength=0.8,arrowinset=0.2]{->}(4.45,-0.47)(4.75,-0.47)
\rput[bl](4.9,0.23){$m_A$}
\psline[linecolor=black, linewidth=0.018, arrowsize=0.05291667cm 2.0,arrowlength=0.8,arrowinset=0.2]{->}(5.55,0.33)(5.85,0.33)
\rput[bl](4.91,-0.57){$\Delta_A$}
\psline[linecolor=black, linewidth=0.018, arrowsize=0.05291667cm 2.0,arrowlength=0.8,arrowinset=0.2]{->}(5.05,-0.17)(5.05,0.13)
\psline[linecolor=black, linewidth=0.018, arrowsize=0.05291667cm 2.0,arrowlength=0.8,arrowinset=0.2]{->}(5.55,-0.47)(5.85,-0.47)
\psline[linecolor=black, linewidth=0.018, arrowsize=0.05291667cm 2.0,arrowlength=0.8,arrowinset=0.2]{->}(8.25,0.33)(8.55,0.33)
\rput[bl](8.7,0.23){$m_A$}
\psline[linecolor=black, linewidth=0.018, arrowsize=0.05291667cm 2.0,arrowlength=0.8,arrowinset=0.2]{->}(9.35,0.33)(9.65,0.33)
\psline[linecolor=black, linewidth=0.018, arrowsize=0.05291667cm 2.0,arrowlength=0.8,arrowinset=0.2]{->}(8.05,-0.27)(8.648252,0.13345931)
\psline[linecolor=black, linewidth=0.018, arrowsize=0.05291667cm 2.0,arrowlength=0.8,arrowinset=0.2]{->}(9.35,-0.47)(9.65,-0.47)
\rput[bl](8.9,-0.62){$g_j$}
\psline[linecolor=black, linewidth=0.018, arrowsize=0.05291667cm 2.0,arrowlength=0.8,arrowinset=0.2]{->}(11.75,0.33)(12.05,0.33)
\psline[linecolor=black, linewidth=0.018, arrowsize=0.05291667cm 2.0,arrowlength=0.8,arrowinset=0.2]{->}(11.75,-0.47)(12.05,-0.47)
\rput[bl](3.15,-0.07){=}
\rput[bl](10.75,-0.07){=}
\rput[bl](6.95,-0.07){=}
\rput[bl](3.6,0.18){$g_ig_j^{-1}$}
\rput[bl](4.0,-0.62){$g_j$}
\rput[bl](7.4,0.18){$g_ig_j^{-1}$}
\rput[bl](7.7,-0.62){$g_j$}
\rput[bl](11.3,0.18){$g_i$}
\rput[bl](11.3,-0.62){$g_j$}
\psline[linecolor=black, linewidth=0.018, arrowsize=0.05291667cm 2.0,arrowlength=0.8,arrowinset=0.2]{->}(6.35,-0.47)(6.65,-0.47)
\rput[bl](5.95,-0.63){$f_j$}
\psline[linecolor=black, linewidth=0.018, arrowsize=0.05291667cm 2.0,arrowlength=0.8,arrowinset=0.2]{->}(6.35,0.33)(6.65,0.33)
\rput[bl](5.96,0.2){$f_i$}
\psline[linecolor=black, linewidth=0.018, arrowsize=0.05291667cm 2.0,arrowlength=0.8,arrowinset=0.2]{->}(10.15,-0.47)(10.45,-0.47)
\rput[bl](9.75,-0.63){$f_j$}
\psline[linecolor=black, linewidth=0.018, arrowsize=0.05291667cm 2.0,arrowlength=0.8,arrowinset=0.2]{->}(10.15,0.33)(10.45,0.33)
\rput[bl](9.76,0.2){$f_i$}
\psline[linecolor=black, linewidth=0.018, arrowsize=0.05291667cm 2.0,arrowlength=0.8,arrowinset=0.2]{->}(12.45,-0.47)(12.75,-0.47)
\rput[bl](12.05,-0.63){$f_j$}
\psline[linecolor=black, linewidth=0.018, arrowsize=0.05291667cm 2.0,arrowlength=0.8,arrowinset=0.2]{->}(12.45,0.33)(12.75,0.33)
\rput[bl](12.06,0.2){$f_i$}
\rput[bl](12.95,-0.62){.}
\end{pspicture}
\end{figure}

\noindent In the first equality we have used Proposition \ref{prop: hsliding property for cointegral arc-arc, arc-curve, curve-curve}, in the second that $g_j$ is group-like and in the third we multiplied the corresponding group-likes. The last tensor is that corresponding to $\HH$, that is, before handlesliding, proving what we want. Now suppose that $I_{\d}=\d\cap\bbb\neq \emptyset$. Isotope $\a_j$ through $\d$ pass through all the points of $I_{\d}$, denote by $\wt{\a_j}$ the curve thus obtained and let $\d'$ the portion of $\d$ left, so that $\d'\cap\bbb=\emptyset$. Then $\a'_j$ is obtained by handlesliding $\wt{\a}_j$ over $\a_i$ through $\d'$ with $\d'\cap\bbb=\emptyset$ so the general case follows follows from the case $I_{\d}=\emptyset$ above in the case that $\a_j$ is an arc. When $\a_j\in\aa$ we further need to take care of basepoints. In this case, we first move $p_j(\x')\in \a'_j$ to $p_0=\wt{\a}_j\cap\d'$, this multiplies $Z$ by some factor $\la$ as in Proposition \ref{prop: changing basepoints and Z}. Then one can apply the argument above for $I_{\d}=\emptyset$ and finally move the basepoint back to $p_j(\x')$, which has the effect of cancelling the initial $\la$. This shows that $Z_H^{\rho}(\HH)=Z_H^{\rho}(\HH',j(\x))$ in all situations.

\medskip

\item Stabilization: this follows from the axiom $\e_B\circ\mu\circ\iota\circ\eta_A=1$ from the compatibility condition.

\end{enumerate}
\end{proof}

\section[The torsion for sutured manifolds]{The torsion for sutured manifolds}
\label{section: The torsion for sutured manifolds}

%\begin{enumerate}\item \cite{FJR11} requires $M$ irreducible for $\tau$ be computed via Fox calculus. Why? The theorem should be written in the following way: the  invariant $Z_{H_n}$ can be computed as a Fox calculus determinant. No need to mention torsion or that this determinant is an invariant. \item Some notation for arcs needs to be changed. Use $c'_i$'s for arcs along alphas and $d$'s for the beta curves.\end{enumerate}

In this section we prove Theorem \ref{thm: intro Z recovers Reidemeister torsion}. The main ingredient is Theorem \ref{thm: intro Z via Fox calculus} of Subsection \ref{subs: computation of ZHn} in which we give a Fox calculus expression for $Z_{H_n}^{\rho}$ (hence also for $I_{H_n}^{\rho}$). In Subsection \ref{subs: The case n=0} we explain how to define $Z_{H_0}^{\rho}$ and $I_{H_0}^{\rho}$ even if $H_0$ has no cointegral. The complete proof of Theorem \ref{thm: intro Z recovers Reidemeister torsion} is devoted to Subsection \ref{subs: the sutured Reid torsion}. Finally, in Subsection \ref{subs: corollaries} we deduce some immediate corollaries from this theorem.

\subsection[Fox calculus]{Fox calculus}
\label{subs: Fox calculus}\def\bstar{x} We begin by recalling the Fox calculus (see e.g. \cite[Sect. 16.2]{Turaev:BOOK2}). Let $F$ be a finitely generated free group, say, with free generators $\bstar_1,\dots,\bstar_{d+l}$. Let $\Z[F]$ denote the group ring of $F$. There is an augmentation map $\aug:\Z[F]\to\Z$ defined by $\aug(\bstar_i)=1$ for all $i=1,\dots,d+l$ and extended linearly to $\Z[F]$. Fox calculus says that there are abelian group homomorphisms $\p/\p \bstar_i:\Z[F]\to \Z[F]$ satisfying
\begin{align*}
\frac{\p (uv)}{\p \bstar_i}=\frac{\p u}{\p \bstar_i}\aug(v)+u\frac{\p v}{\p \bstar_i}
\end{align*}
for all $u,v\in\Z[F]$ and
\begin{align*}
\frac{\p \bstar_j}{\p \bstar_i}=\d_{ij}
\end{align*}
for all $i,j=1,\dots,d+l$. As a consequence of these properties, one has\begin{align*}
\frac{\p \bstar_i^{-1}}{\p \bstar_i}=-\bstar_i^{-1}.
\end{align*}
If $w\in F$ is a word in the generators $x_1,\dots,x_{d+l}$, the Fox derivative $\p w/\p\bstar_i$ is computed as follows: suppose there are $n$ appearances of $\bstar_i^{\pm 1}$ in $w$, say $$w=w_1\bstar_i^{m_1}w_2\dots w_n\bstar_i^{m_n}w_{n+1}$$ where each word $w_k$ contains no $\bstar_i^{\pm 1}$ and $m_j=\pm 1$ for each $j$. Then 
\begin{align}
\label{eq: fox derivative of a word w}
\frac{\p w}{\p x_i}=\sum_{j=1}^n m_j(A_j x_i^{-\e_j})
\end{align}
where  $A_j$ is the subword of $w$ given by $A_j=w_1x_i^{m_1}w_2\dots x_i^{m_{j-1}} w_j$ and $\e_j$ is defined by $\e_j=0$ if $m_j=1$ and $\e_j=1$ if $m_j=-1$.

%\begin{example}If $K$ is the left trefoil, the presentation of $\pi_1(S^3\sm K)$ associated to the Heegaard diagram of Example \ref{example: sutured HD for link complement} (with orientations and basepoints as in Example \ref{example: computation of Z of left trefoil}) is\begin{align*}\lb \b^*, b^* \ | \ \b^*b^*(\b^*)^{-1}b^*\b^*(b^*)^{-1}\rb.\end{align*}Note that this gives $\b^*b^*=1$ in $H_1M$. The Fox derivative is\begin{align*}\frac{\p\ov{\a}}{\p \b^*}&=1-\b^*b^*(\b^*)^{-1}+\b^*b^*(\b^*)^{-1}b^* \\\end{align*}so in $H_1(M)$ this becomes\begin{align*}\frac{\p\ov{\a}}{\p \b^*}&=1-b^*+(b^*)^{2}.\end{align*}Evaluating this on the character $\rho:H_1(M)\to \C^{\t}$ defined by $\rho(b^*)=e^{\frac{2\pi i}{n}}$ gives $Z(\HH,\x_0)$ as computed in Example \ref{example: computation of Z of left trefoil} therefore satisfying the conclusion of Theorem \ref{thm: intro Z via Fox calculus}\end{example}

\subsection[Computation of $Z^{\rho}_{H_n}$]{Computation of $Z^{\rho}_{H_n}$}\label{subs: computation of ZHn} Consider the algebra $H_n$ of Definition \ref{def: Hopf algebra Hn} at $n\geq 1$ and $\kk=\C$, and take the relative integral and cointegral defined in Example \ref{example: Hopf algebra Hn finite dim quotient}. Since $A=\C$ and $B=\C\lb K^{\pm 1} \ | \ K^n=1\rb$ we can think of $G(A\ot B^*)=G(B^*)$ as the group of $n$-th roots of unity. Note that an homomorphism $\rho:G\to G(B^*)$ is uniquely determined by the homomorphism $G\to\C^{\t}, g\mapsto \rho(g)(K)$. We will denote the latter by $\rho_K$.
\medskip

Now let $(M,\c)$ be a balanced sutured 3-manifold with connected $R_-(\c)$ and let $\HH=\HDD$ be an ordered, oriented, based, extended Heegaard diagram of $(M,\c)$. Since the relative cointegral of $H_n$ has $A=\C$, the $\a$-arcs do not play any role in $Z_{H_n}^{\rho}$, hence we do not consider them. As usual, we note $d=|\aa|=|\bb|$, $l=|\boldb|$ and $\bb=\{\b_1,\dots,\b_d\}$ and $\boldb=\{\b_{d+1},\dots,\b_{d+l}\}$. Since all curves and arcs are oriented and the $\a$ curves have basepoints, we can write a presentation
\begin{align*}
\pi_1(M,*)=\lb \b^*_1,\dots, \b^*_{d+l}\ | \ \ov{\a}_1,\dots,\ov{\a}_d \rb
\end{align*}
where $*$ is some basepoint. Here the $\ov{\a}_1,\dots,\ov{\a}_d$ are words in the $\b^*_1,\dots,\b^*_{d+l}$ constructed as in Definition \ref{def: ov l for l arc}, but without taking homology. Note that to define the $\b^*_i$'s as elements of the fundamental group one needs to be more precise on the basepoint $*$, but it doesn't matter for our purposes, since we will evaluate on an abelian representation $\rho$. Since we only treat abelian representations, and for simplicity of notation, we will consider the Fox derivative $\p\ov{\a_i}/\p \b^*_j$ as an element of $\Z[H_1(M)]$ (strictly speaking, it belongs to $\Z[F]$ where $F$ is the free group generated by $\b^*_1,\dots,\b^*_{d+l}$).

%Thus, a representation $\rho:H_1(M)\to G(B^*)$ can be seen as a group homomorphism $H_1(M)\to \kk^{\t}$ (which we still denote by $\rho$) taking values in the subgroup of $n$-th roots of unity, and viceversa. We can further extend $\rho$ to a ring homomorphism $\Z[H_1(M)]\to \kk$.

\begin{theorem}
\label{thm: intro Z via Fox calculus}
Let $H_n$ be the Borel subalgebra of $\Uqgl11$ at a root of unity of order $n\geq 1$. Let $(M,\c)$ be a balanced sutured manifold with connected $R_-(\c)$ and let $\HH=(\S,\aa,\bbb)$ be an ordered, oriented, based extended Heegaard diagram of $(M,\c)$. If $\rho:H_1(M)\to G(B^*)$ is a group homomorphism, then
\begin{align*}
Z_{H_n}^{\rho}(\HH)=\det\left(\rho_K\left(\frac{\p\ov{\a_i}}{\p\b^*_j}\right)\right)_{i,j=1,\dots,d}
\end{align*}
where the right hand side is computed with the same ordering, orientation and basepoints as $\HH$ and $\rho_K$ (given by $\rho_K(h)=\rho(h)(K)$ for all $h\in H_1(M)$) has been extended to $\Z[H_1(M)]$.
\end{theorem}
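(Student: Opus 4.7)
The proof strategy mimics the computation for the trefoil complement worked out in Example \ref{example: computation of Z of left trefoil}, exploiting the structural analogy between the coproduct $\Delta(X) = K \otimes X + X \otimes 1$ in $H_n$ and the Leibniz rule for the Fox derivative. The plan is to expand $Z_{H_n}^{\rho}(\HH)$ directly as a sum of local contributions indexed by a choice of output position for each $X$ coming from the cointegrals of the $\alpha$-curves, and to identify each surviving term with a summand in the Leibniz expansion of $\det(\rho_K(\partial \overline{\alpha_i}/\partial \beta_j^*))$.

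The first reduction is to replace the cointegral $\iota = \tfrac{1}{n}\sum_{r=0}^{n-1} K^r X$ by just $X$. Since $K \in B$ is central and group-like in $H$, Lemma \ref{lemma: multiply by b on lower handlebody does not affect result} (with $b = K$) shows that distributing a factor $K^r$ across the incoming legs of the $\beta$-tensors crossed by $\alpha_i$ multiplies the $\alpha_i$-contribution by $\rho_K(\overline{\alpha_i})^r$. Because $\overline{\alpha_i}$ is a defining relation of $\pi_1(M)$, one has $\rho_K(\overline{\alpha_i}) = 1$, so the sum $\sum_{r=0}^{n-1}$ contributes a factor of $n$ that cancels the prefactor $1/n$. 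After this reduction, one works with $X$ in place of $\iota$ at each $\alpha$-tensor.

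Next, using $\Delta^{(N_i)}(X) = \sum_{k=1}^{N_i} K^{\otimes (k-1)} \otimes X \otimes 1^{\otimes (N_i-k)}$ where $N_i$ is the number of crossings on $\alpha_i$, expand $Z_{H_n}^\rho(\HH)$ as a sum over tuples $(k_1, \ldots, k_d)$ of positions. Since $X^2 = 0$, $\mu(K^s) = 0$, and $\mu(K^s X) = K^s$, the surviving terms are exactly those in which the $d$ copies of $X$ are distributed one-to-one onto the $d$ $\beta$-tensors; such a distribution determines a permutation $\sigma \in S_d$ together with a choice of crossing $x_i \in \alpha_i \cap \beta_{\sigma(i)}$. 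Because $|X| = 1$, reordering the $X$-outputs of the $\alpha$-tensors so that the $i$-th $X$ enters the $\sigma(i)$-th $\beta$-tensor produces exactly the Koszul sign $\operatorname{sgn}(\sigma)$. For fixed $(\sigma, (x_i))$, the antipode relations $S(K) = K^{-1}$ and $S(X) = -K^{-1}X$ combined with Lemma \ref{lemma: multiply by b on lower handlebody does not affect result} turn each crossing $y$ on $\alpha_i$ occurring before $x_i$ (in the orientation of $\alpha_i$ from its basepoint) into a multiplicative factor $\rho_K(\beta_{j(y)}^*)^{m_y}$, whose product over all such $y$ is $\rho_K(A_{x_i})$ with $A_{x_i}$ the prefix of $\overline{\alpha_i}$ up to $x_i$. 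The crossing $x_i$ itself contributes $m_{x_i} \rho_K(\beta_{\sigma(i)}^*)^{-\epsilon_{x_i}}$, via the sign $(-1)^{\epsilon_{x_i}}$ of $S^{\epsilon_{x_i}}(X)$ and the surviving $K^{-\epsilon_{x_i}}$ that passes through $\mu$ and is evaluated by $\psi(\beta_{\sigma(i)}^*)$; the crossings after $x_i$ contribute $1$ since $\Delta^{(N_i)}$ places the identity there. Comparing with formula (\ref{eq: fox derivative of a word w}), each summand matches a term in $\operatorname{sgn}(\sigma) \prod_i \rho_K(\partial \overline{\alpha_i}/\partial \beta_{\sigma(i)}^*)$, yielding the claimed determinant.

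The main technical obstacle is the careful bookkeeping of signs: the Koszul signs arising from the odd element $X$ permuting through the tensor network, the antipode signs $(-1)^{\epsilon_x}$ at crossings, and the crossing signs $m_x$ must all combine to reproduce $\operatorname{sgn}(\sigma)$ and the signs $m_{x_i}$ in the Fox expansion. Equally delicate is the ordering of legs entering each $\beta$-tensor, which must be tracked to ensure the multiple $K^{\pm 1}$-factors and the single $X$ combine unambiguously into $K^s X$ before $\mu$ is applied.
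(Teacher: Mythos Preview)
Your proposal is correct and follows essentially the same approach as the paper's proof: reduce $\iota$ to $X$ via Lemma \ref{lemma: multiply by b on lower handlebody does not affect result}, expand the iterated coproduct of $X$, observe that the surviving terms are indexed by multipoints, and match each term with the corresponding summand in the multipoint expansion of the Fox determinant (the paper isolates this last step as Lemma \ref{lemma: Fox calculus formula is a sum over multipoints}). One small point you glossed over is that $\pi_B(X)=0$ is what kills terms where an $X$ lands on a $\beta$-arc in $\boldb$, but this slots naturally into your argument.
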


We will prove first a preliminary lemma. For any $x\in\a\cap\bb$, let $p(x)\in\a$ be a basepoint defined as in Subsection \ref{subsection: basepoints}, so if the intersection at $x$ is positive (resp. negative), $p(x)$ lies just before $x$ (resp. after $x$) along the orientation of $\a$. If $p$ is the basepoint of $\a$ we denote
\begin{align*}
\ov{\a}_{x}=\prod_{y\in[p,p(x)]\cap\bbb}\b^*_y\in H_1(M)
\end{align*}
where $\b_y$ is the $\b$-curve through the crossing $y$ and $[p,p(x)]$ is the subarc of $\a$ that goes from $p$ to $p(x)$. Thus, these elements satisfy
\begin{align}
\label{eq: Fox derivative of alpha relation}
\frac{\p \ov{\a}}{\p\b^*}=\sum_{x\in\a\cap\b}m_x\ov{\a}_x\in\Z[H_1(M)]
\end{align}
for any $\a\in\aa$ and $\b\in\bbb$, where $m_x$ is the intersection sign at $x$. If $\x=\{x_1,\dots, x_d\}$ is a multipoint of $\HH$,  say $x_i\in\a_i\cap\b_{\s(i)}$ for each $i=1,\dots, d$, we further denote
\begin{align*}
\ov{\a}(\x)=\prod_{i=1}^d\ov{\a}_{x_i}\in H_1(M)
\end{align*}
and
\begin{align*}
m(\x)=\sgn(\s)\prod_{i=1}^d m_{x_i}.
\end{align*}

\begin{lemma}
\label{lemma: Fox calculus formula is a sum over multipoints}
Let $\HH$ be an ordered, oriented, based, extended Heegaard diagram. Then 
\begin{equation*}
\det\left(\frac{\p\ov{\a_i}}{\p\b_j^*}\right)_{i,j=1,\dots, d}=\sum_{\x\in\Tab}m(\x)\ov{\a}(\x)\in\Z[H_1(M)].
\end{equation*}
\end{lemma}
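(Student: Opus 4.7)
The plan is to expand the determinant by the Leibniz formula and then insert the Fox derivative expression (\ref{eq: Fox derivative of alpha relation}); the resulting multi-sum naturally reorganizes as a sum over multipoints in $\Tab$. Concretely, I would start from
\[
\det\left(\frac{\p\ov{\a_i}}{\p\b_j^*}\right)_{i,j=1}^{d}=\sum_{\s\in S_d}\sgn(\s)\prod_{i=1}^d\frac{\p\ov{\a_i}}{\p\b_{\s(i)}^*},
\]
and then substitute formula (\ref{eq: Fox derivative of alpha relation}) into each factor, giving
\[
\sum_{\s\in S_d}\sgn(\s)\prod_{i=1}^d\sum_{x_i\in\a_i\cap\b_{\s(i)}} m_{x_i}\,\ov{\a_i}_{x_i}.
\]
Distributing the product over the sums yields a sum indexed by $\s\in S_d$ together with a choice of one crossing $x_i\in\a_i\cap\b_{\s(i)}$ for every $i$, which is precisely the datum of a multipoint $\x=\{x_1,\dots,x_d\}\in\Tab$ (with its underlying permutation $\s$).

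The key observation is the compatibility between these combinatorial data and the quantities $m(\x)$, $\ov{\a}(\x)$ defined just before the statement. Under the correspondence above, the sign $\sgn(\s)\prod_{i}m_{x_i}$ is exactly $m(\x)$ by definition, and since $H_1(M)$ is abelian we have
\[
\prod_{i=1}^d\ov{\a_i}_{x_i}=\ov{\a}(\x)
\]
by the definition of $\ov{\a}(\x)$. Regrouping the terms of the expansion according to the multipoint $\x\in\Tab$ they produce therefore gives
\[
\sum_{\x\in\Tab} m(\x)\,\ov{\a}(\x),
\]
which is the right-hand side of the claimed identity.

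There is essentially no obstacle here: the argument is a straightforward Leibniz expansion together with bookkeeping. The only thing to be careful about is that passing from $\Z[F]$ (where the Fox derivatives naturally live) to $\Z[H_1(M)]$ is legitimate, which is the case because the statement is made in $\Z[H_1(M)]$ and the abelianization ring map is a ring homomorphism, so it commutes with both the determinant and the products $\prod_i\ov{\a_i}_{x_i}$; and that the bijection between pairs $(\s,(x_i))$ and multipoints $\x\in\Tab$ is in fact a bijection, which follows directly from the definition of $\Tab$.
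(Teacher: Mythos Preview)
Your proposal is correct and follows essentially the same approach as the paper: expand the determinant via the Leibniz formula, substitute the Fox derivative expression (\ref{eq: Fox derivative of alpha relation}) into each entry, and identify the resulting double sum over $(\s,(x_i))$ with a sum over multipoints $\x\in\Tab$, matching the sign and homology factors with $m(\x)$ and $\ov{\a}(\x)$. Your additional remarks about abelianization and the bijection are fine but not strictly needed, since the paper works in $\Z[H_1(M)]$ throughout.
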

\begin{proof}
This is essentially \cite[Proposition 4.2]{FJR11}. We give a proof here for completeness. Let $Z$ be the Fox calculus matrix of the left hand side. Expand the determinant of $Z$ using the formula
\begin{align*}
\det(Z)=\sum_{\s\in S_d}\sign(\s)Z_{1\s(1)}\dots Z_{d\s(d)}
\end{align*}
without ever cancelling terms. Now, by Equation (\ref{eq: Fox derivative of alpha relation}) each $Z_{i\s(i)}$ is a sum of terms indexed by the points in $\a_i\cap\b_{\s(i)}$. If we put this expression into the above formula for $\det(Z)$, we get an expansion of $\det(Z)$ as a sum indexed by multipoints $\x\in\Tab$. More precisely, if $\x=\{x_1,\dots,x_d\}$ with $x_i\in\a_i\cap\b_{\s(i)}$ for each $i$, then the term corresponding to this multipoint is 
\begin{align*}
\sign(\s)\prod_{i=1}^dm_{x_i}\ov{\a}_{x_i}.
\end{align*}
This is exactly $m(\x)\ov{\a}(\x)$, as desired.
\end{proof}

\begin{proof}[Proof of Theorem \ref{thm: intro Z via Fox calculus}]
Recall that the cointegral of $H_n$ is $\iota=\frac{1}{n}(\sum_{i=0}^{n-1}K^iX)$. Thus, the tensor corresponding to each $\a\in\aa$ is
\begin{figure}[H]

\centering
\begin{pspicture}(0,-0.605)(10.5,0.605)
\rput[bl](0.0,-0.005){$\iota$}
\rput[bl](3.4,-0.005){$K^iX$}
\rput[bl](7.4,-0.005){$X$}
\rput[bl](9.3,0.395){$m_{K^i}$}
\psline[linecolor=black, linewidth=0.018, arrowsize=0.05291667cm 2.0,arrowlength=0.8,arrowinset=0.2]{->}(0.3,0.095)(0.6,0.095)
\rput[bl](0.77,-0.015){$\Delta$}
\rput[bl](1.8,0.045){=}
\psline[linecolor=black, linewidth=0.018, arrowsize=0.05291667cm 2.0,arrowlength=0.8,arrowinset=0.2]{->}(4.3,0.095)(4.6,0.095)
\rput[bl](4.77,-0.015){$\Delta$}
\rput[bl](2.35,-0.455){$\displaystyle\frac{1}{n}\sum_{i=0}^{n-1}$}
\psline[linecolor=black, linewidth=0.018, arrowsize=0.05291667cm 2.0,arrowlength=0.8,arrowinset=0.2]{->}(7.9,0.095)(8.2,0.095)
\rput[bl](8.37,-0.015){$\Delta$}
\psline[linecolor=black, linewidth=0.018, arrowsize=0.05291667cm 2.0,arrowlength=0.8,arrowinset=0.2]{->}(10.1,0.495)(10.4,0.495)
\rput[bl](9.3,-0.405){$m_{K^i}$}
\psline[linecolor=black, linewidth=0.018, arrowsize=0.05291667cm 2.0,arrowlength=0.8,arrowinset=0.2]{->}(10.1,-0.305)(10.4,-0.305)
\rput[bl](6.35,-0.455){$\displaystyle\frac{1}{n}\sum_{i=0}^{n-1}$}
\rput[bl](5.8,0.045){=}
\rput[bl](10.45,-0.605){.}
\psline[linecolor=black, linewidth=0.018, arrowsize=0.05291667cm 2.0,arrowlength=0.8,arrowinset=0.2]{->}(1.2,0.195)(1.6,0.495)
\psline[linecolor=black, linewidth=0.018, arrowsize=0.05291667cm 2.0,arrowlength=0.8,arrowinset=0.2]{->}(1.2,-0.005)(1.6,-0.305)
\rput[bl](1.5,-0.035){$\vdots$}
\psline[linecolor=black, linewidth=0.018, arrowsize=0.05291667cm 2.0,arrowlength=0.8,arrowinset=0.2]{->}(5.2,0.195)(5.6,0.495)
\psline[linecolor=black, linewidth=0.018, arrowsize=0.05291667cm 2.0,arrowlength=0.8,arrowinset=0.2]{->}(5.2,-0.005)(5.6,-0.305)
\rput[bl](5.5,-0.035){$\vdots$}
\psline[linecolor=black, linewidth=0.018, arrowsize=0.05291667cm 2.0,arrowlength=0.8,arrowinset=0.2]{->}(8.8,0.195)(9.2,0.495)
\psline[linecolor=black, linewidth=0.018, arrowsize=0.05291667cm 2.0,arrowlength=0.8,arrowinset=0.2]{->}(8.8,-0.005)(9.2,-0.305)
\rput[bl](9.1,-0.035){$\vdots$}
\end{pspicture}
\end{figure}
\noindent In the last equation we used that $K^i$ is group-like. Now, since $K^i\in B$, by Lemma \ref{lemma: multiply by b on lower handlebody does not affect result}, each $K^i$ has the effect of multiplying the whole tensor by $\lb \rho(\ov{\a}),K^i\rb$ and since $\ov{\a}=1$ in $H_1(M)$, it follows that each $K^i$ does nothing. Hence, we can equally compute $Z_{H_n}^{\rho}(\HH)$ by associating the tensor

\begin{figure}[H]
\centering
\begin{pspicture}(0,-0.3858641)(1.876941,0.3858641)
\rput[bl](0.0,-0.099999845){$X$}
\psline[linecolor=black, linewidth=0.018, arrowsize=0.05291667cm 2.0,arrowlength=0.8,arrowinset=0.2]{->}(0.5,0.0)(0.9,0.0)
\psline[linecolor=black, linewidth=0.018, arrowsize=0.05291667cm 2.0,arrowlength=0.8,arrowinset=0.2]{->}(1.5,0.10000015)(1.9,0.40000015)
\psline[linecolor=black, linewidth=0.018, arrowsize=0.05291667cm 2.0,arrowlength=0.8,arrowinset=0.2]{->}(1.5,-0.099999845)(1.9,-0.39999986)
\rput[bl](1.07,-0.10999985){$\Delta$}
\rput[bl](1.8,-0.12999985){$\vdots$}
\end{pspicture}
\end{figure}

\noindent to each $\a\in\aa$ and contracting with the same $\b$-tensors as before (and antipodes on negative crossings). Let $\a\in\aa$ and $p\in\a$ be its basepoint. By definition of $\De(X)$, the above $\a$-tensor is a sum of as many terms as crossings through $\a$. More precisely, for each $x\in\a\cap\bbb$, this tensor has a term with an $X$ over $x$, a $K$ over each crossing from $p$ to $x$ and the unit 1 over each crossing from $x$ to $p$ as in the following figure:

\begin{figure}[H]
\centering
\begin{pspicture}(0,-0.45)(5.4,0.45)
\definecolor{colour0}{rgb}{0.0,0.8,0.2}
\psline[linecolor=red, linewidth=0.03, arrowsize=0.05291667cm 2.0,arrowlength=0.8,arrowinset=0.2]{->}(0.0,0.2)(4.8,0.2)
\psdots[linecolor=black, dotsize=0.1](0.3,0.2)
\rput[bl](0.25,-0.15){$p$}
\rput[bl](5.2,0.05){$\alpha$}
\rput[bl](1.75,-0.45){$K$}
\rput[bl](2.35,-0.45){$X$}
\rput[bl](3.1,-0.44){$1$}
\rput[bl](4.4,-0.44){$1$}
\rput[bl](2.65,0.3){$x$}
\psline[linecolor=colour0, linewidth=0.03](0.7,0.4)(0.7,0.0)
\psline[linecolor=colour0, linewidth=0.03](1.9,0.4)(1.9,0.0)
\psline[linecolor=colour0, linewidth=0.03](2.5,0.4)(2.5,0.0)
\psline[linecolor=colour0, linewidth=0.03](3.1,0.4)(3.1,0.0)
\psline[linecolor=colour0, linewidth=0.03](4.4,0.4)(4.4,0.0)
\psdots[linecolor=black, dotstyle=o, dotsize=0.1, fillcolor=white](2.5,0.2)
\rput[bl](1.1,-0.4){$\dots$}
\rput[bl](3.6,-0.4){$\dots$}
\rput[bl](0.55,-0.45){$K$}
\rput[bl](5.55,-0.45){.}
\end{pspicture}

\end{figure}

\noindent Now, since $\pi_B(X)=0$, each term having an $X$ over an arc of $\boldb$ vanishes after contracting with the $\b$-tensors, hence it does not contributes to $Z^{\rho}_{H_n}$. Similarly, if two $X$'s corresponding to different $\a$'s lie over a same $\b\in\bb$, then they will vanish after multiplying along $\b$, since $X^2=0$. Thus, the only contributions to $Z^{\rho}_{H_n}$ come from those tensors having an $X$ over a crossing $x_i\in\a_i\cap\b_{\s(i)}$ for each $i=1,\dots, d$ where $\s$ is some permutation of $S_d$. In other words, $Z^{\rho}_{H_n}$ is a sum over multipoints:
\begin{align}
\label{eqproof: Z is a sum over multipoints}
Z^{\rho}_{H_n}(\HH)=\sum_{\x\in\Tab}A'(\x)
\end{align}
Here $A'(\x)$ is obtained by putting, for each $i=1,\dots, d$, an $X$ over $x_i$, a $K$ over each crossing from $p$ to $x_i$ along $\a_i$ and the unit $1$ over each crossing from $x_i$ to $p$ along $\a_i$, and then contracting with antipodes (whenever a crossing is negative) and the $\b$-tensors for $\b\in\bbb$. In view of Lemma \ref{lemma: Fox calculus formula is a sum over multipoints}, the theorem is a consequence of the following:

\begin{claim*}
We have $A'(\x)=m(\x)\rho_K(\ov{\a}(\x))\in\C$.
\end{claim*}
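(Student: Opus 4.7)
The plan is to decompose $A'(\x)$ into three contributions—the antipode signs at the crossings $x_i$, a Koszul permutation sign from the super-tensor symmetry, and the $\rho_K$-evaluation of a product of $K$-powers—and match them term by term with $m(\x)$ and $\rho_K(\ov{\a}(\x))$.

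First I would unpack each $\a_i$-tensor for a fixed multipoint $\x = \{x_i \in \a_i \cap \b_{\s(i)}\}$: along $\a_i$ the legs strictly before $x_i$ carry $K$, the leg at $x_i$ carries $X$, and the legs strictly after carry $1$. After applying $S_H^{\e_y}$ at each crossing (using $S_H(K) = K^{-1}$, $S_H(1) = 1$, $S_H(X) = -K^{-1}X$), a crossing $y$ strictly before $x_i$ contributes $K^{m_y}$, a crossing $y$ strictly after contributes $1$, and $y = x_i$ becomes $X$ if $x_i$ is positive or $-K^{-1}X$ if negative. The $d$ antipode signs collected at the $X$'s thus assemble into $\prod_i m_{x_i}$.

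Next I would multiply these inputs along each $\b \in \bbb$. Using $XK = KX$ and $X^2 = 0$, the product collapses to $K^{N_j}X$ along each $\b_j \in \bb$ (exactly one $X$ lands there, from $\a_{\s^{-1}(j)}$) and to $K^{N_k}$ along each $\b_k \in \boldb$, so that $\mu$ and $\pi_B$ return $K^{N_j}$ and $K^{N_k}$ respectively, which $\psi(\b^*) = \rho_K$ evaluates. Regrouping the exponents $N_\b$ by $\a$-curve rather than $\b$-curve, the contribution from $\a_i$ consists of the factors $(\b^*_y)^{m_y}$ for crossings $y$ strictly before $x_i$, plus an extra $(\b^*_{x_i})^{m_{x_i}} = (\b^*_{x_i})^{-1}$ precisely when $x_i$ is negative (coming from the $K^{-1}$ in $-K^{-1}X$), and nothing after. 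These exponents match exactly those defining $\ov{\a}_{x_i}$ under the basepoint convention of Subsection \ref{subsection: basepoints}: when $x_i$ is positive, $p(x_i)$ lies just before $x_i$ so the interval $[p, p(x_i)]$ excludes $x_i$; when $x_i$ is negative, $p(x_i)$ lies just after $x_i$ so $[p, p(x_i)]$ includes $x_i$. The product over $i$ therefore yields $\rho_K(\prod_i \ov{\a}_{x_i}) = \rho_K(\ov{\a}(\x))$.

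The remaining, and most delicate, step is to extract the Koszul sign $\sgn(\s)$. The ordered tensor product over the $\a$'s produces $d$ odd-degree legs (the $X$'s) in the $\a$-order $X_1,\ldots,X_d$; contracting against the $\b$-tensors requires reordering them into the $\b$-order, in which $X_i$ must reach $\b_{\s(i)}$. Since $K^{\pm 1}$ and $1$ are of even degree, the only symmetry isomorphisms in $\SVect$ producing a nontrivial sign are those swapping two $X$'s, and the induced permutation of the $X$'s is exactly $\s$, producing $\sgn(\s)$. The main obstacle will be to make this bookkeeping rigorous inside the tensor-network formalism, in particular to verify that interleaving the $\b$-multiplications with the rearrangement of outputs never generates an unintended sign from an even--odd transposition. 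Once this is in place, the three contributions combine to give $A'(\x) = \sgn(\s)\prod_i m_{x_i}\cdot\rho_K(\ov{\a}(\x)) = m(\x)\rho_K(\ov{\a}(\x))$, as claimed.
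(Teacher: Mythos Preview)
Your proposal is correct and follows essentially the same approach as the paper: separate the antipode signs $\prod_i m_{x_i}$, the Koszul permutation sign $\sgn(\s)$ from reordering the odd-degree $X$'s, and the $K$-power bookkeeping which, via the basepoint convention, assembles into $\rho_K(\ov{\a}(\x))$. The paper phrases the extraction of the $K$'s as an application of Lemma~\ref{lemma: multiply by b on lower handlebody does not affect result} (pulling each $K^{\pm 1}$ out as a scalar $\langle\rho((\b_x^*)^{m_x}),K\rangle$) rather than multiplying along each $\b$ and regrouping by $\a$-curve as you do, but this is the same computation organized slightly differently.
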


Indeed, by Lemma \ref{lemma: multiply by b on lower handlebody does not affect result} again, the $K$'s before an $x_i$ along $\a_i$ can be turned into 1's at the cost of multiplication by some scalars. More precisely, if $K$ lies over $x\in\a\cap\b_x$, then it multiplies the tensor by $\lb\rho((\b^*_x)^{m_x}),K\rb$. Note that if a crossing $x_i$ is negative, then one has to apply an antipode which is $S(X)=-K^{-1}X$. Thus, there is an extra $K^{-1}$ lying over each negative $x_i$. Thus, the $K$'s lying over $\a_i$ can be turned into 1's at the cost of multiplying the new tensor by $\lb \rho(\b^*_x),K\rb$ for all crossings $x$ from the basepoint of $\a_i$ to $x_i$, including $x_i$ itself whenever it is negative. This is exactly $\lb \rho(\ov{\a}_{x_i}), K\rb$ and multiplying these for $i=1,\dots, d$ gives $\lb\rho(\ov{\a}(\x)),K\rb=\rho_K(\ov{\a}(\x))$. Thus, we have $$A'(\x)=\d \rho_K(\ov{\a}(\x))$$ for some scalar $\d$, which is obtained by turning all the $K$'s of $A'(\x)$ into 1's and contracting. We show now that $\d=m(\x)$. Indeed, $\d$ is computed by putting an $X$ over each $x_i$, but if $x_i$ is negative, a sign appears from $S(X)=-K^{-1}X$ (and only a sign, since we already took care of the $K^{-1}$). Some extra signs appear when permuting $X$'s, since they have degree one. If $x_i\in\a_i\cap\b_{\s(i)}$ for each $i$, then the $X$'s are permuted according to $\s$, so the sign $\sign(\s)$ appears. Thus, $\d$ is the product of the sign
\begin{align*}
\sign(\s)\prod_{i=1}^dm_{x_i}=m(\x)
\end{align*}
together with the contraction of the $\b$-tensors. But since each $\b$-curve has a single $X$ and $\mu(X)=1$ it follows that $\d$ coincides with the above sign. This proves the claim and hence the theorem.

\end{proof}

\subsection{The case $n=0$}\label{subs: The case n=0} When $n=0$, $H_0$ is infinite dimensional hence it has no cointegral. However, in the proof above we showed that for each $n\geq 1$, $Z_{H_n}^{\rho}(\HH)$ can be computed with the element $X\in H_n$ in place of the cointegral $\iota=\frac{1}{n}(\sum_{i=0}^{n-1}K^iX)$ involved in the $\a$-tensors. Hence we can {\em define} $Z_{H_0}^{\rho}(\HH)$ (and $I_{H_0}^{\rho}(M,\c,\ss,\o)$) for an arbitrary $\rho:H_1(M)\to\C^{\t}$ by comultiplying $X$ along the $\a$'s and contracting with the $\b$-tensors. Equivalently, we can consider the expression
\begin{align*}
F(\HH,\ss,\x_0,\o)\eq \d_{\o}(\HH)h_{\ss,\x_0}\det\left(\frac{\p \ov{\a_i}}{\p \b^*_j}\right)_{i,j=1,\dots,d}\in\Z[H_1(M)]
\end{align*}
where $\HH$ is an ordered, oriented, extended Heegaard diagram of $(M,\c)$, based according to a multipoint $\x_0\in\Tab$, $\ss\in\Spinc(M,\c)$, $\o$ is an orientation of $H_*(M,R_-(\c);\R)$ and $h_{\ss,\x_0}=PD[\ss-s(\x_0)]$. Note that the basepoints of $\HH$ are necessary to write down the words $\ov{\a}_i$. Theorem \ref{thm: intro Z via Fox calculus} implies that 
\begin{align}
\label{eq: specialization of ZH0}
\rho(F(\HH,\ss,\x_0,\o))=I^{\rho}_{H_n}(M,\c,\ss,\o)
\end{align}
for any $\rho:H_1(M)\to\C$ having values in the subgroup of $n$-th roots of unity. Since this is true for any such $\rho$ and any $n\geq 1$, it follows that $F(\HH,\ss,\x_0,\o)$ is an invariant of $(M,\c,\ss,\o)$.

\begin{definition}
\label{def: ZH0}
We denote $I_{H_0}(M,\c,\ss,\o)\eq F(\HH,\ss,\x_0,\o)\in\Z[H_1(M)]$. If $R_-(\c)$ is disconnected, then we set $I_{H_0}(M,\c,\ss,\o)\eq I_{H_0}(M',\c',i(\ss),\o')$ where $(M',\c')$ is constructed from $(M,\c)$ by adding 1-handles to $R_-\t I$ so that $R(\c')$ is connected. For an arbitrary $\rho:H_1(M)\to\C^{\t}$ we set $I_{H_0}^{\rho}(M,\c,\ss,\o)=\rho(I_{H_0}(M,\c,\ss,\o))\in\C$.
\end{definition}

Note that whenever $R_-$ is disconnected, $I_{H_0}(M',\c',\ss,\o)$ a priori belongs to $\Z[H_1(M')]$, but Theorem \ref{thm: intro Z recovers Reidemeister torsion} implies that indeed $I_{H_0}\in \Z[H_1(M)]$. %We denote this invariant by $I_{H_0}$ since it specializes to the invariants $I_{H_n}^{\rho}$ for any $n\geq 1$ (by Equation (\ref{eq: specialization of ZH0})) in the same way as the Hopf superalgebra $H_0$ specializes to $H_n$ by setting $K^n=1$.

%The reason why we denote this invariant by $Z_{H_0}$ is the following. Recall that in $H_0$ the generator $K$ has infinite order. Thus, $H_0$ has no cointegral and it does not satisfies the properties we required in Theorem \ref{thm: Z is an invariant} to define sutured manifold invariants. However,

\subsection[Recovering Reidemeister torsion]{Recovering Reidemeister torsion} \label{subs: the sutured Reid torsion}\def\Hq{H_0} We now relate the invariant $I_{H_0}$ of Definition \ref{def: ZH0} to the relative Reidemeister torsion $\tau(M,R_-)$. The torsion $\tau(M,R_-)$ is an element of $\Z[H_1(M)]$ defined up to multiplication by an element of $\pm H_1(M)$ (see \cite[Section 6.2]{Turaev:BOOK2}). The indeterminacy can be removed, but we will not need this. We denote by $\dot{=}$ equality up to multiplication by an element of $\pm H_1(M)$. Since we will mostly consider equalities up to multiplication by $\pm H_1(M)$, we will drop the $\Spinc$ structure and homology orientation from the notation. Thus, we note $$I_{H_0}(M,\c)\in \Z[H_1(M)]/\pm H_1(M).$$ Theorem \ref{thm: intro Z recovers Reidemeister torsion} follows from the following facts (which in turn are the sutured versions of some standard properties of Reidemeister torsion, cf. \cite[Theorem 16.5]{Turaev:BOOK2}). 
\begin{enumerate}
\item \label{FJR11 prop 4.1} \cite[Prop. 4.1]{FJR11}: When $R_-$ is connected, then
\begin{align*}
\tau(M,R_-)\dot{=}\det(\p\ov{\a_i}/\p\b^*_j)
\end{align*}
where we consider the Fox derivatives as elements of $\Z[H_1(M)]$.
\medskip

\item \label{FJR11 lemma 3.20}\cite[Lemma 3.20]{FJR11}: If $(M',\c')$ is constructed by adding 1-handles to $(M,\c)$ as in Subsection \ref{subs: disconnected case}, then $i_*(\tau(M,R_-))=\tau(M',R'_-)$ in $\Z[H_1(M')]$, where $i_*:H_1(M)\to H_1(M')$ is induced by the inclusion.
\end{enumerate}

Note that in \cite{FJR11} the above properties are stated for the {\em sutured torsion} $\tau(M,\c,\ss,\o)$, which is a normalization of $\tau(M,R_-)$ and belongs to $\Z[H_1(M)]$ if $(M,\c)$ is balanced. Since we consider equalities up to multiplication by $\pm H_1(M)$, we can (and do) state these properties for $\tau(M,R_-)$ instead.

\begin{proof}[Proof of Theorem \ref{thm: intro Z recovers Reidemeister torsion}]
If $(M',\c')$ is obtained by adding a (possibly empty) collection of 1-handles to $R_-$ in such a way that $R'_-$ is connected, then
\begin{align*}
\label{eq: Z from tau of FJR}
I_{H_0}(M,\c)\eq I_{H_0}(M',\c')&\dot{=}\det(\p\ov{\a_i}/\p\b^*_j)\\
&\dot{=}\tau(M',R'_-)=i_*(\tau(M,R_-)).
\end{align*}
The second equality is by definition of $I_{H_0}$ (which in turn relies on Theorem \ref{thm: intro Z via Fox calculus}). In the third and fourth equalities we used properties (\ref{FJR11 prop 4.1}) and (\ref{FJR11 lemma 3.20}) above respectively. This proves the theorem.
\end{proof}

\subsection{Corollaries}
\label{subs: corollaries}
We now compute $I_{H_0}(M,\c)$ for the sutured manifolds associated to closed 3-manifolds and link complements. Let $Y(1)$ be the sutured manifold associated to a (pointed) closed 3-manifold $Y$, that is, $Y(1)$ is the complement of an open 3-ball $B$ in $Y$ with a single suture in $\p B$ (see Example \ref{example: sutured from closed}).

\begin{corollary}
\label{corollary: Z of closed 3-mfld}
Let $\rho:H_1(M)\to \C^{\t}$ be a group homomorphism. Then $I_{H_0}^{\rho}(Y(1),\ss,\o)$ is zero if $\rho\nequiv 1$ and equals $\pm |H_1(Y;\Z)|$ if $\rho\equiv 1$ (which is defined to be zero if $b_1(Y)>0$).
\end{corollary}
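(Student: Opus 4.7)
The plan is to reduce the claim to an algebraic statement about the Fox Jacobian of a Heegaard presentation of $\pi_1(Y)$, which in turn is handled by elementary linear algebra plus Poincar\'e duality.

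First I would choose a convenient extended Heegaard diagram for $Y(1)$. Since $R_-(\c)$ is a disk (connected, with vanishing $H_1$ rel boundary), no arcs are needed in the cut systems, so such a diagram is just an ordinary genus-$d$ Heegaard diagram $\HD$ of $Y$ with a small open disk removed from $\S$ (Example \ref{example: sutured HD for closed manifold}). Reading off the usual Wirtinger-style presentation gives
\[
\pi_1(Y)=\langle \b_1^*,\dots,\b_d^*\mid \ov{\a_1},\dots,\ov{\a_d}\rangle.
\]
Definition \ref{def: ZH0} together with Theorem \ref{thm: intro Z via Fox calculus} then identifies $I^\rho_{H_0}(Y(1),\ss,\o)$, up to a unit sign coming from $\d_\o(\HH)$ and $h_{\ss,\x_0}$, with $\rho_K\!\left(\det(\p\ov{\a_i}/\p\b_j^*)_{i,j=1,\dots,d}\right)$.

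In the trivial case $\rho\equiv 1$, the Fox Jacobian specializes to $(\aug\,\p\ov{\a_i}/\p\b_j^*)$, which is exactly the abelianization of the relation matrix of the Heegaard presentation, i.e.\ a square presentation matrix of the abelian group $H_1(Y;\Z)$. A standard fact then gives that its determinant has absolute value $|H_1(Y;\Z)|$ when this group is finite and is zero when $b_1(Y)>0$, which matches the claim under the stated convention.

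For $\rho\nequiv 1$ I would argue vanishing via the twisted cellular chain complex of $Y$. The Heegaard decomposition realizes $Y$ as a CW complex with one $0$-cell, the $d$ loops $\b_j^*$ as $1$-cells, $d$ $2$-cells attached along the words $\ov{\a_i}$, and one $3$-cell filling the remaining ball after all the $2$-handle attachments. The $\rho$-twisted cellular complex reads
\[
0\to\C\xrightarrow{\p_3}\C^{d}\xrightarrow{\p_2}\C^{d}\xrightarrow{\p_1}\C\to 0,
\]
with $\p_2$ precisely the matrix $\rho_K(\p\ov{\a_i}/\p\b_j^*)$. Since $\rho\neq 1$, some $\rho(\b_j^*)\neq 1$, so $\p_1$ is surjective and $H_0(Y;\rho)=0$; Poincar\'e duality for the closed oriented $3$-manifold $Y$ then gives $H_3(Y;\rho)\cong H^0(Y;\rho^{-1})^*=0$, so $\p_3$ is injective, hence nonzero. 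Because $\p_2\circ\p_3=0$, this produces a nonzero vector in $\ker\p_2$, forcing $\det\p_2=0$ and therefore $I^\rho_{H_0}(Y(1),\ss,\o)=0$.

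The only delicate point is bookkeeping: one must check that the CW structure on $Y$ described above really extends the $2$-complex coming from the Heegaard presentation by a single $3$-cell whose boundary operator $\p_3$ agrees (up to sign) with the corresponding Fox derivative, so that Poincar\'e duality may be invoked for the same chain complex whose middle differential is the Fox Jacobian. Once this identification is in hand, the two cases above complete the proof.
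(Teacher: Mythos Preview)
Your argument is correct, but for the case $\rho\nequiv 1$ it is more elaborate than necessary. The paper works directly with the cell complex of $M=Y\setminus B$, which has one $0$-cell, $d$ $1$-cells and $d$ $2$-cells and \emph{no} $3$-cell. It simply observes that $\p_1^\rho=(\rho(\b_1^*)-1,\dots,\rho(\b_d^*)-1)$ is nonzero when $\rho\nequiv 1$, so $\ker\p_1^\rho\subsetneq\C^d$; since $\mathrm{im}\,\p_2^\rho\subset\ker\p_1^\rho$ by $\p_1\p_2=0$, the map $\p_2^\rho$ fails to be surjective and $\det\p_2^\rho=0$. This uses nothing beyond the chain complex condition. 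Your route instead closes up the complex with the $3$-cell of $Y$ and invokes Poincar\'e duality to show $\p_3$ is nonzero, hence $\p_2$ is non-injective. Both yield $\det\p_2^\rho=0$, but the paper's approach sidesteps the bookkeeping about the $3$-cell and the appeal to duality that you yourself flag as the delicate point. For $\rho\equiv 1$ your argument coincides with the paper's.
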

\begin{proof}
Denote $M=Y\sm B$, the underlying manifold of $Y(1)$ and let $\HH=\HD$ be a Heegaard diagram of $M$. This specifies a cell decomposition $X$ of $M$ with one 0-cell and an equal number $g$ of 1-cells and 2-cells (see Remark \ref{remark: Heegaard diagram gives handle decomposition}). Let $\p_i^{\rho}:C_i^{\rho}(X)\to C_{i-1}^{\rho}(X)$ be the boundary operator of the complex $C_*^{\rho}(X)\eq C_*(\wh{X})\ot_{Z[H_1(M)]}\C$ where $\wh{X}$ is the maximal abelian cover of $X$ and $\C$ is considered as a $\Z[H_1(M)]$-module via $\rho$. For an appropriate choice of basis in $C_*^{\rho}(X)$, $\p_2^{\rho}$ is represented by the matrix $(\rho(\p \ov{\a}_i/\p \b^*_j))$ (see e.g. \cite[Claim 16.6]{Turaev:BOOK2}) while $\p_1^{\rho}$ is given by $(\rho(\b^*_1)-1,\dots,\rho(\b^*_g)-1)$. Note that $\rho\equiv 1$ if and only if $\p_1^{\rho}=0$. By Theorem \ref{thm: intro Z via Fox calculus} it follows that
$$I_{H_0}^{\rho}(M,\c,\ss,\o)=\pm\zeta_{\ss,\x_0}\det(\p_2^{\rho}).$$
But if $\rho\nequiv 1$, then $\p_1^{\rho}\neq 0$ and so $\p_2^{\rho}$ is non-surjective. Hence $I_{H_0}^{\rho}=0$ whenever $\rho\nequiv 1$. If $\rho\equiv 1$ then $C_*^{\rho}(M)=C_*(M)$ and since $\p_1=0$ we get $$I_{H_0}^{\rho\equiv 1}(M,\c,\ss,\o)=\det(\p_2)=\pm |H_1(M)|=\pm |H_1(Y)|$$ as desired.
\end{proof}

Now let $S^3(L)$ be the sutured manifold complementary to an $m$-component link $L\subset S^3$, that is, $S^3(L)=S^3\sm N(L)$ with two oppositely oriented meridians as sutures over each boundary component of a tubular neighborhood $N(L)$ of $L$. The sutured torsion of $S^3(L)$ is given as follows \cite[Lemma 6.3]{FJR11}:
\[
\tau(S^3(L),R_-)\dot{=}\begin{cases}
\De_K & \text{ if } m=1,\\
\prod_{i=1}^m(t_i-1)\De_L & \text{ if } m>1
\end{cases}
\]
where $\De_L$ denotes the multivariable Alexander polynomial of a link. Note that $\De_L$ is an element of $\Z[t_1^{\pm 1},\dots, t_m^{\pm 1}]$ %defined up to multiplication by $\pm t_1^{k_1}\dots t_m^{k_m}$ 
and requires $L$ to be ordered and oriented to be defined (see \cite[Definition 15.2]{Turaev:BOOK2}). The ordering and orientation of $L$ defines a canonical isomorphism $\Z[H_1(S^3(L))]=\Z[t_1^{\pm 1},\dots, t_m^{\pm 1}]$ and the above equality has to be understood under this isomorphism.

\begin{corollary}
\label{corollary: Z0 recovers multivariable Alexander polynomial}
If $(S^3(L),\c)$ is the sutured complement of an $m$-component link $L\subset S^3$, then
\[
I_{H_0}(S^3(L),\c)\dot{=}\begin{cases}
\De_K & \text{ if } m=1,\\
\prod_{i=1}^m(t_i-1)\De_L & \text{ if } m>1.
\end{cases}
\]
\end{corollary}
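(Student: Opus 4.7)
The proof is essentially a direct application of Theorem \ref{thm: intro Z recovers Reidemeister torsion} combined with the known formula for the sutured Reidemeister torsion of a link complement recalled just before the corollary statement. My plan is to simply chain these two results together.

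More precisely, the first step is to fix an ordering and orientation of the components of $L$, which induces a canonical identification $\Z[H_1(S^3(L))] = \Z[t_1^{\pm 1},\dots,t_m^{\pm 1}]$, so that both sides of the claimed equality live in the same ring (up to the $\pm H_1$ ambiguity). The sutured manifold $S^3(L)$ is balanced, and in the case $m \geq 2$ it has disconnected $R_-(\c)$ (each annular component of $R_-$ corresponds to a component of $L$), whereas for $m=1$ we have $R_-$ connected already; either way Theorem \ref{thm: intro Z recovers Reidemeister torsion} applies since $I_{H_0}$ has been extended to the disconnected case in Definition \ref{def: Z for disconnected R}.

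Applying Theorem \ref{thm: intro Z recovers Reidemeister torsion} yields
\[
I_{H_0}(S^3(L),\c) \,\dot{=}\, \tau(S^3(L),R_-(\c)).
\]
The second and final step is to invoke the formula for $\tau(S^3(L),R_-)$ quoted from \cite[Lemma 6.3]{FJR11} just before the corollary, namely $\tau(S^3(L),R_-)\dot{=}\De_K$ when $m=1$ and $\tau(S^3(L),R_-)\dot{=}\prod_{i=1}^m(t_i-1)\De_L$ when $m>1$. Substituting this into the previous equality gives the result.

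There is essentially no obstacle, since both inputs are now fully available: Theorem \ref{thm: intro Z recovers Reidemeister torsion} has been established, and the torsion computation is cited. The only thing to be mindful of is bookkeeping of the $\pm H_1(M)$ indeterminacy, but since the corollary is stated up to $\dot{=}$ this requires no extra work.
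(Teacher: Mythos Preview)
Your proof is correct and follows exactly the same approach as the paper: apply Theorem \ref{thm: intro Z recovers Reidemeister torsion} and then substitute the formula for $\tau(S^3(L),R_-)$ from \cite[Lemma 6.3]{FJR11}.
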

\begin{proof}
This follows from Theorem \ref{thm: intro Z recovers Reidemeister torsion} together with the above computation of $\tau(S^3(L),R_-)$.
\end{proof}

\bibliographystyle{amsplain}
\bibliography{/Users/daniel/Desktop/TEX/bib/referencesabr}

\providecommand{\bysame}{\leavevmode\hbox to3em{\hrulefill}\thinspace}
\providecommand{\MR}{\relax\ifhmode\unskip\space\fi MR }
% \MRhref is called by the amsart/book/proc definition of \MR.
\providecommand{\MRhref}[2]{%
  \href{http://www.ams.org/mathscinet-getitem?mr=#1}{#2}
}
\providecommand{\href}[2]{#2}
\begin{thebibliography}{10}

\bibitem{Altman:Seifert}
I.~Altman, \emph{Sutured {F}loer homology distinguishes between {S}eifert
  surfaces}, Topology Appl. \textbf{159} (2012), no.~14, 3143--3155.

\bibitem{BCGP}
C.~Blanchet, F.~Costantino, N.~Geer, and B.~Patureau-Mirand,
  \emph{Non-semi-simple {TQFT}s, {R}eidemeister torsion and {K}ashaev's
  invariants}, Adv. Math. \textbf{301} (2016), 1--78.

\bibitem{CC:ontwoinvariants}
L.~Chang and S.~X. Cui, \emph{On two invariants of three manifolds from {H}opf
  algebras}, Adv. Math. \textbf{351} (2019), 621--652.

\bibitem{CKS:relation-WRT-Henn}
Q.~Chen, S.~Kuppum, and P.~Srinivasan, \emph{On the relation between the {WRT}
  invariant and the {H}ennings invariant}, Math. Proc. Cambridge Philos. Soc.
  \textbf{146} (2009), no.~1, 151--163.

\bibitem{FJR11}
S.~Friedl, A.~Juh\'{a}sz, and J.~Rasmussen, \emph{The decategorification of
  sutured {F}loer homology}, J. Topol. \textbf{4} (2011), no.~2, 431--478.

\bibitem{Gabai:foliations}
D.~Gabai, \emph{Foliations and the topology of {$3$}-manifolds}, J.
  Differential Geom. \textbf{18} (1983), no.~3, 445--503.

\bibitem{Juhasz:holomorphic}
A.~Juh\'{a}sz, \emph{Holomorphic discs and sutured manifolds}, Algebr. Geom.
  Topol. \textbf{6} (2006), 1429--1457.

\bibitem{Juhasz:polytope}
\bysame, \emph{The sutured {F}loer homology polytope}, Geom. Topol. \textbf{14}
  (2010), no.~3, 1303--1354.

\bibitem{JTZ:naturality}
A.~Juh\'{a}sz, D.~P. Thurston, and I.~Zemke, \emph{Naturality and mapping class
  groups in {H}eegaard {F}loer homology}, arXiv:1210.4996 (2012).

\bibitem{Kup1}
G.~Kuperberg, \emph{Involutory {H}opf algebras and {$3$}-manifold invariants},
  Internat. J. Math. \textbf{2} (1991), no.~1, 41--66.

\bibitem{Kup2}
\bysame, \emph{Non-involutory {H}opf algebras and {$3$}-manifold invariants},
  Duke Math. J. \textbf{84} (1996), no.~1, 83--129.

\bibitem{LR:cosemisimplechar0}
R.~G. Larson and D.~E. Radford, \emph{Finite-dimensional cosemisimple {H}opf
  algebras in characteristic {$0$} are semisimple}, J. Algebra \textbf{117}
  (1988), no.~2, 267--289.

\bibitem{Murakami:Alexander}
J.~Murakami, \emph{The multi-variable {A}lexander polynomial and a
  one-parameter family of representations of {$U_q(\mathfrak{sl}(2,\C))$} at
  {$q^2=-1$}}, Quantum groups ({L}eningrad, 1990), Lecture Notes in Math., vol.
  1510, Springer, Berlin, 1992, pp.~350--353.

\bibitem{Murakami:state}
\bysame, \emph{A state model for the multivariable {A}lexander polynomial},
  Pacific J. Math. \textbf{157} (1993), no.~1, 109--135.

\bibitem{OS:knot}
P.~Ozsv\'{a}th and Z.~Szab\'{o}, \emph{Holomorphic disks and knot invariants},
  Adv. Math. \textbf{186} (2004), no.~1, 58--116.

\bibitem{OS1}
\bysame, \emph{Holomorphic disks and topological invariants for closed
  three-manifolds}, Ann. of Math. (2) \textbf{159} (2004), no.~3, 1027--1158.

\bibitem{Radford:BOOK}
D.~E. Radford, \emph{Hopf algebras}, Series on Knots and Everything, vol.~49,
  World Scientific Publishing Co. Pte. Ltd., Hackensack, NJ, 2012.

\bibitem{Reshetikhin:supergroup}
N.~Reshetikhin, \emph{Quantum supergroups}, Quantum field theory, statistical
  mechanics, quantum groups and topology ({C}oral {G}ables, {FL}, 1991), World
  Sci. Publ., River Edge, NJ, 1992, pp.~264--282.

\bibitem{RT2}
N.~Reshetikhin and V.~G. Turaev, \emph{Invariants of {$3$}-manifolds via link
  polynomials and quantum groups}, Invent. Math. \textbf{103} (1991), no.~3,
  547--597.

\bibitem{RS:Alexander}
L.~Rozansky and H.~Saleur, \emph{Quantum field theory for the multi-variable
  {A}lexander-{C}onway polynomial}, Nuclear Phys. B \textbf{376} (1992), no.~3,
  461--509.

\bibitem{Sartori:Alexander}
A.~Sartori, \emph{The {A}lexander polynomial as quantum invariant of links},
  Ark. Mat. \textbf{53} (2015), no.~1, 177--202.

\bibitem{Turaev:Spinc}
V.~G. Turaev, \emph{Torsion invariants of {${\rm Spin}^c$}-structures on
  {$3$}-manifolds}, Math. Res. Lett. \textbf{4} (1997), no.~5, 679--695.

\bibitem{Turaev:BOOK2}
\bysame, \emph{Introduction to combinatorial torsions}, Lectures in Mathematics
  ETH Z\"{u}rich, Birkh\"{a}user Verlag, Basel, 2001, Notes taken by Felix
  Schlenk.

\bibitem{Virelizier:flat}
A.~Virelizier, \emph{Involutory {H}opf group-coalgebras and flat bundles over
  3-manifolds}, Fund. Math. \textbf{188} (2005), 241--270.

\bibitem{Viro:Alexander}
O.~Ya. Viro, \emph{Quantum relatives of the {A}lexander polynomial}, Algebra i
  Analiz \textbf{18} (2006), no.~3, 63--157.

\end{thebibliography}

\end{document}